\def\given{\,|\,}
\def\S{\mathcal{S}}
\def\I{\mathcal{I}}
\def\RR{\mathbb{R}}
\def\F{\mathcal{F}}
\def\A{\mathcal{A}}
\def\B{\mathcal{B}}
\def\C{\mathcal{C}}
\def\D{\mathcal{D}}
\def\E{\mathcal{E}}
\def\EE{\mathfrak{E}}
\def\TT{\mathfrak{T}}
\def\floor#1{\lfloor #1 \rfloor}
\def\M{{M}}
\def\Trav{\text{{\it Traversed}}}
\def\NotTrav{\text{{\it NotTraversed}}}
\def\Labels{\text{{\it Labels}}}
\def\Roots{\text{{\it Roots}}}
\def\Children{\text{{\it Children}}}
\def\Triangles{\text{{\it Triangles}}}
\def\Edges{\text{{\it Edges}}}
\def\BIGBite{\text{{\it BIGBite}}}
\def\BigBite{\text{{\it BigBite}}}
\def\Bite{\text{{\it Bite}}}
\def\Pairs{\text{{\it Pairs}}}
\date{}
\title{The $K_4$-free process}
\author{{Guy Wolfovitz\thanks{Department of Computer Science, 
Haifa University, Haifa, Israel. Email address:
{\tt gwolfovi@cs.haifa.ac.il}.}}}
\newtheorem{theorem}{Theorem}[section]
\newtheorem{lemma}[theorem]{Lemma}
\newtheorem{claim}[theorem]{Claim}
\renewcommand{\epsilon}{\varepsilon}
\newcommand{\deq}{:=}
\DeclareMathOperator{\expec}{\mathbb{E}}
\DeclareMathOperator{\prob}{Pr}
\DeclareMathOperator{\ONE}{{\bf 1}}
\newtheoremstyle{upright}%
        {8pt plus2pt minus4pt}%
        {8pt plus2pt minus4pt}%
        {\upshape}%
        {}%
        {\bfseries}%
        {:}%
        {1em}%
        {}%
\theoremstyle{upright}
\newcommand{\ignore}[1]{}
\begin{document}

\maketitle

\begin{abstract}
We consider the $K_4$-free process. In this process, the edges of the complete
$n$-vertex graph are traversed in a uniformly random order, and each traversed
edge is added to an initially empty evolving graph, unless the addition of the
edge creates a copy of $K_4$.  Let $\M(n)$ denote the graph that is produced by
that process. We prove that a.a.s., the number of edges in $\M(n)$ is
$O(n^{8/5} (\ln n)^{1/5} )$. This matches, up to a constant factor, a lower
bound of Bohman.
As a by-product, we prove the following Ramsey-type result: for every $n$ there
exists a $K_4$-free $n$-vertex graph, in which the largest set of vertices that
doesn't span a triangle has size $O(n^{3/5}(\ln n)^{1/5})$. This improves, by a
factor of $(\ln n)^{3/10}$, an upper bound of Krivelevich.
\end{abstract}

\section{Introduction} \label{sec:1}
The $K_4$-free process is a random greedy process that generates a $K_4$-free
graph. In this process, the edges in $\binom{[n]}{2}$, where $[n] \deq \{1, 2,
\ldots, n\}$, are traversed in a uniformly random order, and each traversed
edge is added to an evolving graph, which is initially empty, unless the
addition of the edge creates a copy of $K_4$.  Denote by $\M(n)$ the (maximal)
$K_4$-free graph that is produced by that process.  Say that an event holds
asymptotically almost surely (a.a.s.) if the probability of that event goes to
$1$ as $n \to \infty$.  Throughout the paper we assume that $n \to \infty$, and
any asymptotic notation is used under this, and only this, assumption. The next
theorem is our main result.
\begin{theorem} \label{thm:main}
A.a.s., the number of edges in $\M(n)$ is $O( n^{8/5} (\ln n)^{1/5} )$.  
\end{theorem}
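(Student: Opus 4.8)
The plan is to bound the total number of edges via an "open-pair" argument. Call a non-edge $\{u,v\}$ of the evolving graph $G_i$ (the graph after $i$ traversed pairs) \emph{open at time $i$} if $N_{G_i}(u)\cap N_{G_i}(v)$ spans no edge of $G_i$; this is precisely the condition under which $\{u,v\}$, were it traversed next, would be added. Adding an edge never turns a closed pair open, so openness is monotone decreasing in time, and consequently every edge of $\M(n)$ either already lies in $G_i$ or, when it is added at some time $j>i$, was open at time $j-1$ and hence also at time $i$; distinct added edges give distinct open pairs, so $|E(\M(n))| \le i + (\text{number of pairs open at time } i)$ for every $i$. Fixing $m^\ast \deq C n^{8/5}(\ln n)^{1/5}$ for a suitable constant $C$, Theorem~\ref{thm:main} therefore reduces to showing that a.a.s.\ only $O\!\left(n^{8/5}(\ln n)^{1/5}\right)$ pairs are open at time $m^\ast$; by Markov it suffices to prove, for a fixed pair $\{u,v\}$,
$$ \P\big[\,\{u,v\}\text{ is open at time } m^\ast\,\big] = o\big(n^{-2/5}\big), $$
and I expect to obtain a far stronger bound, of the form $n^{-\omega(1)}$. (An essentially equivalent route is to bound the maximum degree of $\M(n)$ by $O\!\left(n^{3/5}(\ln n)^{1/5}\right)$ and sum; the same technical core is needed.)

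To prove that estimate one must show that by time $m^\ast$ the process has built enough local structure around $\{u,v\}$ that a closing configuration — a pair $\{x,y\}$ of common neighbours of $u$ and $v$ with $xy\in E$ — is overwhelmingly likely to have appeared. With $p\deq m^\ast/\binom n2 \asymp n^{-2/5}(\ln n)^{1/5}$, in $G(n,p)$ the expected number of such configurations at $\{u,v\}$ is of order $n^2p^5\asymp\ln n$ and they are nearly independent, so the probability that none occurs is $n^{-\Theta(1)}$; the work is to make this heuristic rigorous for the $K_4$-free process. I would run the semi-random (nibble / \emph{bite}) method: split the uniformly random permutation of $\binom{[n]}{2}$ into consecutive blocks and process them one at a time. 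An initial group of bites serves as a bootstrap — one shows that a.a.s.\ throughout this stage $G_i$ is "well spread'', with every vertex having $\Theta(np)$ neighbours, every two vertices $\Theta(np^2)$ common neighbours, and the counts of the relevant cherries and near-$K_4$'s within their heuristic windows (all up to logarithmic factors). Then one \emph{big bite} $\BigBite$ — a block of $\Theta\!\left(n^{8/5}(\ln n)^{1/5}\right)$ as-yet-untraversed pairs — is exposed; conditioned on the good structure it behaves like a uniformly random set of that many pairs, and a direct computation shows that with probability $1-n^{-\omega(1)}$ it contains a pair $\{x,y\}$ that is open and joins two common neighbours of $u,v$ at the moment it is reached in $\BigBite$, hence $xy$ is actually added, closing $\{u,v\}$ permanently.

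The bookkeeping that upgrades this to a proof is the control of dependencies, which is where the tree notation ($\Roots$, $\Children$) enters. One cannot simply reveal the neighbourhood structure and then $\BigBite$, since whether an edge of $\BigBite$ gets \emph{added} depends on the whole history; instead, for each pair $\{u,v\}$ one enumerates the bounded-size "witness'' configurations that could close it, records for each the small rooted tree of edges that must be present or added to realize it, and controls their joint behaviour by a Janson-type inequality together with a union bound over these trees. The union bound over all $\binom n2$ pairs (and, for the degree formulation, over all $n$ vertices) is exactly what forces $n^2p^5\asymp\ln n$ rather than $\asymp 1$, i.e.\ the extra $(\ln n)^{1/5}$ factor.

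The step I expect to be the main obstacle is the bootstrap: establishing that every one of the governing local quantities (degrees, codegrees, cherry- and near-$K_4$-counts) stays within its heuristic window with failure probability $n^{-\omega(1)}$ \emph{per vertex / per pair}, uniformly over the entire first stage. Standard differential-equations or martingale arguments yield only $o(1)$ failure probabilities, which cannot survive a union bound over $\Theta(n^2)$ pairs; getting the required superpolynomially small tails — presumably by exploiting that each such quantity is a sum of many nearly-independent indicators and applying concentration within a single bite before advancing to the next — is the technical heart of the argument, after which the analysis of $\BigBite$ and the tree/union-bound step are comparatively routine.
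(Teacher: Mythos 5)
Your open-pair reduction has a genuine gap: it implicitly treats $G_{m^*}$, the graph after $m^*$ traversed pairs, as though it had density $p = m^*/\binom{n}{2} \asymp n^{-2/5}(\ln n)^{1/5}$, but the $K_4$-free process rejects an increasingly large fraction of traversed pairs. Writing $m(t)$ for the number of edges and $Q(t)$ for the number of open pairs after $t$ traversals, the known trajectory (Bohman, and the event $\A_i$ here) gives $m(t) \approx \tfrac12 n^{8/5}\Phi(2tn^{-8/5})$ and $Q(t) \approx \tfrac12 n^2\phi(2tn^{-8/5})$, with $\Phi(x) = \Theta((\ln x)^{1/5})$ and $\phi = \exp(-\tfrac12\Phi^5)$. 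At $t = m^* = Cn^{8/5}(\ln n)^{1/5}$ the scaled time $2tn^{-8/5}$ is only $2C(\ln n)^{1/5}$, so $\Phi$ is of order $(\ln\ln n)^{1/5}$, the density $m(t)/\binom{n}{2}$ is $\Theta(n^{-2/5}(\ln\ln n)^{1/5})$ (a factor $(\ln n/\ln\ln n)^{1/5}$ short of your $p$), and $\phi = (\ln n)^{-\Theta(1)}$. Consequently $\prob[\{u,v\}\text{ open at }m^*]$ is $(\ln n)^{-\Theta(1)}$, not $o(n^{-2/5})$ and certainly not $n^{-\omega(1)}$, and $Q(m^*) = n^{2-o(1)} \gg n^{8/5}(\ln n)^{1/5}$, so your bound $|E(\M(n))| \le m^* + Q(m^*)$ yields only $n^{2-o(1)}$. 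Parametrizing by edge count does not rescue it: $m(t)+Q(t)$ only drops to $\Theta(n^{8/5}(\ln n)^{1/5})$ when $t$ is nearly $\binom{n}{2}$, where $\Phi^5 \asymp \ln n$ and $\phi \lesssim n^{-2/5}$, which is far beyond any window in which the trajectory can be certified — the paper's nibble analysis already terminates at $t \approx n^{8/5+\epsilon_1^2}$, where $m(t)+Q(t)$ is still $n^{2-\Theta(\epsilon_1^2)}$.

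So the open-pair route and the degree route you call essentially equivalent are not: only the degree route closes inside the provable window. The paper takes the degree route in a strictly stronger, Ramsey-type form. Theorem~\ref{thm:main2} shows that a.a.s.\ \emph{every} set of $s = Cn^{3/5}(\ln n)^{1/5}$ vertices of the analyzed prefix $\M_I$ (with $|\M_I| = \Theta_{\epsilon_1}(n^{8/5}(\ln n)^{1/5})$) already spans a triangle; since $\M_I \subseteq \M(n)$ and $\M(n)$ is $K_4$-free, each neighbourhood in $\M(n)$ is triangle-free and hence has fewer than $s$ vertices, giving the edge bound — and also delivering the $f_{3,4}$ improvement (Theorem~\ref{thm:ramsey}), which your route would not yield even if it closed. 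The price of this formulation is a union bound over $\binom{n}{s}$ sets, so the per-set failure probability must be $n^{-\omega(s)}$ rather than merely $o(n^{-2/5})$; this is exactly why the paper tracks triangle counts $Y_{i,j}(T)$ through the nibble with the survival / branching-process machinery of Sections~\ref{sec:5}--\ref{sec:7}, rather than counting closing $K_4^-$'s around a single pair. You correctly anticipate the shape of the technical toolkit (nibble, small trees of dependencies, superpolynomial concentration per iteration), but the object it must be pointed at is not the open-pair count.
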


The first to study the $K_4$-free process were Bollob\'as and
Riordan~\cite{MR1756287}, who showed that a.a.s., the number of edges in
$\M(n)$ is lower bounded by $\Omega(n^{8/5})$ and upper bounded by $O(n^{8/5}
\ln n)$.  Improved results were provided by Osthus and Taraz~\cite{MR1799803},
who showed that a.a.s., the number of edges in $\M(n)$ is upper bounded by $O(
n^{8/5}(\ln n)^{1/2} )$, and by Bohman~\cite{MR2522430}, who showed that
a.a.s., the number of edges in $\M(n)$ is lower bounded by $\Omega(n^{8/5}(\ln
n)^{1/5})$.  Our main result shows that Bohman's lower bound is optimal up to
the hidden constant factor.

The $K_4$-free process is an instance of the more general $H$-free process,
where instead of forbidding the appearance of a copy of $K_4$ in the evolving
graph, one forbids the appearance of some fixed graph $H$. We will not discuss
the $H$-free process here (we refer the reader to~\cite{BKeevash}), but we will
say that the analysis of that process proves to be very useful in studying
certain problems in extremal combinatorics.  For example, the analysis of the
$H$-free process in~\cites{MR2522430, BKeevash} yielded the currently best
known lower bounds for the off-diagonal Ramsey numbers $R(k,n)$, for every
fixed $k \ge 4$. Our analysis of the $K_4$-free process yields a new result
regarding another Ramsey-type problem.  We briefly discuss that problem now.

Let $2 \le k < l \le n$ be integers. For a graph $G$, let $f_k(G)$ be the
maximum size of a subset of the vertices of $G$ that spans no copy of $K_k$.
Let $f_{k,l}(n)$ be defined to be $\min_G f_k(G)$, where the minimum is taken
over all $K_l$-free $n$-vertex graphs $G$.  For various choices of the
parameters, the function $f_{k,l}(n)$ was studied in~\cites{MR0144332,
MR0141612, MR1469821, MR1091586, MR1369060, MR1300971, MR2127369}. For the
special case where $k=3$ and $l=4$, Krivelevich~\cites{MR1300971, MR1369060}
showed that $f_{3,4}(n) = \Omega((n\ln\ln n)^{1/2})$ and $f_{3,4}(n) =
O(n^{3/5}(\ln n)^{1/2})$. The proof of Theorem~\ref{thm:main}, as we argue
below, gives as a by-product the following result, which improves Krivelevich's
upper bound by a factor of $(\ln n)^{3/10}$.
\begin{theorem} \label{thm:ramsey}
$f_{3,4}(n) = O(n^{3/5} (\ln n)^{1/5})$.
\end{theorem}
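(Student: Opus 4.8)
The plan is to exhibit $\M(n)$ itself as the required witness graph. Since $\M(n)$ is $K_4$-free by construction, it suffices to prove that, a.a.s., every set of $q \deq C\, n^{3/5}(\ln n)^{1/5}$ vertices of $\M(n)$ spans a triangle, for a sufficiently large absolute constant $C$; this gives a.a.s.\ $f_3(\M(n)) < q$, hence $f_{3,4}(n) = O(n^{3/5}(\ln n)^{1/5})$ (the finitely many small $n$ being absorbed into the constant). I would generate $\M(n)$ in the standard way, by repeatedly adding a uniformly random \emph{open} pair --- a non-edge whose addition does not create a $K_4$ --- and write $\M_i$, $Q_i$ for the graph and the set of open pairs after $i$ additions, and $m$ for the final number of edges. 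Fix a $q$-set $S$. Then $\M(n)[S]$ contains a triangle if and only if at some step the process makes a \emph{bad move}: it adds a pair $\{u,v\}\subseteq S$ whose endpoints already have a common neighbour in $S$. Writing $X_i(S)$ for the number of open pairs inside $S$ whose endpoints have a common neighbour inside $S$, the conditional probability of a bad move at step $i+1$, given the history, is exactly $X_i(S)/Q_i$, so
\[
\P\bigl[\M(n)[S]\ \text{is triangle-free}\bigr]\ \le\ \expec\Bigl[\prod_{0\le i<m}\bigl(1-X_i(S)/Q_i\bigr)\Bigr]\ \le\ \expec\Bigl[\exp\bigl(-\textstyle\sum_{i}X_i(S)/Q_i\bigr)\Bigr].
\]

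The heart of the matter is to show that, on the high-probability event $\mathcal{A}$ that the process follows its predicted trajectory, $\sum_i X_i(S)/Q_i \ge (1+\delta)\,q\ln n$ holds simultaneously for all $q$-sets $S$ and some fixed $\delta>0$. To establish this I would invoke the analysis behind Theorem~\ref{thm:main}: a.a.s.\ the process survives to $m = \Omega\bigl(n^{8/5}(\ln n)^{1/5}\bigr)$ steps (Bohman's lower bound), and throughout its life $\M_i$ resembles the binomial random graph of the same density $p_i = i/\binom n2$ in the relevant short-subgraph statistics. In particular, the number of cherries (paths on three vertices) inside $S$ is, up to a constant factor, $p_i^2 q^3$; and the ``open'' status of a pair is, to within lower-order corrections, independent of the local cherry structure of $S$, so the global open-density factor $Q_i/\binom n2$ cancels between the numerator and denominator of $X_i(S)/Q_i$, leaving $X_i(S)/Q_i = \Theta\bigl(p_i^2 q^3/n^2\bigr)$ uniformly over $S$. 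Summing over $i$ and using $m = \Omega\bigl(n^{8/5}(\ln n)^{1/5}\bigr)$ then gives $\sum_i X_i(S)/Q_i = \Theta(C^2)\cdot q\ln n$, which exceeds $(1+\delta)q\ln n$ once $C$ is large. Feeding the resulting deterministic bounds $X_i(S)/Q_i\ge \ell_i$ --- valid up to the stopping time at which the trajectory is first violated, which a.a.s.\ exceeds $m$ --- into the product above by the routine step-by-step conditioning argument yields $\P\bigl[\M(n)[S]\text{ is triangle-free},\ \mathcal{A}\bigr] \le \exp\bigl(-(1+\delta)q\ln n\bigr)$; a union bound over the $\binom nq \le e^{q\ln n}$ choices of $S$, together with $\P[\mathcal{A}^c] = o(1)$ (a single event, so not multiplied by $\binom nq$), then completes the proof.

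Two features of this argument deserve emphasis. First, it is essential to integrate $X_i(S)/Q_i$ over the \emph{whole} process rather than to inspect only the final graph: $\M(n)$ has merely $n^{8/5+o(1)}$ edges, which is far too sparse for pseudorandomness alone to force a triangle in every $q$-set with $q = n^{3/5+o(1)}$ --- that would require $q$ of order about $n^{11/10}$ --- and the improvement over Krivelevich's bound comes precisely from the many chances to insert a triangle-closing edge into $S$ that accumulate over the lifetime of the process. Second, the main obstacle is uniformity: the estimate $X_i(S) = \Theta(p_i^2 q^3)$ must hold \emph{simultaneously} for all $\binom nq$ sets $S$ on the event $\mathcal{A}$, which requires the concentration estimates for localized short-subgraph counts furnished by the proof of Theorem~\ref{thm:main} to be strong enough --- failure probability $e^{-\omega(q\ln n)}$ for a fixed $S$ --- to survive the union bound over $S$. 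Checking that the machinery developed for Theorem~\ref{thm:main} delivers estimates of exactly this strength is the real content of deducing Theorem~\ref{thm:ramsey}, which is why the latter emerges as a by-product.
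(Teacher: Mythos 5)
Your reduction --- take the witness graph to be the output of the $K_4$-free process, observe it is $K_4$-free by construction, and argue that a.a.s.\ every $q$-set spans a triangle so that $f_3$ of the witness is less than $q$ --- is exactly the paper's, the key claim being the paper's Theorem~\ref{thm:main2} (stated there for the simulated graph $\M_I$, which is equally a $K_4$-free witness). The hazard-rate strategy you sketch for that key claim, bounding $\P[\M[S]\text{ triangle-free}]$ by $\expec\exp\bigl(-\sum_i X_i(S)/Q_i\bigr)$, showing $\sum_i X_i(S)/Q_i = \Theta(C^2)\,q\ln n$ on the trajectory event, and observing that per-set failure probability $e^{-\omega(q\ln n)}$ is needed to survive the union bound over $\binom{n}{q}$ choices of $S$, matches the paper's Section~\ref{sec:8} argument (Lemma~\ref{lemma:sec:8:2} and the computation following it), including the $\Theta(C^2)$ dependence. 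The main difference is presentational: you stay at edge granularity, while the paper reformulates through the bite-by-bite process with the intermediate sets $\BIGBite_{i+1}\supseteq\BigBite_{i+1}\supseteq\Bite_{i+1}$, introduced precisely to recover enough independence inside a bite that the branching-process/survival analysis of Section~\ref{sec:5} and the repeated McDiarmid applications of Section~\ref{sec:7} go through cleanly. The one substantive point your sketch asserts rather than proves is that the openness of the cherry-closing pairs inside $S$ is, to lower order, independent of the local structure of $S$, so the global open-density factor cancels; this is precisely where the paper works hardest --- the $Z_i(R,T)$ correction term in property (B3), the passage to the sub-pair $(R,T)\in\Pairs(S_i)$, and properties (C7)--(C8) controlling edges $g\in\binom{R}{2}$ with a nearby $K_4^-$ completion meeting $R$ --- and getting the $e^{-\omega(q\ln n)}$ failure bound for these estimates uniformly over $S$ is not routine. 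You flag this honestly at the end, and your assessment that checking this machinery is the real content of deducing the theorem is accurate: it is essentially the whole paper.
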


The proof of Theorem~\ref{thm:main} is based on analysing the following
iterative process, which simulates the early stages of the $K_4$-free process.
This iterative process will be referred to throughout the paper simply as the
process.  Let $\epsilon_1, \epsilon_2$ and $\epsilon_3$ be positive constants
such that $\epsilon_1$ is sufficiently small with respect to $\epsilon_2 \deq
10^4 \epsilon_3^3$, and $\epsilon_3$ is sufficiently small.  Let $I \deq
\floor{ n^{\epsilon_1 + \epsilon_1^2} }$.  Let $\M_0$ and $\Trav_0$ be two
empty graphs.  Let $0 \le i < I$ be an integer and suppose we have already
defined $\M_i$ and $\Trav_i$. We define $\M_{i+1}$ and $\Trav_{i+1}$ as
follows.  Let $\NotTrav_i \deq \binom{[n]}{2} \setminus \Trav_i$.  Let
$\BIGBite_{i+1}$ be a random set of edges constructed by taking every edge in
$\NotTrav_i$ independently with probability $n^{\epsilon_3 - 2/5}$.  Let
$\BigBite_{i+1}$ be a random set of edges constructed by taking every edge in
$\BIGBite_{i+1}$ independently with probability $n^{\epsilon_2 - \epsilon_3}$.
Let $\Bite_{i+1}$ be a random set of edges constructed by taking every edge in
$\BigBite_{i+1}$ independently with probability $n^{-\epsilon_1 - \epsilon_2} /
(1 - i n^{-\epsilon_1 - \epsilon_2})$.\footnote{Equivalently, we could have
defined $\Bite_{i+1}$ to be a random set of edges constructed by taking every
edge in $\NotTrav_i$ independently with probability $n^{-\epsilon_1 - 2/5} / (1
- in^{-\epsilon_1 - \epsilon_2})$.  The intermediate sets $\BigBite_{i+1}$ and
$\BIGBite_{i+1}$ are introduced for technical reasons.} Assign each edge in
$\Bite_{i+1}$ a uniformly random birthtime in the unit interval and order the
edges in $\Bite_{i+1}$ by increasing birthtimes; traverse these edges according
to that order and add each traversed edge to $\M_i$, unless the addition of the
edge creates a copy of $K_4$. Let $\M_{i+1}$ be the graph thus constructed.
Finally, let $\Trav_{i+1} \deq \Trav_i \cup \Bite_{i+1}$ be the graph which is
the set of edges that were already traversed.

Let $s \deq C n^{3/5} (\ln n)^{1/5}$ be an integer, where $C = C(\epsilon_1)$
is a sufficiently large constant. Observe that in order to prove
Theorem~\ref{thm:main}, it is enough to prove the following theorem, which also
easily implies Theorem~\ref{thm:ramsey} (as the existence of a $K_4$-free
$n$-vertex graph in which every set of $s$ vertices spans a triangle implies
$f_{3,4}(n) < s$).
\begin{theorem} \label{thm:main2}
A.a.s, every set of $s$ vertices of $\M_I$ spans a triangle.
\end{theorem}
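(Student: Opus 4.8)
\emph{Overview.} Let $\mu\deq\Theta(n^{-2/5}(\ln n)^{1/5})$ denote the (eventual) edge density of $\M_I$. It suffices to show that a.a.s.\ $\M_I$ contains no $s$-set $U$ with $\M_I[U]$ triangle-free, and I would do this by a union bound over the $\binom{n}{s}\le e^{(1+o(1))\frac25 s\ln n}$ candidate sets, proving that for each fixed $U$,
\[
\Pr\bigl[\M_I[U]\text{ is triangle-free}\bigr]\ \le\ \exp\bigl(-\Omega\bigl(C^{3}n^{3/5}(\ln n)^{6/5}\bigr)\bigr).
\]
Since $s=Cn^{3/5}(\ln n)^{1/5}$, the union-bound factor is $e^{\Theta(Cn^{3/5}(\ln n)^{6/5})}$, so this bound wins once $C=C(\epsilon_1)$ is a large enough constant: the gained exponent scales like a strictly larger power of $C$ than the cost. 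Making the gained exponent genuinely of order $(\ln n)^{6/5}$ rather than a smaller power of $\ln n$ is the whole point, and is where the $K_4$-structure of $\M_I$ is used.

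\emph{The bound for a fixed $U$.} Split $U=W\cup S$ with $|W|=|S|=s/2$. If $\M_I[U]$ is triangle-free then in particular, for every edge $\{x,y\}$ of $\M_I[W]$, no vertex of $S$ is a common $\M_I$-neighbour of $x$ and $y$ (such a vertex would complete a triangle inside $U$); call a pair (edge $\{x,y\}$ of $\M_I[W]$, vertex $s'\in S$) \emph{bad} if $xs',ys'\in\M_I$. So $\M_I[U]$ triangle-free forces the bipartite graph $B$ of $\M_I$-edges between $W$ and $S$ to avoid every bad pattern. Conditioning on the ``pseudorandom'' behaviour of $\M_I$ (which holds a.a.s., and which I would take from the earlier analysis of the process), one has: $\M_I[W]$ has $(1+o(1))\mu\binom{s/2}{2}=\Theta(C^{2}n^{4/5}(\ln n)^{3/5})$ edges; each such edge has $\Theta(n\mu^{2})$ common neighbours in $\M_I$, hence $\Theta(\mu^{2}|S|)=\Theta(Cn^{-1/5}(\ln n)^{3/5})$ of them lying in the fixed set $S$; and, crucially, $B$ behaves like a random bipartite graph of edge-probability $\mu$ that is (asymptotically) independent of $\M_I[W]$. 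The expected number of bad patterns is therefore $\Theta(C^{2}n^{4/5}(\ln n)^{3/5})\cdot\Theta(Cn^{-1/5}(\ln n)^{3/5})=\Theta(C^{3}n^{3/5}(\ln n)^{6/5})$, while a short calculation shows the corresponding ``clustering'' term (pairs of bad patterns sharing an edge of $B$) is only $\Theta(C^{4}(\ln n)^{2})=o$ of the mean; so Janson's inequality gives
\[
\Pr\bigl[B\text{ avoids all bad patterns}\bigr]\ \le\ \exp\bigl(-\Omega\bigl(C^{3}n^{3/5}(\ln n)^{6/5}\bigr)\bigr),
\]
as required. (If $\M_I[W]$ happens to be atypically sparse one instead uses the symmetric constraint coming from $\M_I[S]$, or the triangles living inside $W$ or inside $S$; such configurations contribute a negligible amount to the union bound.)

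\emph{Main obstacle.} The technical heart is the pseudorandomness input just invoked: one must prove that a.a.s.\ $\M_I$ has the quantitatively correct density, degrees, and — most delicately — edge-\emph{codegrees} of order $n\mu^{2}$, and that it does so with failure probability well below $\binom ns^{-1}$ and uniformly enough that the conditioning in the previous step is legitimate (in particular, that the $W$--$S$ edges of $\M_I$ are asymptotically independent of $\M_I[W]$ and sufficiently independent of each other to feed Janson's inequality). This is exactly where the bite structure of the iterative process is exploited: a single bite adds a sparse random edge-set which is then pruned by the $K_4$-condition, and one needs exponential-type tail bounds on how this interacts with an arbitrary fixed pair of disjoint vertex sets $W,S$ — the $K_4$-rejections being responsible for the logarithmic corrections that raise the final exponent to $(\ln n)^{6/5}$. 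Carrying this analysis through all $I$ bites and assembling the pieces, with every error probability kept below $e^{-\Theta(s\ln n)}$, is the bulk of the work.
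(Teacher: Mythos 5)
Your route is genuinely different from the paper's, but it has a gap that I don't think can be patched without essentially reconstructing the paper's argument. The pivotal step is your assertion that $B$, the bipartite graph of $\M_I$-edges between $W$ and $S$, ``behaves like a random bipartite graph of edge-probability $\mu$ that is (asymptotically) independent of $\M_I[W]$.'' Janson's inequality requires a product measure; the conditional distribution of the $W$--$S$ edges of $\M_I$ given $\M_I[W]$ is not one. Worse, the independence statement itself is dubious: the $K_4$-constraint entangles $\M_I[W]$ with $B$ (a dense $\M_I[W]$ creates $K_4$-threats that suppress $W$--$S$ edges), and even within $B$ the edges interact via copies of $K_4^-$ that straddle $U$ and its complement. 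The paper is built precisely to avoid a one-shot deviation bound on the final graph: it works bite-by-bite, exploiting the fact that conditional on the history, each $\Bite_{i+1}$ together with its birthtimes is a genuine product space, and it uses McDiarmid rather than Janson. Lemma~\ref{lemma:sec:8:2} then shows that the conditional probability that $\M_{i+1}\cap\binom{S}{2}$ stays triangle-free given that $\M_i\cap\binom{S}{2}$ is triangle-free is at most $\exp(-\Omega_{\epsilon_1}(n^{-\epsilon_1-2/5}y_{i,1,s^3}))$; multiplying this over the $\Omega(\ln n)$ useful indices $I_0\le i<I$ is what produces the $C^3 n^{3/5}(\ln n)^{6/5}$ in the exponent.

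A second gap is that your bad-pattern count quietly assumes that an open triangle (two edges present, third edge missing) would close if only the process ``found'' it, ignoring that the third edge may be permanently unaddable because adding it would create a $K_4$. The paper tracks this obstruction explicitly through the quantity $Z_i(R,T)$ appearing in property~(B3), and Claim~\ref{claim:yi1} shows that for a suitably chosen pair $(R,T)\in\Pairs(S_i)$ (constructed by discarding vertices with too many $\M_i$-neighbours in a certain small exceptional set $R_0$ and then partitioning the remainder so that no triangle of $T$ can be blocked by $R_0$) the correction $|Z_i(R,T)|=O(n^{4.2/5})$ is negligible next to $y_{i,1,s^3}$. Some analogue of this cleanup is unavoidable in any argument that wants to turn ``two edges of $\M_I$ plus a potential third edge'' into an actual triangle of $\M_I$, and it is absent from your sketch.
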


The rest of the paper is devoted for the proof of Theorem~\ref{thm:main2}.  The
argument that underlies the proof of Theorem~\ref{thm:main2} is an extension of
the branching process argument of Spencer~\cite{Spencer0a}, which was used to
give a limited, though non-trivial, analysis of the triangle-free process.  We
remark that our argument is different than the one used by
Bohman~\cite{MR2522430} to analyze the $K_4$-free process, an argument which is
based on the differential equations method.  Still, we note that there are
similarities between the two arguments and so, wherever possible, we will reuse
some of Bohman's results, instead of reproving them.

The paper is organized as follows.  In Section~\ref{sec:3} we state several
probabilistic tools that we use throughout the paper.  In Section~\ref{sec:2}
we give some basic definitions and state our main lemma.  In
Section~\ref{sec:5} we study a certain branching process and a certain
event~--~the event of survival~--~and in Section~\ref{sec:6} we relate that
event to the process.  In Section~\ref{sec:4} we prove several supporting
lemmas that will be used in the proof of our main lemma. In Section~\ref{sec:7}
we use the results of the preceding sections to prove our main lemma.  In
Section~\ref{sec:8} we use our main lemma to prove Theorem~\ref{thm:main2}.

\section{Probabilistic tools} \label{sec:3}
%
Here we state several probabilistic tools that we use throughout the paper.  We
start with two deviation inequalities for random variables that count small
subgraphs in the binomial random graph $G(n,p)$.  (As usual, the binomial
random graph $G(n,p)$ is the graph obtained by taking every edge in
$\binom{[n]}{2}$ independently with probability $p$.) Our setting is as
follows.  For some index set $L$, let $\{G_l : l \in L\}$ be a family
(potentially a multiset) of subgraphs of $\binom{[n]}{2}$, each of size $K =
O(1)$.  Let $W$ count the number of indices $l \in L$ such that $G_l \subseteq
G(n,p)$. In other words, let $W \deq \sum_{l \in L} \ONE[G_l \subseteq
G(n,p)]$, where $\ONE[\E]$ is the indicator function of the event $\E$.

The first deviation inequality follows directly from a more general result of
Vu~\cite[Corollary~4.4]{Vu02}.  For $G \subseteq \binom{[n]}{2}$, let $L_G$ be
the set of all $l \in L$ such that $G \subseteq G_l$. Let $W_G \deq \sum_{l \in
L_G} \ONE[G_l \setminus G \subseteq G(n,p)]$.  For an integer $0 \le k \le K$,
let $\expec_{k}(W) \deq \max_{G:|G| \ge k} \expec(W_G)$.  
\begin{theorem} \label{thm:vu}
Let $\EE_0 > \EE_1 > \ldots > \EE_K$ and $\lambda$ be positive numbers such
that $\EE_k \ge \expec_k(W)$ for all $0 \le k \le K$, and $\EE_k / \EE_{k+1}
\ge \lambda + 8 k \ln n$ for all $0 \le k \le K-1$. Then for some positive
constants $c_1$ and $c_2$ that depend only on $K$,
\begin{eqnarray*}
\prob(|W - \expec(W)| \ge c_1 \sqrt{\lambda \EE_0 \EE_1}) \le c_2 \exp(-\lambda
/ 4).
\end{eqnarray*}
\end{theorem}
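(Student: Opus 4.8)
The plan is to obtain Theorem~\ref{thm:vu} by a direct application of Vu's polynomial concentration inequality~\cite[Corollary~4.4]{Vu02}, so the work is essentially a translation of notation. First I would write $W$ as a multilinear polynomial in independent Boolean variables: let $t_e = \ONE[e \in G(n,p)]$ for $e \in \binom{[n]}{2}$, so that $W = \sum_{l \in L} \prod_{e \in G_l} t_e$ is a non-negative polynomial of degree $K$ in the $\binom{n}{2}$ independent variables $(t_e)_e$, with non-negative integer coefficients (the multiset $L$ contributing the multiplicities). For an edge set $G$, the partial derivative of this polynomial in the variables $\{t_e : e \in G\}$ is $\sum_{l \in L_G} \prod_{e \in G_l \setminus G} t_e$, whose expectation is $\sum_{l \in L_G} \prob(G_l \setminus G \subseteq G(n,p)) = \expec(W_G)$. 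Hence $\expec_k(W) = \max_{G:|G| \ge k} \expec(W_G)$ is exactly the maximal expected partial derivative of order at least $k$, which is the quantity that appears in Vu's hypotheses.

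With this identification in hand, the second step is to check that the hypotheses of Theorem~\ref{thm:vu} imply those of~\cite[Corollary~4.4]{Vu02} with the same $\EE_0, \dots, \EE_K$ and $\lambda$. The majorization $\EE_k \ge \expec_k(W)$ is precisely what Vu requires; the separation hypothesis $\EE_k/\EE_{k+1} \ge \lambda + 8k\ln n$ is at least as strong as the gap condition there, since the number of underlying variables is $\binom{n}{2}$ and $\ln\binom{n}{2} = O(\ln n)$, so the logarithmic term in Vu's condition is absorbed into $8k\ln n$. Vu's corollary then gives $\prob\bigl(|W - \expec(W)| \ge c_1\sqrt{\lambda \EE_0 \EE_1}\bigr) \le c_2 e^{-\lambda/4}$ with constants $c_1, c_2$ depending only on the degree $K$, which is $O(1)$; this is exactly the assertion of Theorem~\ref{thm:vu}.

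I expect the only delicate point to be bookkeeping around the precise form of Vu's statement: one must confirm that the deliberately lossy hypotheses here (in particular the factor $8k\ln n$ and the strict inequalities $\EE_0 > \cdots > \EE_K$) do imply the exact inequalities of~\cite[Corollary~4.4]{Vu02}, and that Vu's result applies to positive polynomials with arbitrary non-negative coefficients rather than only to normalized ones~--~which matters here because $L$ may be a multiset. Neither is a genuine obstacle: all $K$-dependent constants are harmless since $K = O(1)$, and the multiplicities in $L$ are already accounted for inside the expectations $\expec_k(W)$. Thus the proof reduces to quoting~\cite[Corollary~4.4]{Vu02} after the notational translation above.
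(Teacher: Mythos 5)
Your proposal is correct and matches the paper's approach exactly: the paper also states that Theorem~\ref{thm:vu} follows directly from Vu's Corollary~4.4 and gives no further proof, so the notational translation you supply (identifying $\expec_k(W)$ with the maximal expected order-$\ge k$ partial derivative, and checking the gap condition against $8k\ln n$) is precisely the verification implicit in the paper's citation.
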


The second deviation inequality follows directly from the more general Janson's
inequality~\cite{MR1138428} (see also~\cite[Theorem~2.14]{MR1782847}).  Let
$\Delta \deq \sum_{\{l, l'\}} \expec(\ONE[G_l, G_{l'} \subseteq G(n,p)])$, where
the sum ranges over all sets $\{l, l'\} \subseteq L$ such that $l \ne l'$ and
$G_l \cap G_{l'} \ne \emptyset$. 
\begin{theorem} \label{eq:janson}
For some absolute positive constant $c_3$, for all $0 \le \lambda \le
\expec(W)$,
\begin{eqnarray*}
\prob(W \le \expec(W) - \lambda) \le \exp\bigg(- \frac{c_3 \lambda^2}{\expec(W)
+ \Delta} \bigg).
\end{eqnarray*}
\end{theorem}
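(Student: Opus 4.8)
The plan is to derive this directly from the standard form of Janson's inequality as stated, e.g., in \cite[Theorem~2.14]{MR1782847}. First I would set up the translation to the standard framework: each index $l \in L$ corresponds to the event $A_l \deq \{G_l \subseteq G(n,p)\}$, which is an increasing event determined by the independent edge-indicators $\{\ONE[e \in G(n,p)] : e \in \binom{[n]}{2}\}$. Then $W = \sum_{l \in L} \ONE[A_l]$ is exactly the random variable counted by Janson's inequality, $\expec(W) = \sum_{l} \prob(A_l)$ is the parameter usually called $\mu$, and the dependency parameter $\Delta = \sum_{l \sim l'} \prob(A_l \cap A_{l'})$, where $l \sim l'$ means $l \ne l'$ and $G_l \cap G_{l'} \ne \emptyset$ (two events are dependent precisely when their underlying edge sets intersect), matches our definition of $\Delta$ verbatim. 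Note the convention issue: some statements of Janson's inequality define $\Delta$ as a sum over \emph{ordered} pairs, which is twice ours; this only affects the absolute constant $c_3$, which we are free to choose small enough, so it causes no trouble.

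Second, I would invoke the conclusion of Janson's inequality, which in the form most convenient here reads: for $0 \le \lambda \le \expec(W)$,
\begin{eqnarray*}
\prob(W \le \expec(W) - \lambda) \le \exp\bigg( -\frac{\lambda^2}{2(\expec(W) + \Delta)} \bigg).
\end{eqnarray*}
Taking $c_3 \deq 1/2$ (or any value permitted by whichever normalization of $\Delta$ one adopts) yields precisely the claimed bound. The restriction $\lambda \le \expec(W)$ is exactly the hypothesis under which this lower-tail bound is valid, so nothing further needs to be checked.

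Since this is a verbatim specialization of a cited theorem, there is essentially no real obstacle; the only point requiring a modicum of care is the bookkeeping described above, namely confirming that (i) each $A_l$ is an increasing event in the product space of independent edge indicators, so that the mutual dependence graph is exactly the ``intersection graph'' of the $G_l$'s, and (ii) the normalization of $\Delta$ (ordered versus unordered pairs, and the factor $2$ in the exponent) is absorbed into the absolute constant $c_3$. Once these conventions are pinned down, the statement follows immediately.
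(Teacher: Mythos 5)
Your proposal is correct and matches the paper's approach exactly: the paper does not give a separate argument but states that the inequality "follows directly from the more general Janson's inequality," citing Janson and the version in Janson--\L uczak--Ruci\'nski, which is precisely the specialization you carry out. Your bookkeeping about the ordered/unordered-pair convention for $\Delta$ being absorbed into $c_3$ is the only real point of care, and you handle it correctly.
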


Another result that we use applies to the same setting as above, and follows
directly from a result of Janson and Ruci{\'n}ski~\cites{MR2096818, MR1900611}. 
\begin{theorem} \label{thm:deletion}
Assume that $\{G_l : l \in L\}$ is a set (that is, not a multiset).  For every
pair of positive real numbers $r$ and $\lambda$, with probability at least $1 -
\exp\big(- \frac{r \lambda }{K(\expec(W) + \lambda)}\big)$, there is a set $E_0
\subseteq G(n,p)$ of size at most $r$, such that $G(n,p) \setminus E_0$
contains fewer than $\expec(W) + \lambda$ members of $\{G_l : l \in L\}$.
\end{theorem}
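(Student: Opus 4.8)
The plan is to produce an explicit deletion rule, bound the number of edges it removes in terms of the length of an auxiliary greedy process, and then estimate the probability that this process runs too long. First I would set up the reduction. Given the outcome of $G(n,p)$, run the following greedy: let $D_0:=\emptyset$, and for $j\ge 1$, as long as $G(n,p)\setminus D_{j-1}$ still contains at least $\expec(W)+\lambda$ members of $\{G_l:l\in L\}$, pick one such member $G_{l_j}$ (so $G_{l_j}\subseteq G(n,p)$ and $G_{l_j}\cap D_{j-1}=\emptyset$) and set $D_j:=D_{j-1}\cup G_{l_j}$. Each step deletes all edges of $G_{l_j}$, hence removes $G_{l_j}$ itself from the list of surviving members, so the process terminates after some $M<\infty$ steps with $G(n,p)\setminus D_M$ containing fewer than $\expec(W)+\lambda$ members. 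The members $G_{l_1},\dots,G_{l_M}$ are pairwise edge-disjoint (each is disjoint from $D_{j-1}$, which contains the earlier ones), so $|D_M|=\sum_{j\le M}|G_{l_j}|\le KM$. Thus, taking $E_0:=D_M$, whenever $M\le r/K$ we have $|E_0|\le r$ and $G(n,p)\setminus E_0$ has fewer than $\expec(W)+\lambda$ members; equivalently, the failure event is contained in $\{M\ge\lfloor r/K\rfloor+1\}$. (This is the step responsible for the factor $K$: a maximal edge-disjoint family of present members doubles as an edge set hitting every present member, of size at most $K$ times that of the family.) Finally, note that $M\ge m$ holds precisely when, for each $j=0,1,\dots,m-1$, the graph $G(n,p)\setminus D_j$ still contains at least $\expec(W)+\lambda$ members (the sets $D_0,\dots,D_{m-1}$ being successively well-defined on that event).

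Next I would bound $\prob(M\ge m)$, writing $\mu:=\expec(W)=\sum_l p^{|G_l|}$, by the chain-rule estimate
\[
\prob(M\ge m)=\prod_{j=1}^{m}\prob\big(M\ge j\,\big|\,M\ge j-1\big),
\]
aiming to show each factor is at most $\mu/(\mu+\lambda)$. Conditionally on the history through step $j-1$ — in particular on the value of $D_{j-1}$, a union of $j-1$ pairwise edge-disjoint present members using at most $K(j-1)$ (present) edges — the number of members surviving in $G(n,p)\setminus D_{j-1}$ is $\sum_{l:\,G_l\cap D_{j-1}=\emptyset}\ONE[G_l\subseteq G(n,p)]$, which depends only on edges outside $D_{j-1}$ and has conditional mean at most $\sum_l p^{|G_l|}=\mu$; Markov's inequality then bounds by $\mu/(\mu+\lambda)$ the conditional probability that this count is at least $\mu+\lambda$, i.e.\ that step $j$ is taken. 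Hence $\prob(M\ge m)\le(\mu/(\mu+\lambda))^{m}$, and since $\mu/(\mu+\lambda)<1$ and $m=\lfloor r/K\rfloor+1>r/K$ this is at most $(\mu/(\mu+\lambda))^{r/K}=\exp\!\big(-\tfrac{r}{K}\ln(1+\tfrac{\lambda}{\mu})\big)\le\exp\!\big(-\tfrac{r\lambda}{K(\mu+\lambda)}\big)$, using $\ln(1+x)\ge x/(1+x)$ for $x\ge 0$. Together with the reduction above, this is exactly the claimed bound.

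The main obstacle is justifying that conditional first-moment bound: $\{M\ge j-1\}$ together with the value of $D_{j-1}$ is not determined by the edges of $D_{j-1}$ alone — the requirement that at least $\mu+\lambda$ members survived at each earlier step forces still other members to be present or absent, and those events involve edges outside $D_{j-1}$ — so one must argue, via a carefully staged exposure of $G(n,p)$ (or via a correlation inequality), that this conditioning does not push the expected number of present members disjoint from $D_{j-1}$ above $\mu$. This is precisely the Janson--Ruci{\'n}ski deletion machinery~\cites{MR2096818,MR1900611}; I would either quote their theorem directly or reproduce their exposure argument. The remaining ingredients — the greedy rule, the edge-disjointness bookkeeping, and the elementary manipulation turning $(\mu/(\mu+\lambda))^{r/K}$ into the stated exponent — are routine.
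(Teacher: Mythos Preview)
The paper does not prove this theorem: it simply states that the result follows directly from Janson and Ruci\'nski~\cites{MR2096818,MR1900611} and moves on. So there is nothing in the paper to compare your argument to beyond the citation, and your final sentence (``quote their theorem directly'') is exactly what the paper does.

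Your greedy reduction and the arithmetic at the end are correct. The obstacle you isolate is also real: the event $\{M\ge j-1\}$ does depend on edges outside $D_{j-1}$, so the naive conditional Markov step is not justified as written. However, you do not need any staged exposure or correlation inequality to fix it; there is a one-line replacement that sidesteps the conditioning entirely. Let $Z_m$ be the number of \emph{ordered} $m$-tuples $(l_1,\dots,l_m)$ with $G_{l_1},\dots,G_{l_m}$ pairwise edge-disjoint and all contained in $G(n,p)$. Edge-disjointness gives independence, so
\[
\expec(Z_m)\;=\;\sum_{\substack{(l_1,\dots,l_m)\\ \text{edge-disjoint}}}\prod_{j}p^{|G_{l_j}|}\;\le\;\Big(\sum_l p^{|G_l|}\Big)^m\;=\;\mu^m.
\]
On the failure event, for every $D\subseteq G(n,p)$ with $|D|\le r$ there remain at least $\mu+\lambda$ members disjoint from $D$; in particular, building an $m$-tuple greedily (with $m=\lfloor r/K\rfloor+1$, so $K(m-1)\le r$), at each step there are at least $\mu+\lambda$ valid choices, all distinct from the previous ones since each $G_{l_i}$ meets $D_{j-1}$. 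Hence $Z_m\ge(\mu+\lambda)^m$ deterministically on the failure event, and Markov gives
\[
\prob(\text{failure})\;\le\;\frac{\expec(Z_m)}{(\mu+\lambda)^m}\;\le\;\Big(\frac{\mu}{\mu+\lambda}\Big)^m,
\]
after which your final inequality finishes. This is the actual content of the Janson--Ruci\'nski deletion lemma, and it makes your sequential-conditioning detour unnecessary.
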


We end this section by stating McDiarmid's inequality~\cite{MR1036755}.  Let
$\alpha_1, \alpha_2, \ldots, \alpha_m$ be independent random variables with
$\alpha_i$ taking values in a set $A_i$. Let $\varphi : \prod_{i=1}^m A_i \to
\RR$ satisfy the following Lipschitz condition: if two vectors $\alpha, \alpha'
\in \prod_{i=1}^m A_i$ differ only in the $i$th coordinate, then
$|\varphi(\alpha) - \varphi(\alpha')| \le a_i$. Redefine $W \deq
\varphi(\alpha_1, \alpha_2,\ldots, \alpha_m)$.  McDiarmid's inequality states
that for any $\lambda \ge 0$,
\begin{eqnarray*} \label{eq:mcdiarmid} 
\prob(|W - \expec(W)| \ge \lambda) \le 2 \exp\bigg(
-\frac{2\lambda^2}{\sum_{i=1}^m a_i^2} \bigg).  
\end{eqnarray*}

\section{Main lemma} \label{sec:2}
%
The purpose of this section is to state our main lemma.  We start with some
basic definitions.  Let $\Phi(x)$ be a function over the Reals, whose
derivative is $\phi(x) \deq \exp(-0.5 \Phi(x)^5)$, and which satisfies $\Phi(0)
= 0$. For integers $i$, $j$ and $t$, let
\begin{eqnarray*}
x_{i,j}  &\deq&  \binom{n}{2} \binom{5}{j}
\bigg(\frac{\Phi(in^{-\epsilon_1})}{n^{2/5}}\bigg)^{5-j}
\phi(in^{-\epsilon_1})^j  \text{\quad and \quad } 
\\ 
y_{i,j,t}  &\deq& t \binom{3}{j} \bigg( \frac{\Phi(in^{-\epsilon_1})}{n^{2/5}}
\bigg)^{3-j} \phi(in^{-\epsilon_1})^j.
\end{eqnarray*}
In addition, let 
\begin{eqnarray*}
&& z_{i,j} \deq n^{(\epsilon_2 - 2/5)j} x_{i,j}, \quad
\gamma_i \deq 2 \prod_{1 \le j \le 5} \big(1 - 2 n^{-\epsilon_1j -
\epsilon_2j})^{-6000 z_{i,j}} - 2 \text{\quad and } \\
&& \Gamma_i \deq \left\{ \begin{array}{ll} n^{-\epsilon_1} & \text{if } i = 0,
\\ \Gamma_{i-1}  (1 + \gamma_{i-1}) & \text{if } i \ge 1.  \end{array} \right.
\end{eqnarray*}
We remark that for every $0 \le i \le I$, $\Gamma_i \to 0$ as $n \to \infty$.
This is implied by the next lemma (whose proof is given below), which also
gives other useful facts that will be used throughout the paper.
\begin{lemma} \label{fact:f1}  For an integer $0 \le i \le I$,
\begin{itemize}
\item[(i)] $in^{-\epsilon_1} \to \infty \implies \phi(in^{-\epsilon_1}) =
\Theta ( 1 / (i n^{-\epsilon_1} (\ln in^{-\epsilon_1} )^{4/5}))$ and
$\Phi(in^{-\epsilon_1}) = \Theta( (\ln i n^{-\epsilon_1})^{1/5} )$;
\item[(ii)] $n^{-\Theta(\epsilon_1)} \le \phi(in^{-\epsilon_1}) \le 1$ and $i
\ge 1 \implies 0.9n^{-\epsilon_1} \le \Phi(in^{-\epsilon_1}) = O((\ln
n)^{1/5})$;
\item[(iii)] $\gamma_i = n^{-\Theta(\epsilon_1)}$ and $\Gamma_i =
n^{-\Theta(\epsilon_1)}$.
\end{itemize}
\end{lemma}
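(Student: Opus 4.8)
The plan is to regard $\Phi$ as the unique solution on $[0,\infty)$ of the initial value problem $\Phi'=\phi=\exp(-\Phi^5/2)$, $\Phi(0)=0$ (well defined with no blow-up, since $0<\Phi'(x)\le 1$ there), and to record the elementary consequences that on $[0,\infty)$ the function $\Phi$ is strictly increasing, satisfies $0\le\Phi(x)\le x$ (from $\phi\le 1$), and tends to $\infty$ as $x\to\infty$. The key device is the inverse relation: setting $F(v)\deq\int_0^v e^{u^5/2}\,du$, separation of variables gives $F(\Phi(x))=x$ for $x\ge 0$. To prove~(i) I would first establish the Laplace-type asymptotic $F(v)\sim e^{v^5/2}\big/\big(\tfrac52 v^4\big)$ as $v\to\infty$; this is immediate from $\tfrac{d}{dv}\big(e^{v^5/2}\big/\big(\tfrac52 v^4\big)\big)=e^{v^5/2}\big(1-\tfrac85 v^{-5}\big)$, integration, and the observation that the remaining term $\tfrac85\int_1^v u^{-5}e^{u^5/2}\,du$ is $o\big(e^{v^5/2}/v^4\big)$ (by the same trick). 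Taking logarithms in $x=F(\Phi(x))$ then yields $\ln x=\tfrac12\Phi(x)^5-\ln\big(\tfrac52\Phi(x)^4\big)+o(1)$; since $\Phi(x)\to\infty$ the first term dominates, so $\Phi(x)^5=(2+o(1))\ln x$, i.e.\ $\Phi(x)=\Theta\big((\ln x)^{1/5}\big)$, and hence $\phi(x)=e^{-\Phi(x)^5/2}=(1+o(1))\big/\big(\tfrac52 x\,\Phi(x)^4\big)=\Theta\big(1/(x(\ln x)^{4/5})\big)$. Specialising $x=in^{-\epsilon_1}$ gives~(i).

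Part~(ii) is then short. The bound $\phi\le 1$ holds on $[0,\infty)$ because $\Phi\ge 0$ there. Since $in^{-\epsilon_1}\le In^{-\epsilon_1}\le n^{\epsilon_1^2}$ and $\Phi$ is increasing, (i) gives $\Phi(in^{-\epsilon_1})^5\le\Phi(n^{\epsilon_1^2})^5=O(\epsilon_1^2\ln n)$, so $\phi(in^{-\epsilon_1})=e^{-\Phi(in^{-\epsilon_1})^5/2}\ge n^{-O(\epsilon_1^2)}=n^{-\Theta(\epsilon_1)}$, and also $\Phi(in^{-\epsilon_1})=O\big((\ln n)^{1/5}\big)$. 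For the remaining lower bound, when $i\ge 1$ monotonicity and $\Phi(u)\le u$ give $\Phi(in^{-\epsilon_1})\ge\Phi(n^{-\epsilon_1})=\int_0^{n^{-\epsilon_1}}e^{-\Phi(u)^5/2}\,du\ge\int_0^{n^{-\epsilon_1}}e^{-u^5/2}\,du\ge n^{-\epsilon_1}e^{-n^{-5\epsilon_1}/2}\ge 0.9\,n^{-\epsilon_1}$ for $n$ large.

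For~(iii), a direct computation collapsing the powers of $n$ gives $z_{i,j}=\Theta\big(n^{\epsilon_2 j}\,\Phi(in^{-\epsilon_1})^{5-j}\,\phi(in^{-\epsilon_1})^{j}\big)$ for $1\le j\le 5$ (here $\Phi(in^{-\epsilon_1}),\phi(in^{-\epsilon_1})>0$ when $i\ge 1$, while $z_{0,j}=0$ for $j<5$ and $z_{0,5}=\Theta(n^{5\epsilon_2})$). Write $\gamma_i=2(e^{S_i}-1)$ where $S_i\deq-6000\sum_{j=1}^5 z_{i,j}\ln\big(1-2n^{-(\epsilon_1+\epsilon_2)j}\big)\ge 0$; using $-\ln(1-t)=\Theta(t)$ for $t=o(1)$ this becomes $S_i=\Theta\big(\sum_{j=1}^5 n^{-\epsilon_1 j}\Phi(in^{-\epsilon_1})^{5-j}\phi(in^{-\epsilon_1})^{j}\big)$ (for $i=0$ only the $j=5$ term is nonzero, giving $S_0=\Theta(n^{-5\epsilon_1})$). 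By~(ii) every summand is $O(n^{-\epsilon_1}\ln n)$, so $S_i=O(n^{-\epsilon_1}\ln n)=o(1)$ and $\gamma_i=(2+o(1))S_i\le n^{-\Theta(\epsilon_1)}$; conversely the $j=5$ term alone, with $\phi(in^{-\epsilon_1})\ge n^{-\Theta(\epsilon_1)}$ from~(ii), gives $\gamma_i\ge 2S_i\ge\Theta\big(n^{-5\epsilon_1}\phi(in^{-\epsilon_1})^5\big)\ge n^{-\Theta(\epsilon_1)}$, so $\gamma_i=n^{-\Theta(\epsilon_1)}$. For $\Gamma_i$ one has $\Gamma_i\ge\Gamma_0=n^{-\epsilon_1}$ (as $\gamma_k\ge 0$) and $\Gamma_i\le\Gamma_I=n^{-\epsilon_1}\prod_{k<I}(1+\gamma_k)\le n^{-\epsilon_1}\exp\big(\sum_{k<I}\gamma_k\big)$, so it suffices to show $\sum_{k<I}\gamma_k=O(\epsilon_1^2\ln n)$. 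Comparing the sum over $k$ to $n^{\epsilon_1}$ times an integral (each relevant summand, viewed as a function of $kn^{-\epsilon_1}$, is $O(1)$ and unimodal, so the comparison costs only an additive $O(1)$) and using $\int_0^{a}\Phi^m\Phi'\,dx=\Phi(a)^{m+1}/(m+1)$ with $a=In^{-\epsilon_1}$ and $\Phi(a)\le\Phi(n^{\epsilon_1^2})=O\big((\epsilon_1^2\ln n)^{1/5}\big)$, the $j=1$ term contributes $O\big(n^{-\epsilon_1}\cdot n^{\epsilon_1}\Phi(n^{\epsilon_1^2})^5\big)=O(\epsilon_1^2\ln n)$, while each $j\ge 2$ term, after $\phi(\cdot)^j\le\phi(\cdot)$, contributes $O\big(n^{-\epsilon_1(j-1)}(\ln n)^{4/5}\big)=o(1)$. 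Hence $\Gamma_i\le n^{-\epsilon_1+O(\epsilon_1^2)}=n^{-\Theta(\epsilon_1)}$ since $\epsilon_1$ is small.

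The step I expect to be the main obstacle is the asymptotic in~(i): crude bounds such as $e^{(v-1)^5/2}\le F(v)\le v\,e^{v^5/2}$ already pin down $\Phi(x)=\Theta((\ln x)^{1/5})$, but only give $\phi(x)=x^{-1+o(1)}$, so obtaining the exact $(\ln x)^{4/5}$ factor in $\phi$ genuinely requires the sharper Laplace asymptotic $F(v)\sim e^{v^5/2}/(\tfrac52 v^4)$. In~(iii) the delicate point is that the polynomial smallness of $\Gamma_i$ comes entirely from $In^{-\epsilon_1}\le n^{\epsilon_1^2}$, which makes $\Phi(In^{-\epsilon_1})^5=O(\epsilon_1^2\ln n)$ rather than merely $O(\ln n)$; without this, $\prod_{k<I}(1+\gamma_k)$ could be polynomial in $n$ and swamp the factor $n^{-\epsilon_1}$ in $\Gamma_0$.
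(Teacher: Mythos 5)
Your proposal is correct, and for the most part tracks the paper's argument. For parts~(i)~and~(ii), the paper declines to give details, saying only that ``a standard analysis of the separable differential equation'' yields the estimates; what you supply — the inverse relation $F(\Phi(x))=x$ with $F(v)=\int_0^v e^{u^5/2}\,du$ and the Laplace asymptotic $F(v)\sim e^{v^5/2}/(\tfrac52 v^4)$ — is exactly the right way to unpack that, and you correctly note that the crude inverse-function bounds would lose the $(\ln x)^{4/5}$ in $\phi$, so the sharper asymptotic is genuinely needed. For part~(iii) you obtain the same reduction as the paper, namely $\gamma_i = 2\exp(\Theta(1)\sum_j n^{-\epsilon_1 j}\Phi^{5-j}\phi^j) - 2$, and the same lower bound via the $j=5$ term; where you diverge is in bounding $\Gamma_I$. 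The paper splits the index range at $i_0=\lfloor n^{\epsilon_1}\ln\ln n\rfloor$, uses item~(i) to show $\gamma_i=O(1/i)$ for $i\ge i_0$, and sums the harmonic series to get $\exp(O(\ln(I/i_0)))=n^{O(\epsilon_1^2)}$. You instead compare $\sum_k\gamma_k$ directly to $n^{\epsilon_1}$ times an integral and exploit the exact antiderivative $\int\Phi^4\Phi'\,dx=\Phi^5/5$, so that the dominant $j=1$ contribution is $\Phi(In^{-\epsilon_1})^5/5=O(\epsilon_1^2\ln n)$. Both routes are elementary and yield $\Gamma_I=n^{-\epsilon_1+O(\epsilon_1^2)}$; yours is arguably cleaner in that it sidesteps the two-range split and makes explicit that the whole estimate rests on $\Phi(In^{-\epsilon_1})^5=O(\epsilon_1^2\ln n)$ — a point you correctly flag as the crux, since a bound of merely $O(\ln n)$ there would let $\prod_k(1+\gamma_k)$ swallow the $n^{-\epsilon_1}$ head term.
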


For an integer $0 \le i < I$, let $O_i$ be the set of edges $f \in \NotTrav_i$
such that $\M_i \cup \{f\}$ is $K_4$-free.  Furthermore, for an integer $0 \le
j \le 5$ and an edge $f \in \NotTrav_i$, let $X_{i,j}(f)$ be the set of all
graphs $G \subseteq \M_i \cup \NotTrav_i$ such that $|G| = 5$, $G \cup \{f\}$
is isomorphic to $K_4$, $|\M_i \cap G| = 5 - j$, $|\NotTrav_i \cap G| = j$, and
$\M_i \cup \{g\}$ is $K_4$-free for all $g \in G$ (or equivalently, $g \in O_i$
for all $g \in \NotTrav_i \cap G$).  Lastly, let $O'_i$ be the set of all $f
\in O_i$ such that $f \in \BIGBite_{i+1}$, and let $X'_{i,j}(f)$ be the set of
all $G \in X_{i,j}(f)$ such that $G \subseteq \M_i \cup \BIGBite_{i+1}$.

For integers $0 \le i < I$ and $1 \le j \le 3$, and for a set of $T$ of
triangles in $\binom{[n]}{2}$, let $Y_{i,j}(T)$ be the set of all triangles $G
\in T$ such that $|\M_i \cap G| = 3-j$, $|\NotTrav_i \cap G| = j$, and $\M_i
\cup G$ is $K_4$-free.  
Furthermore, for an integer $1 \le k < j$, let $Y_{i,j,k}(T)$ be the set of all
triples $(G_1, G_2, G_3)$ such that for some $G \in Y_{i,j}(T)$, $G_1 = \M_i
\cap G$, $G_2 \subset \NotTrav_i \cap G$ with $|G_2| = k$, and $G_3 = G
\setminus (G_1 \cup G_2)$.  (Note that $|Y_{i,j,k}(T)| = \binom{j}{k}
|Y_{i,j}(T)|$.) Lastly, let $Y'_{i,j,k}(T)$ be the set of all triples $(G_1,
G_2, G_3) \in Y_{i,j,k}(T)$ such that $G_2 \subseteq \BIGBite_{i+1}$.

For an integer $0 \le i < I$, for a set $R \subseteq [n]$, and for a set $T$ of
triangles in $\binom{R}{2}$, let $Z_i(R, T)$ be the set of all triangles $G \in
T$ such that $|\M_i \cap G| = 2$, $|\NotTrav_i \cap G| = 1$, and letting $g$
denote the edge in $\NotTrav_i \cap G$, there exists $G_0 \in X_{i,0}(g)$ such
that $G_0$ shares at least three vertices with $R$.

For a set $S \subseteq [n]$ with $s - o(s) \le |S| \le s$, let $\Pairs(S)$ be
the set of all pairs $(R, T)$ such that $R \subseteq S$ has size $s - o(s) \le
|R| \le |S|$, and for some partition of $R$ to three sets of size $\Omega(s)$
each, $T$ is the set of all triangles in $\binom{R}{2}$ that have each exactly
one vertex in each part of the partition.

Next, we define a few events. These events will be used to track the random
variables defined above, as they evolve throughout the process, as well as to
track some other properties of the process.  In what follows, and throughout
the paper, an expression that contains a symbol $\pm$ is a shorthand for the
interval $[\eta_-, \eta_+]$, where $\eta_-$ is obtained by replacing in the
expression every $\pm$ with $-$, and $\eta_+$ is obtained by replacing in the
expression every $\pm$ with $+$.  
\begin{itemize}
\item Let $\A_i$ be the event that the following properties hold:
\begin{itemize}
\item[(A1)] $|\M_i| \in 0.5 n^{8/5} \Phi(i n^{-\epsilon_1}) (1 \pm 100
\Gamma_i)$; 
\item[(A2)] $|O_i| \in 0.5 n^2 \phi(in^{-\epsilon_1}) (1 \pm 100 \Gamma_i)$; 
\item[(A3)] $|X_{i,j}(f)| \in x_{i,j} (1 \pm 1000 \Gamma_i)$ for all $1 \le j
\le 5$ and all $f \in \NotTrav_i$.
\end{itemize}
\item Let $\B_i$ be the event that for every set $S \subseteq [n]$ of $s$
vertices, there is a set $S_i \subseteq S$, such that the following properties
hold:
\begin{itemize}
\item[(B1)] $S_i$ has size at least $s(1 - i n^{-0.01})$;
\item[(B2)] $\Trav_i \cap \binom{S_i}{2}$ has maximum degree at most
$n^{1.1/5}$;
\item[(B3)] $|Y_{i,j}(T)| \ge y_{i,j,t} (1 - 100 \Gamma_i) - 0.5 j (3-j) (2-j)
|Z_i(R, T)|$ for all $1 \le j \le 3$, and for every pair $(R, T) \in
\Pairs(S_i)$ with $|T| = t$.  (Note that the coefficient of $|Z_i(R, T)|$ is
equal to $0$ if $2 \le j \le 3$ and is equal to $1$ if $j=1$.)
\end{itemize}
\item Let $\C_i$ be the event that the following properties hold:
\begin{itemize}
\item[(C1)] the number of edges in $\Trav_i \cup \BIGBite_{i+1}$ is at most
$n^{8/5 + 10 \epsilon_3}$;
\item[(C2)] for every set $S \subseteq [n]$ of $s$ vertices, there are at most
$n^{4/5 + 10 \epsilon_3}$ edges in $(\Trav_i \cup \BIGBite_{i+1}) \cap
\binom{S}{2}$;
\item[(C3)] for every two vertices $v_1, v_2 \in [n]$, there are at most
$n^{1/5 + 10\epsilon_3}$ other vertices in $[n]$ that are adjacent in $\Trav_i
\cup \BIGBite_{i+1}$ simultaneously to $v_1$ and $v_2$; moreover, for every
three vertices $v_1, v_2, v_3 \in [n]$, there are at most $(\ln n)^{O(1)}$
other vertices in $[n]$ that are adjacent in $\Trav_i \cup \BIGBite_{i+1}$
simultaneously to $v_1, v_2$ and $v_3$;
\item[(C4)] for every edge $f$, there are at most $(\ln n)^{O(1)}$ pairs $(G,
v)$ such that $G \in X_{0,5}(f)$, $G \subseteq \Trav_i \cup \BIGBite_{i+1}$,
and $v$ is a vertex outside of the vertex set (of size four) of $G$ which is
adjacent in $\Trav_i \cup \BIGBite_{i+1}$ to at least three vertices of $G$;
\item[(C5)] for every edge $f \in \Trav_i \cup \BIGBite_{i+1}$, there are at
most $n^{2/5 + 10\epsilon_3}$ copies of $K_4^-$ (which is a $K_4$ without an
edge) in $\Trav_i \cup \BIGBite_{i+1}$ which contain $f$;
\item[(C6)] for every set $S \subseteq [n]$ of $s$ vertices, there is a set of
at most $n^{3/5 + 10 \epsilon_3}$ edges, the removal of which from $\Trav_i
\cup \BIGBite_{i+1}$ leaves at most $n^{4/5 + 10 \epsilon_3}$ $4$-cycles in
$(\Trav_i \cup \BIGBite_{i+1}) \cap \binom{S}{2}$;
\item[(C7)] for every set $R \subseteq [n]$ of $r$ vertices, where $s - o(s)
\le r \le s$, the following holds for every $\M \subseteq \Trav_i \cup
\BIGBite_{i+1}$, assuming the maximum degree in $\M \cap \binom{R}{2}$ is at
most $n^{1.1/5}$.  First, there are at most $O(n^{4.2/5})$ edges $g \in
\binom{R}{2}$ for which there exists a graph $G \in X_{0,5}(g)$, with $G
\subseteq \M$, which shares all four vertices with $R$.  Second, there is a set
$R_0 \subseteq [n] \setminus R$ of at most $n^{0.99/5}$ vertices, such that
there are at most $O(n^{4.2/5})$ edges $g \in \binom{R}{2}$ for which there
exists a graph $G \in X_{0,5}(g)$, with $G \subseteq \M$, which shares exactly
three vertices with $R$ and one vertex with $[n] \setminus (R \cup R_0)$; 
\item[(C8)] for every set $R \subseteq [n]$ of $r$ vertices, where $s - o(s)
\le r \le s$, and for every set $E$ of $O(n^{1/2})$ edges in $\binom{[n]}{2}
\setminus \binom{R}{2}$, the following holds, assuming the maximum degree in
$(\Trav_i \cup \BIGBite_{i+1}) \cap \binom{R}{2}$ is at most $n^{1.1/5}$. There
are at most $O(n^{4.2/5})$ edges $g \in \binom{R}{2}$ for which there exists a
path of length two in $(\Trav_i \cup \BIGBite_{i+1}) \cap \binom{R}{2}$ that
completes $g$ to a triangle, and a graph $G \in X_{0,5}(g)$, with $G \subseteq
(\Trav_i \cup \BIGBite_{i+1}) \setminus \binom{R}{2}$ and $G \cap E \ne
\emptyset$.
\end{itemize}
\item Let $\D_i$ be the event that the following properties hold: 
\begin{itemize}
\item[(D1)] $|O'_i| \in 0.5 n^{8/5 + \epsilon_3} \phi(i n^{-\epsilon_1}) (1 \pm
(100 \Gamma_i + \Gamma_i\gamma_i))$;
\item[(D2)] $|X'_{i,j}(f)| \in n^{(\epsilon_3 - 2/5)j} x_{i,j} (1 \pm 2000
\Gamma_i)$ for all $1 \le j \le 5$ and all $f \in \NotTrav_i$. In particular,
$|X'_{i,j}(f)| \le n^{\epsilon_3 j + o(1)} \le n^{10 \epsilon_3}$ for all $1
\le j \le 5$ and all $f \in \NotTrav_i$;
\item[(D3)] for every set $S \subseteq [n]$ of $s$ vertices, assuming $\B_i$ holds,
letting $S_i \subseteq S$ be the set that is guaranteed to exist by $\B_i$,
\begin{eqnarray*}
|Y'_{i,j,k}(T)| \ge n^{(\epsilon_3 - 2/5)k} \tbinom{j}{k} y_{i,j,t} (1 -
100\Gamma_i - \Gamma_i \gamma_i )
\end{eqnarray*}
for all $1 \le k < j \le 3$, and for every pair $(R, T) \in \Pairs(S_i)$ with
$|T| = t$.
\end{itemize}
\end{itemize}

Finally, we state our main lemma.
\begin{lemma} \label{lemma0} 
For $0 \le i < I$, $\prob(\A_i \wedge \B_i \wedge \C_i \wedge \D_i) \ge 1 - i
n^{-0.1} - n^{-\omega(1)}$.
\end{lemma}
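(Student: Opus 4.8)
The plan is to prove Lemma~\ref{lemma0} by induction on $i$, establishing the stronger claim that conditioned on $\A_i \wedge \B_i \wedge \C_i \wedge \D_i$ holding at step $i$, the probability that $\A_{i+1} \wedge \B_{i+1} \wedge \C_{i+1} \wedge \D_{i+1}$ fails is at most $n^{-0.1}\cdot(\text{correction}) + n^{-\omega(1)}$, so that the failure probabilities telescope. The base case $i=0$ is easy: $\M_0$ and $\Trav_0$ are empty, so (A1),(A2),(A3) hold with the exact values $x_{0,j}$ etc. (since $\Gamma_0 = n^{-\epsilon_1}$ is tiny and the evolving graph contributes nothing), (B1)--(B3) hold trivially with $S_0 = S$, and the $\C_0$ and $\D_0$ statements are purely about the binomial random graph $\BIGBite_1 \subseteq G(n, n^{\epsilon_3-2/5})$, to which one applies Theorems~\ref{thm:vu}, \ref{eq:janson} and \ref{thm:deletion} to control subgraph counts and the deletion/$4$-cycle conditions. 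The bulk of the work is the inductive step.

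For the inductive step, I would organize the analysis around the ``bite'' structure: conditioned on the state after step $i$, the set $\BIGBite_{i+1}$ is a fresh binomial sample from $\NotTrav_i$, then $\BigBite_{i+1}$ and $\Bite_{i+1}$ are successive sub-samples, and the edges of $\Bite_{i+1}$ are inserted in a random order. The first task is to verify $\C_{i+1}$ and the $\D$-events, since these only involve $\Trav_i \cup \BIGBite_{i+1}$ and the random variables restricted to $\BIGBite_{i+1}$; here one uses the concentration inequalities of Section~\ref{sec:3} exactly as in the base case, noting that the extremal-type conditions (maximum codegrees, number of $4$-cycles after deletion, the $X_{0,5}$-counts in (C4),(C7),(C8)) follow from the fact that $\Trav_i \cup \BIGBite_{i+1}$ is contained in a graph of density roughly $n^{-2/5+O(\epsilon_3)}$, which is below the relevant thresholds. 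The key point for $\D_i$ is that given $\A_i$ and $\B_i$, the expected restricted counts $|O_i'|$, $|X'_{i,j}(f)|$, $|Y'_{i,j,k}(T)|$ are obtained from the unrestricted ones by multiplying by the appropriate powers of $n^{\epsilon_3-2/5}$, and Theorem~\ref{thm:vu} (with $\lambda$ polylogarithmic in $n$) gives concentration within the stated $(1\pm 2000\Gamma_i)$ windows, with failure probability $n^{-\omega(1)}$.

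The hard part is establishing $\A_{i+1}$ and $\B_{i+1}$, i.e.\ tracking how $|\M_{i+1}|$, $|O_{i+1}|$, $|X_{i+1,j}(f)|$ and the $|Y_{i+1,j}(T)|$ change when the edges of $\Bite_{i+1}$ are inserted one at a time, rejecting those that would create a $K_4$. This is where the branching-process argument of Sections~\ref{sec:5}--\ref{sec:7} enters: to decide whether a given edge $f \in O_i$ survives into $O_{i+1}$ (equivalently whether $\M_i \cup \{f\}$ stays $K_4$-free after the whole bite), one must check that no graph $G \in X_{i,0}(f)$ has all its edges inserted; each such $G$ has $5$ edges, each present in $\Bite_{i+1}$ with probability $\approx n^{-\epsilon_1-2/5}$ and inserted only if it itself survives, which recursively spawns more potential $K_4$'s — this is the branching process whose ``survival'' event is analysed in Section~\ref{sec:5} and coupled to the process in Section~\ref{sec:6}. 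The expected decrements are designed so that $|O_i|$ follows the trajectory $0.5n^2\phi(in^{-\epsilon_1})$ and $|\M_i|$ follows $0.5n^{8/5}\Phi(in^{-\epsilon_1})$, with the ODE $\phi = \Phi'$, $\phi = \exp(-0.5\Phi^5)$ built in; the error terms grow by the multiplicative factor $(1+\gamma_i)$ per step, which is exactly why $\Gamma_i$ is defined by that recursion. I would use McDiarmid's inequality (Section~\ref{sec:3}), viewing the relevant counts as Lipschitz functions of the independent edge-choices and birthtimes in $\Bite_{i+1}$ (with Lipschitz constants bounded using $\C_i$ and $\D_i$, e.g.\ (C3)--(C5) bound how many counts a single edge insertion can affect), to get concentration within the widened windows $(1\pm 1000\Gamma_{i+1})$ etc., again with super-polynomially small failure probability. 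For $\B_{i+1}$, the set $S_{i+1}$ is obtained from $S_i$ by deleting the $\approx sn^{-0.01}$ vertices that acquire too-high $\Trav_{i+1}$-degree (using (C2) and a deletion argument via Theorem~\ref{thm:deletion}) or that spoil the $Y$-count; the lower bound (B3) on $|Y_{i+1,j}(T)|$, with its correction term $-0.5j(3-j)(2-j)|Z_i(R,T)|$ accounting for triangles lost because their completing edge was ``used up'' in a $K_4$-near-miss sharing three vertices with $R$, is the most delicate estimate and is precisely what the supporting lemmas of Section~\ref{sec:4} are there to supply. The single dominant contribution $in^{-0.1}$ to the failure probability comes from the union bound over the $\le n^{0.1}$-ish bad steps where the $\B$-event (which ranges over all $\binom{n}{s}$ sets $S$) could fail, all other contributions being $n^{-\omega(1)}$.
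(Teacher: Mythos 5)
Your outline follows essentially the same high-level skeleton as the paper---induction on $i$, a freshness argument showing that $\C$ and $\D$ are cheap given the state after step $i$, and a survival-plus-McDiarmid argument for $\A$ and $\B$---and the base case and the reduction to per-step conditional bounds are correct. However, there is one genuine gap and one related misdiagnosis in where the $in^{-0.1}$ comes from.

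The gap concerns the $|X_{i+1,j}(f)|$ bounds in (A3). You propose to treat ``the relevant counts $|\M_{i+1}|$, $|O_{i+1}|$, $|X_{i+1,j}(f)|$'' uniformly, viewing each as a Lipschitz function of the outcomes of the edges in $\Bite_{i+1}$ and applying McDiarmid. This works for $|\M_{i+1}|$ and $|O_{i+1}|$, which are of size $\Theta(n^{8/5}\Phi)$ and $\Theta(n^2\phi)$ respectively, much larger than the Lipschitz-squared sum; but $|X_{i+1,j}(f)|\approx x_{i,j}\approx n^{2j/5}(\text{polylog})$ is far too small relative to the roughly $n^{8/5}$ edges of $\BIGBite_{i+1}$ being varied, and McDiarmid gives nothing useful. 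The paper sidesteps this by tracking only $|\M_{i+1}|$ and $|O_{i+1}|$ with the survival/McDiarmid machinery (event $\E_4\wedge\E_5$, holding with probability $1-n^{-\omega(1)}$), and then invoking an external black-box---Bohman's Theorem~13 from \cite{MR2522430}---to deduce that $|X_{i+1,j}(f)|$ is essentially determined by $|\M_{i+1}|$ alone (event $\E_6$, holding with probability $1-n^{-1/6}$). Your proposal never mentions this, and without it the inductive step for (A3) does not close.

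This also corrects the misattribution at the end of your sketch: the per-step $n^{-0.11}$ failure probability that accumulates to $in^{-0.1}$ does not come from the union bound over the $\binom{n}{s}$ choices of $S$ in $\B_{i+1}$---that union bound is absorbed because each individual failure probability there is $n^{-\omega(s)}$. Rather, the $n^{-0.11}$ per step comes entirely from the $1-n^{-1/6}$ guarantee of Bohman's Theorem~13 for $\E_6$, conditioned on the state at step $i$. Everything else in the inductive step (the $\C$, $\D$ concentration, the $\E_4\wedge\E_5$ estimates, the $\B_{i+1}$ estimates) is $1-n^{-\omega(1)}$. The rest of your outline---the definition of $S_{i+1}$ by degree-trimming, the role of the $Z_i(R,T)$ correction term in (B3), the use of Theorem~\ref{thm:vu} and the survival coupling from Sections~\ref{sec:5}--\ref{sec:6}---matches the paper's argument.
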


\subsection{Proof of Lemma~\ref{fact:f1}}
%
A standard analysis of the separable differential equation $\phi(x) = \exp (
-0.5 \Phi(x)^5 )$ with the initial condition $\Phi(0) = 0$ gives the estimates
in the first two items. It also shows that $\Phi(in^{-\epsilon_1})$ is
monotonically increasing with $i$. We prove the validity of the third item.

From the definition of $z_{i,j}$ it follows that
\begin{eqnarray*}
z_{i,j} = \Theta \big( n^{\epsilon_2 j} \cdot \Phi(in^{-\epsilon_1})^{5-j}
\cdot \phi(in^{-\epsilon_1})^j \big).
\end{eqnarray*}
Plugging this into the definition of $\gamma_i$, we get that for all $0 \le i
\le I$,
\begin{eqnarray*}
\gamma_i = 2 \exp\bigg( \Theta(1) \cdot \sum_{1 \le j \le 5} n^{-\epsilon_1j }
\cdot \Phi(in^{-\epsilon_1})^{5-j} \cdot \phi(in^{-\epsilon_1})^j \bigg) - 2.
\end{eqnarray*}
This, together with the second item, implies that for all $0 \le i \le I$,
\begin{eqnarray} \label{eq:gamma1}
n^{-\Theta(\epsilon_1)} \le \gamma_i = O( n^{-\epsilon_1} + n^{-\epsilon_1}
\cdot \Phi(in^{-\epsilon_1})^4 \cdot \phi(in^{-\epsilon_1}) ),
\end{eqnarray}
and if in addition $in^{-\epsilon_1} \to \infty$, then
\begin{eqnarray} \label{eq:gamma2}
\gamma_i = O( n^{-\epsilon_1} \cdot \Phi(in^{-\epsilon_1})^4 \cdot
\phi(in^{-\epsilon_1}) ).
\end{eqnarray}
In particular, from~(\ref{eq:gamma1}) and the second item, we get that
$\gamma_i = n^{-\Theta(\epsilon_1)}$ for all $0 \le i \le I$.

To show that $\Gamma_i = n^{-\Theta(\epsilon_1)}$ for all $0 \le i \le I$, it
suffices to show that $\Gamma_I \le n^{-\Theta(\epsilon_1)}$.  For $0 \le i \le
I$, by~(\ref{eq:gamma1}) and by the second item, 
\begin{eqnarray*}
\gamma_i = O(n^{-\epsilon_1} + n^{-\epsilon_1} \cdot \Phi(in^{-\epsilon_1})^4).
\end{eqnarray*}
Therefore, letting $i_0 \deq \floor{n^{\epsilon_1} \ln\ln n}$, for $0 \le i \le
i_0$, by the first item and the monotonicity of $\Phi(in^{-\epsilon_1})$,
$\gamma_i = O(n^{-\epsilon_1} \ln\ln n)$.  Thus,
\begin{eqnarray*}
\Gamma_{i_0} = n^{-\epsilon_1} \cdot \prod_{0 \le i < i_0} (1 + \gamma_i) =
n^{-\epsilon_1 + o(1)}.
\end{eqnarray*}
By~(\ref{eq:gamma2}) and the first item, if $i_0 \le i \le I$, then
\begin{eqnarray*}
\gamma_i = O\bigg(\frac{1}{i}\bigg).
\end{eqnarray*}
Therefore, recalling that $I = \floor{n^{\epsilon_1 + \epsilon_1^2}}$,
\begin{eqnarray*}
\Gamma_I  =  \Gamma_{i_0} \cdot \prod_{i_0 \le i  <  I} (1 + \gamma_i)  
 =  n^{-\epsilon_1 + o(1)} \cdot \exp\bigg( \sum_{i_0 \le i  <  I}
O\bigg(\frac{1}{i}\bigg) \bigg) 
= n^{-\epsilon_1 + O(\epsilon_1^2)}.
\end{eqnarray*}

\section{Survival} \label{sec:5}
%
The purpose of this section is to define and analyze a certain event, an event
which in the next section will be related to the process, and which later on
will be used in the analysis of the process.
Fix for the rest of the section an integer $0 \le i < I$ and a sufficiently
large integer $c$ which we will assume to be constant, independent of $n$.  Let
$\TT_1$ be a rooted, finite tree with the following three properties: first,
each leaf in the tree is at distance $2c$ from the root; second, every non-leaf
node at even distance from the root has $5$ sets of children, where the $j$th
set has size in $z_{i,j} (1 \pm 3000 \Gamma_i)$ and consists of sets of size
$j$; third, every node at odd distance from the root which is a set of size $j$
has exactly $j$ children.
(We remark that we don't use the fact that a node at odd distance from the root
of $\TT_1$ is a set of size $j$, except to indicate that such a node has $j$
children.)
Assign each node $\nu$ at even distance from the root of $\TT_1$ a uniformly
random birthtime $\beta(\nu)$ in the unit interval. 
Let $\nu_0$ be a node at even distance from the root of $\TT_1$.  Define the
event that $\nu_0$ survives as follows.  If $\nu_0$ is a leaf then $\nu_0$
survives by definition;  otherwise, $\nu_0$ survives if and only if for every
child $\nu_1$ of $\nu_0$, the following holds: if for every child $\nu_2$ of
$\nu_1$ we have $\beta(\nu_2) < \beta(\nu_0)$ then $\nu_1$ has a child that
does not survive. 

Let $\nu_{\TT_1}$ denote the root of $\TT_1$.  Let $p_{\TT_1}(x)$ be the
probability that $\nu_{\TT_1}$ survives, under the assumption that its
birthtime is equal to $x n^{-\epsilon_2} / (1 - i n^{-\epsilon_1 -
\epsilon_2})$; in other words, we define $\beta$ as above, only that now we
further set the birthtime of $\nu_{\TT_1}$ to be $x n^{-\epsilon_2} / (1 - i
n^{-\epsilon_1 - \epsilon_2})$, and ask for the probability $p_{\TT_1}(x)$ that
$\nu_{\TT_1}$ survives given that setup.  Let $P_{\TT_1}(x) \deq x
\prob(\text{$\nu_{\TT_1}$ survives} \given \beta(\nu_{\TT_1}) \le x
n^{-\epsilon_2} / (1 - i n^{-\epsilon_1 - \epsilon_2}))$ if $x > 0$ and
$P_{\TT_1}(x) \deq 0$ if $x=0$.  The main result of this section follows.
\begin{lemma} \label{lemma:survive}
$p_{\TT_1}(n^{-\epsilon_1}) \in
\frac{\phi((i+1)n^{-\epsilon_1})}{\phi(in^{-\epsilon_1})} (1 \pm 20 \Gamma_i
\gamma_i)$; $P_{\TT_1}(n^{-\epsilon_1}) \in \frac{\Phi((i+1)n^{-\epsilon_1}) -
\Phi(in^{-\epsilon_1})}{\phi(in^{-\epsilon_1})} (1 \pm 20 \Gamma_i \gamma_i)$.
\end{lemma}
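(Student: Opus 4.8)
The plan is to set up a recursion for the survival probabilities across the $2c$ levels of $\TT_1$ and to show that this recursion is, up to multiplicative error $1 \pm O(\Gamma_i \gamma_i)$, the same recursion that would arise from an ``ideal'' tree in which the $j$th set of children at each even node has size \emph{exactly} $z_{i,j}$ and the birthtimes are genuinely i.i.d.\ uniform. For the ideal tree, I expect the fixed-point equation to be exactly the integral equation defining $\Phi$ and $\phi$: conditioning on the birthtime $x n^{-\epsilon_2}/(1-in^{-\epsilon_1-\epsilon_2})$ of an even node $\nu_0$, the node survives iff for each child $\nu_1$ (a set of size $j$), it is \emph{not} the case that all $j$ of $\nu_1$'s children have earlier birthtimes and all survive. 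The probability that a given grandchild has earlier birthtime and survives is $\int_0^x p(y)\,\frac{n^{-\epsilon_2}}{1-in^{-\epsilon_1-\epsilon_2}}\,dy$ for the appropriate survival function $p$ one level down; raising to the $j$th power, multiplying over the $z_{i,j}$ children in the $j$th set, and taking the product over $j$, one gets a self-consistent functional equation whose solution, after the substitution $x = \Phi$-scale, reproduces $\phi = \exp(-0.5\,\Phi^5)$. The factor $\binom{5}{j}$ and the powers of $\Phi/n^{2/5}$ and $\phi$ hidden in $z_{i,j}$ are exactly what make the five terms $j=1,\dots,5$ assemble into a single clean exponential; this is the computational heart and I would carry it out by differentiating the log of the survival probability in $x$.

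Concretely, I would proceed as follows. First, I would introduce, for each even level $2\ell$ ($0 \le \ell \le c$), the survival probability $q_\ell(x)$ of an even node at that level given its birthtime is $x n^{-\epsilon_2}/(1-in^{-\epsilon_1-\epsilon_2})$; leaves give $q_c \equiv 1$. Second, I would write the one-step recursion $q_\ell(x) = \prod_{j=1}^5 \bigl(1 - (\int_0^x q_{\ell+1}(y)\,\rho\,dy)^j\bigr)^{m_{i,j}}$, where $\rho = n^{-\epsilon_2}/(1-in^{-\epsilon_1-\epsilon_2})$ and $m_{i,j} \in z_{i,j}(1\pm 3000\Gamma_i)$ is the (random but fixed) number of size-$j$ children; note the independence across distinct children and across the two subtrees hanging off a size-$j$ node's $j$ children is what licenses the product. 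Third, I would solve the \emph{exact} fixed-point problem $q^*(x) = \prod_{j=1}^5 (1-(\int_0^x q^* \rho)^j)^{z_{i,j}}$ and identify, via Lemma~\ref{fact:f1}(i)--(ii) and the definition of $\Phi$, that $q^*(n^{-\epsilon_1}) = \phi((i+1)n^{-\epsilon_1})/\phi(in^{-\epsilon_1})$ and that the integrated version $x\,(\text{avg of }q^*\text{ over }[0,x])$ at $x=n^{-\epsilon_1}$ equals $(\Phi((i+1)n^{-\epsilon_1})-\Phi(in^{-\epsilon_1}))/\phi(in^{-\epsilon_1})$; here $z_{i,j} = n^{(\epsilon_2-2/5)j}x_{i,j}$ and the estimate $z_{i,j} = \Theta(n^{\epsilon_2 j}\Phi^{5-j}\phi^j)$ from the proof of Lemma~\ref{fact:f1} are the inputs. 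Fourth, I would control the error: since each of the $\le c$ levels contributes a multiplicative factor that deviates from its ideal counterpart by at most $1 \pm O(\Gamma_i)$ times a quantity of order $z_{i,j}\cdot(\text{per-child survival-weight})^j$, and the total ``mass'' $\sum_j z_{i,j}(\int q\rho)^j$ is of order $\gamma_i$ by the very definition of $\gamma_i$ (which is built precisely as $2\prod(1-2n^{-\epsilon_1 j-\epsilon_2 j})^{-6000 z_{i,j}} - 2$), the accumulated relative error over all $2c$ levels is $O(c\,\Gamma_i\gamma_i) = O(\Gamma_i\gamma_i)$ since $c$ is constant. A short Taylor expansion of $(1-u)^{m}$ for small $u$ and $mu = O(\gamma_i)$ turns the $\pm 3000\Gamma_i$ perturbation of the child-counts into a $\pm O(\Gamma_i\gamma_i)$ perturbation of $q_\ell$, propagated through the $c$ levels.

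The main obstacle I anticipate is the error bookkeeping in the last step rather than the fixed-point computation: one must show that the deviation $1 \pm 3000\Gamma_i$ in the number of children, amplified over $2c$ levels and over the $j=1,\dots,5$ products, still lands inside the target window $1 \pm 20\Gamma_i\gamma_i$ — note the crucial point that the error is multiplied by an extra $\gamma_i$, which is exactly the ``smallness'' coming from the fact that $z_{i,j}(\int_0^{n^{-\epsilon_1}} q^*\rho)^j$ is itself $O(\gamma_i)$, so that perturbing the exponent by a relative $\Theta(\Gamma_i)$ only perturbs $\prod_j(1-u_j)^{m_{i,j}}$ by $\Theta(\Gamma_i)\cdot\sum_j m_{i,j}u_j^j = O(\Gamma_i\gamma_i)$. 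I would also need the elementary monotonicity fact that $q_\ell(x)$ is nonincreasing in each child-count and that $\int_0^x q_{\ell+1}\rho$ is an increasing function bounded by a small quantity, so that all the $(1-u)$ bases stay in, say, $[1/2,1]$ and the $(1-2n^{-\epsilon_1 j-\epsilon_2 j})^{-6000 z_{i,j}}$ bounds in the definition of $\gamma_i$ apply verbatim. With the constant $c$ absorbed and $\Gamma_i,\gamma_i = n^{-\Theta(\epsilon_1)}$ by Lemma~\ref{fact:f1}(iii), both claimed estimates follow.
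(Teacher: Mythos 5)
Your approach is genuinely different from the paper's, and it has one real gap. The paper does \emph{not} analyze the recursion on $\TT_1$ directly. Instead, in Section~\ref{sec:4:2} it introduces a homogeneous infinite tree $\TT_4$ (every even node has exactly $0.5n^{5\epsilon_2}$ children, each with $5$ children), shows that for $\TT_4$ the survival probability genuinely solves the ODE $p_{\TT_4}(x)=\exp(-0.5P_{\TT_4}(x)^5)$ (so $p_{\TT_4}=\phi$, $P_{\TT_4}=\Phi$), and then obtains the conditional quantities $\phi((i+1)n^{-\epsilon_1})/\phi(in^{-\epsilon_1})$ and $(\Phi((i+1)n^{-\epsilon_1})-\Phi(in^{-\epsilon_1}))/(n^{-\epsilon_1}\phi(in^{-\epsilon_1}))$ as $\prob(\S_2\given\S_1)$ and $\prob(\S_5\given\S_1)$. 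It then shows, via a careful oracle/revealing argument, that conditioning $\TT_4$ on survival at time $in^{-\epsilon_1-\epsilon_2}$ produces a random tree $\TT_5$ whose branching law (after a whp concentration event $\S_8$) matches the $z_{i,j}(1\pm 3000\Gamma_i)$ structure of $\TT_2$. Thus the $z_{i,j}$ parameters are \emph{derived} as the conditional law of the homogeneous process, rather than taken as given and verified by computation. Your step~3 (solving the fixed-point equation $q^*(x)=\prod_j(1-(\int_0^x q^*\rho)^j)^{z_{i,j}}$ and identifying $q^*(n^{-\epsilon_1})$) does work out arithmetically -- one can check that $q^*(y)=\phi(in^{-\epsilon_1}+y)/\phi(in^{-\epsilon_1})$ is a solution, by expanding $\log(1-u)\approx -u$ and matching against the binomial expansion of $\Phi_{i+1}^5-\Phi_i^5$ -- so that part of your plan is sound, and it is a more direct if less explanatory computation than the paper's.

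The gap is in the passage from the finite-depth recursion to the fixed point. The tree $\TT_1$ has depth exactly $2c$ and its leaves survive with probability $1$, so $q_c\equiv 1$ and the actual root-survival probability is $q_0 = T^c(1)$, \emph{not} the fixed point $q^*=T(q^*)$. Your item~4 carefully accounts for the $\pm 3000\Gamma_i$ perturbation of the child counts $m_{i,j}$ away from the ideal $z_{i,j}$, but nowhere do you bound $|T^c(1)-q^*|$, and this is a separate, nontrivial issue. The paper handles it as a standalone lemma (the comparison $p_{\TT_3}(x)\in p_{\TT_2}(x)(1\pm\Gamma_i\gamma_i)$): it bounds the expected number of ``relevant leaves'' (depth-$2c$ nodes whose birthtime data can still flip the root's fate) by $n^{-\Theta(\epsilon_1 c)}$, and then uses that $c$ is large enough that $n^{-\Theta(\epsilon_1 c)}\le\Gamma_i\gamma_i$. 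You would need either this counting argument or a contraction estimate showing the map $T$ has Lipschitz constant $O(\gamma_i)=n^{-\Theta(\epsilon_1)}$ on the relevant domain, so that $|T^c(1)-q^*|=O(\gamma_i^c)\ll\Gamma_i\gamma_i$; without it, the claim $q_0(n^{-\epsilon_1})\approx q^*(n^{-\epsilon_1})$ is unjustified. A secondary point: you should also make explicit that the product-form recursion requires independence of the ``all $j$ grandchildren beat the grandparent and survive'' events across children of a given even node, which holds because the subtrees are disjoint; and that the $P_{\TT_1}$ estimate requires integrating $q_0$ over $x\in[0,n^{-\epsilon_1}]$ (not just evaluating at $x=n^{-\epsilon_1}$), so the fixed-point identity must be established uniformly for $x\in[0,n^{-\epsilon_1}]$, not only at the endpoint.
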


\subsection{A related event}
Let ${\TT_2}$ be a rooted tree with the following three properties: first, each
leaf in the tree is at even distance from the root; second, every node at even
distance less than $2c$ from the root has $5$ sets of children, where the $j$th
set has size in $z_{i,j} (1 \pm 3000 \Gamma_i)$ and consists of sets of size
$j$; third, every node at odd distance less than $2c$ from the root which is a
set of size $j$ has exactly $j$ children.
(Note that we make no assumptions as for the number of children of nodes at
distance at least $2c$ from the root of ${\TT_2}$. In particular, it is
possible for ${\TT_2}$ to have an infinite path.) Extend $\beta$ by assigning
each node $\nu$ at even distance from the root of ${\TT_2}$ a uniformly random
birthtime $\beta(\nu)$ in the unit interval.  Define  the event that a node at
even distance from the root of ${\TT_2}$ survives exactly as it was defined for
such a node in ${\TT_1}$. Let $\nu_{\TT_2}$ denote the root of $\TT_2$.  Let
$p_{\TT_2}(x)$ be the probability that $\nu_{\TT_2}$ survives, under the
assumption that its birthtime is equal to $x n^{-\epsilon_2} / (1 - i
n^{-\epsilon_1 - \epsilon_2})$.  Let $P_{\TT_2}(x) \deq x
\prob(\text{$\nu_{\TT_2}$ survives} \given \beta(\nu_{\TT_2}) \le x
n^{-\epsilon_2} / (1 - i n^{-\epsilon_1 - \epsilon_2}))$ if $x > 0$ and
$P_{\TT_2}(x) \deq 0$ if $x=0$.  The purpose of this subsection is to prove the
following lemma, which relates the event that the root of $\TT_1$ survives to
the event that the root of $\TT_2$ survives.
\begin{lemma} \label{lemma:survive1}
$p_{\TT_1}(n^{-\epsilon_1}) \in p_{\TT_2}(n^{-\epsilon_1}) (1 \pm 5
\Gamma_i\gamma_i)$ and
$P_{\TT_1}(n^{-\epsilon_1}) \in P_{\TT_2}(n^{-\epsilon_1}) (1 \pm 5
\Gamma_i\gamma_i)$.
\end{lemma}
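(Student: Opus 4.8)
The plan is to prove Lemma~\ref{lemma:survive1} by a coupling argument between the survival processes on $\TT_1$ and $\TT_2$, exploiting the fact that the two trees are identical down to depth $2c$ from the root, where the branching structure is prescribed by the same data (five sets of children of sizes $z_{i,j}(1\pm 3000\Gamma_i)$, and $j$ children for odd-level size-$j$ nodes). The only difference is what happens at distance $2c$: in $\TT_1$ these are leaves (which survive by definition), while in $\TT_2$ they are arbitrary even-distance nodes whose subtrees may be infinite. So I would first observe that the survival status of an even-distance node $\nu_0$ at depth $< 2c$ depends only on the birthtimes of $\nu_0$ and of the even-distance nodes in its subtree, together with the survival statuses of the even-distance nodes at depth exactly $2c$ below $\nu_0$; replacing those depth-$2c$ nodes by leaves (forcing them to survive) can only \emph{help} a node two levels up to survive. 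Tracking the parity of how survival propagates up the tree (survival of $\nu_0$ requires each child $\nu_1$ to fail the condition, i.e. some grandchild of $\nu_0$ does not survive), a careful sign analysis should show that forcing depth-$2c$ survival is a monotone change in one direction, so that $p_{\TT_1}(x)$ and $p_{\TT_2}(x)$ differ only in that direction.

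Second, and this is where the quantitative estimate comes in, I would bound the size of this discrepancy. The difference between $p_{\TT_1}(n^{-\epsilon_1})$ and $p_{\TT_2}(n^{-\epsilon_1})$ is at most the probability, in the common tree down to depth $2c$, that the root's survival status actually depends on a depth-$2c$ node — equivalently, the probability that some depth-$2c$ node $\nu$ in $\TT_2$ does not survive \emph{and} this propagates up without being ``absorbed'' by a node that fails independently. The crude bound is that this is at most (probability a given depth-$2c$ node fails to survive) times (number of depth-$2c$ nodes on a root-to-depth-$2c$ branch), but that is far too lossy. Instead I expect to argue recursively: let $q_m(x)$ be the survival probability of a node at depth $2(c-m)$ with the depth-$2c$ nodes forced to be leaves, and let $\tilde q_m(x)$ be the same with an arbitrary (adversarial) survival assignment at depth $2c$; the recursion relating depth $2(c-m)$ to depth $2(c-m-1)$ is a product over the (many) children, each factor close to $1$ by Lemma~\ref{fact:f1}(iii) since $z_{i,j}$ is large and $\Gamma_i\gamma_i$ is tiny. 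Unrolling $c$ times — $c$ being constant — the error compounds to something of order $\Gamma_i\gamma_i$ times a constant, which after absorbing the constant into the slack gives the claimed $(1\pm 5\Gamma_i\gamma_i)$ factor. The same argument applied to $P_{\TT_1}$ and $P_{\TT_2}$, which are just the corresponding conditional-survival probabilities multiplied by $x$, yields the second containment.

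The main obstacle, I expect, is getting the \emph{recursion for the discrepancy} tight enough. The naive union bound over depth-$2c$ nodes is exponentially large in $c$ and must be avoided; the point is that for a node $\nu_0$ to have its survival status sensitive to a deep perturbation, \emph{every} child $\nu_1$ must be in the ``borderline'' configuration (all of $\nu_1$'s children born before $\nu_0$, or exactly one non-surviving child), and the probability of the conjunction over the $\Theta(z_{i,j})$ children at each relevant level is doubly small — this is exactly the kind of product-of-near-one-factors that makes $\gamma_i$ (via the exponent $-6000 z_{i,j}$ in its definition) the natural error scale. I would need to set up the borderline-event bookkeeping carefully so that at each of the $c$ steps the extra error is a genuine multiplicative $(1 + O(\Gamma_i\gamma_i))$ and not merely additive, and then check that $c$ being a (large) constant is harmless because $\Gamma_i\gamma_i = n^{-\Theta(\epsilon_1)}$ dominates any constant. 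A secondary, more routine issue is handling the birthtime conditioning in the definition of $P_{\TT_i}$ consistently across the two trees — but since the root's birthtime distribution is the same in both and the coupling leaves all birthtimes untouched, this reduces to the $p_{\TT_i}$ statement integrated against the same conditional law.
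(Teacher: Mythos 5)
Your proposal rests on a premise that is not available: you say "the two trees are identical down to depth $2c$ from the root," but $\TT_1$ and $\TT_2$ are \emph{not} given as identical there. Both satisfy the same constraint — that each even-level internal node has five sets of children with sizes anywhere in $z_{i,j}(1\pm 3000\Gamma_i)$ — but these constraints leave room for the two trees to have genuinely different shapes at every level. So there are two separate sources of discrepancy you must control, and your sketch only addresses one.

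The paper separates them cleanly. It introduces $\TT_3$, the truncation of $\TT_2$ at depth $2c$, and proves two comparisons. First, $p_{\TT_3}\approx p_{\TT_2}$: this is the truncation effect you focus on, and after noting that for $c$ odd, $\TT_3$'s root surviving implies $\TT_2$'s root surviving (your "parity" observation is the right instinct), the gap is bounded by the \emph{expected number of ``relevant'' leaves} — leaves $\nu$ such that, at every even level on the root-to-$\nu$ path, $\nu$'s ancestor and all its siblings have birthtimes smaller than the birthtime two levels up. Contrary to your worry, this is a first-moment (union-bound-style) estimate, and it is not too lossy: the chain of birthtime constraints along the path contributes a factor $(2xn^{-\epsilon_2})^{\Theta(c)}$ which, against the $\prod_j z_{i,j}^{a_j}\le n^{\epsilon_2\sum ja_j+o(1)}$ count of leaves and $x\le n^{-\epsilon_1}$, yields $n^{-\Theta(\epsilon_1 c)} \ll \Gamma_i\gamma_i$ for $c$ large. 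Your proposed ``recursive discrepancy propagation'' is more machinery than this step needs. Second, $p_{\TT_1}\approx p_{\TT_3}$: this is the step your proposal is missing. It compares two depth-$2c$ trees of possibly different shape (within the same constraints), and it \emph{is} done by the kind of level-by-level induction you describe — matching children injectively, using $z_{i,j}(1\pm 3000\Gamma_i)$, the bound $P(x)\le x\le n^{-\epsilon_1}$, and the definition of $\gamma_i$ to show each level contributes only a $(1\pm o(\Gamma_i\gamma_i))$ multiplicative error that compounds over $c$ levels to at most $1\pm\Gamma_i\gamma_i$. So you have the right tools in hand, but you apply the recursion to the wrong comparison and miss the shape-discrepancy comparison entirely.
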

Let $\TT_3$ be obtained by removing from $\TT_2$ every subtree that is rooted
at a node at distance larger than $2c$ from $\nu_{\TT_2}$. Observe that $\TT_3$
satisfies the exact same three properties that $\TT_1$ satisfies. Define the
event that a node at even distance from the root of $\TT_3$ survives exactly as
it was defined for such a node in $\TT_1$. (Note that a node in $\TT_3$ is also
a node in $\TT_2$, but the event that such a node survives with $\TT_2$ being
the underlying tree is not necessarily the same as the event that such a node
survives with $\TT_3$ being the underlying tree. Below, when stating that a
node in $\TT_3$ survives, the exact tree that underlies this event should be
understood from the context. For example, when stating that $\nu_{\TT_2}$
survives we refer to the event that the root of $\TT_2$ survives and not to the
event that the root of $\TT_3$ survives.) Let $\nu_{\TT_3}$ denote the root of
$\TT_3$.  Let $p_{\TT_3}(x)$ be the probability that $\nu_{\TT_3}$ survives
under the assumption that its birthtime is equal to $x n^{-\epsilon_2} / (1 - i
n^{-\epsilon_1 - \epsilon_2})$. Let $P_{\TT_3}(x) \deq x \prob(
\text{$\nu_{\TT_3}$ survives} \given \beta( \nu_{\TT_3} ) \le x n^{-\epsilon_2}
/ (1 - i n^{-\epsilon_1 - \epsilon_2}))$ if $x > 0$ and $P_{\TT_3}(x) \deq 0$
if $x=0$.  The next two lemmas imply Lemma~\ref{lemma:survive1}.
\begin{lemma} 
For $0 \le x \le n^{-\epsilon_1}$, $p_{\TT_3}(x) \in p_{\TT_2}(x)(1 \pm
\Gamma_i\gamma_i)$.
\end{lemma}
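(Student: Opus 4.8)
The plan is a coupling argument. I would couple $\TT_2$ and $\TT_3$ so that every even-distance node $\nu$ at distance at most $2c$ is assigned the same birthtime $\beta(\nu)$ in both trees (recall $\TT_3$ is $\TT_2$ with every subtree rooted at distance $>2c$ deleted, so the two trees literally share this part). For an even-distance node $\nu$ at distance $2m\le 2c$ and $b\in[0,1]$, set $f_\nu(b)\deq\prob(\nu\text{ survives in }\TT_2\mid\beta(\nu)=b)$ and $g_\nu(b)\deq\prob(\nu\text{ survives in }\TT_3\mid\beta(\nu)=b)$, so that with $b^*\deq xn^{-\epsilon_2}/(1-in^{-\epsilon_1-\epsilon_2})$ we have $p_{\TT_2}(x)=f_{\nu_{\TT_2}}(b^*)$ and $p_{\TT_3}(x)=g_{\nu_{\TT_3}}(b^*)$. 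Rephrasing the survival rule, a node $\nu$ survives iff for every child $\nu_1$ it is \emph{not} the case that every child $\nu_2$ of $\nu_1$ both has $\beta(\nu_2)<\beta(\nu)$ and survives; since distinct subtrees are independent and each birthtime is uniform on $[0,1]$, this gives, for $\nu$ at distance $2m<2c$,
\[
f_\nu(b)=\prod_{\nu_1}\Bigl(1-\prod_{\nu_2}\int_0^b f_{\nu_2}(b')\,db'\Bigr),
\]
and the identical recursion with $g$ in place of $f$; for $\nu$ at distance exactly $2c$ one has $g_\nu\equiv 1$, whereas $f_\nu$ is some function valued in $[0,1]$ whose precise form (depending on the unrestricted part of $\TT_2$) will be irrelevant. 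Finally, since $x\le n^{-\epsilon_1}$ and $In^{-\epsilon_1-\epsilon_2}\le n^{\epsilon_1^2-\epsilon_2}=o(1)$ (as $\epsilon_1$ is small relative to $\epsilon_2$), we have $b^*\le B\deq 2n^{-\epsilon_1-\epsilon_2}$, and the recursion evaluates the $f_\nu,g_\nu$ only at arguments in $[0,b^*]\subseteq[0,B]$.

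First I would record a crude bound: for every even-distance $\nu$ at distance $<2c$ and every $b\le B$, using $\int_0^b f_{\nu_2}\le b\le B$ and the fact that the $j$th set of children of $\nu$ has at most $2z_{i,j}$ members, $1-f_\nu(b)\le\sum_{\nu_1,\,|\nu_1|=j}B^j\le 2\sum_{j=1}^5 z_{i,j}B^j$. By Lemma~\ref{fact:f1} and the definition of $z_{i,j}$ one has $z_{i,j}=\Theta\bigl(n^{\epsilon_2 j}\Phi(in^{-\epsilon_1})^{5-j}\phi(in^{-\epsilon_1})^j\bigr)$, hence $z_{i,j}B^j=\Theta\bigl(2^j n^{-\epsilon_1 j}\Phi^{5-j}\phi^j\bigr)\le n^{-\epsilon_1+o(1)}$ for every $1\le j\le 5$ (using $\Phi=O((\ln n)^{1/5})$ and $\phi\le 1$). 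So $1-f_\nu(b)\le n^{-\epsilon_1+o(1)}<1/2$ for large $n$, and in particular $p_{\TT_2}(x)\ge 1/2$ and $p_{\TT_3}(x)\ge 1/2$. It therefore suffices to prove the absolute bound $|p_{\TT_3}(x)-p_{\TT_2}(x)|\le\tfrac12\Gamma_i\gamma_i$, as this implies the stated multiplicative bound (since $\tfrac12\Gamma_i\gamma_i\le\Gamma_i\gamma_i\,p_{\TT_2}(x)$).

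The core step is an induction on levels. For $0\le m\le c$ let $\delta_m\deq\sup_{0\le b\le B}\max_{\nu}|f_\nu(b)-g_\nu(b)|$, the maximum over even-distance nodes $\nu$ at distance $2m$; trivially $\delta_c\le 1$. For $m<c$ I would subtract the two recursions and apply $|\prod a_k-\prod b_k|\le\sum_k|a_k-b_k|$ (valid for $a_k,b_k\in[0,1]$) twice — first over the children $\nu_1$ of $\nu$, then over the children $\nu_2$ of each $\nu_1$, where in the inner step the $|\nu_1|-1$ untouched factors are each $\le b$ — and combine this with $\bigl|\int_0^b(f_{\nu_2}-g_{\nu_2})\bigr|\le b\,\delta_{m+1}$ to get, for all $b\le B$,
\[
|f_\nu(b)-g_\nu(b)|\le\sum_{\nu_1,\,|\nu_1|=j}j\,b^j\,\delta_{m+1}\le 2\Bigl(\sum_{j=1}^5 z_{i,j}\,j\,B^j\Bigr)\delta_{m+1}\le n^{-\epsilon_1+o(1)}\,\delta_{m+1},
\]
the last inequality being the estimate from the previous paragraph. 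Hence $\delta_m\le n^{-\epsilon_1+o(1)}\delta_{m+1}$, and iterating down from $m=c$ (with $c$ constant) gives $\delta_0\le(n^{-\epsilon_1+o(1)})^c=n^{-c\epsilon_1+o(1)}$.

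To conclude, $|p_{\TT_3}(x)-p_{\TT_2}(x)|=|g_{\nu_{\TT_3}}(b^*)-f_{\nu_{\TT_2}}(b^*)|\le\delta_0\le n^{-c\epsilon_1+o(1)}$, and by Lemma~\ref{fact:f1}(iii) there is an absolute constant $D$ with $\Gamma_i\gamma_i\ge n^{-D\epsilon_1}$ for all large $n$ and all $0\le i\le I$; choosing $c$ (which is only required to be sufficiently large) with $c>D+1$ yields $n^{-c\epsilon_1+o(1)}\le\tfrac12\Gamma_i\gamma_i$, finishing the argument (the case $x=0$ is trivial, since then $b^*=0$ and $f_{\nu_{\TT_2}}(0)=g_{\nu_{\TT_3}}(0)=1$). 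The main thing to get right is the bookkeeping confining the argument of every $f_\nu,g_\nu$ to the tiny interval $[0,B]$, $B=n^{-\epsilon_1-\epsilon_2+o(1)}$: this is precisely what turns each pair of recursion levels into a genuine contraction by $n^{-\epsilon_1+o(1)}$, so that a constant number of levels suffices to beat $\Gamma_i\gamma_i$; beyond this I expect only routine estimates.
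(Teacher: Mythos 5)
Your proof is correct, and it takes a genuinely different route from the paper's. The paper exploits the parity of $c$: assuming $c$ is odd (the even case being symmetric), survival of the root in $\TT_3$ \emph{implies} survival in $\TT_2$, which gives one inequality for free; the other direction is obtained by a union bound over ``relevant leaves'' of $\TT_3$ (leaves whose entire path to the root has birthtimes below $2xn^{-\epsilon_2}$), showing the expected number of such leaves is $n^{-\Theta(\epsilon_1 c)}$ and hence the two survival events differ with only that probability. Your argument instead couples the birthtimes of $\TT_2$ and $\TT_3$ on the shared top $2c$ levels, sets up the recursion $f_\nu(b)=\prod_{\nu_1}\bigl(1-\prod_{\nu_2}\int_0^b f_{\nu_2}\bigr)$ (and identically for $g$), and propagates a level-by-level Lipschitz bound $\delta_m\le n^{-\epsilon_1+o(1)}\delta_{m+1}$, so that after $c$ levels the discrepancy is crushed below $\Gamma_i\gamma_i$. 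Both arguments ultimately rest on the same quantitative fact $z_{i,j}B^j=n^{-\epsilon_1 j+o(1)}$, but yours is parity-free and symmetric, while the paper's is slightly lighter on bookkeeping (no telescoping of products, no explicit Lipschitz constants) at the cost of the odd/even case split and the ancillary notion of relevant leaf. One small stylistic caveat: your displayed ``$n^{-\epsilon_1+o(1)}$'' contraction factor really means $O\bigl(n^{-\epsilon_1}(\ln n)^{4/5}\bigr)$; since $c$ is a constant the accumulated polylog factor is still $n^{o(1)}$, so the conclusion $\delta_0\le n^{-c\epsilon_1+o(1)}$ stands, but it is worth making the polylog explicit to confirm it does not compound. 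Also note that (as the paper does implicitly) you are treating ``the root of $\TT_2$ survives'' as a well-defined event for the possibly infinite tree $\TT_2$; your scheme sidesteps unrolling the infinite part by lumping it into the trivial base case $\delta_c\le 1$, which is a clean way to handle it.
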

\begin{proof}
We prove the lemma under the assumption that $c$ is odd. The proof for the case
where $c$ is even is similar and will be omitted.

Fix $0 \le x \le n^{-\epsilon_1}$ and assume that $\beta(\nu_{\TT_2}) =
\beta(\nu_{\TT_3}) = x n^{-\epsilon_2} / (1 - i n^{-\epsilon_1 - \epsilon_2}) <
2 x n^{-\epsilon_2}$.  Since $c$ is odd, the event that $\nu_{\TT_3}$ survives
implies the event that $\nu_{\TT_2}$ survives. Hence $p_{\TT_3}(x) \le
p_{\TT_2}(x)$.  Below we show that $p_{\TT_3}(x) \ge p_{\TT_2}(x) -
n^{-\Theta(\epsilon_1c)}$.  Note that $p_{\TT_2}(x) = \Omega(1)$. (Indeed, a
sufficient condition for the event that $\nu_{\TT_2}$ survives is that for
every child $\nu_1$ of $\nu_{\TT_2}$, there is a child $\nu_2$ of $\nu_1$ with
$\beta(\nu_2) > \beta(\nu_{\TT_2})$.  Given the above assumption on
$\beta(\nu_{\TT_2})$ and the properties of $\TT_2$, this event occurs with
probability $\Omega(1)$.) Therefore, $p_{\TT_3}(x) \ge p_{\TT_2}(x) ( 1 -
n^{-\Theta(\epsilon_1c)})$. Since $c$ is sufficiently large, it follows from
Lemma~\ref{fact:f1} that $\Gamma_i \gamma_i \ge n^{-\Theta(\epsilon_1 c)}$.
This gives the lemma.

Say that a non-root node $\nu$ at even distance from the root of $\TT_3$ is
relevant, if the following two properties hold: first, the grandparent of $\nu$
has a larger birthtime than the birthtime of $\nu$ and the birthtimes of
$\nu$'s siblings (if there are any); second the grandparent of $\nu$ is either
relevant or the root.  Observe that if the root of $\TT_2$ survives then either
the root of $\TT_3$ survives, or else, there is a relevant leaf in $\TT_3$.
Thus, it remains to show that the expected number of relevant leaves in $\TT_3$
is at most $n^{-\Theta(\epsilon_1c)}$.

Say that a leaf $\nu$ in $\TT_3$ is an {$(a_1, a_2, a_3, a_4, a_5)$-type}, if
the path leading from the root to $\nu$ contains exactly $a_j$ nodes at odd
distance from the root which are sets of size $j$. Consider a path
$(\nu_{\TT_3}, \nu_1, \nu_2, \ldots, \nu_{2c})$ from the root to a leaf, where
the leaf $\nu_{2c}$ is an $(a_1, a_2, a_3, a_4, a_5)$-type. Given such a path,
let $N$ be the set of nodes which is the union of $\{\nu_{2b}: 1 \le b \le c\}$
together with $\{\nu : \text{$\nu$ is a sibling of some $\nu_{2b}$ for some $1
\le b \le c$}\}$.  Note that $|N| = \sum_{1 \le j \le 5} j a_j = \Theta(c)$.
Now, if $\nu_{2c}$ is relevant, then the birthtime of every node in $N$ is less
than $2 x n^{-\epsilon_2}$.  This event occurs with probability $(2 x
n^{-\epsilon_2})^{|N|}$.  The number of $(a_1, a_2, a_3, a_4, a_5)$-type leaves
in $\TT_3$ is at most $4^{5c} \prod_{1 \le j \le 5} z_{i,j}^{a_j}$.  Hence, the
expected number of relevant $(a_1, a_2, a_3, a_4, a_5)$-type leaves in $\TT_3$
is at most
\begin{eqnarray*} 
(2 x n^{-\epsilon_2})^{|N|} \cdot 4^{5c} \prod_{1 \le j \le 5} z_{i,j}^{a_j} = 
4^{5c} \cdot \prod_{1 \le j \le 5} (2 x n^{-\epsilon_2})^{j a_j} z_{i,j}^{a_j}
\le n^{-\Theta(\epsilon_1c)},
\end{eqnarray*}
where the inequality follows since $z_{i,j} \le n^{\epsilon_2j + o(1)}$ by
Lemma~\ref{fact:f1} and since $x \le n^{-\epsilon_1}$.  To complete the proof,
note that if a leaf is an $(a_1, a_2, a_3, a_4, a_5)$-type then the number of
choices we have for $\{a_j : 1 \le j \le 5\}$ is at most $(c+1)^5$. A union
bound argument now finishes the proof.
\end{proof}

\begin{lemma}
For $0 \le x \le n^{-\epsilon_1}$,
$p_{\TT_1}(x) \in p_{\TT_3}(x)(1 \pm \Gamma_i\gamma_i)$.
\end{lemma}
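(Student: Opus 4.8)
The plan is to sandwich both $p_{\TT_1}(x)$ and $p_{\TT_3}(x)$ between close multiples of a common ``idealized'' survival probability, namely the one obtained by pretending that at every node at even distance from the root the $j$th set of children has size exactly $z_{i,j}$. Since $\TT_1$ and $\TT_3$ obey identical structural constraints (leaves at distance $2c$; five sets of children of sizes in $z_{i,j}(1\pm 3000\Gamma_i)$; each odd-distance set of size $j$ having $j$ children), it suffices to estimate the survival probability of an even-distance node as a function of the distance of its leaves and of the deviations of its descendants' child-counts from the $z_{i,j}$. First I would dispose of $x=0$ (the root's birthtime is then $0$, and no descendant can be born strictly earlier, so both probabilities are $1$), set $b_0=b_0(x)\deq xn^{-\epsilon_2}/(1-in^{-\epsilon_1-\epsilon_2})$, and record that $0\le b_0\le (1+o(1))n^{-\epsilon_1-\epsilon_2}$ uniformly for $0<x\le n^{-\epsilon_1}$; this uses $i\le I$ and $\epsilon_1^2\ll\epsilon_2$, so that $in^{-\epsilon_1-\epsilon_2}=o(1)$.

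Next I would define $\bar g^{(\ell)}\colon[0,b_0]\to(0,1]$ for $0\le\ell\le c$ by $\bar g^{(0)}\equiv 1$ and $\bar g^{(\ell)}(t)=\prod_{j=1}^{5}\big(1-\bar A^{(\ell-1)}(t)^{j}\big)^{z_{i,j}}$, where $\bar A^{(\ell-1)}(t)\deq\int_0^t\bar g^{(\ell-1)}(u)\,du$. Using $\bar A^{(\ell-1)}(t)\le t\le b_0$ and the definition of $\gamma_i$ one checks $\bar g^{(\ell)}(t)=(1+\gamma_i/2)^{-1/6000}\cdot(1\pm o(\gamma_i))\in[1-\gamma_i,1]$ and hence $\bar A^{(\ell)}(t)\in[t(1-\gamma_i),t]$. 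The key quantitative input, obtained from $-\log(1-u)\ge u$ and $e^v-1\ge v$ applied to the defining formula of $\gamma_i$, is $\gamma_i\ge 24000\sum_{j}z_{i,j}n^{-(\epsilon_1+\epsilon_2)j}$, equivalently $\sum_j z_{i,j}n^{-(\epsilon_1+\epsilon_2)j}\le\gamma_i/24000$.

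The core is an induction on $\ell$ showing: for every node $\nu$ at even distance from the root in $\TT_1$ or in $\TT_3$ whose subtree has leaves at distance $2\ell$ from $\nu$, and every $t\in[0,b_0]$,
\[
\prob(\nu\text{ survives}\mid\beta(\nu)=t)\in\bar g^{(\ell)}(t)\,(1\pm\delta_\ell),
\]
with $\delta_0=0$ and $\delta_\ell\le(1/8+o(1))\Gamma_i\gamma_i+(\gamma_i/900)\,\delta_{\ell-1}$. For the step, conditioning on $\beta(\nu)=t$, the node $\nu$ survives iff no child $\nu_1$ ``kills'' it, these events being independent over the children; a child $\nu_1$ of size $j$ kills $\nu$ iff all its $j$ children $\nu_2$ are born before $t$ and survive, and $\prob(\beta(\nu_2)<t,\ \nu_2\text{ survives})=\int_0^t\prob(\nu_2\text{ survives}\mid\beta(\nu_2)=u)\,du\in\bar A^{(\ell-1)}(t)(1\pm\delta_{\ell-1})$ by the inductive hypothesis. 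Thus $\prob(\nu\text{ survives}\mid\beta(\nu)=t)=\prod_j\big(1-\bar A^{(\ell-1)}(t)^{j}(1\pm\delta_{\ell-1})^{j}\big)^{n_j(\nu)}$ with $n_j(\nu)\in z_{i,j}(1\pm 3000\Gamma_i)$, and comparing with $\bar g^{(\ell)}(t)$ one splits the ratio into a factor $\prod_j(1-\bar A^{(\ell-1)}(t)^j)^{n_j(\nu)-z_{i,j}}$ coming from the count deviation and a factor coming from the birthtime-integral deviation. Both are controlled via $|\log(1-u)|\le u/(1-u)$ together with $\bar A^{(\ell-1)}(t)^j\le b_0^j\le(1+o(1))n^{-(\epsilon_1+\epsilon_2)j}$ and $\sum_j z_{i,j}n^{-(\epsilon_1+\epsilon_2)j}\le\gamma_i/24000$: the first factor lies in $1\pm(1/8+o(1))\Gamma_i\gamma_i$ — this is where the constant $3000$, versus the $6000$ in the definition of $\gamma_i$, is used — and the second in $1\pm(\gamma_i/900)\delta_{\ell-1}$.

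Finally, solving the recursion: since the multiplier of $\delta_{\ell-1}$ is $\gamma_i/900=n^{-\Theta(\epsilon_1)}=o(1)$, the geometric series gives $\delta_\ell\le(1/8+o(1))\Gamma_i\gamma_i$ for every $\ell\le c$, with a constant independent of $c$. Applying the claim to $\nu=\nu_{\TT_1}$ and $\nu=\nu_{\TT_3}$ at depth parameter $c$ and birthtime $b_0$ yields $p_{\TT_1}(x),p_{\TT_3}(x)\in\bar g^{(c)}(b_0)(1\pm\delta_c)$ with $\bar g^{(c)}(b_0)>0$, whence $p_{\TT_1}(x)/p_{\TT_3}(x)\in(1+\delta_c)/(1-\delta_c)\subseteq 1\pm(1/4+o(1))\Gamma_i\gamma_i\subseteq 1\pm\Gamma_i\gamma_i$ for $n$ large, using $\Gamma_i\gamma_i=n^{-\Theta(\epsilon_1)}\to 0$. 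The main obstacle is the bookkeeping in the inductive step: one must keep the per-level error a fixed small fraction of $\Gamma_i\gamma_i$ and, crucially, verify that the coefficient attached to $\delta_{\ell-1}$ is $o(1)$ rather than $1+o(1)$ — otherwise the errors from the $c$ levels would accumulate linearly in $c$ and the clean bound $1\pm\Gamma_i\gamma_i$ would be out of reach.
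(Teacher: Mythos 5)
Your proof is correct, and it takes a recognizably different route from the paper's. The paper proves the same statement by a direct tree-to-tree comparison: it runs an induction with hypothesis $p_{\TT_1,\nu}(x)\in p_{\TT_3,\mu}(x)(1\pm\Gamma_i\gamma_i)$, fixing an injection $l_j$ from the size-$j$ children of $\nu$ (in $\TT_1$) into those of $\mu$ (in $\TT_3$), trimming subtrees from $\TT_1$ if needed so that $|N_j|\le|L_j|$, and then absorbing the induction error by promoting the factor $(1-\Gamma_i\gamma_i)^j$ into an exponent $1-2j\Gamma_i\gamma_i$ and showing the extra exponent multiplies the product by only $1+o(\Gamma_i\gamma_i)$; the leftover children $L_j'=L_j\setminus l_j(N_j)$ (of size $\le 6000\Gamma_i z_{i,j}$) are what give the additive $0.5\Gamma_i\gamma_i$ slack that keeps the hypothesis stable. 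You instead compare each of $\TT_1,\TT_3$ to a common deterministic benchmark $\bar g^{(\ell)}$ obtained by using exactly $z_{i,j}$ children of each size, carrying the hypothesis $\prob(\nu\text{ survives}\mid\beta(\nu)=t)\in\bar g^{(\ell)}(t)(1\pm\delta_\ell)$ and solving the explicit recursion $\delta_\ell\le (1/8+o(1))\Gamma_i\gamma_i + (\gamma_i/900)\delta_{\ell-1}$. The essential mechanism is the same in both — the feedback on the previous level's error is $O(\gamma_i)\cdot\delta_{\ell-1}=o(1)\cdot\delta_{\ell-1}$ rather than $(1+o(1))\delta_{\ell-1}$, which is what prevents linear accumulation over $c$ levels, and the per-level count deviation contributes a fixed small fraction of $\Gamma_i\gamma_i$ via the bound $\sum_j z_{i,j}n^{-(\epsilon_1+\epsilon_2)j}\le\gamma_i/24000$ built into the definition of $\gamma_i$ — but the benchmark-tree formulation makes the contraction explicit and avoids the matching/trimming bookkeeping. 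It also establishes, as a byproduct, a quantitative two-sided estimate for the survival probability of a single tree (not just that the two trees agree), at the small cost of introducing the iterated function $\bar g^{(\ell)}$. Your final step, passing from $p_{\TT_1},p_{\TT_3}\in\bar g^{(c)}(b_0)(1\pm\delta_c)$ with $\delta_c\le(1/8+o(1))\Gamma_i\gamma_i$ to $p_{\TT_1}/p_{\TT_3}\in 1\pm\Gamma_i\gamma_i$, correctly pays the factor of $2$ from dividing the two-sided estimates, and still lands comfortably inside $1\pm\Gamma_i\gamma_i$.
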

\begin{proof}
For a node $\nu$ in a tree $\TT_*$, let $p_{\TT_*,\nu}(x)$ be the probability
that $\nu$ survives under the assumption that $\beta(\nu) = x n^{-\epsilon_2} /
(1 - i n^{-\epsilon_1 - \epsilon_2})$ and furthermore, let $P_{\TT_*,\nu}(x)
\deq x \prob(\text{$\nu$ survives} \given \beta(\nu) \le x n^{-\epsilon_2} / (1
- i n^{-\epsilon_1 - \epsilon_2}))$ if $x > 0$ and $P_{\TT_*,\nu}(x) \deq 0$ if
$x=0$.  The following implies the lemma.
\begin{claim}
Let $0 \le x \le n^{-\epsilon_1}$. Let $0 \le b \le c$ be an integer.  If $\nu$
is a node at height $2b$ in $\TT_1$ and $\mu$ is a node at height $2b$ in
$\TT_3$, then
$p_{\TT_1,\nu}(x) \in p_{\TT_3,\mu}(x) (1 \pm \Gamma_i \gamma_i)$.
\end{claim}
The proof of the claim is by induction on $b$. For $b=0$, both $\nu$ and $\mu$
are leaves and so the claim holds since by definition $p_{\TT_1,\nu}(x) =
p_{\TT_3,\mu}(x) = 1$ for all $0 \le x \le n^{-\epsilon_1}$.  Let $1 \le b \le
c$ be an integer and assume the claim holds for $b-1$, for all $0 \le x \le
n^{-\epsilon_1}$.  Note that by the induction hypothesis, if $\nu'$ is a node
at height $2(b-1)$ in $\TT_1$ and $\mu'$ is a node at height $2(b-1)$ in
$\TT_3$ then for all $0 \le x \le n^{-\epsilon_1}$,
\begin{eqnarray*}
P_{\TT_1,\nu'}(x) \in P_{\TT_3,\mu'}(x) (1 \pm \Gamma_i \gamma_i).
\end{eqnarray*}

Fix $0 \le x \le n^{-\epsilon_1}$, a node $\nu$ at height $2b$ in $\TT_1$ and a
node $\mu$ at height $2b$ in $\TT_3$.  Recall that $\TT_1$ and $\TT_3$ satisfy
the same properties, and so it is enough to prove that $p_{\TT_1, \nu} \le
p_{\TT_3, \mu} ( 1 + \Gamma_i \gamma_i)$. 
Let $\Children(\cdot)$ denote the set of children of a given node in either
$\TT_1$ or $\TT_3$.  Let $N_j \deq \{ \Children(\nu_1) : \text{$\nu_1$ is a
child of $\nu$ which is a set of size $j$ } \}$, and likewise, let $L_j \deq \{
\Children(\mu_1) : \text{$\mu_1$ is a child of $\mu$ which is a set of size $j$
} \}$.
It is safe to assume that $z_{i,j}(1 - 3000 \Gamma_i) \le |N_j| \le |L_j|$ for
every $j$ (since otherwise we can remove some of the subtrees that are rooted
at some of the children of $\nu$ so that this assumption does hold; such an
alteration will only increase the probability that $\nu$ survives).  Let $l_j$
be an injective function that associates each set in $N_j$ with a unique set in
$L_j$.  For brevity, let $\zeta \deq 1 / (1 - i n^{-\epsilon_1 - \epsilon_2})$.
We have
\begin{eqnarray*}
p_{\TT_1,\nu}(x) & = & \prod_{1 \le j \le 5} \prod_{S \in N_j} \Big(1 - \zeta^j
n^{-\epsilon_2j} \prod_{\nu' \in S} P_{\TT_1,\nu'}(x) \Big) \\
&\le& \prod_{1 \le j \le 5} \prod_{S \in N_j} \Big(1 - \zeta^j n^{-\epsilon_2j}
\prod_{\mu' \in l_j(S)} \big( P_{\TT_3,\mu'}(x) (1 - \Gamma_i \gamma_i) \big)
\Big) \\ 
&\le& \prod_{1 \le j \le 5} \prod_{S \in N_j} \Big(1 - \zeta^j n^{-\epsilon_2j}
\prod_{\mu' \in l_j(S)} P_{\TT_3,\mu'}(x) \Big)^{1 - 2 j \Gamma_i \gamma_i} =
(*),
\end{eqnarray*}
where the first equality is by definition, the first inequality is by the
induction hypothesis, and the second inequality follows from known exponential
inequalities (i.e., the fact that for $a > 1$, $\exp(-1/(a-1)) \le 1-1/a \le
\exp(-1/a)$), together with the fact that $\zeta^j n^{-\epsilon_2j} =
o(\Gamma_i \gamma_i)$ (which follows from Lemma~\ref{fact:f1}) and the fact
that $P_{\TT_3, \mu'}(x) \le x \le n^{-\epsilon_1}$ for all $\mu'$.
This upper bound on $P_{\TT_3,\mu'}(x)$, together with the fact that $|N_j| \le
2 z_{i,j}$ and with Lemma~\ref{fact:f1}, gives
\begin{eqnarray*} 
\prod_{1 \le j \le 5} \prod_{S \in N_j} \Big(1 - \zeta^j n^{-\epsilon_2j}
\prod_{\mu' \in l_j(S)} P_{\TT_3,\mu'}(x) \Big)^{- 2 j \Gamma_i \gamma_i} &\le&
\prod_{1 \le j \le 5} \prod_{S \in N_j} \Big(1 - \zeta^j n^{-\epsilon_1j -
\epsilon_2j} \Big)^{- 2 j \Gamma_i \gamma_i} \\
&\le& \prod_{1 \le j \le 5} \Big(1 - \zeta^j n^{-\epsilon_1j - \epsilon_2j}
\Big)^{- 4 j \Gamma_i \gamma_i z_{i,j}} \\
&\le& 1 + o(\Gamma_i \gamma_i).  
\end{eqnarray*}
Moreover, letting $L_j' \deq L_j \setminus \{ l_j(S) : S \in N_j \}$, we have
\begin{eqnarray*}
\prod_{1 \le j \le 5} \prod_{S \in N_j} \Big(1 - \zeta^j n^{-\epsilon_2j}
\prod_{\mu' \in l_j(S)} P_{\TT_3,\mu'}(x) \Big) &=& 
p_{\TT_3, \mu}(x) \prod_{1 \le j \le 5} \prod_{S \in L_j'} \Big(1 - \zeta^j
n^{-\epsilon_2j} \prod_{\mu' \in S}  P_{\TT_3,\mu'}(x) \Big)^{-1} \\ 
&\le& p_{\TT_3, \mu}(x) \prod_{1 \le j \le 5} \Big(1 - 2 n^{-\epsilon_1j -
\epsilon_2j} \Big)^{ - 6000 \Gamma_i z_{i,j}} \\ 
&\le& p_{\TT_3, \mu}(x) (1 + 0.5 \Gamma_i \gamma_i),  
\end{eqnarray*}
where the equality follows from the definition of $p_{\TT_3,\mu}(x)$, the first
inequality follows since $\zeta^j \le 2$, since $P_{\TT_3, \mu'}(x) \le x \le
n^{-\epsilon_1}$ for all $\mu'$, and since $|L_j'| = |L_j| - |N_j| \le 6000
\Gamma_i z_{i,j}$, and the second inequality follows using the definition of
$\gamma_i$.  It follows that $(*) \le p_{\TT_3, \mu}(x) (1 + \Gamma_i
\gamma_i)$.
\end{proof}

\subsection{Proof of Lemma~\ref{lemma:survive}} \label{sec:4:2}
%
Let $\TT_4$ be an infinite tree with the following two properties: first, each
node at even distance from the root has $0.5 n^{5\epsilon_2}$ children; second,
each node at odd distance from the root has $5$ children.  (Note that we
implicitly assume that $0.5 n^{5\epsilon_2}$ is an integer. This is a safe
assumption since we can always choose $\epsilon_2$ so that this assumption
holds.)
Extend $\beta$ by assigning each node $\nu$ at even distance from the root of
$\TT_4$ a uniformly random birthtime $\beta(\nu)$ in the unit interval.  Define
the event that a node at even distance from the root of $\TT_4$ survives
exactly as it was defined for such a node in $\TT_1$.
Let $\nu_{\TT_4}$ denote the root of $\TT_4$.  Let $p_{\TT_4}(x)$ be the
probability that $\nu_{\TT_4}$ survives under the assumption that its birthtime
is equal to $x n^{-\epsilon_2}$, at the limit as $n \to \infty$.  It is not
hard to see that $p_{\TT_4}(x)$ is continuous and bounded.  Hence
$p_{\TT_4}(x)$ is integrable.  Let $P_{\TT_4}(x) \deq \int_0^x p_{\TT_4}(y)
dy$. Note that for $0 < x \le n^{\epsilon_2}$, $P_{\TT_4}(x) = x
\prob(\text{$\nu_{\TT_4}$ survives} \given \beta(\nu_{\TT_4}) \le x
n^{-\epsilon_2})$.

\begin{lemma} \label{lemma:survive2}
For $0 \le x \le n^{\epsilon_2}$, $p_{\TT_4}(x) = \phi(x)$ and $P_{\TT_4}(x) =
\Phi(x)$.
\end{lemma}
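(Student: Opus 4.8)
The plan is to set up recurrences for $p_{\TT_4}(x)$ and $P_{\TT_4}(x)$ directly from the definition of survival, and then recognize these recurrences as the integral equation satisfied by $\phi$ and $\Phi$. First I would write down the survival recurrence at the root. Condition on $\beta(\nu_{\TT_4}) = xn^{-\epsilon_2}$. The root $\nu_{\TT_4}$ survives iff for every child $\nu_1$ (there are $0.5n^{5\epsilon_2}$ of them), either not all of $\nu_1$'s $5$ children $\nu_2$ have $\beta(\nu_2) < \beta(\nu_{\TT_4})$, or $\nu_1$ has a grandchild (through some $\nu_2$) that does not survive. Since the five children $\nu_2$ of $\nu_1$ are independent, each with birthtime uniform in $[0,1]$, and each $\nu_2$ in turn has $5$ children that survive (recursively) with the appropriate probabilities, the probability that a single child $\nu_1$ "fails" $\nu_0$ is
\begin{eqnarray*}
\prod_{j=1}^{5}\Big(\text{contribution of the }j\text{th }\nu_2\Big),
\end{eqnarray*}
but in $\TT_4$ all five children $\nu_2$ are symmetric, so this is $\big(\int_0^{xn^{-\epsilon_2}} p_{\TT_4,\nu_2'}(\beta(\nu_2)\cdot n^{\epsilon_2})\,d\beta(\nu_2)\big)^{?}$ — the key point is that the event "$\nu_1$ has a child that does not survive" has, conditioned on all five $\nu_2$ being born before $\nu_0$, probability $1 - \prod(\text{each }\nu_2\text{'s five children all survive})$, which unwinds into a product of factors $P_{\TT_4}(x)$. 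After the dust settles one obtains an identity of the shape
\begin{eqnarray*}
p_{\TT_4}(x) \;=\; \lim_{n\to\infty}\Big(1 - n^{-5\epsilon_2}\,\psi\big(P_{\TT_4}(x)\big)\Big)^{0.5 n^{5\epsilon_2}} \;=\; \exp\big(-0.5\,\psi(P_{\TT_4}(x))\big),
\end{eqnarray*}
where $\psi$ is a polynomial in $P_{\TT_4}(x)$ coming from the five layers of children; the careful bookkeeping of the five "sets of children" in $\TT_1$ was collapsed in $\TT_4$ to a single pool, so $\psi$ should come out to $P_{\TT_4}(x)^5$ (the five independent $\nu_2$'s each needing their block of children), giving $p_{\TT_4}(x) = \exp(-0.5\,P_{\TT_4}(x)^5)$.

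Next I would differentiate. By construction $P_{\TT_4}(x) = \int_0^x p_{\TT_4}(y)\,dy$, so $P_{\TT_4}'(x) = p_{\TT_4}(x) = \exp(-0.5\,P_{\TT_4}(x)^5)$ with $P_{\TT_4}(0) = 0$. This is exactly the separable ODE $\Phi'(x) = \exp(-0.5\,\Phi(x)^5)$, $\Phi(0)=0$, which defines $\Phi$ uniquely (the right-hand side is smooth and bounded, so there is a unique global solution). Hence $P_{\TT_4}(x) = \Phi(x)$ for all $x \ge 0$, and in particular on $[0, n^{\epsilon_2}]$; differentiating once more gives $p_{\TT_4}(x) = \phi(x)$. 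To make the limiting step rigorous I would use the monotone/continuity structure: $p_{\TT_4}$ is continuous and bounded (as already noted in the text), the finite-$n$ survival probabilities converge pointwise, and one passes the limit through the integral by bounded convergence to get the integral equation in the limit.

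The main obstacle I expect is the combinatorial identification of $\psi$ — that is, carefully verifying that when one unrolls two levels of the $\TT_4$ recursion (root $\to$ children $\to$ grandchildren, with the alternating $0.5n^{5\epsilon_2}$ and $5$ branching), the "non-survival" probability of a single child $\nu_1$, conditioned on its five grandchildren all being born early, is precisely $\prod_{j=1}^5 P_{\TT_4}(x)$ up to $1+o(1)$, so that the product over the $0.5n^{5\epsilon_2}$ children exponentiates cleanly to $\exp(-0.5 P_{\TT_4}(x)^5)$. One has to check that the birthtime threshold is $xn^{-\epsilon_2}$ (so each grandchild is born early with probability $xn^{-\epsilon_2}$, and conditioned on that its birthtime is uniform on $[0, xn^{-\epsilon_2}]$, which is what turns a factor "$\int_0^{xn^{-\epsilon_2}} p_{\TT_4}(\cdot)$" into $n^{-\epsilon_2}P_{\TT_4}(x)$), and that lower-order terms from non-independence across the five grandchild-blocks of a fixed $\nu_1$ vanish in the limit. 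Everything else — the ODE solve and the two differentiations — is routine.
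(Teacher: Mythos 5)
Your plan matches the paper's own proof: derive the limit identity $p_{\TT_4}(x) = \exp(-0.5 P_{\TT_4}(x)^5)$ from the definition of survival at the root, then observe that $P_{\TT_4}' = p_{\TT_4}$, $P_{\TT_4}(0)=0$ turns this into exactly the ODE whose solution is $\Phi$ by definition. The formula you arrive at and the ODE solve are both correct.

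However, your derivation of the limit identity — the part you flag as ``the main obstacle'' — has two slips in the bookkeeping. First, you repeatedly say $\nu_1$'s ``grandchildren'' where the survival condition actually refers to $\nu_1$'s \emph{children} $\nu_2$ (the nodes at even distance $2$ from the root, of which $\nu_1$ has exactly $5$): $\nu_0$ fails to survive iff some child $\nu_1$ has \emph{all five of its children} $\nu_2$ born before $\nu_0$ and all surviving; there is no need to descend another level. Second, you claim that conditioned on the $\nu_2$'s being born early, the relevant product is $\prod_{j=1}^5 P_{\TT_4}(x)$, but by the paper's definition $P_{\TT_4}(x) = x\,\prob(\text{survives} \mid \beta \le xn^{-\epsilon_2})$, so the conditional survival probability is $P_{\TT_4}(x)/x$, not $P_{\TT_4}(x)$. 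What you actually want is the \emph{unconditional} probability that a fixed $\nu_2$ is both born before $xn^{-\epsilon_2}$ and survives, namely $\int_0^{xn^{-\epsilon_2}} p_{\TT_4}(yn^{\epsilon_2})\,dy = n^{-\epsilon_2}P_{\TT_4}(x)$. Multiplying over the five independent $\nu_2$'s gives the failure probability $n^{-5\epsilon_2}P_{\TT_4}(x)^5$ per child $\nu_1$, and raising $1$ minus this to the power $0.5n^{5\epsilon_2}$ yields the paper's identity cleanly. As written, your version would have an extra stray factor of $x^5$ if one followed it literally; the $x$'s happen to cancel with the ones you omitted from the birth-early probability, which is why you still land on the right answer, but the intermediate statement is not correct as stated.
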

\begin{proof}
By definition, for every $0 \le x \le n^{\epsilon_2}$,
\begin{eqnarray} \label{eq:g0} 
p_{\TT_4}(x) = \lim_{n \to \infty} \Big(1 - n^{-5\epsilon_2} P_{\TT_4}(x)^5
\Big)^{0.5 n^{5\epsilon_2}}  = \exp\Big( - 0.5 P_{\TT_4}(x)^5 \Big).  
\end{eqnarray}
(Indeed, if $0 < x \le n^{\epsilon_2}$ then the first equality above holds by
definition of the event that the root of $\TT_4$ survives; if $x = 0$ then the
first equality above holds since $p_{\TT_4}(0) = 1$ and $P_{\TT_4}(0) = 0$.)

By the fundamental theorem of calculus, $p_{\TT_4}(x)$ is the derivative of
$P_{\TT_4}(x)$.  Hence, we view~(\ref{eq:g0}) as the           differential
equation that it is.  Since $P_{\TT_4}(0) = 0$, by definition the solution of
this differential equation is $p_{\TT_4}(x) = \phi(x)$ and $P_{\TT_4}(x) =
\Phi(x).$
\end{proof}

Let $\S_1$ (respectively $\S_2$) be the event that the root of $\TT_4$ survives
under the assumption that its birthtime is equal to $i n^{-\epsilon_1 -
\epsilon_2}$ (respectively $(i+1) n^{-\epsilon_1 - \epsilon_2}$).  Let $\S_3$
(respectively $\S_4$) be the event that the root of $\TT_4$ survives,
conditioned on the event that its birthtime is at most $i n^{-\epsilon_1 -
\epsilon_2}$ (respectively $(i+1) n^{-\epsilon_1 - \epsilon_2}$), unless $i=0$
in which case we let $\S_3$ be the empty event.  Let $\S_5$ be the event that
the root of $\TT_4$ survives, conditioned on the event that its birthtime is in
$[in^{-\epsilon_1 - \epsilon_2}, (i+1) n^{-\epsilon_1 - \epsilon_2}]$.
By Lemma~\ref{lemma:survive2} and since $\prob(\S_2) = \prob(\S_1) \prob(\S_2
\given \S_1)$ and $\prob(\S_4) = \frac{i}{i+1} \prob(\S_3)  +  \frac{1}{i+1}
\prob(\S_1) \prob(\S_5 \given \S_1)$, we have
\begin{eqnarray} \label{eq:g1}
\prob(\S_2 \given \S_1) =  \frac{\phi((i+1)n^{-\epsilon_1})}{
\phi(in^{-\epsilon_1})} \quad \text{ and } \quad
\prob(\S_5 \given \S_1) = \frac{\Phi((i+1)n^{-\epsilon_1}) -
\Phi(in^{-\epsilon_1})}{n^{-\epsilon_1} \phi(in^{-\epsilon_1})}.
\end{eqnarray}

Let us consider  the events $\S_1$, $\S_2$ and $\S_5$.  These events depend on
the random function $\beta$. More accurately, these events depend on the
birthtimes of the nodes that are at even distances from the root of $\TT_4$. 
For the purpose of giving a few observations, let us imagine in this paragraph
that we can access $\beta$ through two different oracles.  The revealing-oracle
reveals everything: given a node $\nu$ it returns its birthtime $\beta(\nu)$.
The hiding-oracle does not reveal everything: given a node $\nu$ it returns its
birthtime $\beta(\nu)$ only if its birthtime is at most $in^{-\epsilon_1 -
\epsilon_2}$; otherwise it returns ``hidden'' (in which case one only learns
that $\beta(\nu) > in^{-\epsilon_1 - \epsilon_2}$).
Observe that in order to determine the occurrence of $\S_1$, it suffices to
only consult the hiding-oracle. 
In contrast, in order to determine the occurrence of $\S_2$ and $\S_5$, it is
not sufficient in general to only consult the hiding-oracle, as these two
events may depend on the exact birthtimes of nodes whose birthtimes are larger
than $in^{-\epsilon_1 - \epsilon_2}$; it is, however, sufficient to first
consult the hiding-oracle, to verify using the information obtained from the
hiding-oracle that $\S_1$ occurs (this is a necessary condition for the
occurrence of both $\S_2$ and $\S_5$), and then consult the revealing-oracle
for the birthtimes of all nodes whose exact birthtimes were not revealed by the
hiding-oracle.
The point we'd like to make is that after consulting the hiding-oracle and
verifying that $\S_1$ occurs, there are some nodes in $\TT_4$ whose birthtimes
need not be queried via the revealing-oracle in order to determine the
occurrence of $\S_2$ and $\S_5$.  We describe these nodes now.
Let $\nu$ be a non-root node at even distance from the root of $\TT_4$.  In
order to determine whether or not the grandparent of $\nu$ survives, we are
interested in knowing (among other things) whether or not the following holds:
$\nu$ and its siblings all have birthtimes smaller than that of their
grandparent, and all survive. From this we get the following observations.
If $\beta(\nu) \le in^{-\epsilon_1 - \epsilon_2}$ and we know that $\nu$
survives given only the information provided by the hiding-oracle, then in
order to determine the occurrence of $\S_2$ and $\S_5$, we may ignore the
subtree rooted at $\nu$ upon querying the revealing-oracle.
Further, if $\beta(\nu) \le in^{-\epsilon_1 - \epsilon_2}$ and we know that
$\nu$ does not survive given only the information provided by the
hiding-oracle, then in order to determine the occurrence of $\S_2$ and $\S_5$,
we may ignore the subtree rooted at the parent of $\nu$ (and in particular,
ignore the subtrees rooted at $\nu$ and its siblings) upon querying the
revealing-oracle.
Lastly, if $\beta(\nu) > in^{-\epsilon_1 - \epsilon_2}$ and $\nu$ has a child
$\nu_1$, such that  given only the information provided by the hiding-oracle we
know that for every child $\nu_2$ of $\nu_1$ it holds that $\beta(\nu_2) \le
in^{-\epsilon_1 - \epsilon_2}$ and $\nu_2$ survives, then in order to determine
the occurrence of $\S_2$ and $\S_5$, we may ignore the subtree rooted at the
parent of $\nu$ upon querying the revealing-oracle.  These observations
motivate the next definition.

Let $\TT_5$ be a random rooted tree (depending on the random function $\beta$)
that is obtained from $\TT_4$ using the following procedure. For every non-root
node $\nu$ in $\TT_4$ at even distance from the root, do: if $\beta(\nu) \le i
n^{-\epsilon_1 - \epsilon_2}$ and $\nu$ survives then remove the subtree rooted
at $\nu$, and if $\nu$ doesn't survive then remove the subtree rooted at the
parent of $\nu$; if $\beta(\nu) > i n^{-\epsilon_1 - \epsilon_2}$, and there is
a child $\nu_1$ of $\nu$ such that for every child $\nu_2$ of $\nu_1$ it holds
that $\beta(\nu_2) \le i n^{-\epsilon_1 - \epsilon_2}$ and $\nu_2$ survives
(which is the same as saying that $\nu$ doesn't survive under the assumption
that its birthtime is exactly $in^{-\epsilon_1 - \epsilon_2}$), then remove the
subtree rooted at the parent of $\nu$.  This gives the random rooted tree
$\TT_5$.
Assign each node $\nu$ at even distance from the root of $\TT_5$ a uniformly
random birthtime $\beta'(\nu)$ in the unit interval. 
Define the event that a node at even distance from the root of $\TT_5$ survives
exactly as it was defined for such a node in $\TT_1$, only that in the current
definition we replace $\beta$ with $\beta'$.  
%
%
Let $\S_6$ be the event that the root of $\TT_5$ survives under the assumption
that its birthtime under $\beta'$ is equal to $n^{-\epsilon_1 - \epsilon_2} /
(1 - i n^{-\epsilon_1 - \epsilon_2})$. 
Let $\S_7$ be the event that the root of $\TT_5$ survives, conditioned on the
event that its birthtime under $\beta'$ is at most $n^{-\epsilon_1 -
\epsilon_2} / (1 - i n^{-\epsilon_1 - \epsilon_2})$.
Given the discussion in the previous paragraph, we observe that $\prob(\S_2) =
\prob(\S_1 \wedge \S_6)$ and that $\prob(\S_5) = \prob(\S_1 \wedge \S_7)$.
These two equalities, together with the fact that $\S_1$ is implied by both
$\S_2$ and $\S_5$, give 
\begin{equation} \label{eq:g2}
\begin{split}
\prob(\S_6 \given \S_1) = \frac{\prob(\S_2)}{\prob(\S_1)} = \frac{\prob(\S_1
\wedge \S_2)}{\prob(\S_1)} = \prob(\S_2 \given \S_1) \text{\quad and } \\
\prob(\S_7 \given \S_1) = \frac{\prob(\S_5)}{\prob(\S_1)} = \frac{\prob(\S_1
\wedge \S_5)}{\prob(\S_1)} = \prob(\S_5 \given \S_1).
\end{split}
\end{equation}
From~(\ref{eq:g1})~and~(\ref{eq:g2}) we get
\begin{eqnarray} \label{eq:g3}
\prob(\S_6 \given \S_1) =  \frac{\phi((i+1)n^{-\epsilon_1})}{
\phi(in^{-\epsilon_1})} \quad \text{ and } \quad
\prob(\S_7 \given \S_1) = \frac{\Phi((i+1)n^{-\epsilon_1}) -
\Phi(in^{-\epsilon_1})}{n^{-\epsilon_1} \phi(in^{-\epsilon_1})}.
\end{eqnarray}
In addition, by~(\ref{eq:g2}), Lemma~\ref{lemma:survive2} (which implies
$\prob(\S_2) = \phi((i+1) n^{-\epsilon_1}))$, Lemma~\ref{fact:f1} (which
implies $\phi((i+1) n^{-\epsilon_1}) \ge n^{-\Theta(\epsilon_1)}$), and the
fact that $\S_2$ implies $\S_5$,
\begin{equation} \label{eq:g4}
\begin{split}
\prob(\S_6 \given \S_1) \ge \prob(\S_2) \ge n^{-\Theta(\epsilon_1)} \text{\quad
and} \\ 
\prob(\S_7 \given \S_1) \ge \prob(\S_5) \ge \prob(\S_2) \ge
n^{-\Theta(\epsilon_1)}.
\end{split}
\end{equation}
Let $\S_8$ be the event that for every node at even distance less than $2c$
from the root of $\TT_5$, the number of children of that node which in turn
have exactly $j$ children is in $z_{i,j}(1 \pm 3000 \Gamma_i)$.  By
Lemma~\ref{lemma:survive2}, Chernoff's bound and the union bound, one can find that $\prob(\S_8)
\ge 1 - n^{-\omega(1)}$. This, with~(\ref{eq:g4}), and the fact that
$\prob(\S_1) \ge n^{-\Theta(\epsilon_1)}$ (which follows from
Lemmas~\ref{lemma:survive2}~and~\ref{fact:f1}), implies
\begin{equation} \label{eq:g5}
\begin{split}
\prob(\S_6 \given \S_1 \wedge \S_8) \in \prob(\S_6 \given \S_1) (1 \pm
n^{-\omega(1)}) \text{\quad and } \\
\prob(\S_7 \given \S_1 \wedge \S_8) \in \prob(\S_7 \given \S_1) (1 \pm
n^{-\omega(1)}).
\end{split}
\end{equation}

To conclude the proof, note that if we condition on $\S_1 \wedge \S_8$, $\TT_5$
is a random tree which is isomorphic to a tree that satisfies the same
properties as $\TT_2$. Hence, conditioned on $\S_1 \wedge \S_8$, the
probability of $\S_6$ is a weighted average of elements in $\bigcup_{\TT_2'}
\{p_{\TT_2'}(n^{-\epsilon_1})\}$, and the probability of $\S_7$ is a weighted
average of elements in $\bigcup_{\TT_2'} \{ n^{\epsilon_1}
P_{\TT_2'}(n^{-\epsilon_1}) \}$, where $\bigcup_{\TT_2'}$ ranges over trees
that satisfy the same properties as $\TT_2$.  By applying
Lemma~\ref{lemma:survive1} twice we get that $p_{\TT_2'}(n^{-\epsilon_1}) \in
p_{\TT_2}(n^{-\epsilon_1}) ( 1 \pm 11 \Gamma_i \gamma_i )$ for every tree
$\TT_2'$ that satisfies the same properties as $\TT_2$.  Therefore, $\prob(\S_6
\given \S_1 \wedge \S_8) \in p_{\TT_2}(n^{-\epsilon_1}) (1 \pm 11 \Gamma_i
\gamma_i)$. Hence, $p_{\TT_2}(n^{-\epsilon_1}) \in \prob(\S_6 \given \S_1
\wedge \S_8) (1 \pm 12 \Gamma_i \gamma_i)$.  A similar argument shows that
$P_{\TT_2}(n^{-\epsilon_1}) \in n^{-\epsilon_1} \prob(\S_7 \given \S_1 \wedge
\S_8) (1 \pm 12 \Gamma_i \gamma_i)$.  This, together
with~(\ref{eq:g3})~and~(\ref{eq:g5}), implies
\begin{eqnarray*}
p_{\TT_2}(n^{-\epsilon_1}) \in \frac{\phi((i+1)n^{-\epsilon_1})}{
\phi(in^{-\epsilon_1})} (1 \pm 13 \Gamma_ i \gamma_i) \text{\quad and } \\
P_{\TT_2}(n^{-\epsilon_1}) \in \frac{\Phi((i+1)n^{-\epsilon_1}) -
\Phi(in^{-\epsilon_1})}{\phi(in^{-\epsilon_1})} (1 \pm 13 \Gamma_i \gamma_i).
\end{eqnarray*}
These estimates, together Lemma~\ref{lemma:survive1}, give
Lemma~\ref{lemma:survive}.

\section{Survival and the process} \label{sec:6}
%
In this section we relate the main result of the previous section to the
process.
Fix for the rest of the section an integer $0 \le i < I$.  For an integer $1
\le j \le 5$ and an edge $f \in \NotTrav_i$, let $X''_{i,j}(f)$ be the set of
all $G \in X_{i,j}(f)$ such that $G \subseteq \M_i \cup \BigBite_{i+1}$.
For an integer $c \ge 1$ and an edge $f \in \NotTrav_i$, we define a finite,
rooted, labeled tree $\TT_c(f)$; to do so, we first define another tree
$\TT'_c(f)$ and then alter it to obtain $\TT_c(f)$.  Let $\TT'_c(f)$ be the
finite, rooted, labeled tree with the following four properties: first, every
leaf in the tree is at distance $2c$ from the root; second, the root is labeled
with the edge $f$; third, if a non-leaf node $\nu$ at even distance from the
root is labeled with an edge $g$, then its set of children is the set $\{\nu_G
: G \in \bigcup_{1 \le j \le 5} X''_{i,j}(g)\}$, where the label of $\nu_G$ is
the graph $G$; fourth, if a node $\nu$ at odd distance from the root is labeled
with a graph $G$, then its set of children is the set $\{\nu_g  : g \in G \cap
\BigBite_{i+1}\}$, where the label of $\nu_g$ is the edge $g$.
Let $\TT_c(f)$ be obtained by removing subtrees from $\TT'_c(f)$ as follows:
for every non-leaf, non-root node $\nu$ at even distance from the root, if
$\nu$ has a child labeled $G$ and a grandparent labeled $g \in G$, then remove
the subtree rooted at the child labeled $G$.

Let $\nu_0$ be a node labeled $g_0$ at even distance from the root of
$\TT_c(f)$.  Define the event that $\nu_0$ survives as follows. If $\nu_0$ is a
leaf then $\nu_0$ survives by definition.  Otherwise, $\nu_0$ survives if and
only if for every child $\nu_1$ of $\nu_0$, the following holds: if for every
child $\nu_2$ of $\nu_1$, labeled $g_2$, we have $g_2 \in \Bite_{i+1}$, and in
case $g_0 \in \Bite_{i+1}$ we also have that the birthtime of $g_2$ is less
than the birthtime of $g_0$, then $\nu_1$ has a child that does not survive.
For $f \in \NotTrav_i$, let $\S_c(f)$ be the event that the root of $\TT_c(f)$
survives.  For $F \subseteq \NotTrav_i$, let $\S_c(F) \deq \bigwedge_{f \in F}
\S_c(f)$, and let $\I_c(F)$ be the event that every two distinct nodes at even
distances from the roots of the trees in the forest $\{\TT_c(f) : f \in F\}$
have two distinct labels. 
\begin{lemma} \label{lemma:conn}
Let $c \ge 1$ be an odd integer. Let $F \subseteq \NotTrav_i$ and assume that
$\M_i \cup F$ is $K_4$-free. Then: (i) assuming $|F|=1$, $\S_c(F) \implies
\text{ $\M_{i+1} \cup F$ is $K_4$-free} \implies \S_{c+1}(F)$; (ii) assuming
$|F| \ge 2$, $\I_c(F) \wedge \S_c(F) \implies \text{ $\M_{i+1} \cup F$ is
$K_4$-free}$.  
\end{lemma}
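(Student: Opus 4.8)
The plan is to unwind the recursive definitions of ``survives'' and of the trees $\TT_c(f)$ so that the statement ``$\M_{i+1}\cup F$ is $K_4$-free'' is expressed purely in terms of what happens during the traversal of $\Bite_{i+1}$. Recall that an edge $g\in\Bite_{i+1}$ is rejected (not added to $\M_i$) precisely when, at the moment it is traversed, the current graph already contains a path completing $g$ to a $K_4$; equivalently, there is some $G\in X''_{i,j}(g)$ all of whose $\BigBite_{i+1}$-edges lie in $\Bite_{i+1}$ and were traversed earlier than $g$. So the key recursive fact is: \emph{$g$ gets added to $\M_{i+1}$ iff for every $G\in\bigcup_j X''_{i,j}(g)$, either some edge of $G\cap\BigBite_{i+1}$ is not in $\Bite_{i+1}$, or some edge of $G\cap\BigBite_{i+1}$ with smaller birthtime than $g$ is itself rejected.} Comparing this with the definition of the survival event, one sees that a node $\nu_0$ labeled $g_0$ at even distance from the root ``survives'' is, level by level, exactly the statement that $g_0$ would be \emph{rejected} if it were traversed at its birthtime and all relevant earlier edges behaved as dictated by the subtree. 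This is why the parity of $c$ matters: for $c$ odd the bottom layer of guesses is conservative in the right direction.

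The concrete steps are: (1) Prove an auxiliary claim by induction on $c$ (equivalently, on tree depth), of roughly the form: if $\S_c(f)$ holds and $f\in\Bite_{i+1}$ with birthtime $\beta$, then $f$ is rejected when traversed (so $f\notin\M_{i+1}$); and conversely, if $f$ is rejected then $\S_{c+1}(f)$ holds. Here the pruning that turns $\TT'_c(f)$ into $\TT_c(f)$ — deleting the child labeled $G$ when the grandparent is labeled $g\in G$ — exactly removes the ``degenerate'' witnesses $G$ that reuse an edge already on the path from the root, which cannot contribute a genuinely new $K_4$; so working with $\TT_c(f)$ loses nothing. (2) For part (i), $|F|=1$: the first implication is the forward direction of the claim applied with the root's birthtime being its actual birthtime in $\Bite_{i+1}$ (or noting that if $f\notin\Bite_{i+1}$ at all then $\M_{i+1}=\M_{i+1}\cup F$ trivially and $f$ added to $\M_i$ would still not create a $K_4$ by the hypothesis $\M_i\cup F$ is $K_4$-free together with survival controlling depth-$2c$ interactions); the second implication ``$\M_{i+1}\cup F$ is $K_4$-free $\implies\S_{c+1}(F)$'' is the converse direction, where the extra layer of depth ($c+1$ vs $c$) is needed because the converse needs one more round of ``guaranteed'' behaviour at the leaves. (3) For part (ii), $|F|\ge2$: run the same argument on each $f\in F$ simultaneously; the only new issue is that a $K_4$ in $\M_{i+1}\cup F$ might use edges of two different $f,f'\in F$, or a witness tree for $f$ might share a node-label with a witness tree for $f'$. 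The event $\I_c(F)$ — all even-level labels across the forest are distinct — rules out exactly the reuse of an edge-label between the trees, so that the per-edge analysis composes; a $K_4$ in $\M_{i+1}\cup F$ would have to sit inside the union of the $2c$-neighbourhoods, and distinctness of labels lets one locate it in a single tree, contradicting survival. Note part (ii) only claims one implication, which is why $\I_c$ suffices at level $c$ rather than $c+1$.

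The main obstacle will be step (1): setting up the induction so that the birthtime bookkeeping is airtight. One must be careful that ``$g_2$ has smaller birthtime than $g_0$'' in the survival definition matches ``$g_2$ was traversed before $g_0$'' in the process, and that the conditional clause ``in case $g_0\in\Bite_{i+1}$'' correctly handles the root vs. internal nodes (the root's birthtime is externally prescribed, internal nodes' birthtimes come from $\beta$). There is also a subtlety that survival of $\nu_0$ only forces $g_0$ to be rejected \emph{provided} the witnesses $G$ it relies on actually land in $\Bite_{i+1}$ in the right temporal order — which is precisely the hypothesis encoded in the ``if\,\dots\,then'' structure of the survival definition — so the induction hypothesis must be stated as a conditional (``assuming the realized $\Bite_{i+1}$/birthtimes are consistent with the path, then\,\dots''), and one checks that these conditions propagate down the tree. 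Once the claim is correctly stated, the parity argument and the reduction from $\TT'_c$ to $\TT_c$ are routine, and parts (i) and (ii) follow by specialization.
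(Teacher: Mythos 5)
Your overall strategy—induction on tree height that translates the recursive ``survives'' event into what the process does to the labelled edge, with the odd/even parity of $c$ determining which direction of the implication you get at the root—is exactly the paper's approach (the paper's Claim~\ref{claim:sec:4:1} is the induction you describe in step~(1)). But there is a fatal sign error in the translation. You assert that ``$\nu_0$ survives'' corresponds to ``$g_0$ would be \emph{rejected}.'' In fact it corresponds to the opposite: $g_0$ \emph{remains addable}, i.e.\ $\M_{i+1}\cup\{g_0\}$ is $K_4$-free. Your own unwinding of the recursion shows this. You correctly write that $g$ gets added iff for every $G\in\bigcup_j X''_{i,j}(g)$ some edge of $G\cap\BigBite_{i+1}$ is missing from $\Bite_{i+1}$, born too late, or itself rejected; and the definition of ``$\nu_0$ survives'' is precisely: for every child $\nu_1$ (each potential blocking copy $G$), some grandchild fails to be in $\Bite_{i+1}$ with an early birthtime, or some grandchild fails to survive. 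Under your intended identification (``does not survive'' $=$ ``rejected''), that makes ``survives'' $=$ ``accepted/open,'' not ``rejected.'' With the claim stated as you have it, all the implications run the wrong way and the lemma does not follow. The paper's Claim~\ref{claim:sec:4:1} is the correctly-signed statement: if a node labelled $g_0$ at height $2b\ge 4$ \emph{does not} survive, then $\M_{i+1}\cup\{g_0\}$ is \emph{not} $K_4$-free.

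Two secondary issues. First, your parenthetical handling of $f\notin\Bite_{i+1}$ in step~(2) is wrong: $f\notin\Bite_{i+1}$ certainly does not give $\M_{i+1}=\M_{i+1}\cup F$, since $f$ is then not in $\M_{i+1}$. The paper avoids any case split by phrasing the induction claim directly in terms of ``$\M_{i+1}\cup\{g_0\}$ is ($K_4$)-free,'' which is meaningful whether or not $g_0\in\Bite_{i+1}$. Second, your plan to state the induction hypothesis as a conditional (``assuming the realized $\Bite_{i+1}$/birthtimes are consistent with the path'') is an unnecessary complication: the trees $\TT_c(f)$ and the survival events are already defined in terms of the realized $\BigBite_{i+1}$, $\Bite_{i+1}$ and the birthtimes, so the paper's claim is unconditional and the bookkeeping you worry about does not arise.
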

\begin{proof}
Let $c \ge 1$ be an odd integer, let $F \subseteq \NotTrav_i$ and assume that
$\M_i \cup F$ is $K_4$-free.  The second item follows directly from the first
item.
So it remains to prove the first item. For that purpose, assume for the rest of
the proof that $F = \{f\}$.  We need the following claim.
\begin{claim} \label{claim:sec:4:1}
Let $b \ge 2$ be an even integer.  Let $\nu_0$ be a node labeled $g_0$ at
height $2b$ in $\TT_c(f)$ or in $\TT_{c+1}(f)$. If $\nu_0$ doesn't survive then
$\M_{i+1} \cup \{g_0\}$ is not $K_4$-free.
\end{claim}
\begin{proof}
The proof is by induction on $b$.  We start with a general setup that applies
both to the base case and to the induction step. Let $b \ge 2$ be an even
integer. Let $\nu_0$ be a node labeled $g_0$ at height $2b$ in $\TT_c(f)$ or in
$\TT_{c+1}(f)$. Assume that $\nu_0$ doesn't survive.
We show that there is a node $\nu_1$ labeled $G_1$, which is a child of
$\nu_0$, such that $G_1 \subseteq \M_{i+1}$. This will give us that $\M_{i+1}
\cup \{g_0\}$ is not $K_4$-free.
Since $\nu_0$ doesn't survive we have that there is a node $\nu_1$ labeled
$G_1$, which is a child of $\nu_0$, such that for every child $\nu_2$ of
$\nu_1$, letting $g_2$ be the label of $\nu_2$, the following two properties
hold: first, $g_2 \in \Bite_{i+1}$ and if $g_0 \in \Bite_{i+1}$ then the
birthtime of $g_2$ is less than the birthtime of $g_0$; second, $\nu_2$
survives. 
Fix such a child $\nu_1$ of $\nu_0$. It remains to show that for every child
$\nu_2$ of $\nu_1$, letting $g_2$ be the label of $\nu_2$, the fact that
$\nu_2$ survives implies $\M_{i+1} \cup \{g_2\}$ is $K_4$-free, as we already
know that $g_2 \in \Bite_{i+1}$. This will give us that $G_1 \subseteq
\M_{i+1}$.  So let us fix such a child $\nu_2$ of $\nu_1$. 
To show that $\M_{i+1} \cup \{g_2\}$ is $K_4$-free we need to show that for
every $G_3 \in \bigcup_{1 \le j \le 5} X''_{i,j}(g_2)$, either there is an edge
in $G_3$ whose birthtime is larger than that of $g_2$, or otherwise $G_3
\nsubseteq \M_{i+1}$. Fix a graph $G_3 \in \bigcup_{1 \le j \le 5}
X''_{i,j}(g_2)$. Then either $G_3$ is a label of a child of $\nu_2$ or $g_0 \in
G_3$, and this follows from the definition of $\TT_c(f)$ and $\TT_{c+1}(f)$. If
$g_0 \in G_3$ then the birthtime of $g_0$ is larger than the birthtime of $g_2$
and we are done.  So assume that $G_3$ is a label of a child $\nu_3$ of
$\nu_2$.  This is where the arguments for the base case and the induction step
differ.

For the base case, assume that $b=2$. Then every child of $\nu_3$ is a leaf.
Since a leaf survives by definition, the fact that $\nu_2$ survives implies
that $\nu_3$ either has a child whose label is not in $\Bite_{i+1}$, in which
case $G_3 \nsubseteq \M_{i+1}$ as needed, or $\nu_3$ has a child whose label
has a birthtime larger than that of $g_2$, as needed.  For the induction step,
assume that $b \ge 4$ and that the claim holds for $b-2$. If $\nu_2$ survives
then either $\nu_3$ has a child whose label is not in $\Bite_{i+1}$, in which
case $G_3 \nsubseteq \M_{i+1}$ as needed; or $\nu_3$ has a child whose label
has a birthtime larger than that of $g_2$, as needed; or $\nu_3$ has a child
that doesn't survive, in which case, by the induction hypothesis, for some $g_4
\in G_3$, $\M_{i+1} \cup \{g_4\}$ is not $K_4$-free, and so $G_3 \nsubseteq
\M_{i+1}$ as needed.
\end{proof}

Since $c+1$ is even, it follows from the claim above that $\M_{i+1} \cup F
\text{ is $K_4$-free} \implies \S_{c+1}(F)$. Since $c$ is odd, it also follows
from the claim above and from the definition of $\S_c(F)$, that if $\S_c(F)$
holds then the following holds: for every node $\nu$ whose label is $G$ and
which is a child of the root of $\TT_c(f)$, $G \nsubseteq \M_{i+1}$. Since
$\M_i \cup F$ is $K_4$-free and since every graph $G \in \bigcup_{1 \le j \le
5} X''_{i,j}(f)$ is a label of a child of the root of $\TT_c(f)$, we get that
$\S_c(F) \implies \M_{i+1} \cup F \text{ is $K_4$-free}$. 
\end{proof}

For an edge $f \in \NotTrav_i$ and a set $R \subseteq [n]$, let $\TT_c(f,R)$ be
obtained by removing subtrees from $\TT_c(f)$ as follows: for every child $\nu$
of the root of $\TT_c(f)$, if $\nu$ is labeled with a graph that shares at
least three vertices with $R$, then remove the subtree rooted at $\nu$. Define
the event that a node         at even distance from the root of $\TT_c(f,R)$
survives exactly as it was defined for such a node in $\TT_c(f)$.  
Let $\S_c(f, R)$ be the event that the root of $\TT_c(f, R)$ survives.  For two
disjoint graphs $F_1, F_2 \subseteq \NotTrav_i$, let $\S_c(F_1, F_2, R)$ be the
event $\big[\bigwedge_{f \in F_1} \S_c(f)\big] \wedge \big[\bigwedge_{f \in
F_2} \S_c(f, R)\big]$, and let $\I_c(F_1, F_2, R)$ be the event that every two
distinct nodes at even distances from the roots of the trees in the forest
$\{\TT_c(f) : f \in F_1\} \cup \{\TT_c(f, R) : f \in F_2\}$ have two distinct
labels.
\begin{lemma} \label{lemma:conn2}
Let $c \ge 1$ be an odd integer. Let $F_1, F_2 \subseteq \NotTrav_i$ be two
disjoint graphs and assume that $\M_i \cup F_1 \cup F_2$ is $K_4$-free. Let $R
\subseteq [n]$.  Then: (i) $\I_c(F_1 \cup F_2) \wedge \S_c(F_1 \cup F_2)
\implies \I_c(F_1, F_2, R) \wedge \S_c(F_1, F_2, R)$; (ii) $\I_c(F_1, F_2, R)
\wedge \S_c(F_1, F_2, R)$ implies that $\M_{i+1} \cup F_1$ is $K_4$-free, and
for every edge $f \in F_2$, if $|X_{i+1,0}(f)| > 0$ (meaning $\M_{i+1} \cup
\{f\}$ is not $K_4$-free), then for every $G \in X_{i+1,0}(f)$, $G$ shares at
least three vertices with $R$.
\end{lemma}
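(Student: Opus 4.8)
The plan is to read off both parts from Lemma~\ref{lemma:conn} together with a localized use of Claim~\ref{claim:sec:4:1}, exploiting that $\TT_c(f,R)$ is obtained from $\TT_c(f)$ merely by deleting some of the subtrees hanging off the children of the root. For part~(i), every node at even distance from the root of a tree $\TT_c(f,R)$ with $f\in F_2$ is also such a node in the corresponding $\TT_c(f)$, carrying the same label, so the requirement imposed by $\I_c(F_1,F_2,R)$ is weaker than that imposed by $\I_c(F_1\cup F_2)$; this gives $\I_c(F_1\cup F_2)\Rightarrow\I_c(F_1,F_2,R)$. Moreover ``$\nu_0$ survives'' is a conjunction, over the children $\nu_1$ of $\nu_0$, of statements depending only on the subtree rooted at $\nu_1$, so deleting some children of $\nu_0$ --- or of any ancestor of $\nu_0$ --- can only preserve survival, never destroy it. Applying this at the roots yields $\S_c(f)\Rightarrow\S_c(f,R)$ for $f\in F_2$, and combining with the untouched trees for $f\in F_1$ gives $\S_c(F_1\cup F_2)\Rightarrow\S_c(F_1,F_2,R)$, which is part~(i).

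For part~(ii), I would first restrict $\I_c(F_1,F_2,R)\wedge\S_c(F_1,F_2,R)$ to $F_1$ to obtain $\I_c(F_1)\wedge\S_c(F_1)$ and invoke Lemma~\ref{lemma:conn} (part~(i) when $|F_1|\le 1$, using that $\M_i\cup F_1$ is $K_4$-free; part~(ii) when $|F_1|\ge 2$; the case $F_1=\emptyset$ being trivial since $\M_{i+1}$ is $K_4$-free), giving that $\M_{i+1}\cup F_1$ is $K_4$-free. For the claim about $F_2$, fix $f\in F_2$ with $\M_{i+1}\cup\{f\}$ not $K_4$-free and fix $G\in X_{i+1,0}(f)$. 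Since $G\subseteq\M_{i+1}\subseteq\M_i\cup\BigBite_{i+1}$, every edge of $G\setminus\M_i$ was added during the $(i+1)$-st bite and hence lies in $O_i$, while $G\not\subseteq\M_i$ (otherwise $\M_i\cup\{f\}$ would contain a $K_4$, contradicting that $\M_i\cup F_1\cup F_2$ is $K_4$-free); one then checks $G\in X''_{i,j}(f)$ for some $1\le j\le 5$, so $\nu_G$ is a child of the root of $\TT_c(f)$. Assume for contradiction that $G$ meets $R$ in fewer than three vertices. Then $\nu_G$ survives the $R$-pruning, hence is a child of the root of $\TT_c(f,R)$, which survives by $\S_c(f,R)$. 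Because the $R$-pruning touches only children of the root, every subtree of $\TT_c(f,R)$ rooted at a node of height at least two agrees with the corresponding subtree of $\TT_c(f)$, so Claim~\ref{claim:sec:4:1} applies verbatim inside $\TT_c(f,R)$; arguing as in the last paragraph of the proof of Lemma~\ref{lemma:conn} --- the root survives, so via Claim~\ref{claim:sec:4:1} applied to the grandchildren of the root one produces an edge $g\in G$ with $\M_{i+1}\cup\{g\}$ not $K_4$-free --- contradicts $g\in G\subseteq\M_{i+1}$.

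The delicate point, and the one where the $R$-pruning really has to be looked at, is the birthtime comparison at the root when $f\in\Bite_{i+1}$. If $f\notin\Bite_{i+1}$ the survival clause at the root only requires all grandchildren of $\nu_G$ to carry $\Bite_{i+1}$-labels, which is automatic from $G\subseteq\M_{i+1}$, and the argument above goes through directly (with the small-$c$ case, where the grandchildren of the root are leaves, handled on its own). If $f\in\Bite_{i+1}$ then $f\notin\M_{i+1}$ --- otherwise $\M_{i+1}\cup\{f\}=\M_{i+1}$ would contain a $K_4$ --- so $f$ was traversed but not added, and the survival clause at $\nu_G$ can be vacuous because some edge of $G$ may have been born after $f$. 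One then has to work not with $G$ but with a $K_4^-$ that actually blocked $f$ at the moment of its traversal, all of whose edges are born before $f$; one argues that some such blocking copy meets $R$ in fewer than three vertices, so that its node is not deleted in forming $\TT_c(f,R)$, and derives the contradiction from that node instead. Ruling out the bad configuration --- every copy blocking $f$ meeting $R$ in at least three vertices while $G$ does not --- is where I expect most of the care to be needed; the remainder is the monotonicity-of-survival and induction bookkeeping already used in Lemmas~\ref{lemma:conn} and~\ref{lemma:survive1}.
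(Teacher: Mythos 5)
Your proof of part~(i), the $F_1$-part of~(ii), and the easy cases of the $F_2$-part of~(ii) --- where $f\notin\Bite_{i+1}$, or $f\in\Bite_{i+1}$ and every edge of $G\cap\NotTrav_i$ is born before $f$ --- are correct, and they are in the same spirit as the paper's own extremely terse proof, which merely points back to Lemma~\ref{lemma:conn} and Claim~\ref{claim:sec:4:1}. You correctly observe that the $R$-pruning removes only whole subtrees rooted at children of the root, so that survival of the root is monotone under the pruning and Claim~\ref{claim:sec:4:1} transfers unchanged to the grandchild subtrees of $\TT_c(f,R)$. (The parenthetical assertion that deleting children of an arbitrary ancestor always preserves survival is stronger than what you need and not true in general, but it causes no damage where you actually invoke it.)

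The delicate point you flag in the last paragraph is, however, a genuine gap, and I do not believe it can be closed along the route you sketch, because the ``bad configuration'' is not in fact impossible. Concretely, take $F_1=\emptyset$ and $F_2=\{f\}$ with $f=\{u,v\}$. Let $\M_i$ consist of the six edges $\{v,w\},\{v,z\},\{w,z\},\{u,a\},\{u,b\},\{a,b\}$, let $\Bite_{i+1}$ consist of $\{v,a\},\{v,b\},f,\{u,w\},\{u,z\}$ in increasing order of birthtime, and suppose $u,v,a,b\in R$ while $w,z\notin R$. Then $G'=\{\{u,a\},\{u,b\},\{v,a\},\{v,b\},\{a,b\}\}$ is the unique copy that blocks $f$ at time $\beta(f)$; it has all four of its vertices in $R$, so $\nu_{G'}$ is pruned away in forming $\TT_c(f,R)$. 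Meanwhile $G=\{\{u,w\},\{u,z\},\{v,w\},\{v,z\},\{w,z\}\}$ is built up edge by edge after $f$'s rejection and ends up in $X_{i+1,0}(f)$ with only two vertices in $R$, but the precondition in the survival clause at $\nu_G$ fails because $\{u,w\},\{u,z\}$ are born after $f$. The root of $\TT_c(f,R)$ then survives, $\I_c(F_1,F_2,R)$ holds, yet the ``for every $G\in X_{i+1,0}(f)$'' conclusion is violated by $G$. What the argument transplanted from Lemma~\ref{lemma:conn} actually delivers is only the existential form: the copy that blocked $f$ at the moment of its traversal, whose precondition \emph{does} hold, cannot have fewer than three vertices in $R$ without contradicting survival via Claim~\ref{claim:sec:4:1}. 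Note that $Z_{i+1}(R,T)$ is defined by an ``exists'' quantifier, and the use of this lemma in Section~\ref{sec:7:2} appears to need only that existential statement, so I would re-examine the required strength of the claim rather than try to rule out the bad configuration.
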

\begin{proof}
The first item follows from the definition of the underlying events.  The
second item can be proved using the same argument used in the proof of
Lemma~\ref{lemma:conn}, and in particular, using Claim~\ref{claim:sec:4:1}.
\end{proof}

Fix for the rest of the section a graph $F \subseteq \NotTrav_i$, with $1 \le
|F| = a_1 + a_2 \le 3$, and such that $\M_i \cup F$ is $K_4$-free.  Also, fix
an integer $c \in \epsilon_3^2 \epsilon_2^{-1} \pm 1$, and note that $c$ is a
sufficiently large constant. 
Let $\E_0$ be the event that $|F \cap \Bite_{i+1}| = a_1$.
Let $\E_1$  be the event that for every $f \in F$, the set of children of every
non-leaf node at even distance from the root of $\TT_c(f)$ can be partitioned
to $5$ sets of children, where the $j$th set satisfies the following: it
consists of nodes whose labels have exactly $j$ edges in $\BigBite_{i+1}$, and
it has size in $z_{i,j} ( 1 \pm 3000 \Gamma_i)$.  For brevity, set $\E_2 =
\I_c(F)$.

The next lemma is the main result of this section. 
\begin{lemma} \label{lemma:survive:process}
Assume that $\M_i$ and $\BIGBite_{i+1}$ are given so that $\C_i \wedge \D_i$
holds. Then 
\begin{eqnarray*} \label{eq:sec:4:1}
\prob(\S_c(F) \given \E_0) \in
\bigg(\frac{\Phi((i+1)n^{-\epsilon_1}) -
\Phi(in^{-\epsilon_1})}{n^{-\epsilon_1} \phi(in^{-\epsilon_1})}\bigg)^{a_1}
\bigg(\frac{\phi((i+1)n^{-\epsilon_1})}{\phi(in^{-\epsilon_1})}\bigg)^{a_2} (1
\pm 90 \Gamma_i \gamma_i) 
\end{eqnarray*}
and 
\begin{eqnarray*} \label{eq:sec:4:2}
\prob(\E_2 \wedge \S_c(F) \given \E_0) \in \prob(\S_c(F) \given \E_0) (1 \pm
o(\Gamma_i \gamma_i)),
\end{eqnarray*}
where the probabilities are both over the choice of $\BigBite_{i+1}$,
$\Bite_{i+1}$ and the choice of the birthtimes of the edges in $\Bite_{i+1}$.
\end{lemma}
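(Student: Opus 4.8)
The plan is to reduce Lemma~\ref{lemma:survive:process} to the analysis of the abstract survival tree $\TT_1$ carried out in Section~\ref{sec:5}, by showing that---under the conditioning $\C_i \wedge \D_i$ and $\E_0$---the labeled trees $\TT_c(f)$ attached to the edges of $F$ behave like independent copies of $\TT_1$ (with the appropriate child-count distributions), and that the survival events on these labeled trees coincide with the abstract survival events whose probabilities are computed by Lemma~\ref{lemma:survive}. Concretely, for a fixed $f \in F$, one first conditions on $\M_i$ and $\BIGBite_{i+1}$ (so $\C_i \wedge \D_i$ holds) and observes that the tree $\TT_c(f)$ is then deterministic: its even-level nodes carry edge labels $g$ and their children run over $\bigcup_j X''_{i,j}(g)$, and the surviving-ness of its root depends only on the random set $\BigBite_{i+1}$, the set $\Bite_{i+1}$ and the birthtimes. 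One then notes that $\E_1$ holds with probability $1 - n^{-\omega(1)}$---this is a Chernoff-plus-union-bound argument using that $\expec |X''_{i,j}(g)| $ is $z_{i,j}(1\pm O(\Gamma_i))$, which follows from \D_i (specifically $|X'_{i,j}(f)| \in n^{(\epsilon_3-2/5)j} x_{i,j}(1\pm 2000\Gamma_i)$ in (D2), refined through the extra thinning down to $\BigBite_{i+1}$), together with the bounds in Lemma~\ref{fact:f1} ensuring that the failure probability of the Chernoff estimate at each of the $O(1)$-many even-level nodes of the finite tree $\TT_c(f)$ is superpolynomially small. Since $\TT_c(f)$ has only $O(1)$ nodes (as $c=O(1)$ and degrees are $n^{O(\epsilon_2)}$), the union over all its nodes and over all $f\in F$ still loses only an $n^{-\omega(1)}$ factor.

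Second, on the event $\E_1$ the tree $\TT_c(f)$, after the deletion rule that produced it from $\TT'_c(f)$, satisfies exactly the three structural properties defining $\TT_1$: leaves at distance $2c$, even-level nodes having $5$ sets of children with the $j$th of size $z_{i,j}(1\pm3000\Gamma_i)$ consisting of sets of size $j$, and odd-level nodes that are sets of size $j$ having $j$ children. The key correspondence is between the combinatorial survival event on $\TT_c(f)$ (where a child $\nu_2$ with label $g_2$ ``counts'' iff $g_2\in\Bite_{i+1}$, with a birthtime comparison when $g_0\in\Bite_{i+1}$) and the abstract survival event on $\TT_1$: each edge of $\BigBite_{i+1}$ is put into $\Bite_{i+1}$ independently with probability $n^{-\epsilon_1-\epsilon_2}/(1-in^{-\epsilon_1-\epsilon_2})$, which is precisely $x n^{-\epsilon_2}\zeta$ with $x=n^{-\epsilon_1}$ and $\zeta=1/(1-in^{-\epsilon_1-\epsilon_2})$; this is exactly the per-edge probability that enters the product defining $p_{\TT_1,\nu}$. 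Assigning birthtimes to the edges of $\Bite_{i+1}$ reproduces the birthtime structure on the even-level nodes of $\TT_1$. Therefore, conditioned on $\M_i,\BIGBite_{i+1}$ and on $\E_1$, the probability $\prob(\S_c(f)\given \E_0)$ is a weighted average of values $p_{\TT_1'}(n^{-\epsilon_1})$ (for the case $f\notin\Bite_{i+1}$, contributing the factor $\phi((i+1)n^{-\epsilon_1})/\phi(in^{-\epsilon_1})$) and of values $n^{\epsilon_1}P_{\TT_1'}(n^{-\epsilon_1})$ (for $f\in\Bite_{i+1}$, contributing $(\Phi((i+1)n^{-\epsilon_1})-\Phi(in^{-\epsilon_1}))/(n^{-\epsilon_1}\phi(in^{-\epsilon_1}))$), where $\TT_1'$ ranges over trees satisfying the $\TT_1$-properties; by Lemma~\ref{lemma:survive} each such value lies in the claimed interval up to a $(1\pm O(\Gamma_i\gamma_i))$ factor (absorbing the $z_{i,j}(1\pm3000\Gamma_i)$ tolerance against the $\pm3000\Gamma_i$ allowed in the definition of $\TT_1$). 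Multiplying across the $a_1$ edges of $F$ in $\Bite_{i+1}$ and the $a_2$ edges not in $\Bite_{i+1}$ gives the first displayed estimate, with the constant $90$ chosen to dominate the accumulated errors ($20\Gamma_i\gamma_i$ from Lemma~\ref{lemma:survive}, times $|F|\le 3$, plus the $n^{-\omega(1)}$ slack from $\E_1$ and lower-order corrections).

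For the second displayed estimate, the point is that $\E_2=\I_c(F)$ is a ``generically true'' event: the labels of even-level nodes across the forest $\{\TT_c(f):f\in F\}$ are graphs $G$ on at most four vertices appearing as subgraphs of $\M_i\cup\BigBite_{i+1}$, and two such nodes having the same label forces a collision which is ruled out whp by the sparseness bounds in $\C_i$ (in particular (C1)--(C5): the relevant count of $K_4^-$'s, of common neighborhoods, and of copies of $X_{0,5}$-type configurations through a given edge is at most $n^{O(\epsilon_3)}$ or $(\ln n)^{O(1)}$). One shows $\prob(\neg\E_2 \mid \E_0)\le n^{-\Theta(\epsilon_2)} = o(\Gamma_i\gamma_i\cdot\prob(\S_c(F)\mid\E_0))$, using that $\prob(\S_c(F)\mid\E_0)\ge n^{-\Theta(\epsilon_1)}$ (which follows from the first estimate together with Lemma~\ref{fact:f1}) while the probability that any two even-level nodes receive the same label---requiring in particular that a superpolynomially-improbable extra edge incidence occur inside $\Bite_{i+1}$ or that two already-present configurations coincide---is much smaller; hence $\prob(\E_2\wedge\S_c(F)\mid\E_0)\in\prob(\S_c(F)\mid\E_0)(1\pm o(\Gamma_i\gamma_i))$.

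The main obstacle I anticipate is the bookkeeping in the first step: carefully justifying that, conditioned on $\C_i\wedge\D_i$, the further thinning from $\BIGBite_{i+1}$ to $\BigBite_{i+1}$ makes $|X''_{i,j}(g)|$ concentrated around $z_{i,j}$ with a $(1\pm 3000\Gamma_i)$ window uniformly over all the (finitely many) even-level node labels $g$ occurring in the forest, and that this concentration is strong enough (superpolynomial in $n$) to survive the union bound; this requires combining (D2) with a Chernoff bound and checking that the mean $\expec|X''_{i,j}(g)| = n^{(\epsilon_2-\epsilon_3)j}\cdot n^{(\epsilon_3-2/5)j}x_{i,j} = n^{(\epsilon_2-2/5)j}x_{i,j} = z_{i,j}$ is polynomially large (it is, being $n^{\Theta(\epsilon_2)}$), so that relative deviations of order $\Gamma_i=n^{-\Theta(\epsilon_1)}$ are superpolynomially unlikely given $\epsilon_1\ll\epsilon_2$. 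Everything else is a faithful translation of the abstract computation of Section~\ref{sec:5} into the labeled setting, with error constants chosen generously.
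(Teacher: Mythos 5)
Your high-level structure is correct and matches the paper's: condition on $\M_i$ and $\BIGBite_{i+1}$ so that $\C_i\wedge\D_i$ holds, show that the "good degree" event $\E_1$ and the "no label collision" event $\E_2=\I_c(F)$ jointly occur with probability $1-o(\Gamma_i\gamma_i)$, observe that conditionally on $\E_0\wedge\E_1\wedge\E_2$ the trees $\TT_c(f)$ instantiate the abstract tree $\TT_1$ with mutually independent survival events, apply Lemma~\ref{lemma:survive}, and unwind the conditioning. Your description of how the birthtime thinning of $\Bite_{i+1}$ reproduces the scaled birthtime $x n^{-\epsilon_2}\zeta$ with $x=n^{-\epsilon_1}$ in $\TT_1$ is also accurate, as is your accounting of the two cases $f\in\Bite_{i+1}$ (giving the $P_{\TT_1}$ contribution) and $f\notin\Bite_{i+1}$ (giving $p_{\TT_1}$).

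However, there are two genuine gaps. First, for $\E_1$ you assert that the concentration of $|X''_{i,j}(g)|$ around $z_{i,j}$ follows from a "Chernoff-plus-union-bound argument." That cannot work directly: the graphs $G\in X'_{i,j}(g)$ overlap (a given edge in $\BIGBite_{i+1}$ can lie in polylogarithmically many of them, per~(C3)), so the indicators $\ONE[G\cap\BIGBite_{i+1}\subseteq\BigBite_{i+1}]$ are positively correlated, not independent. The paper handles this by a careful application of Vu's inequality (Theorem~\ref{thm:vu}), with a specific choice of $\EE_k$ verified against the codegree bounds in~(C3). A plain Chernoff bound does not apply here, and your acknowledged "main obstacle" is exactly the spot where your proposed tool is the wrong one.

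Second, for $\E_2$ you claim the label-collision probability is "superpolynomially-improbable," but it is not. The paper's bound on $\prob(\neg\E_2\given\E_0)$ is only $n^{-\Theta(\epsilon_3)}$, obtained via a nontrivial "bad sequence" counting argument that tracks how a collision propagates through the forest and charges it to an improbable extra edge in $\BigBite_{i+1}$ or to a codegree/$K_4^-$ excess that is ruled out by~(C3)--(C5). This is the delicate part of the proof. Your conclusion ($n^{-\Theta(\epsilon_2)}$ being $o(\Gamma_i\gamma_i)$) is salvageable because $\epsilon_1\ll\epsilon_2,\epsilon_3$, but the argument you offer for why $\neg\E_2$ is rare is at best a placeholder; the combinatorial content of the bad-sequence reduction is missing from your proposal.
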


Let us prove Lemma~\ref{lemma:survive:process}.  To this end, assume for the
rest of the section that $\M_i$ and $\BIGBite_{i+1}$ are given so that $\C_i
\wedge \D_i$ holds.  In the two subsections below we will prove that
\begin{eqnarray}
\label{eq:sec:6:1} \prob(\E_1 \given \E_0) &\ge& 1 - n^{-\omega(1)} \text{\quad
and } \\
\label{eq:sec:6:2} \prob(\E_2 \given \E_0) &\ge& 1 - n^{-\Theta(\epsilon_3)}.
\end{eqnarray}
Here we use these estimates to prove the lemma.  

We start by obtaining an estimate for $\prob(\S_c(F) \given \E_0)$.  First,
observe that~(\ref{eq:sec:6:1})~and~(\ref{eq:sec:6:2}), together with
Lemma~\ref{fact:f1} and the fact that $\epsilon_1$ is sufficiently small with
respect to $\epsilon_3$, imply that
\begin{eqnarray*} \label{eq:a1}
\prob(\E_1 \wedge \E_2 \given \E_0) \ge 1 - o(\Gamma_i \gamma_i).
\end{eqnarray*}
Second, note that if we condition on $\E_0 \wedge \E_1 \wedge \E_2$, then every
tree in $\{\TT_c(f) : f \in F\}$ satisfies the same properties that are
satisfied by the tree $\TT_1$, the tree that was studied in the previous
section, and the events in $\{ \S_c(f) : f \in F \}$ are mutually independent.
Hence, under this condition we can use the main result of the previous section,
Lemma~\ref{lemma:survive}, to find that
\begin{eqnarray*} 
\prob(\S_c(F) \given \E_0 \wedge \E_1 \wedge \E_2) \in
\bigg(\frac{\Phi((i+1)n^{-\epsilon_1}) -
\Phi(in^{-\epsilon_1})}{n^{-\epsilon_1} \phi(in^{-\epsilon_1})}\bigg)^{a_1}
\bigg(\frac{\phi((i+1)n^{-\epsilon_1})}{\phi(in^{-\epsilon_1})}\bigg)^{a_2} (1
\pm 80 \Gamma_i \gamma_i).
\end{eqnarray*}
Third, observe that $\prob(\S_c(F) \given \E_0 \wedge \E_1 \wedge \E_2) =
\Omega(1)$.  (Indeed, a sufficient condition for $\S_c(F)$ is that for every $f
\in F$, for every child $\nu_1$ of the root of $\TT_c(f)$ there is a child
$\nu_2$ whose label is not in $\Bite_{i+1}$. Assuming $\E_1 \wedge \E_2$ this
event occurs with probability $\Omega(1)$.) Since $\prob(\S_c(F) \given \E_0) =
\prob(\E_1 \wedge \E_2 \given \E_0) \prob(\S_c(F) \given \E_0 \wedge \E_1
\wedge \E_2) + O(\prob(\neg (\E_1 \wedge \E_2) \given \E_0))$, the above three
facts give the desired estimate for $\prob(S_c(F) \given \E_0)$.

To obtain an estimate for $\prob(\E_2 \wedge \S_c(F) \given \E_0)$ and complete
the proof, simply note that
\begin{eqnarray*}
\prob(\S_c(F) \given \E_0) &\ge& \prob(\E_2 \wedge \S_c(F) \given \E_0) \\
&\ge& \prob(\E_1 \wedge \E_2 \wedge \S_c(F) \given \E_0) \\
&\ge& \prob(\E_1 \wedge \E_2 \given \E_0) \cdot \prob(\S_c(F) \given \E_0
\wedge  \E_1 \wedge \E_2),
\end{eqnarray*}
and apply our findings from above.

\subsection{Proof of~(\ref{eq:sec:6:1})} \label{sec:6:1}
%
Since clearly $\prob(\E_0) \ge n^{-\Theta(1)}$, it suffices to prove that
$\prob(\E_1) \ge 1 - n^{-\omega(1)}$.  Let $f \in F$, let $\nu$ be a non-leaf
node labeled $g$ at even distance from the root of $\TT_c(f)$, and let $1 \le j
\le 5$. Note that by the union bound it is enough to prove that each of the
following two properties occurs with probability at least $1 - n^{-\omega(1)}$:
first, the number of children of $\nu$ which are labeled with a graph $G \in
X''_{i,j}(g)$ is equal to $|X''_{i,j}(g)|$, up to an additive factor of $(\ln
n)^{O(1)}$; second, $|X''_{i,j}(g)| \in z_{i,j}(1 \pm 2999 \Gamma_i)$.

To show that the first property occurs with probability at least $1 -
n^{-\omega(1)}$, recall the definition of $\TT_c(f)$ and observe that it
suffices to show that with probability at least $1 - n^{-\omega(1)}$, for every
three vertices $v_1, v_2, v_3 \in [n]$, there are at most $(\ln n)^{O(1)}$
other vertices in $[n]$ that are adjacent in $\Trav_i \cup \BigBite_{i+1}$
simultaneously to $v_1, v_2$ and $v_3$.  Indeed, by the fact that
$\BigBite_{i+1} \subseteq \BIGBite_{i+1}$ and by~(C3), the above occurs with
probability $1$.

Next, we show that the second property occurs with probability at least $1 -
n^{-\omega(1)}$.
For that we assume that either $i \ge 1$, or else $j = 5$, since otherwise
trivially $|X''_{i,j}(f)| = z_{i,j} = 0$ and we are done.
Let $\{G_l' : l \in L\}$ be the set $X'_{i,j}(g)$. By~(D2)
we have $|L| = |X'_{i,j}(g)| \in n^{(\epsilon_3 - 2/5)j} x_{i,j} (1 \pm 2000
\Gamma_i)$.  Let $\{G_l : l \in L\}$ be the family (potentially a multiset) for
which it holds that $G_l = G_l' \cap \BIGBite_{i+1}$ for every $l \in L$.
Consider the binomial random graph $G(n,p)$ with $p = n^{\epsilon_2 -
\epsilon_3}$, and let $W$ be as defined at the beginning of
Section~\ref{sec:3}. Note that $\expec(W) = n^{(\epsilon_2 - \epsilon_3) j} |L|
\in z_{i,j} (1 \pm 2000 \Gamma_i)$. Also note that $W$ has the same
distribution as $|X''_{i,j}(g)|$. It remains to argue that the probability that
$W$ deviates from its expectation by more than $999 \Gamma_i z_{i,j} \ge
\expec(W)^{0.9} = n^{\Omega(\epsilon_2)}$ is at most $n^{-\omega(1)}$.  To do so, note
that if $1 \le |G| \le j$ then $|L_G| \le (\ln n)^{O(1)}$, and that this
follows from~(C3).  Apply Theorem~\ref{thm:vu} with $\EE_0 = \expec(W)$, $\EE_k
= \exp((2j-k)\sqrt{\ln n})$ for $1 \le k \le j$, and $\lambda = (\ln n)^2$.

\subsection{Proof of~(\ref{eq:sec:6:2})} \label{sec:6:2}
%
Say that a sequence $(G_l)_{l=1}^m$ is bad, if the following properties hold:
\begin{itemize}
\item $1 \le m \le 2c$; 
\item for all $1 \le l \le m$, $G_l \in \bigcup_{1 \le j \le 5} X'_{i,j}(g)$
for some $g \in F \cup \bigcup_{k<l} (G_k \cap \BIGBite_{i+1})$; 
\item for all $1 \le l < m$, $G_l \cap \BIGBite_{i+1}$ shares no edge with $F
\cup \bigcup_{k<l} (G_k \cap \BIGBite_{i+1})$;
\item one of the following holds:
\begin{itemize}
\item $G_m \cap \BIGBite_{i+1}$ shares at least $1$ edge but not all edges with
$F \cup \bigcup_{k<m} (G_k \cap \BIGBite_{i+1})$;
\item $G_m \cap \BIGBite_{i+1}$ shares no edge with $F \cup \bigcup_{k<m} (G_k
\cap \BIGBite_{i+1})$, and there is a vertex outside of the vertex set of $G_m$
that is adjacent in $\Trav_i \cup \BIGBite_{i+1}$ to at least three vertices of
$G_m$;
\item $G_m \cap \BIGBite_{i+1}$ shares no edge with $F \cup \bigcup_{k<m} (G_k
\cap \BIGBite_{i+1})$. Moreover, let $g_m$ be such that $G_m \in \bigcup_{1 \le
j \le 5} X'_{i,j}(g_m)$. Then there is a vertex of $G_m$ that is not a vertex
of $g_m$, and which is adjacent in $\Trav_i \cup \BIGBite_{i+1} \cup F$ to at
least three vertices of $F \cup \bigcup_{k<m} G_k$.
\end{itemize}
\end{itemize}

Note that for every bad sequence it holds that its members are all contained in
$\M_i \cup \BIGBite_{i+1}$.
Let $\E_3$ be the event that there is no bad sequence whose members are all
contained in $\M_i \cup \BigBite_{i+1}$.  The next two lemmas
imply~(\ref{eq:sec:6:2}).
\begin{lemma}
$\E_3 \implies \E_2$.
\end{lemma}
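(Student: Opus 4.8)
The plan is to prove the contrapositive: assuming $\I_c(F)$ fails, I will exhibit a bad sequence all of whose members lie in $\M_i\cup\BigBite_{i+1}$. So suppose two distinct nodes $\nu,\nu'$ at even distances from the roots of the forest $\{\TT_c(f):f\in F\}$ carry the same edge-label $g^\ast$, chosen so that $\operatorname{depth}(\nu)+\operatorname{depth}(\nu')$ is as small as possible — equivalently, so that no even-distance node lying on the two root-paths, other than $\nu'$ itself, repeats an earlier even-distance label. Write the root-to-$\nu$ path as $\mu_0,\mu_1,\dots,\mu_{2p}=\nu$, where the odd-level nodes carry graph-labels $A_1,\dots,A_p$ and the even-level nodes carry edge-labels $a_0\in F,a_1,\dots,a_p=g^\ast$; similarly write $B_1,\dots,B_q$ and $b_0\in F,\dots,b_q=g^\ast$ for the root-to-$\nu'$ path, with $p\le q$. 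Several facts are immediate from the construction of $\TT_c$: every $A_\ell,B_\ell$ is a member of some $X''_{i,j}(\cdot)\subseteq X'_{i,j}(\cdot)$ (since $\BigBite_{i+1}\subseteq\BIGBite_{i+1}$), hence is contained in $\M_i\cup\BigBite_{i+1}$; consecutive graph-labels are linked by a $\BigBite_{i+1}$-edge, namely $a_\ell\in A_\ell\cap\BigBite_{i+1}$ and $A_{\ell+1}\in\bigcup_j X''_{i,j}(a_\ell)$; for graphs in $X''_{i,j}$ the set of $\BIGBite_{i+1}$-edges equals the set of $\BigBite_{i+1}$-edges (both equal the $j$ non-$\M_i$ edges); the only effect of the $\TT_c$-pruning is that $a_{\ell-1}\notin A_{\ell+1}$; and finally $g^\ast\in A_p\cap\BigBite_{i+1}$ and $g^\ast\in B_q\cap\BigBite_{i+1}$, so the two chains literally share the edge $g^\ast$.

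The first step is to reduce to the case that both chains $A_1,\dots,A_p$ and $B_1,\dots,B_q$ are \emph{clean}, meaning that for every proper prefix the $\BIGBite_{i+1}$-edges of the last graph are disjoint from $F$ together with the $\BIGBite_{i+1}$-edges of the earlier graphs, and no prefix meets either of the two degeneracy alternatives in the definition of a bad sequence. Indeed, let $\ell_0\le p$ be minimal with $A_1,\dots,A_{\ell_0}$ not clean (if there is none, the $A$-chain is already clean). If $A_{\ell_0}\cap\BIGBite_{i+1}$ meets $F\cup\bigcup_{k<\ell_0}(A_k\cap\BIGBite_{i+1})$ in at least one but not all of its edges, or if a degeneracy alternative holds, then $A_1,\dots,A_{\ell_0}$ is itself a bad sequence with members in $\M_i\cup\BigBite_{i+1}$ and length $\ell_0\le p\le c\le 2c$, and we are done. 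Otherwise $A_{\ell_0}\cap\BIGBite_{i+1}$ is entirely contained in that edge set; in particular the path-edge $a_{\ell_0}$, which is one of these $\BIGBite_{i+1}$-edges, equals either an element of $F$ (a root label) or an element of some $A_k\cap\BigBite_{i+1}$ with $k<\ell_0$ (the label of a depth-$2k$ child of the $A_k$-node). Either way $a_{\ell_0}$, the label of $\mu_{2\ell_0}$, coincides with the label of a distinct even-distance node at depth at most $2(\ell_0-1)$, giving a repeat of strictly smaller total depth and contradicting the minimal choice. The same argument applies to the $B$-chain, so from now on both chains are clean.

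Now concatenate: consider $A_1,\dots,A_p,B_1,B_2,\dots$ and let $\ell_1$ be the least index with $B_{\ell_1}\cap\BIGBite_{i+1}$ meeting $F\cup\bigcup_{k\le p}(A_k\cap\BIGBite_{i+1})\cup\bigcup_{k<\ell_1}(B_k\cap\BIGBite_{i+1})$. Such an $\ell_1$ exists with $\ell_1\le q$, because $g^\ast\in B_q\cap\BIGBite_{i+1}$ and $g^\ast\in A_p$; moreover $p+\ell_1\le p+q\le 2c$. Cleanliness of both chains, minimality of $\ell_1$, and the facts recorded above show that the first $p+\ell_1-1$ terms of $A_1,\dots,A_p,B_1,\dots,B_{\ell_1}$ satisfy all the intermediate-term requirements of a bad sequence (each term lies in $\bigcup_j X'_{i,j}(g)$ for $g$ either in $F$ or a $\BIGBite_{i+1}$-edge of an earlier term, and no two of them share a $\BIGBite_{i+1}$-edge). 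If $B_{\ell_1}\cap\BIGBite_{i+1}$ meets the preceding edge set in at least one but not all of its edges, then this length-$(p+\ell_1)$ sequence is bad via the first degeneracy alternative, and we are done. The remaining case — and this is where I expect the main difficulty — is that $B_{\ell_1}\cap\BIGBite_{i+1}$ is entirely contained in the preceding edge set. Running the ``root-label or earlier child-label'' argument of the previous paragraph on the edges of $B_{\ell_1}$ forces (up to a contradiction with minimality) that $\ell_1=q$, that $B_q\in X''_{i,1}(b_{q-1})$ has its unique special edge equal to $g^\ast$, and that the two $K_4$'s $A_p\cup\{a_{p-1}\}$ and $B_q\cup\{b_{q-1}\}$ meet exactly in the edge $g^\ast$. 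In that residual configuration one forms the graph $G^\dagger\deq(B_q\cup\{b_{q-1}\})\setminus\{g^\ast\}$, checks $G^\dagger\in X''_{i,1}(g^\ast)$, and observes that $G^\dagger$ is a child of $\nu$ in $\TT_c(f)$ unless $a_{p-1}\in G^\dagger$; in the former case $b_{q-1}$ then labels an even-distance node at depth $2p+2$ as well as the node $\mu_{2q-2}$, producing a repeat that again contradicts the first-repeat choice, and in the latter case a short case analysis on which edge of $G^\dagger$ equals $a_{p-1}$ forces $V(A_p)=V(B_q)$, whence either $A_p=B_q$ (and then $a_{p-1},b_{q-1}$ label two even-distance nodes at depths $2p-2$ and $2q-2$, a shallower repeat) or one of the ``vertex adjacent in $\Trav_i\cup\BIGBite_{i+1}$ (resp.\ $\cup F$) to three vertices'' degeneracy alternatives is met by the terminal graph. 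Carrying out this last bookkeeping, and verifying in every branch that the produced bad sequence has length at most $2c$, completes the proof.
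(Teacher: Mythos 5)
Your strategy is genuinely different from the paper's. The paper does \emph{not} pick two repeating nodes and trace two root-paths; instead, from $\neg\E_2$ it extracts a sequence $(\nu_l)_{l=1}^m$ of odd-distance nodes in which each $\nu_l$ is a child of a root or a grandchild of an earlier $\nu_k$, the first $m-1$ are ``clean'', and $G_m$ is the first to overlap $F\cup\bigcup_{k<m}(G_k\cap\BigBite_{i+1})$. Then the whole argument is a case analysis on $G_m$: if the overlap is partial, the first alternative of a bad sequence fires for $(G_l)_{l=1}^m$; if $G_m\cap\BigBite_{i+1}$ lies wholly inside, the paper steps back to a suitable earlier index $m'$ (depending on whether the overlapping edges are in $F$ or in some $G_{m'}$, and on whether $\nu_m$ is a sibling or grandchild of $\nu_{m'}$ or neither) and shows that $(G_l)_{l=1}^{m'}$ satisfies the second or third degeneracy alternative. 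No minimality of any repeat is ever invoked; the bad-sequence vertex-adjacency alternatives do all the work in the full-overlap case.

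Your proposal, by contrast, leans on a ``minimal total depth'' choice of the repeating pair and tries to use it to \emph{rule out} the full-overlap case. This is where the gap is. Your minimality contradictions require that a newly discovered repeat have \emph{strictly} smaller total depth, and this fails in exactly the situation you call the ``main difficulty.'' When $\ell_1=q$ and some edge $e\in B_q\cap\BIGBite_{i+1}$ also lies in $A_p\cap\BIGBite_{i+1}$ (with $e$ possibly $\neq g^\ast$), the two even-level nodes labelled $e$ sit at depths $2p$ and $2q$, and their total depth is $2p+2q$ --- equal to, not less than, that of $(\nu,\nu')$. So minimality gives no contradiction, and the claimed consequences (``$\ell_1=q$, $B_q\in X''_{i,1}(b_{q-1})$, the two $K_4$'s meet only in $g^\ast$'') are not in fact forced. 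The same issue recurs when $b_{\ell_1}$ is found in a $B_k$ with $k$ close to $q$: $2k+2\ell_1$ need not be smaller than $2p+2q$ when $p<q$. Your parenthetical equivalence between ``minimal total depth'' and ``no earlier repeat on the two root-paths'' is also not correct as stated, which further weakens the reduction. You acknowledge that the final bookkeeping is not carried out; as written, the analysis after ``$\ell_1=q$'' rests on a forced configuration that is not actually forced, so this is a genuine gap rather than a routine completion. To close it you would either need a sharper, well-founded minimality criterion, or --- following the paper --- drop minimality entirely and handle the full-overlap case by stepping back to a suitable $m'<m$ and invoking the vertex-adjacency alternatives in the bad-sequence definition.
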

\begin{proof}
We prove the contrapositive. Assume $\neg \E_2$ holds and consider the forest
$\{\TT_c(f): f \in F\}$.  Then for some $1 \le m \le 2c$ there is a sequence
$(\nu_l)_{l=1}^m$ of nodes at odd distances from the roots in the forest such
that, denoting by $G_l$ the label of $\nu_l$, the following holds: for all $1
\le l \le m$, $\nu_l$ is either a child of a root in the forest, or a
grandchild of some $\nu_k$ with $k < l$; furthermore, for all $1 \le l < m$,
$G_l \cap \BigBite_{i+1}$ shares no edge with $F \cup \bigcup_{k < l} (G_k \cap
\BigBite_{i+1})$, while $G_m \cap \BigBite_{i+1}$ shares at least $1$ edge with
$F \cup \bigcup_{k<m} (G_k \cap \BigBite_{i+1})$.
Let $g_l$ be the label of the parent of $\nu_l$.  Note that for all $1 \le l
\le m$, $g_l \in F \cup \bigcup_{k<l} (G_k \cap \BigBite_{i+1})$, $G_l \in
\bigcup_{1 \le j \le 5} X'_{i,j}(g_l)$, and $G_l \cap \BigBite_{i+1} = G_l \cap
\BIGBite_{i+1}$.  Hence, every non-empty prefix of $(G_l)_{l=1}^m$ satisfies
the first three properties of a bad sequence. By definition, the members of
$(G_l)_{l=1}^m$ are all contained in $\M_i \cup \BigBite_{i+1}$, and so in
order to conclude that $\neg \E_3$ holds it remains to show that some non-empty
prefix of $(G_l)_{l=1}^m$ satisfies the fourth property of a bad sequence. 

Since $G_m \cap \BigBite_{i+1}$ shares at least $1$ edge with $F \cup
\bigcup_{k<m} (G_k \cap \BigBite_{i+1})$, we may        assume that $G_m \cap
\BigBite_{i+1}$ shares all of its edges with $F \cup \bigcup_{k<m} (G_k \cap
\BigBite_{i+1})$, since otherwise $(G_l)_{l=1}^m$ satisfies the fourth property
of a bad sequence and we are done.

Suppose that $G_m \cap \BigBite_{i+1} \subseteq F$. By assumption, $\M_i \cup
F$ is $K_4$-free and so since $G_m \subseteq \M_i \cup \BigBite_{i+1}$,
we must have that $g_m \notin F$.  Hence, there exists
$1 \le m' < m$ such that $g_m \in G_{m'}$. We claim that $(G_l)_{l=1}^{m'}$
satisfies the fourth property of a bad sequence. Indeed, let $f \in G_m \cap
\BigBite_{i+1} \subseteq F$. By the definition of the trees in the forest,
$g_{m'} \notin G_m$ and so $f \ne g_{m'}$. 
Also, $G_{m'} \cap \BigBite_{i+1}$ shares no edge with $F$ and so $f \notin
G_{m'}$.
It follows that there is a vertex of $G_{m'}$ (more accurately, a vertex of
$g_m$) that is not a vertex of $g_{m'}$, and which is adjacent in $\Trav_i \cup
\BIGBite_{i+1}$ to at least three vertices of $F \cup \bigcup_{k<m'} G_k$
(these three vertices being the two vertices of $g_{m'}$ and one vertex of
$f$).

Suppose that $G_m \cap \BigBite_{i+1} \nsubseteq F$. Then there exists $1 \le
m' < m$ such that $G_m \cap \BigBite_{i+1}$ and $G_{m'} \cap \BigBite_{i+1}$
share some edge $g$, and such that $m'$ is maximal with respect to that
property.  If $\nu_m$ and $\nu_{m'}$ are siblings, then it is clear that there
is a vertex outside of the vertex set of $G_{m'}$ that is adjacent in $\Trav_i
\cup \BIGBite_{i+1}$ to at least three vertices of $G_{m'}$.  If $\nu_m$ is a
grandchild of $\nu_{m'}$, then since by the definition of the trees in the
forest $g_{m'} \notin G_m$, we have that there is a vertex outside of the
vertex set of $G_{m'}$ that is adjacent in $\Trav_i \cup \BIGBite_{i+1}$ to at
least three vertices of $G_{m'}$. Therefore, if $\nu_m$ and $\nu_{m'}$ are
siblings or if $\nu_m$ is a grandchild of $\nu_{m'}$, then $(G_l)_{l=1}^{m'}$
satisfies the fourth property of a bad sequence, and we are done.
So assume that $\nu_m$ and $\nu_{m'}$ are not siblings, and that $\nu_m$ is not
a grandchild of $\nu_{m'}$. Then since $(G_l)_{l=1}^m$ satisfies the first
three properties of a bad sequence, we have that $g_m \ne g_{m'}$ and $g_m
\notin G_{m'}$.
Now, we have two cases: either $g_m \in F \cup \bigcup_{k < m'} (G_k \cap
\BigBite_{i+1})$, or $g_m \in \bigcup_{m' \le k < m} (G_k \cap
\BigBite_{i+1})$. 
If the first case holds, since $g_m \ne g_{m'}$, we have that there is a vertex
of $G_{m'}$ (more accurately, a vertex of $g$) that is not a vertex of
$g_{m'}$, and which is adjacent in $\Trav_i \cup \BIGBite_{i+1} \cup F$ to at
least three vertices of $F \cup \bigcup_{k<m'} G_k$ (these three vertices being
the two vertices of $g_{m'}$ and one vertex of $g_m$).
If the second case holds, since $g_m \notin G_{m'}$, we have that $g_m \in
\bigcup_{m' < k < m} (G_k \cap \BigBite_{i+1})$. In that case, $g_m \in G_{m''}
\cap \BigBite_{i+1}$, for some $m' < m'' < m$.  Since $g \in G_m \cap
\BigBite_{i+1}$ and $G_m \in \bigcup_{1 \le j \le 5} X'_{i,j}(g_m)$, by the
definition of the trees in the forest, $g \ne g_{m''}$. Also, by the maximality
of $m'$, $g \notin G_{m''}$. Hence, $g$ has a vertex outside of the vertex set
of $G_{m''}$. Note that $g \in F \cup \bigcup_{k < m''} (G_k \cap \BigBite_{i+1})$. It follows that
there is a vertex of $G_{m''}$ (more accurately, a vertex of $g_m$) that is not
a vertex of $g_{m''}$, and which is adjacent in $\Trav_i \cup \BIGBite_{i+1}$
to at least three vertices of $F \cup \bigcup_{k < m''} G_k$ (these three
vertices being the two vertices of $g_{m''}$ and one vertex of $g$).
\end{proof}

\begin{lemma}
$\prob(\E_3 \given \E_0) \ge 1 - n^{-\Theta(\epsilon_3)}$.
\end{lemma}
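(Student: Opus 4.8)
The plan is to bound the probability that a bad sequence with all members in $\M_i \cup \BigBite_{i+1}$ exists by a union bound over the "shape" of the sequence, using the structural control provided by $\C_i$ and $\D_i$. First I would condition on $\E_0$ throughout (this only affects the distribution of $\Bite_{i+1}$, not of $\BigBite_{i+1}$ or of the underlying $\BIGBite_{i+1}$, so it barely changes the relevant probabilities; since $\prob(\E_0) \ge n^{-\Theta(1)}$ it suffices to prove $\prob(\E_3) \ge 1 - n^{-\Theta(\epsilon_3)}$ unconditionally, up to absorbing a polynomial factor that is killed by the $n^{-\omega(1)}$-type bounds, though here the target bound is only $n^{-\Theta(\epsilon_3)}$ so one must be slightly careful — I would instead directly estimate the conditional expectation). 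The key point is that the members $G_1, \ldots, G_m$ of a bad sequence form a connected configuration: each $G_l$ is attached (via $X'_{i,j}$) to an edge lying in $F$ or in some earlier $G_k$, and all of $G_1,\ldots,G_{m-1}$ meet $\BIGBite_{i+1}$ in a fresh set of $j$ edges disjoint from everything before. So the total number of edges of $\BigBite_{i+1}$ involved is roughly $j_1 + \cdots + j_{m-1}$ new edges plus the edges of $G_m$, while the final graph $G_m$ witnesses one of three "degeneracy" events (a nontrivial edge-overlap at the last step, an extra common neighbour of three vertices of $G_m$, or an extra vertex with three neighbours among $F \cup \bigcup_{k<m} G_k$).

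The main step is: for each fixed "attachment pattern" (which earlier edge each $G_l$ hangs from, and the isomorphism type of each $G_l$, i.e.\ the value $j_l$), count the expected number of such sequences lying in $\M_i \cup \BigBite_{i+1}$. I would build the sequence greedily: having placed $G_1, \ldots, G_{l-1}$, the number of choices for $G_l$ in $\bigcup_j X'_{i,j}(g)$ for a given base edge $g$ is at most $\sum_j |X'_{i,j}(g)| \le n^{10\epsilon_3}$ by (D2); the probability that its $j_l$ fresh $\BIGBite_{i+1}$-edges all land in $\BigBite_{i+1}$ is $n^{(\epsilon_2 - \epsilon_3)j_l}$ (since $\BigBite_{i+1}$ is a $p = n^{\epsilon_2-\epsilon_3}$ subsample of $\BIGBite_{i+1}$). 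For the intermediate steps $l < m$ with $j_l \ge 1$ this contributes a factor $n^{10\epsilon_3} \cdot n^{(\epsilon_2-\epsilon_3)j_l}$; but crucially the number of \emph{base edges} available to hang $G_{l+1}$ from grows by only $j_l$, so telescoping one sees the product $\prod_l n^{(\epsilon_2-\epsilon_3)j_l}$ is more than compensated. Wait — that is not quite it; the real gain is different. The correct accounting: each intermediate $G_l$ with $j_l$ fresh edges in $\BigBite_{i+1}$ costs a factor $\le n^{10\epsilon_3}\cdot n^{(\epsilon_2-\epsilon_3) j_l} \le n^{10\epsilon_3}$ times $n^{(\epsilon_2-\epsilon_3)}$ to the $j_l$ — and here one uses $\epsilon_2 - \epsilon_3 < 0$? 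No: $\epsilon_2 = 10^4\epsilon_3^3$ is tiny, so $\epsilon_2 - \epsilon_3 < 0$, hence $n^{(\epsilon_2-\epsilon_3)j_l} \le n^{-\Theta(\epsilon_3)}$ whenever $j_l \ge 1$. That is the engine: any intermediate step that adds a genuinely new $\BigBite_{i+1}$-edge pays a $n^{-\Theta(\epsilon_3)}$ toll, while the branching factor $n^{10\epsilon_3}$ from (D2) applies only when $j_l \ge 1$ as well (a graph $G_l$ with $j_l = 0$ is already entirely in $\M_i$, attached to a base edge in $\M_i$, and there are only $O(1)$ such — or it is forced and contributes a bounded factor). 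The degenerate final event at step $m$ is handled by (C3), (C4), (C5) and the relevant clauses of the definition: the number of ways to realize "an extra vertex adjacent to three given vertices in $\Trav_i \cup \BIGBite_{i+1}$" is $(\ln n)^{O(1)}$, and the number of $K_4^-$'s through a given edge is $n^{2/5 + 10\epsilon_3}$, etc. — each such clause contributes a subpolynomial or small-polynomial factor that is dominated by the $n^{-\Theta(\epsilon_3)}$ savings accumulated along the chain (possibly after verifying $G_m$ itself adds few fresh $\BigBite_{i+1}$-edges, so that the "fresh edge" toll at step $m$ is waived but the degeneracy is then paid for by the structural clause). Finally, since $m \le 2c = O(1)$ and the number of attachment patterns and isomorphism types is bounded, a union bound over all $O(1)$ patterns gives $\prob(\neg\E_3 \mid \E_0) \le n^{-\Theta(\epsilon_3)}$.

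The hard part will be the bookkeeping in the intermediate lemma: correctly identifying, for each clause of the "fourth property", which of the structural bounds (C3)–(C5) and the fresh-edge samplings of $\BigBite_{i+1}$ supply the decay, and verifying that in every case the accumulated savings $n^{-\Theta(\epsilon_3)(\#\text{fresh edges along the chain})}$ strictly beat the accumulated $n^{10\epsilon_3}$ branching from (D2) plus the polynomial cost $n^{2/5+10\epsilon_3}$ of the $K_4^-$-type clause (C5) — this is where one needs $\epsilon_3$ small and $\epsilon_1 \ll \epsilon_2 = 10^4\epsilon_3^3$ so that $\epsilon_2$-factors and $\epsilon_1$-factors are negligible against $\epsilon_3$-factors. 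In particular, the clause where $G_m$'s fresh edges all lie in $F$ (the case $|F| \ge 1$ analyzed via the maximal $m'$) has to be treated so that the chain still contains at least one genuinely fresh $\BigBite_{i+1}$-edge, or else the degeneracy is located on an edge of $F$ and one uses that $\M_i \cup F$ is $K_4$-free together with (C3)/(C5) applied with $f$ an edge of $F$. Once the worst case among the finitely many clauses is pinned down and shown to yield $n^{-\Theta(\epsilon_3)}$, the union bound closes the argument.
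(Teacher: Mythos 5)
Your accounting of where the $n^{-\Theta(\epsilon_3)}$ decay comes from has the sign wrong, and this is a real gap. For an intermediate step $G_l$ with $j_l$ fresh $\BIGBite_{i+1}$-edges, the number of choices supplied by~(D2) is $|X'_{i,j_l}(g)| \le n^{\epsilon_3 j_l + o(1)}$, and the probability that those $j_l$ fresh edges land in $\BigBite_{i+1}$ is $n^{(\epsilon_2-\epsilon_3)j_l}$. Multiplying, the expected per-step contribution is $n^{\epsilon_3 j_l + (\epsilon_2-\epsilon_3)j_l + o(1)} = n^{\epsilon_2 j_l + o(1)}$, a small \emph{positive} power of $n$, not a ``toll.'' Your sentence ``any intermediate step that adds a genuinely new $\BigBite_{i+1}$-edge pays a $n^{-\Theta(\epsilon_3)}$ toll'' isolates only the probability factor and forgets that the branching from~(D2) precisely cancels the $n^{-\epsilon_3 j_l}$ part, leaving $n^{+\epsilon_2 j_l}$. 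Consequently there are no ``accumulated savings'' along the chain to beat anything; the intermediate (almost-bad) prefix only contributes an expected count of order $n^{O(\epsilon_2 m)}$, and it is exactly here that the choice $c \in \epsilon_3^2\epsilon_2^{-1} \pm 1$ matters: since $m \le 2c$, we get $n^{O(\epsilon_2 m)} \le n^{O(\epsilon_3^2)}$, a loss small enough to be absorbed.

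The decay in the paper comes entirely from the \emph{final} step, and via a different mechanism than you invoke. The fourth clause in the definition of a bad sequence forces $G_m$ to be structurally degenerate relative to $F \cup \bigcup_{k<m} G_k$, and~(C3) together with~(C4) then bound the number of admissible $G_m \subseteq \M_i \cup \BIGBite_{i+1}$ by $(\ln n)^{O(1)}$ --- not by $n^{10\epsilon_3}$, and not by the $n^{2/5+10\epsilon_3}$ of~(C5), which is far too large to be useful here and is not invoked in the paper's proof of this lemma. Moreover, every such $G_m$ contains at least one edge of $\BIGBite_{i+1}$ lying outside $F$ and outside all earlier $G_k$, so conditioned on the almost-bad prefix the probability that $G_m$ falls inside $\M_i \cup \BigBite_{i+1}$ is at most $n^{\epsilon_2-\epsilon_3}$. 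The final tally is $n^{O(\epsilon_3^2)} \cdot (\ln n)^{O(1)} \cdot n^{\epsilon_2-\epsilon_3} = n^{-\Theta(\epsilon_3)}$. Your proposal would need to be reorganised along these lines: treat the prefix purely as a controlled (slightly lossy) union bound, and extract the entire decay from the single last step via the $(\ln n)^{O(1)}$ bound of~(C3)/(C4) plus the guaranteed fresh $\BigBite_{i+1}$-edge.
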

\begin{proof}
Fix $1 \le m \le 2c$ and note that $m = O(1)$. By the union bound, it is enough
to show that conditioned on $\E_0$, the expected number of bad sequences of
length $m$ whose members are all contained in $\M_i \cup \BigBite_{i+1}$ is at
most $n^{-\Theta(\epsilon_3)}$.

Say that a sequence $(G_l)_{l=1}^{m-1}$ is almost-bad if there exists $G_m$
such that $(G_l)_{l=1}^m$ is bad. 
We first claim that conditioned on $\E_0$, the expected number of almost-bad
sequences whose members are all contained in $\M_i \cup \BigBite_{i+1}$ is at
most $n^{O(\epsilon_2 m)}$, which is at most $n^{O(\epsilon_3^2)}$, since $m
\le 2 c \le 4 \epsilon_3^2 \epsilon_2^{-1}$.  This claim follows from the
definition of a bad sequence, the fact that~(D2) holds (specifically the fact
that $|X'_{i,j}(f)| \le n^{\epsilon_3 j + o(1)}$ for all $1 \le j \le 5$ and
all $f \in \NotTrav_i$), and since the probability that $G \in X'_{i,j}(f)$ is
contained in $\M_i \cup \BigBite_{i+1}$ is $n^{(\epsilon_2 - \epsilon_3)j}$.

Given an almost-bad sequence $(G_l)_{l=1}^{m-1}$, the number of graphs $G_m
\subseteq \M_i \cup \BIGBite_{i+1}$ that can be concatenated to this sequence
so that it becomes bad is at most $(\ln n)^{O(1)}$, and this follows from the
definition of a bad sequence together with~(C3)~and~(C4).  For every almost-bad
sequence $(G_l)_{l=1}^{m-1}$ and every graph $G_m \subseteq \M_i \cup
\BIGBite_{i+1}$ that can be concatenated to this sequence so that it becomes
bad, there is an edge in $G_m \cap \BIGBite_{i+1}$ which does not belong to
$F$, nor to any of the graphs in the almost-bad sequence.  Hence, conditioned
on the event that every member of a given almost-bad sequence is contained in
$\M_i \cup \BigBite_{i+1}$, and conditioned on $\E_0$, the probability that a
given $G_m$ as above is contained in $\M_i \cup \BigBite_{i+1}$ is at most
$n^{\epsilon_2 - \epsilon_3}$.  It follows that the expected number of bad
sequences of length $m$ is at most $n^{O(\epsilon_3^2)} \cdot (\ln n)^{O(1)}
\cdot n^{\epsilon_2 - \epsilon_3}$, which is at most $n^{-\Theta(\epsilon_3)}$.
\end{proof}

\section{Supporting lemmas} \label{sec:4}
%
In this section we prove two supporting lemmas that will be used in the next
section, where we prove our main lemma. Fix for the rest of the section an
integer $0 \le i < I$.  We start with the following lower bound on the
probability of $\C_i$.
\begin{lemma} \label{lemma:sec:4:1}
$\prob(\C_i) \ge 1 - n^{\omega(1)}$.
\end{lemma}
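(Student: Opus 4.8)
The plan is to prove each of the eight properties (C1)--(C8) separately, and conclude by a union bound over the at most $I = n^{o(1)}$ values of $i$ together with a union bound over the choices of vertex sets $S$ (of which there are $\binom{n}{s} \le 2^n$), pairs of vertices, triples of vertices, etc. The key point that makes all the individual estimates tractable is that $\Trav_i \cup \BIGBite_{i+1}$ is, for the purpose of these deterministic-looking structural statements, dominated by a binomial random graph. Indeed, each edge of $\binom{[n]}{2}$ lands in $\BIGBite_{i+1}$ with probability exactly $n^{\epsilon_3-2/5}$, and $\Trav_i$ is built from $i \le I$ earlier $\BIGBite$-type samples; so $\Trav_i \cup \BIGBite_{i+1}$ is stochastically dominated by $G(n,p)$ with $p = (I+1)n^{\epsilon_3 - 2/5} = n^{\epsilon_3 - 2/5 + o(1)} \le n^{\epsilon_3 + \epsilon_1 - 2/5}$. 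Since $\epsilon_1$ is tiny compared to $\epsilon_3$, this $p$ is at most $n^{2\epsilon_3 - 2/5}$, and every bound in (C1)--(C8) has a slack of $n^{10\epsilon_3}$ (or $(\ln n)^{O(1)}$), which comfortably absorbs these losses. So throughout I would work with $G(n,p)$, $p = n^{2\epsilon_3 - 2/5}$, and prove each property holds \whps for $G(n,p)$.

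The individual estimates then break into three flavors. First, the "counting" properties (C1), (C2), (C3), (C4), (C5): these count edges, common neighbours, $K_4^-$-copies, etc., in $G(n,p)$. For each, the expected count is easily computed (e.g.\ $\expec|E(G(n,p))| \sim p\binom n2 = n^{8/5 + 2\epsilon_3 + o(1)}$, the expected number of common neighbours of a fixed pair is $\sim np^2 = n^{1/5 + 4\epsilon_3}$, the expected number of $K_4^-$'s through a fixed edge is $\sim n^2 p^4 = n^{2/5 + 8\epsilon_3}$, and for triples of vertices the expected number of common neighbours is $np^3 = n^{-1/5 + 6\epsilon_3} = o(1)$ so the $(\ln n)^{O(1)}$ bound follows from a first-moment/Markov plus union-bound argument over triples), and concentration follows from Theorem~\ref{thm:vu} with the appropriate choice of the $\EE_k$'s and $\lambda = (\ln n)^2$ (or $\lambda$ a small power of $n$), exactly as is done in Section~\ref{sec:6:1} for a very similar quantity. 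One has to check the ratio condition $\EE_k/\EE_{k+1} \ge \lambda + 8k\ln n$, which amounts to checking that conditioning on one more edge of a fixed small subgraph drops the conditional expectation by at least a polynomial factor — true here since $p$ is a negative power of $n$. The union bound over $S$ in (C2) is fine because $\binom n2 p$ inside $\binom S2$ has expectation $s^2 p = n^{6/5 + 2\epsilon_3 + o(1)}$, and the deviation probability from Theorem~\ref{thm:vu} is $\exp(-n^{\Omega(\epsilon_3)})$, which beats $2^n$; similarly (C3)'s pair/triple union bounds cost only $n^2$ resp.\ $n^3$.

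Second, the "deletion" properties (C6), (C7), (C8): here one cannot hope that the raw count of $4$-cycles (resp.\ graphs in $X_{0,5}$ sharing vertices with $R$) is small, but one can delete few edges to destroy all but few of them. This is exactly the setting of Theorem~\ref{thm:deletion} (the Janson--Ruci\'nski deletion lemma). For (C6), take $\{G_l\}$ to be the family of $4$-cycles inside $\binom S2$; $\expec(W) = s^4 p^4 / 8 = n^{4/5 + 8\epsilon_3 + o(1)}$, and applying Theorem~\ref{thm:deletion} with $r = n^{3/5 + 9\epsilon_3}$ and $\lambda = \expec(W)$ gives a failure probability $\exp(-n^{\Omega(\epsilon_3)})$, again beating the $2^n$ union bound over $S$. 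For (C7) and (C8), the relevant families are indexed by edges $g \in \binom R2$ together with a completion to a graph in $X_{0,5}(g)$ (four vertices, five edges on them), with various constraints on how the four vertices meet $R$; the expected number, over $g \in \binom R2$, of such configurations contained in $G(n,p)$ is $r^2 \cdot (\text{number of extensions}) \cdot p^{(\text{edges})}$, which works out to $n^{O(1)} \cdot p^5 \cdot (\text{vertex factors})$ — and the claimed bound $O(n^{4.2/5})$ is generous enough to absorb this. The subtlety in (C7) and (C8) is the conditioning on the maximum degree in $\M \cap \binom R2$ being at most $n^{1.1/5}$ and on the existence of the small exceptional set $R_0$ (resp.\ the $O(n^{1/2})$ edges $E$): these are handled by a two-step argument — first expose the edges of $G(n,p)$ inside $\binom R2$, use the degree bound to limit how many triangles through a given edge of $\binom R2$ can be "completed inside $R$", then expose the rest and apply the deletion lemma to the extensions that go outside $R$, with $R_0$ being the set of outside vertices of high "co-degree" into $R$ (bounded in number by a first-moment argument, since a vertex with $\ge n^{0.99/5}$ neighbours in $R$ is rare when $rp = n^{3/5 + 2\epsilon_3 - 2/5 + o(1)} = n^{1/5 + 2\epsilon_3 + o(1)} \ll n^{0.99/5}$... wait, that needs $2\epsilon_3 < 0.19/5$, which holds since $\epsilon_3$ is sufficiently small — so indeed such vertices are rare and $|R_0| \le n^{0.99/5}$ \whps).

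The main obstacle, and where I would spend the most care, is \emph{not} any single tail bound but rather the bookkeeping in (C7) and (C8): correctly setting up the two-stage revelation of $G(n,p)$ so that the maximum-degree hypothesis and the auxiliary sets $R_0$, $E$ can be used as \emph{given} data while the remaining randomness still behaves like a fresh binomial graph to which Theorem~\ref{thm:deletion} applies, and checking that the vertex/edge-counting exponents genuinely come out below $4.2/5$ with room to spare after the $n^{O(\epsilon_1)}$ and $n^{O(\epsilon_3)}$ losses. Everything else is a routine, if lengthy, instantiation of Theorems~\ref{thm:vu} and~\ref{thm:deletion} with parameters chosen so that the failure probabilities are $\exp(-n^{\Omega(\epsilon_3)})$ or $n^{-\omega(1)}$, which survive all the union bounds (the largest being the $2^n$ from ranging over $S$, comfortably beaten by $\exp(-n^{\Omega(\epsilon_3)})$), giving $\prob(\C_i) \ge 1 - n^{-\omega(1)}$ as claimed.
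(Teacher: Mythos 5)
Your overall roadmap matches the paper's: observe that $\Trav_i \cup \BIGBite_{i+1}$ behaves like a binomial random graph with $p=\Theta(n^{\epsilon_3-2/5})$, then dispatch (C1)--(C5) with Chernoff/Vu-type concentration and (C6)--(C8) with the Janson--Ruci\'nski deletion lemma (Theorem~\ref{thm:deletion}) plus a two-stage exposure for (C7)--(C8). (Minor aside: the paper observes that $\Trav_i\cup\BIGBite_{i+1}$ \emph{is exactly} a $G(n,p)$ with $p=\Theta(n^{\epsilon_3-2/5})$, since each edge is present independently; your stochastic-domination bound $p\le (I+1)n^{\epsilon_3-2/5}$ is a valid but lossier route.)

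However, there is a genuine gap in your sketch of the second assertion of (C7), and you flagged the right spot but then convinced yourself the numbers work when they do not. You propose taking $R_0$ to be the set of vertices of $[n]\setminus R$ with at least $n^{0.99/5}$ neighbours in $R$, and claim this is small because $rp\ll n^{0.99/5}$. But $r=s-o(s)=\Theta(n^{3/5}(\ln n)^{1/5})$ and $p=n^{\epsilon_3-2/5+o(1)}$ give $rp=\Theta(n^{1/5+\epsilon_3}(\ln n)^{1/5})$, which is \emph{larger} than $n^{0.99/5}=n^{0.198}$, not smaller: the typical co-degree into $R$ already exceeds your threshold, so almost every vertex would land in your $R_0$, and the first-moment argument fails. (Your side-check ``needs $2\epsilon_3<0.19/5$'' is the wrong inequality: $n^{1/5+2\epsilon_3}\ll n^{0.99/5}$ would require $2\epsilon_3<-0.01/5$, which is impossible.) The paper's $R_0$ is defined by a much higher co-degree threshold ($>n^{2.02/5}$), which \emph{is} rare; the many vertices of moderate co-degree (between $n^{1.01/5}$ and $n^{2.02/5}$) are kept in $R_1$ but counted separately (at most $n^{2/5}$ of them), and this split is then fed into the bound on the number of candidate paths of length two.

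Relatedly, your description of the two-stage exposure for (C7)--(C8) is reversed relative to what actually works. You say: expose inside $\binom R2$ first, use the degree bound, then expose outside and apply the deletion lemma to the ``extensions that go outside $R$.'' The degree hypothesis is on an \emph{arbitrary} subgraph $\M\subseteq G(n,p)$, not on $G(n,p)$ itself, so you cannot simply invoke it after exposing inside edges. The paper exposes \emph{outside} $\binom R2$ first to determine $R_0$ and the candidate path set $Q$, and \emph{then} applies the deletion lemma (via Claim~\ref{claim:sec:6}) to the paths of $Q$ sitting \emph{inside} $G(n,p)\cap\binom R2$: after deleting $\le n^{3.1/5}$ edges (whose contribution is controlled by the $\M$-degree hypothesis, $\le 2n^{1.1/5}$ paths each), at most $2qn^{-3.99/5}$ paths remain. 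The order of exposure and the location where the deletion lemma is applied are both load-bearing here, and getting them right is where the proof of (C7)--(C8) actually lives.
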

\begin{proof}
We argue that every property that is asserted to hold by $\C_i$ occurs with
probability at least $1 - n^{-\omega(1)}$.  The key observation here is that
$\Trav_i \cup \BIGBite_{i+1}$ is the binomial random graph $G(n,p)$, for some
$p = \Theta( n^{\epsilon_3 - 2/5})$ that we fix for the rest of the proof.
With that observation at hand, we continue as follows.  A standard application
of Chernoff's bound and the union bound shows that~(C1),~(C2)~and~(C3), each
occurs with probability at least $1 - n^{-\omega(1)}$.  Theorem~\ref{thm:vu}
and the union bound easily implies that~(C4)~and~(C5), each occurs with
probability at least $1 - n^{-\omega(1)}$.  Theorem~\ref{thm:deletion} and the
union bound easily implies that~(C6) occurs with probability at least $1 -
n^{-\omega(1)}$.  To complete the proof, we need to show that~(C7)~and~(C8),
each occurs with probability at least $1 - n^{-\omega(1)}$.  For that, we need
the following claim.

\begin{claim} \label{claim:sec:6}
Let $R \subseteq [n]$ be a set of $r$ vertices, where $s - o(s) \le r \le s$.
Let $Q$ be a set containing at most $q$ paths of length two in $\binom{R}{2}$.
With probability at least $1 - n^{-\omega(s)}$, the following holds for every
$\M \subseteq G(n,p)$, assuming the maximum degree in $\M \cap \binom{R}{2}$ is
at most $n^{1.1/5}$: the number of paths in $Q$ that are contained in $\M \cap
\binom{R}{2}$ is at most $O(q n^{-3.99/5} + n^{4.2/5})$.
\end{claim}
\begin{proof}
The expected number of paths in $Q$ that are contained in $G(n,p)$ is at most
$q p^2 \le q n^{-3.99/5}$. Hence, by Theorem~\ref{thm:deletion}, with
probability at least $1 - n^{-\omega(s)}$, there is a set $E_0 \subseteq
G(n,p)$ of size at most $n^{3.1/5}$, such that $G(n,p) \setminus E_0$ contains
fewer than $2 q n^{-3.99/5}$ paths from $Q$.  Moreover, for every $\M \subseteq
G(n,p)$, assuming the maximum degree in $\M \cap \binom{R}{2}$ is at most
$n^{1.1/5}$, every edge in $E_0$ belongs to at most $2 n^{1.1/5}$ paths of
length two in $\M \cap \binom{R}{2}$.  Therefore, with the desired probability,
for every $\M \subseteq G(n,p)$, assuming the maximum degree in $\M \cap
\binom{R}{2}$ is at most $n^{1.1/5}$, the number of paths in $Q$ that are
contained in $\M \cap \binom{R}{2}$ is at most $2 q n^{-3.99/5} + |E_0| \cdot 2
n^{1.1/5} = O( q n^{-3.99/5} + n^{4.2/5})$.  
\end{proof}

{\bf Property~(C7).}
Fix a set $R \subseteq [n]$ of $r$ vertices, where $s - o(s) \le r \le s$.  To
show that~(C7) occurs with probability at least $1 - n^{-\omega(1)}$, it is
enough to show that $R$ satisfies the first assertion of~(C7) with probability
at least $1 - n^{-\omega(s)}$ and the second assertion of~(C7) with probability
at least $1 - n^{-\omega(s)}$.  

We begin with the first assertion.  We show that with probability at least $1 -
n^{-\omega(s)}$, for every $\M \subseteq G(n,p)$, assuming the maximum degree
in $\M \cap \binom{R}{2}$ is at most $n^{1.1/5}$, there are at most
$O(n^{4.2/5})$ edges $g \in \binom{R}{2}$ for which there exists  a graph $G
\in X_{0,5}(g)$, with $G \subseteq \M$, which shares all four vertices with
$R$.  For that it is enough to show that with probability at least $1 -
n^{-\omega(s)}$, for every $\M \subseteq G(n,p)$, assuming the maximum degree
in $\M \cap \binom{R}{2}$ is at most $n^{1.1/5}$, there are at most
$O(n^{4.2/5})$ edges $g \in \binom{R}{2}$ whose two vertices form an
independent set in some $4$-cycle in $\M \cap \binom{R}{2}$.  Note that the
expected number of $4$-cycles in $G(n,p) \cap \binom{R}{2}$ is at most
$n^{4.2/5}$. 
Hence, by Theorem~\ref{thm:deletion}, with probability at least $1 -
n^{-\omega(s)}$, there is a set $E_0 \subseteq G(n,p)$ of size at most
$n^{3.1/5}$, such that $(G(n,p) \cap \binom{R}{2}) \setminus E_0$ contains
fewer than $2n^{4.2/5}$ $4$-cycles.  Also, every $4$-cycle has two independent
sets of size two. Hence, with probability at least $1 - n^{-\omega(s)}$, for
every $\M \subseteq G(n,p)$, assuming the maximum degree in $\M \cap
\binom{R}{2}$ is at most $n^{1.1/5}$, the number of edges $g \in \binom{R}{2}$
whose two vertices form an independent set in some $4$-cycle in $\M \cap
\binom{R}{2}$ is at most $4n^{4.2/5} + |E_0| \cdot 2n^{1.1/5} = O(n^{4.2/5})$. 

We continue with the second assertion.  We show that with probability at least
$1 - n^{-\omega(s)}$, for every $\M \subseteq G(n,p)$, assuming the maximum
degree in $\M \cap \binom{R}{2}$ is at most $n^{1.1/5}$, there is a set $R_0
\subseteq [n] \setminus R$ of at most $n^{0.99/5}$ vertices, such that there
are at most $O(n^{4.2/5})$ edges $g \in \binom{R}{2}$ for which there exists a
graph $G \in X_{0,5}(g)$, with $G \subseteq \M$, which shares exactly three
vertices with $R$ and one vertex with $[n] \setminus (R \cup R_0)$.
Start by exposing only the edges in $G(n,p) \setminus \binom{R}{2}$.  It is
easy to show that with probability at least $1 - n^{-\omega(s)}$, we can
partition the set of vertices $[n] \setminus R$ to two sets, $R_0$ and $R_1$,
where $R_0$ has size at most $n^{0.99/5}$, where every vertex in $R_1$ is
adjacent in $G(n,p)$ to at most $n^{2.02/5}$ vertices in $R$, and where the
number of vertices in $R_1$ which are adjacent in $G(n,p)$ to more than
$n^{1.01/5}$ vertices in $R$ is at most $n^{2/5}$.  We condition on the
occurrence of this event and continue by exposing  the edges in $G(n,p) \cap
\binom{R}{2}$.
For $\M \subseteq G(n,p)$, let $E_1(\M)$ be the set of edges $g \in
\binom{R}{2}$ for which there exists a graph $G \in X_{0,5}(g)$, with $G
\subseteq \M$, which shares exactly three vertices with $R$ and one vertex with
$R_1 = [n] \setminus (R \cup R_0)$. 
Note that $|E_1(\M)|$ is bounded by the number of paths of length two in $\M
\cap \binom{R}{2}$, whose three vertices are adjacent in $G(n,p)$ to a vertex
in $R_1$.  Before the second exposure, the number of possible paths of this
kind is at most 
\begin{eqnarray*}
3 \cdot |R_1| \cdot (n^{1.01/5})^3 + 3 \cdot n^{2/5} \cdot (n^{2.02/5})^3 \le
n^{8.1/5}. 
\end{eqnarray*}
Hence, by Claim~\ref{claim:sec:6}, with probability at least $1 -
n^{-\omega(s)}$, for every $\M \subseteq G(n,p)$, assuming the maximum degree
in $\M \cap \binom{R}{2}$ is at most $n^{1.1/5}$, of these possible paths, only
$O(n^{4.2/5})$ belong to $\M \cap \binom{R}{2}$.  So with probability at least
$1 - n^{-\omega(s)}$, for every $\M \subseteq G(n,p)$, assuming the maximum
degree in $\M \cap \binom{R}{2}$ is at most $n^{1.1/5}$, we have $|E_1(\M)| =
O(n^{4.2/5})$.  This completes the proof.

{\bf Property~(C8).}
Fix a set $R \subseteq [n]$ of $r$ vertices, where $s - o(s) \le r \le s$, and
a set $E$ of $O(n^{1/2})$ edges in $\binom{[n]}{2} \setminus \binom{R}{2}$. To
show that~(C8) occurs with probability at least $1 - n^{-\omega(1)}$, it is
enough to show that $R$ and $E$ satisfies the assertion of~(C8) with
probability at least $1 - n^{-\omega(s)}$.  Furthermore, it suffices to do so
conditioned on~(C3).
Let $E_1$ be the set of edges $g \in \binom{R}{2}$ for which there exists a
graph $G \in X_{0,5}(g)$, with $G \subseteq G(n,p) \setminus \binom{R}{2}$ and
$G \cap E \ne \emptyset$.  Let $E_2$ be the number of paths of length two in
$G(n,p) \cap \binom{R}{2}$ that complete some edge $g \in E_1$ to a triangle.
Start by exposing only the edges in $G(n,p) \setminus \binom{R}{2}$.
Conditioned on~(C3), we have that $|E_1| \le |E| \cdot n^{2/5 + 20 \epsilon_3}
= O(n^{4.5/5 + 20 \epsilon_3})$. Continue by exposing the edges in $G(n,p) \cap
\binom{R}{2}$. The number of possible paths of length two in $\binom{R}{2}$
that complete some edge $g \in E_1$ to a triangle is at most $|E_1| \cdot s  =
O(n^{7.6/5})$.  Therefore, assuming the maximum degree in $G(n,p) \cap
\binom{R}{2}$ is at most $n^{1.1/5}$, by Claim~\ref{claim:sec:6}, with
probability at least $1 - n^{-\omega(s)}$, $|E_2| = O(n^{7.6/5 - 3.99/5} +
n^{4.2/5})$. This completes the proof. 
\end{proof}

We continue with the following conditional lower bound on the probability of
$\D_i$.
\begin{lemma} \label{lemma:sec:4:2}
Assume that $\M_i$ is given so that $\A_i \wedge \B_i$ holds. Further assume
that the graph $\Trav_i$ has the following properties: first, the maximum
degree is at most $n^{3/5 + 10 \epsilon_3}$; second, the number of vertices
that are adjacent to any two fixed vertices is at most $n^{1/5 + 10
\epsilon_3}$; third, the number of vertices that are adjacent to any three
fixed vertices is at most $(\ln n)^{O(1)}$.  Then the probability of $\D_i$
(over the choice of $\BIGBite_{i+1}$) is at least $1 - n^{-\omega(1)}$.
\end{lemma}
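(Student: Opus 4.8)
The plan is to exploit that $\BIGBite_{i+1}$ keeps each edge of $\NotTrav_i$ independently with probability $p:=n^{\epsilon_3-2/5}$, so that counting copies of fixed small graphs supported in $\NotTrav_i$ inside $\BIGBite_{i+1}$ is precisely the setting of Section~\ref{sec:3} with $G(n,p)$ restricted to $\NotTrav_i$. Each of (D1)--(D3) asserts that one such count is concentrated near its mean, and the conditioning on $\A_i\wedge\B_i$ supplies that mean. For (D1), $|O'_i|$ is $\mathrm{Bin}(|O_i|,p)$, so by (A2) its mean lies in $0.5n^{8/5+\epsilon_3}\phi(in^{-\epsilon_1})(1\pm 100\Gamma_i)$, which by Lemma~\ref{fact:f1} is at least $n^{8/5+\epsilon_3-\Theta(\epsilon_1)}$; Chernoff's bound then gives deviation by a factor $1\pm\Gamma_i\gamma_i$ with failure probability $\exp(-n^{\Omega(1)})=n^{-\omega(1)}$, which together with (A2) yields (D1).

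For (D2), fix $1\le j\le 5$ and $f\in\NotTrav_i$ and write $|X'_{i,j}(f)|=\sum_{G\in X_{i,j}(f)}\ONE[(\NotTrav_i\cap G)\subseteq\BIGBite_{i+1}]$, a count $W$ as in Section~\ref{sec:3} with $K=j$. By (A3), $\EE_0:=\expec(W)=|X_{i,j}(f)|\,p^j\in x_{i,j}(1\pm 1000\Gamma_i)\,n^{(\epsilon_3-2/5)j}$. I would bound $\expec_k(W)$ for $k\ge 1$ by a short case analysis: a $k$-edge subgraph $G$ of a member of $X_{i,j}(f)$ lives on the two endpoints of $f$ and the two ``extension'' vertices of the $K_4$, and unless $G$ already determines both extension vertices, the free one must be a common $\Trav_i$-neighbour of one, two, or three fixed vertices, hence has at most $n^{3/5+10\epsilon_3}$, $n^{1/5+10\epsilon_3}$, or $(\ln n)^{O(1)}$ choices by the degree and codegree hypotheses on $\Trav_i$; by Lemma~\ref{fact:f1} and $\epsilon_1\ll\epsilon_3$ this makes each $\expec_k(W)$ polynomially smaller than $\EE_0$. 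Then Theorem~\ref{thm:vu} with $\lambda=(\ln n)^2$ gives deviation $c_1\sqrt{\lambda\EE_0\EE_1}\ll\Gamma_i\EE_0$ and failure probability $\le c_2\exp(-(\ln n)^2/4)=n^{-\omega(1)}$, and a union bound over the $O(n^2)$ edges $f$ and $O(1)$ values of $j$ completes (D2). (When $i=0$ and $j\le 4$ both sides are $0$ and there is nothing to prove.)

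For (D3), fix an $s$-set $S$, let $S_i$ be the set promised by $\B_i$, and fix $(R,T)\in\Pairs(S_i)$ with $|T|=t$ and $1\le k<j\le 3$. Since $j\ge 2$, the $|Z_i(R,T)|$-term in (B3) vanishes, so (B3) gives $\expec(|Y'_{i,j,k}(T)|)=\tbinom{j}{k}|Y_{i,j}(T)|\,p^k\ge\tbinom{j}{k}y_{i,j,t}(1-100\Gamma_i)\,p^k$. I would apply Janson's inequality (Theorem~\ref{eq:janson}) --- or, for $k=1$, a Chernoff bound for the weighted sum $\sum_e w_e\ONE[e\in\BIGBite_{i+1}]$, $w_e$ being the number of triangles of $Y_{i,j}(T)$ through $e$ --- to the lower tail, with deviation $\lambda=0.5\,\Gamma_i\gamma_i\expec(|Y'_{i,j,k}(T)|)$, to obtain the stated interval. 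The decisive point is the overlap bound: for $j=2$, every triangle of $Y_{i,2}(T)$ through a fixed edge $e$ has its third vertex an $\M_i$-neighbour of an endpoint of $e$ inside $\binom{R}{2}\subseteq\binom{S_i}{2}$, so by (B2) there are at most $2n^{1.1/5}$ of them, while for $j=3$ the cruder bound ``at most $|R_3|=O(s)$ triangles through an edge'' suffices because the mean is then larger. A computation using Lemma~\ref{fact:f1} then shows that in each case $\lambda^2/(\expec(|Y'_{i,j,k}(T)|)+\Delta)$ exceeds $n^{3/5+\Omega(1)}$, so the failure probability is $n^{-\omega(s)}$; a union bound over all $\binom{n}{s}$ sets $S$, all $(R,T)\in\Pairs(S_i)$ (at most $e^{O(s)}$ of them) and $O(1)$ pairs $(j,k)$ then gives (D3).

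The main obstacle is (D3): quantifying over every $s$-subset $S$ forces a per-$(R,T)$ failure probability beating $\binom{n}{s}^{-1}=n^{-\Theta(s)}$, and this is exactly why (B2) is indispensable --- it pins the third vertex of a $j=2$ triangle to a neighbourhood of size $n^{1.1/5}$, without which the naive bound $w_e=O(s)$ would leave the overlap term too large and push the exponent below $s\ln n$. A secondary point requiring care is the case analysis of $\expec_k(W)$ in (D2): one must verify that for each shape of a $k$-edge subgraph of a $K_4^-$ an appropriate codegree hypothesis on $\Trav_i$ genuinely applies.
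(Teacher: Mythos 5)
Your proposal follows essentially the same route as the paper: Chernoff for (D1), Vu's concentration inequality (Theorem~\ref{thm:vu}) with a case-by-case bound on $\expec_k(W)$ using the degree/codegree hypotheses on $\Trav_i$ for (D2), and Janson's inequality (Theorem~\ref{eq:janson}) case-by-case over $(j,k)$ for (D3), with (B2) supplying the crucial overlap bound in the $(j,k)=(2,1)$ case. The one place you deviate slightly is the choice of deviation $\lambda$ in (D3): the paper sets $\lambda=\Gamma_i\gamma_i n^{(\epsilon_3-2/5)k}\binom{j}{k}y_{i,j,t}$ and must then cap $|L|\le\binom{j}{k}y_{i,j,t}$ (``safe to assume'') to control $\Delta$ and $\expec(W)$ from above, whereas your $\lambda=0.5\Gamma_i\gamma_i\expec(W)$ scales with $\expec(W)$, so $\lambda^2/\Delta$ becomes $\Theta((\Gamma_i\gamma_i)^2\expec(W)/w_{\max})$ and the upper bound on $|L|$ is no longer needed; both yield the desired $\omega(s\ln n)$ and hence $n^{-\omega(s)}$ per $(R,T)$. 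Your parenthetical alternative for $k=1$ (``a Chernoff bound for the weighted sum'') would only succeed if read as a Bernstein/variance-sensitive bound — a pure bounded-difference inequality loses a factor of $p$ relative to Janson and falls short in the $(j,k)=(2,1)$ case — but since you lead with Janson this is harmless.
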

\begin{proof}
We begin by noting that by Chernoff's bound,~(D1) occurs with probability at
least $1 - n^{-\omega(1)}$.

We continue by arguing that under the assumptions in the lemma,~(D2) occurs
with probability at least $1 - n^{-\omega(1)}$.  Let $1 \le j \le 5$ and $f \in
\NotTrav_i$.  
We assume that either $i \ge 1$, or else $j=5$, since otherwise trivially
$|X'_{i,j}(f)| = n^{(\epsilon_3 - 2/5)j} x_{i,j} = 0$ and we are done.
Let $\{G_l' : l \in L\}$ be the set $X_{i,j}(f)$.  Let $\{G_l : l \in L\}$ be
the family (potentially a multiset) for which it holds that $G_l = G_l' \cap
\NotTrav_i$ for every $l \in L$.  Consider the binomial random graph $G(n,p)$
with $p = n^{\epsilon_3 - 2/5}$, and let $W$ be as defined at the beginning of
Section~\ref{sec:3}.  Since $\A_i$ holds and $|L| = |X_{i,j}(f)|$, we have
$\expec(W) = n^{(\epsilon_3 - 2/5)j} |L|  \in n^{(\epsilon_3 - 2/5)j} x_{i,j}
(1 \pm 1000 \Gamma_i )$.  Observe that $|X'_{i,j}(f)|$ has the same
distribution as $W$ and so, by the union bound, it is enough to prove that the
probability that $W$ deviates from its expectation by more than $1000 \Gamma_i
n^{(\epsilon_3 - 2/5)j} x_{i,j} \ge \expec(W)^{0.9} = n^{\Omega(\epsilon_3)}$
is at most $n^{-\omega(1)}$.  We do so using Theorem~\ref{thm:vu}, with $\EE_0
= \expec(W), \EE_k = \exp((2j-k)\sqrt{\ln n})$ for $1 \le k \le j$, and
$\lambda = (\ln n)^2$.  We have several cases.
\begin{itemize}
\item Assume $j=5$. If $3 \le |G| \le 5$ then trivially $|L_G| \le 1$, while if
$1 \le |G| \le 2$ then trivially $|L_G| \le n$.  Therefore, $\expec_k(W) \le 1$
for all $1 \le k \le 5$.  Now note that $\expec_0(W) = \expec(W)$ and apply
Theorem~\ref{thm:vu} with the above parameters.
\item Assume $j=4$. If $3 \le |G| \le 4$ then trivially $|L_G| \le 1$.  If $|G|
= 2$ then since by assumption the maximum degree in the graph $\Trav_i$ is at
most $n^{3/5 + 10 \epsilon_3}$, we have $|L_G| = O(n^{3/5 + 10 \epsilon_3})$.
If $|G| = 1$ then trivially $|L_G| \le n$.  Therefore, $\expec_k(W) \le 1$ for
all $1 \le k \le 4$.  As before, note that $\expec_0(W) = \expec(W)$ and apply
Theorem~\ref{thm:vu} with the above parameters.
\item Assume $j=3$. If $|G| = 3$ then trivially $|L_G| \le 1$. If $|G| = 2$
then since by assumption the number of vertices that are adjacent in $\Trav_i$
to any two fixed vertices is at most $n^{1/5 + 10 \epsilon_3}$, we have $|L_G|
= O(n^{1/5 + 10 \epsilon_3})$. If $|G| = 1$ then since by assumption the
maximum degree in the graph $\Trav_i$ is at most $n^{3/5 +  10\epsilon_3}$, we
have $|L_G| = O(n^{3/5 + 10\epsilon_3})$. Therefore, $\expec_k(W) \le 1$ for
all $1 \le k \le 3$.  Apply Theorem~\ref{thm:vu} with the above parameters.
\item Assume $j=2$. If $|G| = 2$ then since by assumption the number of
vertices that are adjacent in $\Trav_i$ to any three fixed vertices is at most
$(\ln n)^{O(1)}$, we have $|L_G| \le (\ln n)^{O(1)}$. If $|G| = 1$ then since
by assumption the number of vertices that are adjacent in $\Trav_i$ to any two
fixed vertices is at most $n^{1/5 + 10 \epsilon_3}$, we have $|L_G| = O(n^{1/5
+ 10\epsilon_3})$. Therefore, $\expec_k(W) \le (\ln n)^{O(1)}$ for all $1 \le k
\le 2$.  Apply Theorem~\ref{thm:vu} with the above parameters.
\item Assume $j=1$. If $|G| = 1$ then since by assumption the number of
vertices that are adjacent in $\Trav_i$ to any three fixed vertices is at most
$(\ln n)^{O(1)}$, we have $|L_G| \le (\ln n)^{O(1)}$. Therefore, $\expec_1(W)
\le (\ln n)^{O(1)}$.  Apply Theorem~\ref{thm:vu} with the above parameters.
\end{itemize}

We end by arguing that under the assumptions in the lemma,~(D3) occurs with
probability at least $1 - n^{-\omega(1)}$.  Let $S \subseteq [n]$ be a set of
$s$ vertices, let $S_i \subseteq S$ be the set that is guaranteed to exist by
$\B_i$, let $1 \le k < j \le 3$, let $(R, T) \in \Pairs(S_i)$ and let $t =
|T|$, noting that $t = \Omega(s^3)$.
Note that by~(B2), $\Trav_i \cap \binom{R}{2}$ has maximum degree at most
$n^{1.1/5}$, and that by~(B3), $|Y_{i,j}(T)| \ge y_{i,j,t} (1 - 100\Gamma_i)$
(as $2 \le j \le 3$).
By the union bound, it is enough to show that $|Y'_{i,j,k}(T)| \ge
n^{(\epsilon_3 - 2/5)k} \binom{j}{k} y_{i,j,t} (1 - 100 \Gamma_i - \Gamma_i
\gamma_i)$ occurs with probability at least $1 - n^{-\omega(s)}$.  
Let $\{G_l : l \in L\}$ be the multiset $\biguplus_{(G_1, G_2, G_3)} G_2$,
where the multiset union ranges over all $(G_1, G_2, G_3) \in Y_{i,j,k}(T)$.
Consider the binomial random graph $G(n,p)$ with $p = n^{\epsilon_3 - 2/5}$,
and let $W$ be as defined at the beginning of Section~\ref{sec:3}.  By
assumption, $|L| = \binom{j}{k} |Y_{i,j}(T)| \ge \binom{j}{k} y_{i,j,t} (1 -
100 \Gamma_i)$ and so $\expec(W) \ge n^{(\epsilon_3 - 2/5)k} \binom{j}{k}
y_{i,j,t} (1 - 100 \Gamma_i)$.
Since $|Y'_{i,j,k}(T)|$ has the same distribution as $W$, it remains for us to
show that the probability that $W < \expec(W) - \lambda$ for $\lambda =
\Gamma_i \gamma_i n^{(\epsilon_3 - 2/5)k} \binom{j}{k} y_{i,j,t} $, is at most
$n^{-\omega(s)}$.  We do so using Theorem~\ref{eq:janson}, and for that it is
enough to show that $\lambda^2 / (\expec(W) + \Delta) = \omega(s \ln n)$, where
$\Delta$ is as defined in Section~\ref{sec:3}. We have three cases.
\begin{itemize}
\item Assume that $k=2$ and $j=3$. Clearly, $\Delta \le 12 s^4 n^{3(\epsilon_3
- 2/5)} \le n^{6.1/5}$. Furthermore, $\expec(W) \le 3 s^3 n^{2(\epsilon_3 -
2/5)}\le n^{5.1/5}$. Also, $\lambda \ge n^{5/5}$ by Lemma~\ref{fact:f1}.  Thus,
$\lambda^2 / (\expec(W) + \Delta) = \omega(s \ln n)$.
\item Assume that $k=1$ and $j=3$. Clearly, $\Delta \le 3 s^4 n^{\epsilon_3 -
2/5} \le n^{10.1/5}$.  Furthermore, $\expec(W) \le 3 s^3 n^{\epsilon_3 - 2/5}
\le n^{7.1/5}$. Also, $\lambda \ge n^{7/5}$ by Lemma~\ref{fact:f1}.  Thus,
$\lambda^2 / (\expec(W) + \Delta) = \omega(s \ln n)$.
\item Assume that $k=1$ and $j=2$. Recall that $|L| \ge \binom{j}{k} y_{i,j,t}
(1 - 100 \Gamma_i)$.  It is safe to assume that $|L| \le \binom{j}{k} y_{i,j,t}
$ (since otherwise we can remove some of the members of $L$ so that this
assumption does hold; such an alteration will not affect the proof). By this
upper bound on $|L|$, and using the fact that the maximum degree in $\Trav_i
\cap \binom{R}{2}$ is at most $n^{1.1/5}$, one can verify using
Lemma~\ref{fact:f1} that $\Delta \le 2 |L| n^{1.1/5} n^{\epsilon_3 - 2/5} \le
n^{6.2/5}$.  Furthermore, by this upper bound on $|L|$ and by
Lemma~\ref{fact:f1}, $\expec(W) = |L| n^{\epsilon_3 - 2/5} \le n^{5.1/5}$.
Also, $\lambda \ge n^{5/5}$ by Lemma~\ref{fact:f1}.  Thus, $\lambda^2 /
(\expec(W) + \Delta) = \omega(s \ln n)$.
\end{itemize}
\end{proof}

\section{Proof of Lemma~\ref{lemma0}} \label{sec:7}
%
In this section we prove our main lemma.  We start with the following lemma.
\begin{lemma} \label{lemma:sec:7:2}
For $0 \le i < I$,
\begin{eqnarray*}
\prob(\A_i \wedge \B_i) \ge 1 - i n^{-0.1} \implies \prob(\A_i \wedge \B_i
\wedge \C_i \wedge \D_i) \ge 1 - i n^{-0.1} - n^{-\omega(1)}.
\end{eqnarray*}
\end{lemma}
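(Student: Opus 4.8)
The plan is to reduce the statement to a bookkeeping combination of the two supporting lemmas already proved, namely Lemma~\ref{lemma:sec:4:1} (an unconditional lower bound on $\prob(\C_i)$) and Lemma~\ref{lemma:sec:4:2} (a conditional lower bound on $\prob(\D_i)$), together with one elementary Chernoff estimate. The first step is to record the dependency structure of the four events: $\A_i$ and $\B_i$ are determined by the pair $(\M_i,\Trav_i)$, whereas $\C_i$ and $\D_i$ depend on $(\M_i,\Trav_i)$ together with $\BIGBite_{i+1}$; moreover, conditioned on $(\M_i,\Trav_i)$, the set $\BIGBite_{i+1}$ is obtained from $\NotTrav_i$ by independent coin flips, so $\Trav_i\cup\BIGBite_{i+1}$ is distributed as a binomial random graph $G(n,p)$ with $p=\Theta(n^{\epsilon_3-2/5})$ (exactly as already noted in the proof of Lemma~\ref{lemma:sec:4:1}). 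This lets me treat the randomness of $\BIGBite_{i+1}$ conditionally on the past.

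The second step is to introduce the auxiliary event $\F$ that $\Trav_i\cup\BIGBite_{i+1}$ has maximum degree at most $n^{3/5+10\epsilon_3}$. Since $\Trav_i\cup\BIGBite_{i+1}$ is distributed as $G(n,p)$ with $p=\Theta(n^{\epsilon_3-2/5})$, each vertex has expected degree $\Theta(n^{3/5+\epsilon_3})$, so Chernoff's bound and a union bound over vertices give $\prob(\neg\F)\le n^{-\omega(1)}$. The role of $\F$ is that on $\C_i\wedge\F$ the graph $\Trav_i$, being a subgraph of $\Trav_i\cup\BIGBite_{i+1}$, has maximum degree at most $n^{3/5+10\epsilon_3}$, while property~(C3) bounds the number of vertices of $\Trav_i$ adjacent to two (respectively three) fixed vertices by $n^{1/5+10\epsilon_3}$ (respectively $(\ln n)^{O(1)}$); in short, $\C_i\wedge\F$ forces every hypothesis that Lemma~\ref{lemma:sec:4:2} imposes on $\Trav_i$.

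The third step is the union bound
\begin{align*}
\prob(\neg(\A_i\wedge\B_i\wedge\C_i\wedge\D_i)) \le{} & \prob(\neg(\A_i\wedge\B_i))+\prob(\neg\C_i)+\prob(\neg\F) \\
& {}+\prob(\A_i\wedge\B_i\wedge\C_i\wedge\F\wedge\neg\D_i).
\end{align*}
Here the first term is at most $i n^{-0.1}$ by hypothesis, the second is $n^{-\omega(1)}$ by Lemma~\ref{lemma:sec:4:1}, and the third is $n^{-\omega(1)}$ as above. For the last term I would condition on $(\M_i,\Trav_i)$: if $(\M_i,\Trav_i)$ makes $\A_i\wedge\B_i$ fail, or makes $\Trav_i$ violate one of the three degree conditions of Lemma~\ref{lemma:sec:4:2}, then the indicator of $\A_i\wedge\B_i\wedge\C_i\wedge\F\wedge\neg\D_i$ is zero (in the second case because $\C_i\wedge\F$ then cannot hold); otherwise $(\M_i,\Trav_i)$ meets all the hypotheses of Lemma~\ref{lemma:sec:4:2}, which gives $\prob(\neg\D_i\mid\M_i,\Trav_i)\le n^{-\omega(1)}$. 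Taking expectations bounds the last term by $n^{-\omega(1)}$, and summing the four bounds yields $\prob(\neg(\A_i\wedge\B_i\wedge\C_i\wedge\D_i))\le i n^{-0.1}+n^{-\omega(1)}$, which is the claim.

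The argument contains no genuinely hard step; the only thing one must be careful about is the conditioning, i.e.\ verifying that $\A_i\wedge\B_i$ really is determined by the past, that $\C_i$, $\D_i$ and $\F$ depend only on the past together with $\BIGBite_{i+1}$, that conditioning on $(\M_i,\Trav_i)$ leaves $\BIGBite_{i+1}$ distributed as a fresh binomial restriction of $\NotTrav_i$ (so Lemma~\ref{lemma:sec:4:2} applies verbatim), and that $\C_i\wedge\F$ implies precisely the degree hypotheses on $\Trav_i$ that Lemma~\ref{lemma:sec:4:2} requires — the maximum-degree hypothesis is not literally one of (C1)--(C8), which is the sole reason the auxiliary event $\F$ is introduced.
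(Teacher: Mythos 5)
Your proposal is correct and takes essentially the same route as the paper. The paper's proof is terse: it invokes Lemma~\ref{lemma:sec:4:1} for $\C_i$, appeals to unnamed ``standard arguments'' to certify that $\Trav_i$ satisfies the three degree/codegree hypotheses of Lemma~\ref{lemma:sec:4:2} with probability $1-n^{-\omega(1)}$, then applies Lemma~\ref{lemma:sec:4:2} and a union bound. What you do differently is to make the ``standard arguments'' explicit, deriving the two codegree conditions from (C3) (so from $\C_i$ itself, which you are already including in the union bound) and introducing the auxiliary maximum-degree event $\F$ via a Chernoff bound on $\Trav_i\cup\BIGBite_{i+1}\sim G(n,p)$; you also correctly observe that since $\Trav_i\subseteq\Trav_i\cup\BIGBite_{i+1}$, upper bounds on degrees and codegrees of the latter pass down to the former. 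Your approach is slightly more careful about the conditioning structure (which the paper leaves implicit) and reuses the $\C_i$ machinery rather than re-deriving codegree bounds for $\Trav_i$ directly; it is a correct instantiation of what the paper calls standard arguments, not a genuinely different decomposition.
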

\begin{proof}
Let $0 \le i < I$.  Assume that $\prob(\A_i \wedge \B_i) \ge 1 - i n^{-0.1}$.
Standard arguments show that with probability at least $1 - n^{-\omega(1)}$,
the graph $\Trav_i$ satisfies the three properties that are assumed to hold in
Lemma~\ref{lemma:sec:4:2}.  Therefore, by Lemma~\ref{lemma:sec:4:2} we have
$\prob(\A_i \wedge \B_i \wedge \D_i) \ge 1 - i n^{-0.1} - n^{-\omega(1)}$. The
lemma now follows since by Lemma~\ref{lemma:sec:4:1} we have $\prob(\C_i) \ge 1
- n^{-\omega(1)}$.
\end{proof}

The proof of Lemma~\ref{lemma0} is by induction on $i$. Trivially, $\prob(\A_0
\wedge \B_0) = 1$. This, together with Lemma~\ref{lemma:sec:7:2}, implies the
validity of the lemma for the case $i=0$.  Assume the lemma holds for  $0 \le i
< I-1$.  We prove that $\prob(\A_{i+1} \wedge \B_{i+1} \wedge \C_{i+1} \wedge
\D_{i+1}) \ge 1 - (i+1) n^{-0.1} - n^{-\omega(1)}$.  To do that, by
Lemma~\ref{lemma:sec:7:2} it suffices to prove that $\prob(\A_{i+1} \wedge
\B_{i+1}) \ge 1 - (i+1) n^{-0.1}$.  Thus, by the induction hypothesis,
it is enough to prove that
\begin{eqnarray}
\label{eq:sec:7:1} \prob(\A_{i+1} \given \A_i \wedge \B_i \wedge \C_i \wedge
\D_i) &\ge& 1 - n^{-0.11} \text{\quad and }\\ 
\label{eq:sec:7:2} \prob(\B_{i+1} \given \A_i \wedge \B_i \wedge \C_i \wedge
\D_i) &\ge& 1 - n^{-0.11}.
\end{eqnarray}

The proof of~(\ref{eq:sec:7:1})~and~(\ref{eq:sec:7:2}) is given in the next two
subsections.  In our arguments below we make use of the notion of an outcome of
an edge in $\BIGBite_{i+1}$: an outcome of such an edge is either the event
that the edge in not in $\Bite_{i+1}$, or otherwise it is the birthtime of the
edge. 
Changing the outcome of an edge that is not in $\Bite_{i+1}$ means adding that
edge to $\Bite_{i+1}$ and giving it an arbitrary birthtime.  Changing the
outcome of an edge in $\Bite_{i+1}$ means either taking that edge out of
$\Bite_{i+1}$, or changing its birthtime arbitrarily.

We will also need the following definitions and observations.  Given
$\BIGBite_{i+1}$, say that an edge $f \in \BIGBite_{i+1}$ has the potential of
being a label in a tree $\TT_c(g)$ or in a tree $\TT_c(g,S)$, where $g \in
\NotTrav_i$ and $S \subseteq [n]$, if there exists a choice of $\BigBite_{i+1}
\subseteq \BIGBite_{i+1}$ so that given that particular choice of
$\BigBite_{i+1}$, indeed $f$ is a label in $\TT_c(g)$ or $\TT_c(g,S)$,
respectively. (Recall that $\BigBite_{i+1}$ completely determines $\TT_c(g)$
and $\TT_c(g,S)$, and that those labels of $\TT_c(g)$ and $\TT_c(g,S)$ that are
edges, are all in $\BigBite_{i+1}$, except maybe for the label of the root.)
The motivation behind these definitions is that, for example, whenever $f \in
\BIGBite_{i+1}$ has the potential of being a label in a tree $\TT_c(g)$, then
we know that the outcome of $f$ could affect the occurrence of $\S_c(g)$;
otherwise, the outcome of $f$ would not affect the occurrence of $\S_c(g)$. 
For an edge $f \in \NotTrav_i$, let $\Labels_c(f)$ be the set of edges in
$\BIGBite_{i+1}$ which have the potential of being labels in $\TT_c(f)$.
For an edge $f \in \BIGBite_{i+1}$, let $\Roots_c(f)$ be set of edges $g \in
\NotTrav_i$ such that $f$ has the potential of being a label in $\TT_c(g)$.
Assuming $\A_i \wedge \B_i \wedge \C_i \wedge \D_i$ and $c \in \epsilon_3^2
\epsilon_2^{-1} \pm 1$, it follows from~(D2),~(C5) and from our choice of
$\epsilon_2$ that
\begin{eqnarray} 
\label{eq:labels}
f \in \NotTrav_i &\implies& |\Labels_c(f)| \le 1 + 5^c \cdot n^{10\epsilon_3 c}
\le n^{0.01}, \\
\label{eq:roots1}
f \in \BIGBite_{i+1} &\implies& |\Roots_c(f) \cap \BIGBite_{i+1}| \le 1 + 5^c
\cdot n^{10\epsilon_3 c} \le n^{0.01}  \text{\quad  and } \\
\label{eq:roots2}
f \in \BIGBite_{i+1} &\implies& |\Roots_c(f)| \le 1 + 5^c \cdot n^{10\epsilon_3
c} \cdot n^{2/5 + 10\epsilon_3} \le n^{2.1/5}.
\end{eqnarray}

\subsection{Proof of~(\ref{eq:sec:7:1})} \label{sec:7:1}
%
Let $\E_4$ and $\E_5$ be, respectively, the events 
\begin{eqnarray*}
|\M_{i+1}| &\in& 0.5 n^{8/5} \Phi((i+1) n^{-\epsilon_1}) (1 \pm 100
\Gamma_{i+1}) \text{\quad and } \\
|O_{i+1}| &\in& 0.5 n^2 \phi((i+1) n^{-\epsilon_1}) (1 \pm 100 \Gamma_{i+1}).
\end{eqnarray*}
\begin{lemma} \label{lemma:sec:7:1}
Assume that $\M_i$ and $\BIGBite_{i+1}$ are given so that $\A_i \wedge \B_i
\wedge \C_i \wedge \D_i$ holds.  Then the probability of $\E_4 \wedge \E_5$
(over the choice of $\BigBite_{i+1}$, $\Bite_{i+1}$ and the choice of the
birthtimes of the edges in $\Bite_{i+1}$) is at least $1 - n^{-\omega(1)}$.
\end{lemma}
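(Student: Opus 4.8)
The plan is to sandwich both $|\M_{i+1}|$ and $|O_{i+1}|$ between ``survival surrogates'' which --- unlike $|\M_{i+1}|$ and $|O_{i+1}|$ themselves --- are Lipschitz functions of the edge outcomes with a small Lipschitz constant, to compute the surrogates' expectations using Lemma~\ref{lemma:survive:process}, and then to apply McDiarmid's inequality to each surrogate.

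First I would set things up. Fix an odd integer $c$ with $c$ and $c+1$ both in $\epsilon_3^2\epsilon_2^{-1}\pm1$ (possible for a suitable choice of $\epsilon_3$). Recall that for $f\in O_i$: if $f\in\Bite_{i+1}$ then $f$ is added to $\M_{i+1}$ exactly when $\M_{i+1}\cup\{f\}$ is $K_4$-free, while if $f\notin\Bite_{i+1}$ then $f\in O_{i+1}$ exactly when $\M_{i+1}\cup\{f\}$ is $K_4$-free; moreover every edge of $\Bite_{i+1}$ that gets added already lies in $O_i$, and $O_{i+1}\subseteq O_i$. Hence $|\M_{i+1}|=|\M_i|+|\{f\in O_i\cap\Bite_{i+1}:\M_{i+1}\cup\{f\}\text{ is }K_4\text{-free}\}|$ and $|O_{i+1}|=|\{f\in O_i\setminus\Bite_{i+1}:\M_{i+1}\cup\{f\}\text{ is }K_4\text{-free}\}|$. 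By Lemma~\ref{lemma:conn}(i) (here we use that $c$ is odd), for every $f\in O_i$ one has $\S_c(\{f\})\Rightarrow\M_{i+1}\cup\{f\}\text{ is }K_4\text{-free}\Rightarrow\S_{c+1}(\{f\})$. So, writing
$$M^{c}:=|\M_i|+\big|\{f\in O_i\cap\Bite_{i+1}:\S_c(\{f\})\}\big|,\qquad O^{c}:=\big|\{f\in O_i\setminus\Bite_{i+1}:\S_c(\{f\})\}\big|,$$
and $M^{c+1},O^{c+1}$ with $c$ replaced by $c+1$, one gets the sandwiches $M^{c}\le|\M_{i+1}|\le M^{c+1}$ and $O^{c}\le|O_{i+1}|\le O^{c+1}$.

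Next I would estimate the four expectations. For $f\in O_i$ we have $\prob(f\in\Bite_{i+1})=n^{-\epsilon_1-2/5}/(1-in^{-\epsilon_1-\epsilon_2})$, and Lemma~\ref{lemma:survive:process} applied to $F=\{f\}$ --- with $(a_1,a_2)=(1,0)$ for the $\M$-surrogates and $(a_1,a_2)=(0,1)$ for the $O$-surrogates, and with tree-depth parameter $c$ resp.\ $c+1$ --- gives that $\prob(\S_c(\{f\})\mid f\in\Bite_{i+1})$ and $\prob(\S_{c+1}(\{f\})\mid f\in\Bite_{i+1})$ both lie in $\frac{\Phi((i+1)n^{-\epsilon_1})-\Phi(in^{-\epsilon_1})}{n^{-\epsilon_1}\phi(in^{-\epsilon_1})}(1\pm90\Gamma_i\gamma_i)$, and $\prob(\S_c(\{f\})\mid f\notin\Bite_{i+1})$ and $\prob(\S_{c+1}(\{f\})\mid f\notin\Bite_{i+1})$ both lie in $\frac{\phi((i+1)n^{-\epsilon_1})}{\phi(in^{-\epsilon_1})}(1\pm90\Gamma_i\gamma_i)$. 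Summing over $f\in O_i$, substituting $|O_i|\in0.5n^2\phi(in^{-\epsilon_1})(1\pm100\Gamma_i)$ and $|\M_i|\in0.5n^{8/5}\Phi(in^{-\epsilon_1})(1\pm100\Gamma_i)$ from $\A_i$, and using $n^{-\epsilon_1-2/5}=o(\Gamma_i\gamma_i)$ together with $1/(1-in^{-\epsilon_1-\epsilon_2})=1+o(\Gamma_i\gamma_i)$ (both a consequence of $I=\floor{n^{\epsilon_1+\epsilon_1^2}}$, of $\epsilon_1$ being small relative to $\epsilon_2$, and of Lemma~\ref{fact:f1}), one obtains
$$\expec[O^{c}],\ \expec[O^{c+1}]\in 0.5n^2\phi((i+1)n^{-\epsilon_1})\bigl(1\pm(100\Gamma_i+91\Gamma_i\gamma_i)\bigr)$$
and, with $|\delta'|\le100\Gamma_i+91\Gamma_i\gamma_i$,
$$\expec[M^{c}],\ \expec[M^{c+1}]\in|\M_i|+0.5n^{8/5}\bigl(\Phi((i+1)n^{-\epsilon_1})-\Phi(in^{-\epsilon_1})\bigr)(1\pm\,|\delta'|).$$
The point that makes the $\M$-estimate land inside the target window is a telescoping: after substituting $|\M_i|$, the main error term is $100\Gamma_i\Phi(in^{-\epsilon_1})+100\Gamma_i\bigl(\Phi((i+1)n^{-\epsilon_1})-\Phi(in^{-\epsilon_1})\bigr)=100\Gamma_i\Phi((i+1)n^{-\epsilon_1})$, so the excess error beyond that is only $\le 91\Gamma_i\gamma_i\bigl(\Phi((i+1)n^{-\epsilon_1})-\Phi(in^{-\epsilon_1})\bigr)\le 91\Gamma_i\gamma_i\Phi((i+1)n^{-\epsilon_1})$; using $\Gamma_{i+1}=\Gamma_i(1+\gamma_i)$ this puts $\expec[M^{c}],\expec[M^{c+1}]$ inside $0.5n^{8/5}\Phi((i+1)n^{-\epsilon_1})(1\pm100\Gamma_{i+1})$ with room of order $9\Gamma_i\gamma_i$ to spare, and likewise for the $O$-surrogates.

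Finally I would prove concentration. Condition on $\M_i$ and $\BIGBite_{i+1}$ and view $M^{c},M^{c+1},O^{c},O^{c+1}$ as functions of the independent outcomes of the edges of $\BIGBite_{i+1}$ (recall that each such event, being defined through finite-depth trees, is determined by which edges lie in $\Bite_{i+1}$ and by their birthtimes). By the defining property of $\Roots_c(\cdot)$, changing the outcome of one edge $g\in\BIGBite_{i+1}$ can flip $\ONE[\S_c(\{f\})]$ only for $f\in\Roots_c(g)$ and can affect at most one further summand (via $\ONE[g\in\Bite_{i+1}]$), so each of the four functions has Lipschitz constant at most $|\Roots_c(g)|+1\le n^{2.1/5}+1\le n^{2.2/5}$ by~(\ref{eq:roots2}) (and the analogue for $c+1$). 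Since $|\BIGBite_{i+1}|\le n^{8/5+10\epsilon_3}$ by~(C1), the sum of squared Lipschitz constants is at most $n^{12.4/5+10\epsilon_3}$, and McDiarmid's inequality with a deviation $\lambda$ chosen large enough that $\lambda^2\cdot n^{-12.4/5-10\epsilon_3}=\omega(\ln n)$ but small enough to keep the expectation estimates inside their target windows --- both possible because $16/5>12.4/5$, because $\Phi((i+1)n^{-\epsilon_1})\ge0.9n^{-\epsilon_1}$ and $\phi((i+1)n^{-\epsilon_1})\ge n^{-\Theta(\epsilon_1)}$ by Lemma~\ref{fact:f1}, and because $\epsilon_1$ and $\epsilon_3$ are sufficiently small --- shows that each surrogate differs from its expectation by more than $\lambda$ with probability $n^{-\omega(1)}$. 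Combining the sandwiches with the expectation bounds and taking a union bound over the four surrogates gives $\E_4\wedge\E_5$ with probability $1-n^{-\omega(1)}$.

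The main obstacle is conceptual: $|\M_{i+1}|$ and $|O_{i+1}|$ admit no useful Lipschitz bound in the edge outcomes, since altering a single birthtime can cascade through the greedy additions, so one cannot apply a bounded-differences inequality to them directly. This is precisely what the finite-depth survival events of Sections~\ref{sec:5}--\ref{sec:6} and the bound~(\ref{eq:roots2}) on $|\Roots_c(g)|$ are for: they localise the dependence at the cost of a controlled two-sided error. The remaining difficulty is bookkeeping --- arranging that the errors from Lemma~\ref{lemma:survive:process} ($90\Gamma_i\gamma_i$), from $\A_i$ ($100\Gamma_i$), from the $i$-dependent normalisation, and from McDiarmid all fit inside $100\Gamma_{i+1}$ --- which goes through thanks to the telescoping identity for $\M$ and straightforwardly for $O$.
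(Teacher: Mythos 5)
Your proposal mirrors the paper's argument: sandwich $|\M_{i+1}|$ and $|O_{i+1}|$ between survival-indicator sums via Lemma~\ref{lemma:conn} (using odd and even tree depths for the two directions), compute their expectations with Lemma~\ref{lemma:survive:process} together with $\A_i$/$\D_i$, and apply McDiarmid with Lipschitz constants controlled by $|\Roots_c(\cdot)|$ via~(\ref{eq:roots2}) and $|\BIGBite_{i+1}|$ via~(C1). One small slip: since the lemma conditions on $\BIGBite_{i+1}$ being given, you should not write $\prob(f\in\Bite_{i+1})=n^{-\epsilon_1-2/5}/(1-in^{-\epsilon_1-\epsilon_2})$ and sum against $|O_i|$ from (A2); for fixed $\BIGBite_{i+1}$ the correct step is to sum over $f\in O'_i=O_i\cap\BIGBite_{i+1}$ with the conditional probability $n^{-\epsilon_1-\epsilon_3}/(1-in^{-\epsilon_1-\epsilon_2})$ and to use (D1) to count $|O'_i|$, as the paper does. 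The resulting leading term is identical and the extra $\Gamma_i\gamma_i$ in the (D1) error still fits inside the $100\Gamma_{i+1}$ budget, so the conclusion is unaffected; it is a presentational rather than a substantive error.
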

\begin{proof}
Fix an integer $c \in \epsilon_3^2\epsilon_2^{-1} \pm 1$. Let $\sum_f$ range
over all $f \in O_i$.  Let 
\begin{eqnarray*}
W_1 \deq \sum_f \ONE[\S_c(f) \wedge f \in \Bite_{i+1}] \text{\quad and \quad } 
W_2 \deq \sum_f \ONE[\S_c(f) \wedge f \notin \Bite_{i+1}].
\end{eqnarray*}
By Lemma~\ref{lemma:conn}, we have that if $c$ is odd then $|\M_{i+1}
\setminus \M_i| \ge W_1$ and $|O_{i+1}| \ge W_2$, while if $c$ is even then
$|\M_{i+1} \setminus \M_i| \le W_1$ and $|O_{i+1}| \le W_2$.  Now,~(A1) and the
fact that $\Gamma_i \le \Gamma_{i+1}$ imply that $|\M_i| \in 0.5 n^{8/5} \Phi(i
n^{-\epsilon_1}) (1 \pm 100 \Gamma_{i+1})$. So it suffices to prove that $W_1
\in 0.5 n^{8/5} (\Phi((i+1) n^{-\epsilon_1}) - \Phi(i n^{-\epsilon_1})) (1 \pm
100 \Gamma_{i+1})$ and $W_2 \in 0.5 n^2 \phi((i+1)n^{-\epsilon_1}) (1 \pm 100
\Gamma_{i+1})$, each occurs with probability at least $1 - n^{-\omega(1)}$.

Since by~(A2) the number of edges over which $\sum_f$ ranges is in $0.5 n^2
\phi(i n^{-\epsilon_1}) (1 \pm 100 \Gamma_i)$, since by~(D1) the number of
edges in $\BIGBite_{i+1}$ over which $\sum_f$ ranges is in $0.5 n^{8/5 +
\epsilon_3} \phi(i n^{-\epsilon_1}) (1 \pm (100 \Gamma_i + \Gamma_i
\gamma_i))$, and since $\C_i \wedge \D_i$ holds, we may apply
Lemma~\ref{lemma:survive:process} to get
\begin{eqnarray*} 
\expec(W_1) &\in& 0.5 n^{8/5} (\Phi((i+1)n^{-\epsilon_1}) - \Phi(i
n^{-\epsilon_1}) ) (1 \pm ( 100 \Gamma_i + 99 \Gamma_i \gamma_i )) \text{\quad and }
\\
\expec(W_2) &\in& 0.5 n^{2} \phi((i+1)n^{-\epsilon_1}) (1 \pm ( 100 \Gamma_i +
99 \Gamma_i \gamma_i )).  
\end{eqnarray*}
Observe that the above estimate on $\expec(W_2)$, together with
Lemma~\ref{fact:f1}, implies that $\expec(W_2) \ge n^{9.9/5}$.  Also, the above
estimate on $\expec(W_1)$, together with Lemma~\ref{fact:f1}, and the fact that
$\Phi((i+1) n^{-\epsilon_1}) - \Phi(i n^{-\epsilon_1}) \ge n^{- \Theta(
\epsilon_1 )}$ (indeed, recall the proof of Lemma~\ref{lemma:survive:process},
where we have argued indirectly that $(\Phi((i+1) n^{-\epsilon_1}) - \Phi(i
n^{-\epsilon_1})) / ( n^{-\epsilon_1} \phi(i n^{-\epsilon_1}) ) = \Omega(1)$),
implies that $\expec(W_1) \ge n^{7.9/5}$.  Therefore, by Lemma~\ref{fact:f1},
it suffices to show that the probability that $W_1$ and $W_2$ deviate from
their expectation by more than $n^{7/5}$ is at most $n^{-\omega(1)}$.

Note that $W_1$ and $W_2$ each depends only on the outcomes of edges in
$\BIGBite_{i+1}$, which by~(C1) contains at most $n^{8.1/5}$ edges.
Furthermore, by~(\ref{eq:roots2}), every edge in $\BIGBite_{i+1}$ has the
potential of being a label in at most $n^{2.1/5}$ trees $\TT_c(f)$ with $f \in
O_i$.  This implies that changing the outcome of a single edge in
$\BIGBite_{i+1}$ can change $W_1$ and $W_2$ each by at most an additive factor
of $n^{2.1/5}$.  Therefore, by McDiarmid's inequality we can conclude that the
probability that $W_1$ or $W_2$ each deviates from its expectation by more than
$n^{7/5}$ is at most $n^{-\omega(1)}$.
\end{proof}
From Lemma~\ref{lemma:sec:7:1} it follows that
\begin{eqnarray*}
\prob(\E_4 \wedge \E_5 \given \A_i \wedge \B_i \wedge \C_i \wedge \D_i) \ge 1 -
n^{-\omega(1)}.
\end{eqnarray*}
Let $\E_6$ be the following event: letting $m_{i+1} \deq |\M_{i+1}|$, for all
$1 \le j \le 5$ and all $f \in \NotTrav_i$,
\begin{eqnarray*} 
|O_{i+1}| &\in& 0.5 n^2 \exp(-16 (m_{i+1} n^{-8/5})^5) (1 \pm n^{-\epsilon_3})
\quad \text{ and } \\
|X_{i+1,j}(f)| &\in& n^{2j/5} 2^{4-j} \tbinom{5}{j} ( m_{i+1} n^{-8/5} )^{5-j}
\exp(-16 j (m_{i+1} n^{-8/5})^5) (1 \pm n^{-\epsilon_3}).
\end{eqnarray*}
A result of Bohman~\cite[Theorem~13]{MR2522430}
implies that $\prob(\E_6) \ge 1 - n^{-1/6}$, and so by
the induction hypothesis,
\begin{eqnarray*}
\prob(\E_6 \given \A_i \wedge \B_i \wedge \C_i \wedge \D_i) \ge
\frac{\prob(\A_i \wedge \B_i \wedge \C_i \wedge \D_i) - n^{-1/6}}{\prob(\A_i
\wedge \B_i \wedge \C_i \wedge \D_i)} \ge 1 - 2 n^{-1/6}. 
\end{eqnarray*}
Since our goal is to prove~(\ref{eq:sec:7:1}), it  remains to argue that $\E_4
\wedge \E_5 \wedge \E_6$ imply the bounds on $|X_{i+1,j}(f)|$ that are asserted
by $\A_{i+1}$, for all $1 \le j \le 5$ and all $f \in \NotTrav_i$.  Indeed,
note that $\E_4 \wedge \E_5 \wedge \E_6$ implies 
\begin{eqnarray*}
m_{i+1} &\in& 0.5 n^{8/5} \Phi((i+1) n^{-\epsilon_1}) (1 \pm 100 \Gamma_{i+1})
\text{\quad and } \\
\exp(-16 (m_{i+1} n^{-8/5})^5) &\in& \phi((i+1) n^{-\epsilon_1}) (1 \pm 101
\Gamma_{i+1}).
\end{eqnarray*}
This in turn implies that for all $1 \le j \le 5$,
\begin{eqnarray*}
2^{4-j} (m_{i+1} n^{-8/5})^{5 - j} \exp(-16 j (m_{i+1} n^{-8/5})^5) (1 \pm
n^{-\epsilon_3}) 
&\subseteq& \\
0.5 (\Phi((i+1) n^{-\epsilon_1}))^{5-j} \phi((i+1) n^{-\epsilon_1})^j (1 \pm
999 \Gamma_{i+1}).
\end{eqnarray*}
This, together with the fact that $0.5 n^{2j/5} \in \binom{n}{2}
\big(\frac{1}{n^{2/5}}\big)^{5-j} (1 \pm o(\Gamma_{i+1}))$, completes the
proof.

\subsection{Proof of~(\ref{eq:sec:7:2})} \label{sec:7:2}
%
Fix for the rest of the section a set $S \subseteq [n]$ of $s$ vertices.
Further, assume that we are given $\M_i$ and $\BIGBite_{i+1}$ so that $\A_i
\wedge \B_i \wedge \C_i \wedge \D_i$ holds.  Under this assumption, we prove
that with probability at least $1 - n^{-\omega(s)}$ (where the probability is
over the choice of $\BigBite_{i+1}$, $\Bite_{i+1}$ and the choice of the
birthtimes of the edges in $\Bite_{i+1}$), there exists a set $S_{i+1}
\subseteq S$, which satisfies the three properties that are asserted to hold by
$\B_{i+1}$.  A union bound argument will then give us~(\ref{eq:sec:7:2}).

We start by defining the set $S_{i+1}$.  Let $S_i \subseteq S$ be the set that
is guaranteed to exist by $\B_i$. Let $S_{i+1}$ be the set of all vertices in
$S_i$ whose degree in $(\Trav_i \cup \BIGBite_{i+1}) \cap \binom{S_i}{2}$ is at
most $n^{1.1/5}$, so that
\begin{eqnarray} \label{eq:maxdeg}
\text{the maximum degree in $(\Trav_i \cup \BIGBite_{i+1}) \cap
\tbinom{S_{i+1}}{2}$ is at most $n^{1.1/5}$}.
\end{eqnarray}
We claim that with probability $1$, $S_{i+1}$ satisfies the first two
properties that are asserted to hold by $\B_{i+1}$.  Indeed, by assumption,
$S_i$ has size at least $s (1 - i n^{-0.01}) = \omega(n^{3/5})$.  This,
together with~(C2), implies that the average degree in $(\Trav_i \cup
\BIGBite_{i+1}) \cap \binom{S_i}{2}$ is at most $n^{1/5 + 10 \epsilon_3}$.
Hence, the number of vertices in $S_i$ whose degree in $(\Trav_i \cup
\BIGBite_{i+1}) \cap \binom{S_i}{2}$ is more than $n^{1.1/5}$ is at most $|S_i|
n^{-0.01} \le sn^{-0.01}$. It follows that $S_{i+1}$ has size at least $|S_i| -
sn^{-0.01} \ge s (1 - (i+1) n^{-0.01})$.  In addition to that, from the fact
that $\Trav_{i+1} \subseteq \Trav_i \cup \BIGBite_{i+1}$ and
from~(\ref{eq:maxdeg}) it follows that $\Trav_{i+1} \cap \binom{S_{i+1}}{2}$
has maximum degree at most $n^{1.1/5}$. 

It is left for us to show that with the desired probability, $S_{i+1}$
satisfies the third property that is asserted to hold by $\B_{i+1}$.  Fix for
the rest of the section a pair $(R, T) \in \Pairs(S_{i+1})$ and let $t = |T|$,
noting that $t = \Omega(s^3)$.
By the union bound, it remains to argue that for every $1 \le j \le 3$, with
probability at least $1 - n^{-\omega(s)}$,  $R$ and $T$ satisfy~$\B_{i+1}$(B3).
That is, it remains to prove that for every $1 \le j \le 3$,
\begin{eqnarray}  \label{eq:sec:7:3}
\prob( |Y_{i+1,j}(T)| \ge y_{i+1,j,t} (1 - 100 \Gamma_{i+1}) - 0.5 j (3-j)
(2-j) |Z_{i+1}(R,T)| ) \ge 1 - n^{-\omega(s)},
\end{eqnarray}
where we stress again that the probability is over the choice of
$\BigBite_{i+1}$, $\Bite_{i+1}$, and the choice of the birthtimes of the edges
in $\Bite_{i+1}$.  

In order to prove~(\ref{eq:sec:7:3}), we assume that the inequalities
\begin{equation} \label{eq:7:2:1}
|Y_{i,j}(T)| \le y_{i,j,t} \text{\quad and \quad } 
|Y'_{i,j,k}(T)| \le n^{(\epsilon_3 - 2/5)k} \tbinom{j}{k} y_{i,j,t}
\end{equation}
hold for every possible choice of $j$ and $k$.  This is a safe assumption since
by~(B3)~and~(D3) we can always remove, if needed, elements from these sets so
that this assumption holds. Such an alteration will not affect the proof.

\subsubsection{Proof of~(\ref{eq:sec:7:3}) (case $j=3$)}
%
Let $c \in \epsilon_3^2 \epsilon_2^{-1} \pm 1$ be an odd integer and let
\begin{eqnarray*}
W_3 \deq \sum_{G \in Y_{i,3}(T)} \ONE[\I_c(G) \wedge \S_c(G) \wedge |G \cap
\Bite_{i+1}| = 0].
\end{eqnarray*}
By Lemma~\ref{lemma:conn}, we have $|Y_{i+1,3}(T)| \ge W_3$.  This, with the
next two claims, gives~(\ref{eq:sec:7:3}) for $j=3$.

\begin{claim} 
$\expec(W_3) \ge y_{i+1,3,t} (1 - 100 \Gamma_i - 99 \Gamma_i \gamma_i)$.
\end{claim}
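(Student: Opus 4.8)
The plan is to apply linearity of expectation over the fixed index set $Y_{i,3}(T)$ (which depends only on $\M_i$ and $T$, both given), reducing the per-triangle probability to the main result of Section~\ref{sec:6}. So fix $G\in Y_{i,3}(T)$; then $G\subseteq\NotTrav_i$ and $\M_i\cup G$ is $K_4$-free, so $G$ is a legitimate instance of the set $F$ of Section~\ref{sec:6} with $|F|=3$, $a_1=0$, $a_2=3$, and the standing hypothesis gives $\C_i\wedge\D_i$, which is what Lemma~\ref{lemma:survive:process} requires. For these parameters the event $\E_0$ of that section is exactly the event $|G\cap\Bite_{i+1}|=0$ and the event $\E_2$ is $\I_c(G)$, so combining the two estimates of Lemma~\ref{lemma:survive:process} (with errors $90\Gamma_i\gamma_i$ and $o(\Gamma_i\gamma_i)$) yields
\[
\prob\bigl(\I_c(G)\wedge\S_c(G)\given\E_0\bigr)\in\bigg(\frac{\phi((i+1)n^{-\epsilon_1})}{\phi(in^{-\epsilon_1})}\bigg)^{3}\bigl(1\pm(90+o(1))\Gamma_i\gamma_i\bigr).
\]

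Next I would estimate $\prob(\E_0)$ from below. Given $\M_i$ and $\BIGBite_{i+1}$, each edge $f\in G\subseteq\NotTrav_i$ lies in $\Bite_{i+1}$ independently, with probability $0$ if $f\notin\BIGBite_{i+1}$ and with probability $n^{\epsilon_2-\epsilon_3}\cdot n^{-\epsilon_1-\epsilon_2}/(1-in^{-\epsilon_1-\epsilon_2})=n^{-\epsilon_1-\epsilon_3+o(1)}$ if $f\in\BIGBite_{i+1}$; in either case this is $o(\Gamma_i\gamma_i)$, since $\Gamma_i\gamma_i=n^{-\Theta(\epsilon_1)}$ by Lemma~\ref{fact:f1} while $\epsilon_1$ is sufficiently small with respect to $\epsilon_3$. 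Hence $\prob(\E_0)\ge1-o(\Gamma_i\gamma_i)$, and writing $\prob(\I_c(G)\wedge\S_c(G)\wedge\E_0)=\prob(\E_0)\,\prob(\I_c(G)\wedge\S_c(G)\given\E_0)$ and absorbing the new error gives the per-triangle bound $\prob(\I_c(G)\wedge\S_c(G)\wedge\E_0)\ge(\phi((i+1)n^{-\epsilon_1})/\phi(in^{-\epsilon_1}))^{3}(1-(90+o(1))\Gamma_i\gamma_i)$.

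Finally I would sum over $G\in Y_{i,3}(T)$. Property~(B3) (whose $|Z_i(R,T)|$-coefficient $0.5\cdot3\cdot(3-3)\cdot(2-3)$ vanishes) gives $|Y_{i,3}(T)|\ge y_{i,3,t}(1-100\Gamma_i)$, and from the definition of $y_{i,j,t}$ at $j=3$ one has $y_{i,3,t}=t\,\phi(in^{-\epsilon_1})^{3}$, so that $y_{i,3,t}\,(\phi((i+1)n^{-\epsilon_1})/\phi(in^{-\epsilon_1}))^{3}=t\,\phi((i+1)n^{-\epsilon_1})^{3}=y_{i+1,3,t}$. Therefore $\expec(W_3)\ge y_{i+1,3,t}(1-100\Gamma_i)(1-(90+o(1))\Gamma_i\gamma_i)$, and since $(1-100\Gamma_i)(1-(90+o(1))\Gamma_i\gamma_i)\ge1-100\Gamma_i-99\Gamma_i\gamma_i$ for $n$ large, the claim follows.

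The argument is routine once Lemma~\ref{lemma:survive:process} is in hand, so I do not anticipate a real obstacle; the one point that needs genuine care is checking that conditioning on $\E_0$ — i.e., forcing the three edges of $G$ out of $\Bite_{i+1}$ — costs only a multiplicative $1-o(\Gamma_i\gamma_i)$ rather than something of order $\Gamma_i$, since it is exactly this that keeps the error term at $99\Gamma_i\gamma_i$; this in turn rests on $\epsilon_1$ being small relative to $\epsilon_3$. The accompanying identity $y_{i,3,t}\,(\phi((i+1)n^{-\epsilon_1})/\phi(in^{-\epsilon_1}))^{3}=y_{i+1,3,t}$ is immediate from the definition of $y_{i,j,t}$.
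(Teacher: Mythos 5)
Your proof is correct and follows essentially the same route as the paper's: apply Lemma~\ref{lemma:survive:process} with $a_1=0$, $a_2=3$ to each triangle $G\in Y_{i,3}(T)$, absorb the cost of conditioning on $\E_0$ (showing $\prob(\E_0)\ge1-o(\Gamma_i\gamma_i)$), invoke (B3) for $|Y_{i,3}(T)|\ge y_{i,3,t}(1-100\Gamma_i)$, and conclude by linearity of expectation together with the identity $y_{i,3,t}(\phi((i+1)n^{-\epsilon_1})/\phi(in^{-\epsilon_1}))^3=y_{i+1,3,t}$. You simply make explicit some bookkeeping the paper leaves implicit.
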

\begin{proof}
Consider a triangle $G \in Y_{i,3}(T)$. Since an edge in $\BIGBite_{i+1}$ is an
edge in $\Bite_{i+1}$ with probability $n^{-\epsilon_1 - \epsilon_3} / (1 - i
n^{-\epsilon_1 - \epsilon_2})$, using Lemma~\ref{fact:f1}, we find that
$\prob(|G \cap \Bite_{i+1}| = 0) \ge 1 - o(\Gamma_i \gamma_i)$. Hence, by
Lemma~\ref{lemma:survive:process}, 
\begin{eqnarray*}
\prob(\I_c(G) \wedge S_c(G) \wedge |G \cap \Bite_{i+1}| = 0) \ge
\bigg(\frac{\phi((i+1)n^{-\epsilon_1})} {\phi(i n^{-\epsilon_1})}\bigg)^3 (1 -
91 \Gamma_i \gamma_i).
\end{eqnarray*}
Also, by~(B3), $|Y_{i,3}(T)| \ge y_{i,3,t} (1 - 100\Gamma_i)$.  The claim now
follows using linearity of expectation.
\end{proof}
\begin{claim}
The probability (over the choice of $\BigBite_{i+1}$, $\Bite_{i+1}$, and the
choice of the birthtimes of the edges in $\Bite_{i+1}$) that $W_3$ deviates
from its expectation by more than $\Gamma_i \gamma_i y_{i+1,3,t}$ is at most
$n^{-\omega(s)}$.
\end{claim}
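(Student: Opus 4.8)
The plan is to prove this concentration statement by a single application of McDiarmid's inequality, treating the $|\BIGBite_{i+1}|$ edges of $\BIGBite_{i+1}$ as the independent coordinates. Since $\M_i$ and $\BIGBite_{i+1}$ are fixed, the only remaining randomness is, independently for each $f \in \BIGBite_{i+1}$, the \emph{outcome} of $f$ (whether $f \in \BigBite_{i+1}$, whether $f \in \Bite_{i+1}$, and if so its birthtime), and $W_3$ is a function of this vector of outcomes. The first step is to bound the Lipschitz coefficients $a_f$: changing the outcome of a single $f$ changes $W_3$ by at most the number of triangles $G \in Y_{i,3}(T)$ whose indicator $\ONE[\I_c(G) \wedge \S_c(G) \wedge |G \cap \Bite_{i+1}| = 0]$ can be affected. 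The indicator for a fixed $G$ is determined by the trees $\{\TT_c(g) : g \in G\}$ (including the $\Bite_{i+1}$-membership and birthtimes of their labels) and by $G \cap \Bite_{i+1}$; hence, recalling the relation $f \in \Labels_c(g) \iff g \in \Roots_c(f)$ and that $\Labels_c(\cdot), \Roots_c(\cdot)$ are defined to account for every choice of $\BigBite_{i+1}$, the outcome of $f$ can affect this indicator only if $f \in G$ or some edge $g$ of $G$ lies in $\Roots_c(f)$.

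The second step is to bound $\max_f a_f$ and $\sum_f a_f$ separately and combine them via $\sum_f a_f^2 \le (\max_f a_f)(\sum_f a_f)$; this is the crucial gain over the trivial bound $|\BIGBite_{i+1}| \cdot \max_f a_f^2$. Every edge of $\binom{R}{2}$ lies in at most $s$ triangles of $T$, so the number of $G \in Y_{i,3}(T)$ with an edge in $\Roots_c(f)$, or with $f \in G$, is at most $O(s \cdot |\Roots_c(f)|) \le O(s\, n^{2.1/5})$ by~(\ref{eq:roots2}); thus $\max_f a_f \le O(s\, n^{2.1/5})$. For $\sum_f a_f$, swap the order of summation: $\sum_f a_f \le \sum_{G \in Y_{i,3}(T)} \#\{f : f \in G \cup \bigcup_{g \in G}\Labels_c(g)\} \le |Y_{i,3}(T)| \cdot (3 + 3 n^{0.01})$ by~(\ref{eq:labels}), and since $|Y_{i,3}(T)| \le |T| = t$ this gives $\sum_f a_f \le O(t\, n^{0.01})$. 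Combining, $\sum_f a_f^2 \le O(s\, t\, n^{2.2/5})$.

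Finally, set $\lambda \deq \Gamma_i \gamma_i y_{i+1,3,t}$. Using Lemma~\ref{fact:f1} together with $y_{i+1,3,t} = t\, \phi((i+1)n^{-\epsilon_1})^3$ and $t = \Omega(s^3)$, we get $\lambda \ge t\, n^{-\Theta(\epsilon_1)}$, so $2\lambda^2 / \sum_f a_f^2 = \Omega\big(t\, n^{-2.2/5 - \Theta(\epsilon_1)}/s\big) = \Omega\big(s^2\, n^{-2.2/5 - \Theta(\epsilon_1)}\big) = \Omega\big(n^{3.8/5 - \Theta(\epsilon_1)}(\ln n)^{2/5}\big)$, which is $\omega(s \ln n)$ because $\epsilon_1$ is sufficiently small. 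McDiarmid's inequality then yields $\prob(|W_3 - \expec(W_3)| \ge \lambda) \le 2\exp(-2\lambda^2/\sum_f a_f^2) = n^{-\omega(s)}$, as claimed. The main obstacle is the Lipschitz analysis of the first step — in particular, justifying that no triangle other than those containing $f$ or having an edge in $\Roots_c(f)$ is affected when the outcome of $f$ changes, which hinges on the potential-label machinery introduced before~(\ref{eq:labels}); once that is granted, the two-sided estimate for $\sum_f a_f^2$ is precisely what pushes the exponent past $s \ln n$.
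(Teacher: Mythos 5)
Your proof is correct and follows essentially the same route as the paper: define $\Triangles(f)$ (the triangles $G$ whose indicator can be affected by changing the outcome of $f$), bound $\max_f |\Triangles(f)|$ via~(\ref{eq:roots2}) and $\sum_f |\Triangles(f)|$ via~(\ref{eq:labels}), multiply the two to bound $\sum_f |\Triangles(f)|^2$, and apply McDiarmid. The only cosmetic differences are that you keep $t$ and $s$ explicit where the paper substitutes $|Y_{i,3}(T)| \le s^3$ and $\lambda \ge n^{8.9/5}$ before comparing exponents, and you list $\BigBite_{i+1}$-membership as part of the per-edge outcome (which is the correct reading of the paper's slightly terse definition of ``outcome'').
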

\begin{proof}
For an edge $f \in \BIGBite_{i+1}$, let $\Triangles(f)$ be the set which
contains every triangle $G \in Y_{i,3}(T)$ for which it holds that $f$ has the
potential of being a label in a tree in the forest $\{\TT_c(g) : g \in G\}$.
Observe that changing the outcome of an edge $f \in \BIGBite_{i+1}$ can change
$W_3$ by at most an additive factor of $|\Triangles(f)|$.

By~(\ref{eq:roots2}) we have that for every $f \in \BIGBite_{i+1}$,
$|\Triangles(f)| \le |\Roots_c(f)| \cdot s \le n^{5.2/5}$.
By~(\ref{eq:labels}) we have that for every triangle $G \in Y_{i,3}(T)$, there
are at most $3 n^{0.01}$ edges $f \in \BIGBite_{i+1}$ such that $G \in
\Triangles(f)$, and so
\begin{eqnarray*}
\sum_{f \in \BIGBite_{i+1}} |\Triangles(f)| \le |Y_{i,3}(T)| \cdot 3 n^{0.01}
\le n^{9.1/5}, 
\end{eqnarray*}
where the second inequality follows since trivially $|Y_{i,3}(T)| \le s^3$.
Therefore,
\begin{eqnarray*}
\sum_{f \in \BIGBite_{i+1}} |\Triangles(f)|^2 \le n^{5.2/5} \cdot \sum_{f \in
\BIGBite_{i+1}} |\Triangles(f)| \le n^{14.3/5}.
\end{eqnarray*}
It now follows from McDiarmid's inequality that the probability that $W_3$ deviates
from its expectation by more than $\Gamma_i \gamma_i y_{i+1,3,t} \ge n^{8.9/5}$
is at most $n^{-\omega(s)}$.
\end{proof}

\subsubsection{Proof of~(\ref{eq:sec:7:3}) (case $j=2$)}
%
Let $c \in \epsilon_3^2 \epsilon_2^{-1} \pm 1$ be an odd integer.  Let $\sum_G$
range over all triangles $G \in Y_{i,2}(T)$ and let $\sum_{(G_1, G_2, G_3)}$
range over all triples $(G_1, G_2, G_3) \in Y'_{i,3,1}(T)$.  Let
\begin{eqnarray*}
W_4 &\deq& \sum_{G} \ONE[\I_c(G \cap \NotTrav_i) \wedge \S_c(G \cap \NotTrav_i)
\wedge |G \cap \Bite_{i+1}|=0] + \\
&& \sum_{(G_1, G_2, G_3)}
\ONE[\I_c(G_2 \cup G_3) \wedge \S_c(G_2 \cup G_3) \wedge G_2 \subseteq
\Bite_{i+1} \wedge |G_3 \cap \Bite_{i+1}|=0 ].
\end{eqnarray*}
By Lemma~\ref{lemma:conn}, we have $|Y_{i+1,2}(T)| \ge W_4$.  This, with the
next two claims, gives~(\ref{eq:sec:7:3}) for $j=2$.

\begin{claim} \label{claim:w4}
$\expec(W_4) \ge y_{i+1,2,t} (1 - 100\Gamma_i - 99 \Gamma_i \gamma_i)$.
\end{claim}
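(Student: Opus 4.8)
The plan is to expand $\expec(W_4)$ by linearity into its two constituent sums, estimate each type of term via Lemma~\ref{lemma:survive:process}, and multiply by the size bounds for $Y_{i,2}(T)$ and $Y'_{i,3,1}(T)$ provided by (B3) and (D3). Write $\Phi = \Phi(in^{-\epsilon_1})$, $\Phi^+ = \Phi((i+1)n^{-\epsilon_1})$, and likewise $\phi,\phi^+$; recall that $y_{i,2,t} = 3t\,\Phi\,n^{-2/5}\phi^2$, $y_{i,3,t} = t\phi^3$, $y_{i+1,2,t} = 3t\,\Phi^+ n^{-2/5}(\phi^+)^2$, and that, given the fixed $\BIGBite_{i+1}$, every edge of $\NotTrav_i$ enters $\Bite_{i+1}$ with probability at most $n^{-\epsilon_1-\epsilon_3}/(1-in^{-\epsilon_1-\epsilon_2}) = o(\Gamma_i\gamma_i)$ by Lemma~\ref{fact:f1}.

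\emph{First sum.} For $G \in Y_{i,2}(T)$ put $F := G\cap\NotTrav_i$, so $|F| = 2$ and $\M_i\cup F$ is $K_4$-free. I would write the expectation of the corresponding indicator as $\prob(|G\cap\Bite_{i+1}| = 0)\cdot\prob(\I_c(F)\wedge\S_c(F)\mid|G\cap\Bite_{i+1}|=0)$: the first factor is $1-o(\Gamma_i\gamma_i)$, and the second, by Lemma~\ref{lemma:survive:process} applied to $F$ with $a_1 = 0$, $a_2 = 2$ (for which the lemma's event $\E_0$ is exactly $\{|F\cap\Bite_{i+1}| = 0\}$, and whose two bullets give $\prob(\I_c(F)\wedge\S_c(F)\mid\E_0)\ge(\phi^+/\phi)^2(1-91\Gamma_i\gamma_i)$), is at least $(\phi^+/\phi)^2(1-91\Gamma_i\gamma_i)$. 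Summing over $G$ and using (B3) — whose coefficient of $|Z_i(R,T)|$ vanishes for $j = 2$, so $|Y_{i,2}(T)|\ge y_{i,2,t}(1-100\Gamma_i)$ — the first sum contributes at least $y_{i,2,t}(\phi^+/\phi)^2(1-100\Gamma_i-92\Gamma_i\gamma_i) = 3t\,\Phi\,n^{-2/5}(\phi^+)^2(1-100\Gamma_i-92\Gamma_i\gamma_i)$.

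\emph{Second sum.} For $(G_1,G_2,G_3)\in Y'_{i,3,1}(T)$ we have $G_1 = \emptyset$, $|G_2| = 1$, $|G_3| = 2$, the edges of $G := G_2\cup G_3$ all in $\NotTrav_i$ with $\M_i\cup G$ being $K_4$-free (as $G\in Y_{i,3}(T)$), and $G_2\subseteq\BIGBite_{i+1}$. The event $\{G_2\subseteq\Bite_{i+1}\wedge|G_3\cap\Bite_{i+1}| = 0\}$ is the intersection of $\E_0 := \{|G\cap\Bite_{i+1}| = 1\}$ with the event that the unique $\Bite_{i+1}$-edge of $G$ is the one in $G_2$, so the expectation of the corresponding indicator equals $\prob(G_2\subseteq\Bite_{i+1}\wedge|G_3\cap\Bite_{i+1}| = 0)\cdot\prob(\I_c(G)\wedge\S_c(G)\mid G_2\subseteq\Bite_{i+1},\,|G_3\cap\Bite_{i+1}| = 0)$. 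The first factor is at least $n^{-\epsilon_1-\epsilon_3}/(1-in^{-\epsilon_1-\epsilon_2})\cdot(1-o(\Gamma_i\gamma_i))$, since $G_2\subseteq\BIGBite_{i+1}$ is already guaranteed and an edge of $\BIGBite_{i+1}$ reaches $\Bite_{i+1}$ with probability exactly $n^{-\epsilon_1-\epsilon_3}/(1-in^{-\epsilon_1-\epsilon_2})$, while $|G_3\cap\Bite_{i+1}| = 0$ holds with probability $1-o(\Gamma_i\gamma_i)$. For the second factor I would argue it equals $\prob(\I_c(G)\wedge\S_c(G)\mid\E_0)$ up to a factor $1\pm o(\Gamma_i\gamma_i)$: indeed, on the events $\E_1$ and $\E_2 = \I_c(G)$ of Section~\ref{sec:6} — which together hold with probability $1-o(\Gamma_i\gamma_i)$ given $\E_0$, and under which (as established in the proof of Lemma~\ref{lemma:survive:process}) the three trees $\TT_c(g)$, $g\in G$, share the same shape and the events $\S_c(g)$ are mutually independent — the survival probability depends only on how many of the three roots lie in $\Bite_{i+1}$, not on which. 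By Lemma~\ref{lemma:survive:process} with $a_1 = 1$, $a_2 = 2$, $\prob(\I_c(G)\wedge\S_c(G)\mid\E_0)\ge\tfrac{\Phi^+-\Phi}{n^{-\epsilon_1}\phi}(\phi^+/\phi)^2(1-91\Gamma_i\gamma_i)$. Summing over the triples and using (D3) for $j = 3$, $k = 1$ (so $|Y'_{i,3,1}(T)|\ge 3n^{\epsilon_3-2/5}y_{i,3,t}(1-100\Gamma_i-\Gamma_i\gamma_i)$ with $y_{i,3,t} = t\phi^3$), the factors $n^{\pm\epsilon_3}$ and the powers of $\phi$ cancel, and the second sum contributes at least $3t\,(\Phi^+-\Phi)\,n^{-2/5}(\phi^+)^2(1-100\Gamma_i-93\Gamma_i\gamma_i)$.

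Adding the two contributions, $\Phi$ and $\Phi^+-\Phi$ combine to $\Phi^+$, giving $\expec(W_4)\ge 3t\,\Phi^+ n^{-2/5}(\phi^+)^2(1-100\Gamma_i-99\Gamma_i\gamma_i) = y_{i+1,2,t}(1-100\Gamma_i-99\Gamma_i\gamma_i)$, where Lemma~\ref{fact:f1} is used to absorb the several $o(\Gamma_i\gamma_i)$ slacks into the coefficient $99$. I expect the main obstacle to be the second factor in the second sum: showing rigorously that conditioning on the precise outcome ``$G_2$ traversed, $G_3$ not traversed'' changes the survival probability of $\S_c(G)$ only by $1\pm o(\Gamma_i\gamma_i)$ relative to conditioning merely on $|G\cap\Bite_{i+1}| = 1$, which is what Lemma~\ref{lemma:survive:process} controls; this rests on a symmetry among the three root-trees that is only approximate and must be coupled with the structure control from $\E_1$ and the mutual independence from $\E_2$. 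A secondary point is keeping track of the probability $n^{-\epsilon_1-\epsilon_3}$ (rather than $n^{-\epsilon_1-2/5}$) with which an edge of $\BIGBite_{i+1}$ enters $\Bite_{i+1}$ — precisely the $n^{\epsilon_3-2/5}$ weighting already built into $Y'_{i,3,1}(T)$ and into (D3).
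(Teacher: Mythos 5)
Your proposal is correct and follows the same route as the paper: factor $\expec(W_4)$ into the two sums, apply Lemma~\ref{lemma:survive:process} with $(a_1,a_2)=(0,2)$ for the $Y_{i,2}(T)$ terms and $(a_1,a_2)=(1,2)$ for the $Y'_{i,3,1}(T)$ terms, and combine via (B3), (D3) and linearity, with the $n^{\pm\epsilon_3}$ and $\phi^3$ cancelling so that $\Phi$ and $\Phi^+-\Phi$ assemble into $\Phi^+$. The concern you flag about conditioning on the specific event $G_2\subseteq\Bite_{i+1}\wedge|G_3\cap\Bite_{i+1}|=0$ rather than on $\E_0=\{|F\cap\Bite_{i+1}|=1\}$ is a genuine (minor) gap that the paper's proof elides by writing ``Hence, by Lemma~\ref{lemma:survive:process}''; your symmetry argument (conditional on $\E_1\wedge\E_2$, the events $\S_c(g)$ are independent and their probabilities depend only on whether $g\in\Bite_{i+1}$, so the choice of which edge of $F$ lies in $\Bite_{i+1}$ is immaterial) is exactly the right resolution and is implicit in the proof of Lemma~\ref{lemma:survive:process}, so this is not an obstacle so much as a place where you are being more explicit than the paper.
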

\begin{proof}
Consider a triangle $G \in Y_{i,2}(T)$ and a triple $(G_1, G_2, G_3) \in
Y'_{i,3,1}(T)$.  Since an edge in $\BIGBite_{i+1}$ is an edge in $\Bite_{i+1}$
with probability $n^{-\epsilon_1 - \epsilon_3} / (1 - i n^{-\epsilon_1 -
\epsilon_2})$, using Lemma~\ref{fact:f1}, we find that 
\begin{eqnarray*}
\prob(|G \cap \Bite_{i+1}| = 0) &\ge& 1 - o(\Gamma_i \gamma_i) \text{ \quad and
} \\
\prob(G_2 \subseteq \Bite_{i+1} \wedge |G_3 \cap \Bite_{i+1}| = 0) &\ge&
n^{-\epsilon_1 - \epsilon_3}(1 - o(\Gamma_i \gamma_i)).
\end{eqnarray*}
Hence, by Lemma~\ref{lemma:survive:process}, 
\begin{eqnarray*}
\prob(\I_c(G \cap \NotTrav_i) \wedge \S_c(G \cap \NotTrav_i) \wedge |G \cap
\Bite_{i+1}| = 0) &\ge& \\
\bigg(\frac{\phi((i+1)n^{-\epsilon_1})} {\phi(i n^{-\epsilon_1})}\bigg)^2 (1 -
91 \Gamma_i \gamma_i)
\end{eqnarray*}
and 
\begin{eqnarray*}
\prob(\I_c(G_2 \cup G_3) \wedge \S_c(G_2 \cup G_3) \wedge G_2 \subseteq
\Bite_{i+1} \wedge |G_3 \cap \Bite_{i+1}| = 0) &\ge&  \\
n^{-\epsilon_1 - \epsilon_3} \bigg( \frac{\Phi((i+1)n^{-\epsilon_1}) -
\Phi(in^{-\epsilon_1})}{n^{-\epsilon_1} \phi(in^{-\epsilon_1})} \bigg)
\bigg(\frac{\phi((i+1)n^{-\epsilon_1})} {\phi(i n^{-\epsilon_1})}\bigg)^2 (1 -
91 \Gamma_i \gamma_i).
\end{eqnarray*}
Also,  by~(B3), $|Y_{i,2}(T)| \ge y_{i,2,t} (1 - 100\Gamma_i)$, and by~(D3),
$|Y'_{i,3,1}(T)| \ge 3 n^{\epsilon_3 - 2/5} y_{i,3,t} (1 - 100 \Gamma_i -
\Gamma_i \gamma_i )$.  The claim now follows using linearity of expectation.
\end{proof}

\begin{claim}
The probability (over the choice of $\BigBite_{i+1}$, $\Bite_{i+1}$, and the
choice of the birthtimes of the edges in $\Bite_{i+1}$) that $W_4$ deviates
from its expectation by more than $\Gamma_i \gamma_i y_{i+1,2,t}$ is at most
$n^{-\omega(s)}$.
\end{claim}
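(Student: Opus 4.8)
The plan is to replay the concentration argument just used for $W_3$ in the case $j=3$: regard $W_4$ as a function of the (independent) outcomes of the edges of $\BIGBite_{i+1}$ and apply McDiarmid's inequality. Write $W_4 = W_4' + W_4''$, where $W_4'$ is the first sum (over triangles $G \in Y_{i,2}(T)$) and $W_4''$ is the second (over triples $(G_1,G_2,G_3)\in Y'_{i,3,1}(T)$), and set $\lambda \deq \Gamma_i\gamma_i y_{i+1,2,t}$. Exactly as in the $j=3$ case, changing the outcome of an edge $f \in \BIGBite_{i+1}$ alters $W_4$ by at most the number $a_f$ of summands whose indicator can possibly depend on that outcome; these are, for $W_4'$, the triangles $G$ whose unique $\NotTrav_i$-edge $g$ lies in $\Roots_c(f)\cup\{f\}$, and for $W_4''$, the triples whose triangle has an edge $g'$ in $\Roots_c(f)\cup\{f\}$. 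I would bound $\max_f a_f$ and $\sum_f a_f$ and then conclude via $\prob(|W_4 - \expec(W_4)|\ge\lambda)\le 2\exp(-2\lambda^2/\sum_f a_f^2)$.

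The point at which $j=2$ is genuinely harder than $j=3$ is that $y_{i+1,2,t}$ is smaller than $y_{i+1,3,t}$ by a factor $\Theta\big(n^{2/5}\phi(in^{-\epsilon_1})/\Phi(in^{-\epsilon_1})\big)$ — essentially $n^{2/5}$ up to polylogarithmic and $n^{\pm\Theta(\epsilon_1)}$ factors — so $\lambda$ shrinks correspondingly, and the crude bound $a_f\le |\Roots_c(f)|\,s\le n^{5.2/5}$ that sufficed for $W_3$ is now far too weak. The remedy is to use $\C_i$ and~(\ref{eq:maxdeg}) to see that at most $n^{1.1/5}$ triangles can be anchored at a fixed edge. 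For $W_4'$: a triangle in $Y_{i,2}(T)$ has two edges in $\M_i\subseteq\Trav_i\cup\BIGBite_{i+1}$, so once its $\NotTrav_i$-edge $g$ is fixed the triangle is determined by a common neighbour of the two endpoints of $g$ in $\Trav_i\cup\BIGBite_{i+1}$, of which there are at most $n^{1.1/5}$ by~(C3); with $|\Roots_c(f)|\le n^{2.1/5}$ from~(\ref{eq:roots2}), $W_4'$ contributes at most $n^{3.3/5}$ to $a_f$. For $W_4''$: the triangle lies in $Y_{i,3}(T)$ but its distinguished edge $G_2$ lies in $\BIGBite_{i+1}$; if $f$ is relevant through the tree rooted at $G_2$ then $G_2\in\Roots_c(f)\cap\BIGBite_{i+1}$, which by~(\ref{eq:roots1}) has at most $n^{0.01}$ members, while if $f$ is relevant through a tree rooted at one of the two $\NotTrav_i$-edges $g'\in G_3$ then $g'\in\Roots_c(f)$ (at most $n^{2.1/5}$ choices) and the triangle is $g'$ together with a vertex of $R$ adjacent in $\Trav_i\cup\BIGBite_{i+1}$ to an endpoint of $g'$, of which there are at most $n^{1.1/5}$ by~(\ref{eq:maxdeg}) since $R\subseteq S_{i+1}$; the residual case, where $f$ itself is an edge of the triangle, contributes $O(s)$. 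Hence $W_4''$ also contributes at most $n^{3.3/5}$ to $a_f$, and $\max_f a_f\le n^{3.4/5}$.

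For the sum, double-counting and~(\ref{eq:labels}) give that each summand of $W_4$ is relevant to at most $n^{0.01}+O(1)$ edges of $\BIGBite_{i+1}$, so $\sum_f a_f \le (|Y_{i,2}(T)| + 3|Y'_{i,3,1}(T)|)(n^{0.01}+O(1))$; by the a-priori bounds~(\ref{eq:7:2:1}) together with Lemma~\ref{fact:f1}, $|Y_{i,2}(T)|\le y_{i,2,t}\le n^{7.1/5}$ and $|Y'_{i,3,1}(T)|\le 3n^{\epsilon_3-2/5}y_{i,3,t}\le n^{7.1/5}$, so $\sum_f a_f\le n^{7.2/5}$ and therefore $\sum_f a_f^2\le(\max_f a_f)(\sum_f a_f)\le n^{10.6/5}$. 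Since $\Gamma_i\gamma_i\ge n^{-\Theta(\epsilon_1)}$ and $y_{i+1,2,t}\ge n^{7/5-\Theta(\epsilon_1)}$ by Lemma~\ref{fact:f1}, one has $\lambda^2\ge n^{14/5-\Theta(\epsilon_1)}$, whence $\lambda^2/\sum_f a_f^2\ge n^{3.4/5-\Theta(\epsilon_1)}=\omega(s\ln n)$ once $\epsilon_1$ is small enough; McDiarmid's inequality then yields the asserted bound $n^{-\omega(s)}$. The hard part is precisely this tightening of the per-edge Lipschitz constants: one must observe that the short chains of $K_4^{-}$-gadgets that can be anchored at one edge — the $\M_i$-completions of an $\NotTrav_i$-edge inside $Y_{i,2}(T)$, and the $\BIGBite_{i+1}$-anchored distinguished edges inside $Y'_{i,3,1}(T)$ — are controlled by the degree and codegree bounds carried by $\C_i$ and~(\ref{eq:maxdeg}) rather than by the trivial count $s$; the rest is routine and parallels the $j=3$ argument, and one then combines this concentration with Claim~\ref{claim:w4} and $|Y_{i+1,2}(T)|\ge W_4$ (Lemma~\ref{lemma:conn}) to obtain~(\ref{eq:sec:7:3}) for $j=2$.
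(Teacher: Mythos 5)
Your proposal follows the paper's argument: write $W_4$ as a function of the independent outcomes of the edges in $\BIGBite_{i+1}$, bound the per-edge Lipschitz constant using $\Roots_c(f)$ and local degree bounds, bound the sum of Lipschitz constants using $\Labels_c$, and apply McDiarmid. The structure and numerology match the paper's proof.

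One conceptual slip is worth flagging: you write that a triangle $G \in Y_{i,2}(T)$ has ``two edges in $\M_i$'' and a unique $\NotTrav_i$-edge $g$, and you therefore bound the third vertex as a \emph{common} neighbour of $g$'s endpoints via~(C3). In fact $Y_{i,j}(T)$ has $|\M_i \cap G| = 3-j$, so for $j=2$ a triangle in $Y_{i,2}(T)$ has exactly \emph{one} edge in $\M_i$ and \emph{two} in $\NotTrav_i$; once you fix the $\NotTrav_i$-edge $g \in \Roots_c(f)$, the third vertex need only be a neighbour of one endpoint of $g$ in $(\Trav_i\cup\BIGBite_{i+1})\cap\binom{S_{i+1}}{2}$, so the correct tool is the max-degree bound~(\ref{eq:maxdeg}), giving $\le 2n^{1.1/5}$ choices, not the codegree bound from~(C3). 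Numerically the two coincide (both are $\le n^{1.1/5}$ up to a constant), so your final exponents $\max_f a_f \le n^{3.4/5}$, $\sum_f a_f \le n^{7.2/5}$, $\sum_f a_f^2 \le n^{10.6/5}$ and the resulting $\lambda^2/\sum_f a_f^2 = \omega(s\ln n)$ are unaffected; but the justification as written appeals to the wrong inequality for the wrong reason, and in a context where the distinction between degree and codegree control is precisely what the $j=1$ case will later hinge on, the slip is not harmless stylistically. Relatedly, your case analysis for the $W_4''$ contribution leaves the $G_2 \in \Roots_c(f)\cap\BIGBite_{i+1}$ branch without an explicit factor for the number of triangles per fixed $G_2$ (which is $O(s)$), folding it into a vague ``residual case''; the paper's clean statement is $|\Triangles(f)| \le 3|\Roots_c(f)|\cdot 2n^{1.1/5} + 3|\Roots_c(f)\cap\BIGBite_{i+1}|\cdot s$, and matching that form would tighten your exposition. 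With those two points repaired, your argument is the paper's argument.
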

\begin{proof}
For an edge $f \in \BIGBite_{i+1}$, let $\Triangles(f)$ be the set which
contains every triangle $G \in Y_{i,2}(T)$ for which it holds that $f$ has the
potential of being a label in a tree in the forest $\{\TT_c(g) : g \in G \cap
\NotTrav_i\}$, and every triple $(G_1, G_2, G_3) \in Y'_{i,3,1}(T)$ for which
it holds that $f$ has the potential of being a label in a tree in the forest
$\{\TT_c(g) : g \in G_2 \cup G_3\}$.
Observe that changing the outcome of an edge $f \in \BIGBite_{i+1}$ can change
$W_4$ by at most an additive factor of $|\Triangles(f)|$.

Note that every triangle $G \in Y_{i,2}(T)$ has at least one edge in $\Trav_i
\cup \BIGBite_{i+1}$, and that every triple $(G_1, G_2, G_3) \in Y'_{i,3,1}(T)$
is such that the triangle $G_1 \cup G_2 \cup G_3$ has at least one edge in
$\Trav_i \cup \BIGBite_{i+1}$. This fact, together
with~(\ref{eq:roots1}),~(\ref{eq:roots2}),~(\ref{eq:maxdeg}) and the fact that
for every triangle $G$ there are at most $3$ possible triples $(G_1, G_2, G_3)
\in Y'_{i,3,1}(T)$ such that $G = G_1 \cup G_2 \cup G_3$, gives us that for
every $f \in \BIGBite_{i+1}$, 
\begin{eqnarray*}
|\Triangles(f)| \le 3 \cdot |\Roots_c(f)| \cdot 2n^{1.1/5} + 3 \cdot
|\Roots_c(f) \cap \BIGBite_{i+1}| \cdot s \le 10 n^{3.2/5}.
\end{eqnarray*}
Moreover, by~(\ref{eq:labels}) we have that for every triangle $G \in
Y_{i,2}(T)$, there are at most $2n^{0.01}$ edges $f \in \BIGBite_{i+1}$ such
that $G \in \Triangles(f)$, and likewise for every triple $(G_1, G_2, G_3) \in
Y'_{i,3,1}(T)$, there are at most $3 n^{0.01}$ edges $f \in \BIGBite_{i+1}$
such that $(G_1, G_2, G_3) \in \Triangles(f)$, and so
\begin{eqnarray*}
\sum_{f \in \BIGBite_{i+1}} |\Triangles(f)| \le (|Y_{i,2}(T)| +
|Y'_{i,3,1}(T)|) \cdot 3n^{0.01} \le n^{7.1/5},
\end{eqnarray*}
where the second inequality follows from~(\ref{eq:7:2:1}).  Therefore, 
\begin{eqnarray*}
\sum_{f \in \BIGBite_{i+1}} |\Triangles(f)|^2 \le 10 n^{3.2/5} \cdot \sum_{f
\in \BIGBite_{i+1}} |\Triangles(f)| \le 10 n^{10.3/5}.
\end{eqnarray*}
It now follows from McDiarmid's inequality that the probability that $W_4$ deviates
from its expectation by more than $\Gamma_i \gamma_i y_{i+1,2,t} \ge n^{6.9/5}$
is at most $n^{-\omega(s)}$.
\end{proof}

\subsubsection{Proof of~(\ref{eq:sec:7:3}) (case $j=1$)}
%
We begin with the following claim.
\begin{claim} \label{claim:7:2:1}
Let $Y_1(T) \deq Y_{i,1}(T) \cup Y'_{i,2,1}(T) \cup Y'_{i,3,2}(T)$ be a set of
triangles and triples.  There exists a set $Y(T) \subseteq Y_1(T)$, of size at
least $|Y_1(T)| - O(n^{4.2/5})$, such that
for every edge $g_1$ there is at most one triangle $\{g_1, g_2, g_3\}$ with
$\{g_2, g_3\} \subseteq \Trav_i \cup \BIGBite_{i+1}$, such that either $\{g_1,
g_2, g_3\} \in Y(T)$, or $\{g_1, g_2, g_3\} = G_1 \cup G_2 \cup G_3$ for some
triple $(G_1, G_2, G_3) \in Y(T)$.
\end{claim}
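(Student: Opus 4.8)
The plan is to prune $Y_1(T)$ in two rounds, discarding $O(n^{4.2/5})$ elements in each, so that the stated sparsity property holds afterwards. First I set up notation. To each member $e$ of $Y_1(T)$ I attach a triangle $G(e)\subseteq\binom{R}{2}$ and a distinguished edge $g^{*}(e)\in\NotTrav_i$: if $e\in Y_{i,1}(T)$ put $G(e)=e$ and let $g^{*}(e)$ be the unique edge of $e$ in $\NotTrav_i$; if $e=(G_1,G_2,G_3)\in Y'_{i,2,1}(T)\cup Y'_{i,3,2}(T)$ put $G(e)=G_1\cup G_2\cup G_3$ and $g^{*}(e)=G_3$. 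Directly from the definitions, in every case the two edges of $G(e)$ other than $g^{*}(e)$ lie in $\Trav_i\cup\BIGBite_{i+1}$ (they are in $\M_i$ or, being $G_2$, in $\BIGBite_{i+1}$), and at most $1+\binom{2}{1}+\binom{3}{2}=6$ members of $Y_1(T)$ give rise to the same triangle. Call a triangle $\tau$ a \emph{witness for an edge $g_1$} if $g_1\in\tau$ and $\tau\setminus\{g_1\}\subseteq\Trav_i\cup\BIGBite_{i+1}$; then $G(e)$ is a witness for $g_1$ exactly when $g_1=g^{*}(e)$, or when $g^{*}(e)\in\BIGBite_{i+1}$ and $g_1\in G(e)$, and the claim asks for a subset $Y(T)$ with $|Y_1(T)\setminus Y(T)|=O(n^{4.2/5})$ such that no edge has two distinct witnesses arising from members of $Y(T)$.

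\textbf{Round 1.} Apply (C6) to the set $S$ fixed in this subsection; intersecting the resulting edge set with $\binom{R}{2}$ gives a set $E_0$ with $|E_0|\le n^{3/5+10\epsilon_3}$ such that $\big((\Trav_i\cup\BIGBite_{i+1})\cap\binom{R}{2}\big)\setminus E_0$ has at most $n^{4/5+10\epsilon_3}$ four-cycles (using $R\subseteq S_{i+1}\subseteq S$ and that four-cycles in $\binom{R}{2}$ use only edges of $\binom{R}{2}$). Discard every member $e$ of $Y_1(T)$ with $G(e)\cap E_0\ne\emptyset$. For a fixed $e_0=\{x,y\}\in E_0$, every triangle $G(e)$ containing $e_0$ has at least two edges in $\Trav_i\cup\BIGBite_{i+1}$, so its third vertex is adjacent in $(\Trav_i\cup\BIGBite_{i+1})\cap\binom{R}{2}$ to $x$ or to $y$; by~(\ref{eq:maxdeg}) there are at most $2n^{1.1/5}$ such triangles, hence at most $12n^{1.1/5}$ corresponding members. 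So Round 1 discards at most $12 n^{1.1/5}|E_0|=O(n^{4.1/5+10\epsilon_3})=O(n^{4.2/5})$ members, the last step because $\epsilon_3$ is sufficiently small.

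\textbf{Round 2.} Among the surviving members, call $e\ne e'$ \emph{conflicting} if $G(e)\ne G(e')$ and some edge $g_1$ has both $G(e)$ and $G(e')$ as witnesses. Two distinct triangles with a common witnessed edge $g_1$ meet in exactly $g_1$, and their four other edges all lie in $\Trav_i\cup\BIGBite_{i+1}$ and form a four-cycle $C\subseteq\binom{R}{2}$ of which $g_1$ is a diagonal. Since $e,e'$ survived Round 1, $G(e)\cap E_0=G(e')\cap E_0=\emptyset$, so $C$ avoids $E_0$; there are at most $n^{4/5+10\epsilon_3}$ such four-cycles, each has two diagonals, and for each (four-cycle, diagonal) the two triangles involved are determined and each comes from at most $6$ members, so there are at most $2\cdot 6^{2}\cdot n^{4/5+10\epsilon_3}$ conflicting pairs. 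Discard from the surviving set every member lying in at least one conflicting pair; this removes at most twice the number of conflicting pairs, i.e. $O(n^{4/5+10\epsilon_3})=O(n^{4.2/5})$ members.

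Let $Y(T)$ be what remains. Then $|Y_1(T)\setminus Y(T)|=O(n^{4.2/5})$, and if some edge $g_1$ had two distinct witnesses coming from members $e,e'\in Y(T)$, then $(e,e')$ would be a conflicting pair and at least one of $e,e'$ would have been discarded in Round 2 — a contradiction. Hence each edge has at most one witness arising from $Y(T)$, which is the assertion of the claim. The one genuinely substantive step is recognising a conflict as a four-cycle and then absorbing the four-cycles through the exceptional set $E_0$ of (C6) via the degree bound~(\ref{eq:maxdeg}); the remainder is bookkeeping with the constant $6$ and the exponents.
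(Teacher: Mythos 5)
Your proof is correct and follows essentially the same strategy as the paper's: use the exceptional edge set $E_0$ from (C6) to remove members of $Y_1(T)$ whose triangle touches $E_0$ (bounded via (\ref{eq:maxdeg})), then observe that a remaining "double witness" forces a four-cycle in $(\Trav_i\cup\BIGBite_{i+1})\cap\binom{R}{2}$ avoiding $E_0$, of which there are at most $n^{4/5+10\epsilon_3}$, and remove the corresponding members. Your "conflicting pairs" bookkeeping is a slightly more explicit version of the paper's "bad triangle/triple" deletion, and your overcount constant $6$ in place of the paper's $3$ is harmless; the substantive steps — identifying a conflict with a diagonal of an $E_0$-avoiding four-cycle and bounding the first-round loss via (\ref{eq:maxdeg}) — match the paper's argument.
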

\begin{proof}
By~(C6), there is a set $E_0$ of at most $n^{3/5 + 10 \epsilon_3}$ edges, the
removal of which from $\Trav_i \cup \BIGBite_{i+1}$ leaves at most $n^{4/5 + 10
\epsilon_3}$ $4$-cycles in $(\Trav_i \cup \BIGBite_{i+1}) \cap \binom{R}{2}$.
Obtain $Y_2(T)$ from $Y_1(T)$ by removing from $Y_1(T)$ every triangle $G$ for
which it holds that $G \cap E_0 \ne \emptyset$, and every triple $(G_1, G_2,
G_3)$ for which it holds that $(G_1 \cup G_2 \cup G_3) \cap E_0 \ne \emptyset$.
Note that every triangle in $Y_1(T)$ has at least two edges in $\Trav_i \cup
\BIGBite_{i+1}$, and that every triple $(G_1, G_2, G_3)$ in $Y_1(T)$ is such
that the triangle $G_1 \cup G_2 \cup G_3$ has at least two edges in $\Trav_i
\cup \BIGBite_{i+1}$.  Hence, using~(\ref{eq:maxdeg}), we find that every edge
in $E_0$ belongs to at most $2 n^{1.1/5}$ triangles in $Y_1(T)$, and to at most
$2 n^{1.1/5}$ triangles $G_1 \cup G_2 \cup G_3$ such that $(G_1, G_2, G_3) \in
Y_1(T)$. Since for every triangle $G$ there are at most $3$ triples $(G_1, G_2,
G_3) \in Y_1(T)$ such that $G = G_1 \cup G_2 \cup G_3$, we get that $|Y_2(T)| =
|Y_1(T)| - |E_0| \cdot O(n^{1.1/5})$, and so $|Y_2(T)| = |Y_1(T)| -
O(n^{4.2/5})$. 

Say that a triangle $G \in Y_2(T)$ is bad if it has two edges in $\Trav_i \cup
\BIGBite_{i+1}$ that belong to some $4$-cycle in $(\Trav_i \cup \BIGBite_{i+1})
\cap \binom{R}{2}$. Say that a triple $(G_1, G_2, G_3) \in Y_2(T)$ is bad if
the triangle $G_1 \cup G_2 \cup G_3$  has two edges in $\Trav_i \cup
\BIGBite_{i+1}$ that belong to some $4$-cycle in $(\Trav_i \cup \BIGBite_{i+1})
\cap \binom{R}{2}$. 
Obtain $Y(T)$ from $Y_2(T)$ by removing from $Y_2(T)$ every bad triangle and
every bad triple. Observe that $Y(T)$ satisfies the property that is asserted
to hold by the claim, and so it remains for us to show that $|Y(T)| = |Y_1(T)|
- O(n^{4.2/5})$.  To show this, first recall that if we remove from $\Trav_i
\cup \BIGBite_{i+1}$ the edges in $E_0$, the number of $4$-cycles remaining in
$(\Trav_i \cup \BIGBite_{i+1}) \cap \binom{R}{2}$ is at most $n^{4/5 +
10\epsilon_3} \le n^{4.1/5}$.  Then, note that for every $4$-cycle there are
exactly $4$ triangles that share two of their edges with that cycle.  It
follows that the number of bad triangles and bad triples in $Y_2(T)$ is at most
$O(n^{4.1/5})$. Thus, $|Y(T)| = |Y_2(T)| - O(n^{4.1/5})$, and so $|Y(T)| =
|Y_1(T)| - O(n^{4.2/5})$. 
\end{proof}

Fix for the rest of the section the set $Y(T)$ that is guaranteed to exist by
Claim~\ref{claim:7:2:1}. For brevity, we mark a few properties of $Y(T)$ for
future reference. Let~(P1) be the property that every triangle $G \in Y(T)$ has
at least two edges in $\Trav_i \cup \BIGBite_{i+1}$ and that every triple
$(G_1, G_2, G_3) \in Y(T)$ is such that the triangle $G_1 \cup G_2 \cup G_3$
has at least two edges in $\Trav_i \cup \BIGBite_{i+1}$.  Let~(P2) be the
property that for every triangle $G$ there are at most $3$ possible triples
$(G_1, G_2, G_3) \in Y(T)$ such that $G = G_1 \cup G_2 \cup G_3$.  Let~(P3) be
the property that every edge in $\NotTrav_i \setminus \BIGBite_{i+1}$ belongs
to at most $3$ triangles and triples in $Y(T)$ (where we say that an edge
belongs to a triple $(G_1, G_2, G_3)$ if that edge belongs to $G_1 \cup G_2
\cup G_3$).  Using the fact that $Y(T) \subseteq Y_1(T)$, where $Y_1(T)$ is as
defined in Claim~\ref{claim:7:2:1}, and using Claim~\ref{claim:7:2:1}, we find
that the properties~(P1),~(P2)~and~(P3) hold.

Let $c \in \epsilon_3^2 \epsilon_2^{-1} \pm 1$ be an odd integer.  Let
$\sum_{G}$ range over all triangles $G \in Y(T)$ and let $\sum_{(G_1, G_2,
G_3)}$ range over all triples $(G_1, G_2, G_3) \in Y(T)$.  Let
\begin{eqnarray*}
W_5 &\deq& \sum_{G} \ONE[\I_c(\emptyset, G \cap \NotTrav_i, R) \wedge
\S_c(\emptyset, G \cap \NotTrav_i, R) \wedge |G \cap \Bite_{i+1}| = 0 ] + \\
&& \sum_{(G_1, G_2, G_3)}
\ONE[\I_c(G_2, G_3, R) \wedge \S_c(G_2, G_3, R) \wedge G_2 \subseteq
\Bite_{i+1}  \wedge |G_3 \cap \Bite_{i+1}|=0 ].
\end{eqnarray*}
By Lemma~\ref{lemma:conn2}, there are at least $W_5$ triangles $G$ such that
$|\M_{i+1} \cap G| = 2$, $|\NotTrav_{i+1} \cap G| = 1$, and letting $g$ denote
the edge in $\NotTrav_{i+1} \cap G$, one of the following two possibilities
hold.  Either $|X_{i+1,0}(g)| = 0$, in which case $G \in Y_{i+1,1}(T)$, or
$|X_{i+1,0}(g)| > 0$ and for every $G_0 \in X_{i+1,0}(g)$, $G_0$ shares at
least three vertices with $R$. 
The number of triangles $G$ for which the second possibility holds is at most
$|Z_{i+1}(R,T)| - |Z_i(R,T)|$. Therefore, there are at least $W_5 -
|Z_{i+1}(R,T)| + |Z_i(R,T)|$ triangles $G$ for which the first possibility
holds, or in other words, $|Y_{i+1,1}(T)| \ge W_5 - |Z_{i+1}(R,T)| +
|Z_i(R,T)|$.  This, with the next two claims, gives~(\ref{eq:sec:7:3}) for
$j=1$.

\begin{claim}  \label{claim:7:2:2}
$\expec(W_5) \ge y_{i+1,1,t} (1 - 100\Gamma_i - 99 \Gamma_i \gamma_i) - |Z_i(R,
T)|$.
\end{claim}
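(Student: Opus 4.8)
The plan is to evaluate $\expec(W_5)$ by linearity of expectation, treating separately the triangles $G\in Y(T)\cap Y_{i,1}(T)$ and the triples $(G_1,G_2,G_3)\in Y(T)\cap Y'_{i,2,1}(T)$ and $(G_1,G_2,G_3)\in Y(T)\cap Y'_{i,3,2}(T)$. Since $Y_1(T)=Y_{i,1}(T)\cup Y'_{i,2,1}(T)\cup Y'_{i,3,2}(T)$ is a disjoint union (triangles versus triples, and triples with $|G_2|=1$ versus $|G_2|=2$) and $|Y(T)|\ge|Y_1(T)|-O(n^{4.2/5})$ by Claim~\ref{claim:7:2:1}, each of these intersections omits at most $O(n^{4.2/5})$ members of the relevant full set. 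Write $p\deq\phi((i+1)n^{-\epsilon_1})/\phi(in^{-\epsilon_1})\le 1$ and $P\deq(\Phi((i+1)n^{-\epsilon_1})-\Phi(in^{-\epsilon_1}))/(n^{-\epsilon_1}\phi(in^{-\epsilon_1}))$. The only feature of $W_5$ not already present in $W_3$ and $W_4$ is the use of the $R$-restricted events; since $\TT_c(g,R)$ is obtained from $\TT_c(g)$ by deleting subtrees rooted at children of the root, one has $\S_c(g)\implies\S_c(g,R)$ and, for a forest, $\I_c(F_1\cup F_2)\wedge\S_c(F_1\cup F_2)\implies\I_c(F_1,F_2,R)\wedge\S_c(F_1,F_2,R)$, so every lower bound on an unrestricted survival probability provided by Lemma~\ref{lemma:survive:process} transfers to the corresponding term of $W_5$.

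For a triangle $G\in Y(T)\cap Y_{i,1}(T)$ with $\NotTrav_i$-edge $g$, applying Lemma~\ref{lemma:survive:process} with $F=\{g\}$, $a_1=0$, $a_2=1$ (and $\prob(g\notin\Bite_{i+1})\ge 1-o(\Gamma_i\gamma_i)$, by Lemma~\ref{fact:f1}) bounds the corresponding indicator's expectation below by $p(1-92\Gamma_i\gamma_i)$. For a triple in $Y(T)\cap Y'_{i,2,1}(T)$ (where $|G_2|=|G_3|=1$), Lemma~\ref{lemma:survive:process} with $F=G_2\cup G_3$, $a_1=1$, $a_2=1$, together with $\prob(G_2\subseteq\Bite_{i+1}\wedge G_3\cap\Bite_{i+1}=\emptyset)\ge n^{-\epsilon_1-\epsilon_3}(1-o(\Gamma_i\gamma_i))$, gives expectation at least $n^{-\epsilon_1-\epsilon_3}Pp(1-92\Gamma_i\gamma_i)$. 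For a triple in $Y(T)\cap Y'_{i,3,2}(T)$ (where $|G_2|=2$, $|G_3|=1$), the same argument with $a_1=2$, $a_2=1$ gives at least $n^{-2\epsilon_1-2\epsilon_3}P^2p(1-92\Gamma_i\gamma_i)$. These are exactly the analogues of the per-term estimates used in the $W_3$- and $W_4$-claims.

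Summing, and using $|Y_{i,1}(T)|\ge y_{i,1,t}(1-100\Gamma_i)-|Z_i(R,T)|$ (from~(B3) with $j=1$, where the coefficient of $|Z_i(R,T)|$ equals $1$), $|Y'_{i,2,1}(T)|\ge 2n^{\epsilon_3-2/5}y_{i,2,t}(1-100\Gamma_i-\Gamma_i\gamma_i)$ and $|Y'_{i,3,2}(T)|\ge 3n^{2\epsilon_3-4/5}y_{i,3,t}(1-100\Gamma_i-\Gamma_i\gamma_i)$ (both from~(D3)), the main term of $\expec(W_5)$ equals
\[
p\,y_{i,1,t}+2n^{-\epsilon_1-2/5}Pp\,y_{i,2,t}+3n^{-2\epsilon_1-4/5}P^2p\,y_{i,3,t}.
\]
Writing $\delta\deq\Phi((i+1)n^{-\epsilon_1})-\Phi(in^{-\epsilon_1})$, $\Phi_i\deq\Phi(in^{-\epsilon_1})$, $\phi_{i+1}\deq\phi((i+1)n^{-\epsilon_1})$, these three summands are $3t\phi_{i+1}\Phi_i^2/n^{4/5}$, $6t\phi_{i+1}\delta\Phi_i/n^{4/5}$ and $3t\phi_{i+1}\delta^2/n^{4/5}$, whose sum is $3t\phi_{i+1}(\Phi_i+\delta)^2/n^{4/5}=y_{i+1,1,t}$. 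The $-|Z_i(R,T)|$ contribution enters only through the $Y_{i,1}(T)$ term and, since $p\le 1$, costs at most $|Z_i(R,T)|$. Finally, all multiplicative slack, together with the $O(n^{4.2/5})$ loss from Claim~\ref{claim:7:2:1}, is absorbed into a single factor $(1-100\Gamma_i-99\Gamma_i\gamma_i)$; this is valid because Lemma~\ref{fact:f1} gives $y_{i+1,1,t}\Gamma_i\gamma_i=\Omega(n^{1-\Theta(\epsilon_1)})\gg n^{4.2/5}$ and $n^{-\epsilon_1-\epsilon_3}=o(\Gamma_i\gamma_i)$ (the latter as $\epsilon_1$ is sufficiently small relative to $\epsilon_3$). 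This yields $\expec(W_5)\ge y_{i+1,1,t}(1-100\Gamma_i-99\Gamma_i\gamma_i)-|Z_i(R,T)|$.

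The step I expect to need the most care is the justification inside each per-term estimate that conditioning on the precise $\Bite_{i+1}$-membership pattern of the edges of $F$—rather than merely on the cardinality event conditioned on in Lemma~\ref{lemma:survive:process}—leaves the conditional survival probability unchanged; this rests on the mutual independence of the per-edge survival events established within the proof of that lemma and should be invoked explicitly.
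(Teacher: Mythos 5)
Your proof is correct and follows the same approach as the paper's: decompose $W_5$ over the three disjoint pieces of $Y(T)$, apply Lemmas~\ref{lemma:conn2} and~\ref{lemma:survive:process} together with~(B3),~(D3) and Claim~\ref{claim:7:2:1} to get per-term lower bounds, and sum by linearity of expectation; the binomial identity collapsing the main term to $y_{i+1,1,t}$, which you verify explicitly, is the same one the paper invokes implicitly. The subtlety you flag at the end—that Lemma~\ref{lemma:survive:process} conditions on the cardinality event $|F\cap\Bite_{i+1}|=a_1$ rather than on a specific subset of $F$—is likewise left implicit in the paper; it is resolved by the conditional independence (given $\E_1\wedge\E_2$) of the per-root survival events established inside the proof of that lemma, which makes the conditional survival probability depend only on each root edge's $\Bite_{i+1}$-status and hence only on the cardinality.
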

\begin{proof} 
Consider a triangle $G \in Y(T)$ and a triple $(G_1, G_2, G_3) \in Y(T)$.
Since an edge in $\BIGBite_{i+1}$ is an edge in $\Bite_{i+1}$ with probability
$n^{-\epsilon_1 - \epsilon_3} / (1 - i n^{-\epsilon_1 - \epsilon_2})$, using
Lemma~\ref{fact:f1}, we find that $\prob(|G \cap \Bite_{i+1}| = 0) \ge 1 -
o(\Gamma_i \gamma_i)$ and $\prob(G_2 \subseteq \Bite_{i+1} \wedge |G_3 \cap
\Bite_{i+1}| = 0) \ge n^{-\epsilon_1|G_2| - \epsilon_3|G_2|}(1 - o(\Gamma_i
\gamma_i))$.  Hence, by Lemma~\ref{lemma:conn2} and
Lemma~\ref{lemma:survive:process},
\begin{eqnarray*}
\prob(\I_c(\emptyset, G \cap \NotTrav_i, R) \wedge S_c(\emptyset, G \cap
\NotTrav_i, R) \wedge |G \cap \Bite_{i+1}| = 0) &\ge& \\
\bigg( \frac{\phi((i+1)n^{-\epsilon_1})} {\phi(i n^{-\epsilon_1})} \bigg) (1 -
91 \Gamma_i \gamma_i)
\end{eqnarray*}
and 
\begin{eqnarray*}
\prob(\I_c(G_2, G_3, R) \wedge \S_c(G_2, G_3, R) \wedge G_2 \subseteq
\Bite_{i+1} \wedge |G_3 \cap \Bite_{i+1}| = 0) &\ge& \\ 
n^{-\epsilon_1 |G_2| - \epsilon_3 |G_2|} \bigg(
\frac{\Phi((i+1)n^{-\epsilon_1}) - \Phi(in^{-\epsilon_1})}{n^{-\epsilon_1}
\phi(in^{-\epsilon_1})} \bigg)^{|G_2|} \bigg( \frac{\phi((i+1)n^{-\epsilon_1})}
{\phi(i n^{-\epsilon_1})} \bigg)^{|G_3|} (1 - 91 \Gamma_i \gamma_i).
\end{eqnarray*}
Also, from~(B3),~(D3) and Claim~\ref{claim:7:2:1}, it follows that the number
of triangles in $Y(T)$ is at least $y_{i,1,t} (1 - 100 \Gamma_i - 2 \Gamma_i
\gamma_i) - |Z_i(R, T)|$, the number of triples $(G_1, G_2, G_3)$ in $Y(T)$
which belong to $Y'_{i,2,1}(T)$ is at least $2 n^{\epsilon_3 - 2/5} y_{i,2,t}
(1 - 100 \Gamma_i - 2 \Gamma_i \gamma_i)$, and the number of triples $(G_1,
G_2, G_3)$ in $Y(T)$ which belong to $Y'_{i,3,2}(T)$ is at least $3
n^{2(\epsilon_3 - 2/5)} y_{i,3,t} (1 - 100\Gamma_i - 2 \Gamma_i \gamma_i)$.
The claim now follows using linearity of expectation.
\end{proof}

\begin{claim}
The probability (over the choice of $\BigBite_{i+1}$, $\Bite_{i+1}$, and the
choice of the birthtimes of the edges in $\Bite_{i+1}$) that $W_5$ deviates
from its expectation by more than $\Gamma_i \gamma_i y_{i+1,1,t}$ is at most
$n^{-\omega(s)}$.
\end{claim}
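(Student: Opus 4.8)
The plan is to prove this concentration claim by McDiarmid's inequality, in exact parallel with the two preceding claims (the cases $j=3$ and $j=2$). Fix the odd integer $c \in \epsilon_3^2 \epsilon_2^{-1} \pm 1$. The starting observation is that $W_5$ is a function of the outcomes of the edges in $\BIGBite_{i+1}$ only: the sets $\BigBite_{i+1}$, $\Bite_{i+1}$ and the birthtimes of the $\Bite_{i+1}$-edges are all determined by these independent per-edge outcomes. First I would define, for an edge $f \in \BIGBite_{i+1}$, the set $\Triangles(f)$ consisting of every triangle $G \in Y(T)$ for which $f$ has the potential of being a label in the tree $\TT_c(g,R)$ with $g = G \cap \NotTrav_i$, together with every triple $(G_1,G_2,G_3) \in Y(T)$ for which $f$ has the potential of being a label in a tree in $\{\TT_c(g): g \in G_2\} \cup \{\TT_c(g,R) : g \in G_3\}$. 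Then changing the outcome of a single edge $f$ changes $W_5$ by at most $|\Triangles(f)|$, since the only summands of $W_5$ whose indicator can be affected by $f$ are those attached to triangles or triples in $\Triangles(f)$; so the Lipschitz coefficients for McDiarmid's inequality are $a_f = |\Triangles(f)|$.

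Next I would bound $\sum_{f \in \BIGBite_{i+1}} |\Triangles(f)|$ and $\max_{f} |\Triangles(f)|$. For the sum: every triangle or triple in $Y(T)$ has at most three tracked edges, and each tracked edge $g$ lies in at most $|\Labels_c(g)| \le n^{0.01}$ trees by~(\ref{eq:labels}); hence $\sum_f |\Triangles(f)| \le 3 n^{0.01} |Y(T)| \le 3 n^{0.01} |Y_1(T)|$, and the upper bounds~(\ref{eq:7:2:1}) on $|Y_{i,1}(T)|, |Y'_{i,2,1}(T)|, |Y'_{i,3,2}(T)|$ together with Lemma~\ref{fact:f1} make this at most $n^{5.2/5}$. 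For $\max_f|\Triangles(f)|$ I would exploit the structural properties recorded for $Y(T)$: by~(P1) every triangle in $Y(T)$, and every triangle underlying a triple in $Y(T)$, has at least two edges in $\Trav_i \cup \BIGBite_{i+1}$, so by Claim~\ref{claim:7:2:1} each edge is the free edge of at most one relevant triangle in $Y(T)$, and hence (using~(P2)) is a tracked edge of at most $O(1)$ triples; for the $G_2$-edges, which are forced to lie in $\BIGBite_{i+1}$, I would use~(\ref{eq:roots1}) to bound their number by $n^{0.01}$ and~(\ref{eq:maxdeg}) to bound the choice of the third vertex of the triangle by $n^{1.1/5}$; together with the bound $|\Roots_c(f)| \le n^{2.1/5}$ from~(\ref{eq:roots2}), and with properties~(C7)~and~(C8) applied with $E$ taken to be (an $O(n^{1/2})$-size set built from) $\Roots_c(f)$ so as to control the number of relevant roots $g \in \binom{R}{2}$ per $f$, this bounds $\max_f|\Triangles(f)|$ by a suitable power $n^{O(1)/5}$. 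Plugging $\sum_f |\Triangles(f)|^2 \le \max_f |\Triangles(f)| \cdot \sum_f |\Triangles(f)|$ and the lower bound $\Gamma_i \gamma_i y_{i+1,1,t} \ge n^{\Omega(1)}$ from Lemma~\ref{fact:f1} into McDiarmid's inequality then gives deviation probability $2\exp(-n^{\Omega(1)}) \le n^{-\omega(s)}$.

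The main obstacle will be making the Lipschitz bound sharp enough. The case $j=1$ is the tightest of the three, because the target $y_{i+1,1,t} \approx n^{1+o(1)}$ is the smallest of $y_{i+1,1,t}, y_{i+1,2,t}, y_{i+1,3,t}$, so McDiarmid's inequality leaves the least margin and the generic bound $\max_f|\Triangles(f)| \le O(|\Roots_c(f)|) = O(n^{2.1/5})$ that sufficed for $j=2,3$ is no longer enough on its own. The key is to use the \emph{near-matching} structure produced by Claim~\ref{claim:7:2:1} — each edge is the free edge of at most one relevant triangle of $Y(T)$, which is exactly why $Y(T)$ was extracted from $Y_1(T)$ — and to route the restricted trees $\TT_c(g,R)$ through properties~(C7)~and~(C8) so that, for a fixed $f$, only few roots $g \in \binom{R}{2}$ are relevant. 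Everything else is the same bookkeeping with Lemma~\ref{fact:f1}, properties~(P1)--(P3), and~(\ref{eq:maxdeg}),~(\ref{eq:roots1}),~(\ref{eq:roots2}),~(\ref{eq:labels}) that appears in the claims for $j=2$ and $j=3$.
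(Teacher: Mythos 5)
Your setup is right and matches the paper: $W_5$ is a function of the independent per-edge outcomes in $\BIGBite_{i+1}$, the Lipschitz coefficient at $f$ is $|\Triangles(f)|$ with $\Triangles(f)$ defined exactly as you define it, and one gets $\sum_f |\Triangles(f)| \le n^{5.1/5}$ (via (P3), (\ref{eq:labels}), and (\ref{eq:7:2:1})) and $\max_f |\Triangles(f)| \le 4n^{2.1/5}$ (via (P1),(P2),(\ref{eq:roots1}),(\ref{eq:roots2}),(\ref{eq:maxdeg})). You also correctly notice that the naive product bound $\sum_f |\Triangles(f)|^2 \le \max_f \cdot \sum_f \approx n^{7.2/5}$ is \emph{not} small enough: with $\lambda = \Gamma_i\gamma_i y_{i+1,1,t} \ge n^{4.99/5}$, McDiarmid would give $\exp(-\Omega(n^{2.78/5}))$, which is not $n^{-\omega(s)}$ since $s \approx n^{3/5}(\ln n)^{1/5}$.

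But then your proposal has a genuine gap. You say the fix is to "bound $\max_f |\Triangles(f)|$ by a suitable power" using (C7) and (C8) together with the near-matching structure of $Y(T)$. That is not what the argument needs, and in fact the max cannot be improved much in general: there really can be edges $f$ with $|\Triangles(f)|$ close to $n^{2.1/5}$. What the paper does instead is a \emph{dichotomy argument on $\sum_f |\Triangles(f)|^2$}: it shows that the number of edges $f \in \BIGBite_{i+1}$ with $|\Triangles(f)| \ge n^{1.85/5}$ is at most $n^{2.45/5}$, so the big contributors add at most $n^{2.45/5}\cdot (4n^{2.1/5})^2 = O(n^{6.65/5})$, while the small contributors add at most $n^{1.85/5}\cdot\sum_f |\Triangles(f)| \le n^{6.95/5}$, giving $\sum_f|\Triangles(f)|^2 = O(n^{6.95/5})$. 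The count of big contributors is then bounded by contradiction: assuming a set $E_1$ of $\Theta(n^{2.45/5})$ such edges, one decomposes $|\Triangles(f)| \lesssim 3|\Edges_1(f)| + 3|\Edges_2(f)|\cdot 2n^{1.1/5}$, shows $|\Edges_2(f)|\cdot n^{1.1/5} = o(n^{1.85/5})$ by (\ref{eq:roots1}), so $|\Edges_1(f)| \gtrsim n^{1.85/5}$ for $f\in E_1$, then builds a set $E$ of $O(n^{1/2})$ edges from $E_1$ via (\ref{eq:roots1}) (not from $\Roots_c(f)$ for a single $f$, as you suggest), applies (C8) with that $E$ to show $|E_2| := |\bigcup_{f\in E_1}\Edges_1(f)| = O(n^{4.2/5})$, and then uses (\ref{eq:labels}) to get $\sum_{f\in E_1}|\Edges_1(f)| = O(n^{4.25/5})$, which forces some $f\in E_1$ to have $|\Edges_1(f)| = o(n^{1.85/5})$ — a contradiction. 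Your sketch names (C8), but the actual argument it participates in (the $E_1$/$E_2$/$E$ construction and the tail bound on $\{f : |\Triangles(f)| \ge n^{1.85/5}\}$) is absent, and (C7) plays no role in this claim (it is used in the proof of Claim~\ref{claim:yi1}). So the proposal identifies the right difficulty and the right ingredients but does not supply the key step that actually closes the gap.
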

\begin{proof}
For an edge $f \in \BIGBite_{i+1}$, let $\Triangles(f)$ be the set which
contains every triangle $G \in Y(T)$ for which it holds that $f$ has the
potential of being a label in a tree in the forest $\{\TT_c(g,R) : g \in G \cap
\NotTrav_i\}$, and every triple $(G_1, G_2, G_3) \in Y(T)$ for which it holds
that $f$ has the potential of being a label in a tree in the forest $\{\TT_c(g)
: g \in G_2\} \cup \{ \TT_c(g,R) : g \in G_3\}$.
Observe that changing the outcome of an edge $f \in \BIGBite_{i+1}$ can change
$W_5$ by at most an additive factor of $|\Triangles(f)|$.

We now use~(P1),~(P2)~and~(P3), together with~(\ref{eq:roots1}),
(\ref{eq:roots2}) and~(\ref{eq:maxdeg}), to find that for every $f \in
\BIGBite_{i+1}$,
\begin{eqnarray*}
|\Triangles(f)| \le 3 \cdot |\Roots_c(f)| + 3 \cdot |\Roots_c(f) \cap
\BIGBite_{i+1}| \cdot 2 n^{1.1/5} \le 4 n^{2.1/5}.
\end{eqnarray*}
Moreover, by~(\ref{eq:labels}) we have that for every triangle $G \in Y(T)$,
there are at most $n^{0.01}$ edges $f \in \BIGBite_{i+1}$ such that $G \in
\Triangles(f)$, and likewise for every triple $(G_1, G_2, G_3) \in Y(T)$, there
are at most $3 n^{0.01}$ edges $f \in \BIGBite_{i+1}$ such that $(G_1, G_2,
G_3) \in \Triangles(f)$, and so
\begin{eqnarray*}
\sum_{f \in \BIGBite_{i+1}} |\Triangles(f)| \le |Y(T)| \cdot 3n^{0.01} \le
n^{5.1/5},
\end{eqnarray*}
where the second inequality follows from the definition of $Y(T)$ and
from~(\ref{eq:7:2:1}).  

We want to claim that $\sum_{f \in \BIGBite_{i+1}} |\Triangles(f)|^2 =
O(n^{6.95/5})$.  This will allow us to complete the proof using McDiarmid's
inequality, as we did in the previous cases.  However, unlike the situation in
the previous cases, the discussion above does not provide directly such a bound
on that sum of squares. We would have to resort to a finer analysis. The first
thing to note here is  that if $\sum_{f}$ ranges over all edges $f \in
\BIGBite_{i+1}$ with $|\Triangles(f)| \le n^{1.85/5}$, then using the
discussion above,
\begin{eqnarray*}
\sum_{f} |\Triangles(f)|^2 \le n^{1.85/5} \cdot \sum_{f \in \BIGBite_{i+1}}
|\Triangles(f)| 
\le n^{6.95/5}.
\end{eqnarray*}
Second, and this we show below, the number of edges $f \in \BIGBite_{i+1}$ with
$|\Triangles(f)| \ge n^{1.85/5}$ is at most $n^{2.45/5}$. Therefore,
\begin{eqnarray*}
\sum_{f \in \BIGBite_{i+1}} |\Triangles(f)|^2 \le n^{6.95/5} + n^{2.45/5} \cdot
(4 n^{2.1/5})^2 = O( n^{6.95/5} ).
\end{eqnarray*}
It now follows from McDiarmid's inequality that the probability that $W_5$
deviates from its expectation by more than $\Gamma_i \gamma_i y_{i+1,1,t} \ge
n^{4.99/5}$ is at most $n^{-\omega(s)}$.

To finish the proof, we argue that the number of edges $f \in \BIGBite_{i+1}$
with $|\Triangles(f)| \ge n^{1.85/5}$ is at most $n^{2.45/5}$. To this end,
assume for the sake of contradiction that this does not hold, and fix a set
$E_1$ of $\Theta(n^{2.45/5})$ edges $f \in \BIGBite_{i+1}$ with
$|\Triangles(f)| \ge n^{1.85/5}$.

For an edge $f \in \BIGBite_{i+1}$, let $\Edges_{1}(f)$ be the set of edges $g
\in \binom{R}{2} \cap (\NotTrav_i \setminus \BIGBite_{i+1})$ for which it holds
that $f$ has the potential of being a label in the tree $\TT_c(g, R)$ and $g$
belongs to some triangle or triple in $\Triangles(f)$. Furthermore, let
$\Edges_{2}(f)$ be the set of edges $g \in \binom{R}{2} \cap \BIGBite_{i+1}$
for which it holds that $f$ has the potential of being a label in the tree
$\TT_c(g)$ and $g$ belongs to some triangle or triple in $\Triangles(f)$.
Note that by~(P3) and since $\Triangles(f) \subseteq Y(T)$, every edge $g \in
\NotTrav_i \setminus \BIGBite_{i+1}$ belongs to at most $3$ triangles and
triples in $\Triangles(f)$.
Also, note that by~(P1),~(P2),~(\ref{eq:maxdeg}) and since $\Triangles(f)
\subseteq Y(T)$, every edge $g \in \BIGBite_{i+1}$ belongs to at most $3 \cdot
2n^{1.1/5}$ triangles and triples in $\Triangles(f)$.
Hence, for every edge $f \in \BIGBite_{i+1}$, by the definition of
$\Triangles(f)$, $\Edges_1(f)$ and $\Edges_2(f)$, we find that 
\begin{eqnarray*}
|\Triangles(f)|  \le 3 \cdot |\Edges_{1}(f)| + 3 \cdot |\Edges_{2}(f)| \cdot 2
n^{1.1/5}.
\end{eqnarray*}
In addition, for an edge $f \in \BIGBite_{i+1}$, by~(\ref{eq:roots1}),
\begin{eqnarray*}
|\Edges_{2}(f)| \cdot n^{1.1/5} \le |\Roots_c(f) \cap \BIGBite_{i+1}| \cdot
n^{1.1/5} \le n^{0.01} \cdot n^{1.1/5} = o(n^{1.85/5}),
\end{eqnarray*}
and so for every edge $f \in E_1$, 
\begin{eqnarray*}
3 \cdot | \Edges_1(f) | \ge |\Triangles(f)| - o(n^{1.85/5}) \ge 0.5 n^{1.85/5}.
\end{eqnarray*}
We will reach a contradiction by showing that for some $f \in E_1$,
$|\Edges_1(f)| = o(n^{1.85/5})$.

Define
\begin{eqnarray*}
E_2 \deq \bigcup_{f \in E_1} \Edges_1(f).
\end{eqnarray*}
Observe that for an edge $f \in E_1$ and for an edge $g \in \Edges_1(f)$,
there exists an edge $e \in \BIGBite_{i+1}$ for which the
following two properties hold: first, $f$ has the potential of being a label in
the tree $\TT_c(e)$; second, there exists a path of length two in $(\Trav_i
\cup \BIGBite_{i+1}) \cap \binom{R}{2}$ that completes $g$ to a triangle, and a
graph $G \in X_{0,5}(g)$ with $G \subseteq (\Trav_i \cup \BIGBite_{i+1})
\setminus \binom{R}{2}$ and $e \in G$ (and so, in particular, $e \in
\binom{[n]}{2} \setminus \binom{R}{2}$).
Hence, by the definition of $E_2$, there exists a set $E$
of edges in $\BIGBite_{i+1}$ for which the following two properties hold:
first, every edge $f \in E_1$ has the potential of being a label in a tree
$\TT_c(e)$ for some $e \in E$;  second, for every edge $g \in E_2$ there exists
a path of length two in $(\Trav_i \cup \BIGBite_{i+1}) \cap \binom{R}{2}$ that
completes $g$ to a triangle, and a graph $G \in X_{0,5}(g)$, with $G \subseteq
(\Trav_i \cup \BIGBite_{i+1}) \setminus \binom{R}{2}$ and $G \cap E \ne
\emptyset$ (and so, in particular, $E \subseteq \binom{[n]}{2} \setminus
\binom{R}{2}$).
By~(\ref{eq:roots1}), $|E| \le |E_1| \cdot n^{0.01} = O(n^{1/2})$. Hence,
by~(C8), $|E_2| = O( n^{4.2/5} )$. Thus, by~(\ref{eq:labels}), 
\begin{eqnarray*}
\sum_{f \in E_1} |\Edges_1(f)| \le |E_2| \cdot n^{0.01} = O(n^{4.25/5}).
\end{eqnarray*}
Therefore, there exists an edge $f \in E_1$ for which $|\Edges_1(f)| =
O(n^{4.25/5}) / |E_1|$. Since by assumption $|E_1| = \Theta(n^{2.45/5})$, we
get that there exists an edge $f \in E_1$ for which $|\Edges_1(f)| =
o(n^{1.85/5})$.  This gives the desired contradiction.
\end{proof}

\section{Proof of Theorem~\ref{thm:main2}} \label{sec:8}
%
We begin with the following definitions.  Let $0 \le i < I$.  For an integer $1
\le j \le 5$ and an edge $f \in \NotTrav_i$, let $X'''_{i,j}(f)$ be the set of
all $G \in X_{i,j}(f)$ such that $G \subseteq \M_i \cup \Bite_{i+1}$.  Consider
the undirected graph whose vertex set is the family of all edges in
$\Bite_{i+1}$, and whose edge set is the family of all sets $\{g_1, g_2\}$ such
that $g_2 \in G$ for some $G \in \bigcup_{1 \le j \le 5} X'''_{i,j}(g_1)$ (or
equivalently, $g_1 \in G$ for some $G \in \bigcup_{1 \le j \le 5}
X'''_{i,j}(g_2)$).  Let $\F_i$ be the event that the size of the largest
connected component in that graph has size $O_{\epsilon_1}(1)$ (where the
subscript $\epsilon_1$ means, as usual, that the hidden constant depends on
$\epsilon_1$).

To understand the motivation behind the above definitions, we make the
following observation. Let $0 \le i < I$. Assume that we are given $\M_i$,
$\Bite_{i+1}$, and a set $\Triangles$ of triangles, each having two edges in
$\M_i$ and one edge in $\Bite_{i+1}$ which can be added to $\M_i$ without
creating a copy of $K_4$. Further assume that for every triangle $G \in
\Triangles$, the edge in $G \cap \Bite_{i+1}$ belongs to exactly one triangle
in $\Triangles$.  Lastly assume that $\F_i$ holds.
Then the following three properties hold: first, the event that a triangle in
$\Triangles$ is contained in $\M_{i+1}$ depends only on the birthtimes of
$O_{\epsilon_1}(1)$ edges in $\Bite_{i+1}$; second, and this follows from the
first property, given any triangle in $\Triangles$, the probability that this
triangle is contained in $\M_{i+1}$ is $\Omega_{\epsilon_1}(1)$; third,
changing the birthtime of a single edge in $\Bite_{i+1}$ can change the number
of triangles in $\Triangles$ that are contained in $\M_{i+1}$ by at most an
additive factor of $O_{\epsilon_1}(1)$.  Later, this observation will be used
to prove Theorem~\ref{thm:main2}. For now, we prove the following.
\begin{lemma} \label{lemma:sec:8:1}
For $0 \le i < I$, $\prob(\F_i) \ge 1 - n^{-1}$.
\end{lemma}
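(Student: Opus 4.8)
The plan is to bound the size of the largest connected component in the auxiliary graph $H_i$ on vertex set $\Bite_{i+1}$ (where $g_1 \sim g_2$ iff $g_2 \in G$ for some $G \in \bigcup_{1 \le j \le 5} X'''_{i,j}(g_1)$) by a standard subcritical-branching/path-counting argument. Since a connected component of size $k$ contains a path with $\Omega_{\epsilon_1}(k)$ distinct vertices, it suffices to show that with probability at least $1 - n^{-1}$ there is no path of length $\ell_0 = O_{\epsilon_1}(1)$ in $H_i$, where $\ell_0$ is a sufficiently large constant depending on $\epsilon_1$. So the first step is to set up the union bound over all potential paths $(g_0, g_1, \ldots, g_{\ell_0})$ in $H_i$ and estimate the expected number of such paths that actually occur.

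The key point is that a path in $H_i$ corresponds, after unfolding the definition of $X'''_{i,j}$, to a structure built from $\ell_0$ copies of $K_4^-$-type graphs $G^{(1)}, \ldots, G^{(\ell_0)}$ (each $G^{(t)} \cup \{g_{t-1}\}$ and $G^{(t)} \cup \{g_t\}$ being a $K_4$), glued along the edges $g_t$, with every edge of every $G^{(t)}$ lying in $\M_i \cup \Bite_{i+1}$. I would first condition on $\M_i$ and $\BIGBite_{i+1}$ being such that $\C_i \wedge \D_i$ holds; by Lemma~\ref{lemma:sec:4:1} and Lemma~\ref{lemma:sec:4:2} (together with Lemma~\ref{lemma:sec:7:2} / the inductive control already established), this happens with probability $1 - n^{-\omega(1)}$, which is more than enough. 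Under this conditioning, the number of choices for each successive $G^{(t)}$ given the previous edge $g_{t-1}$ is controlled: by (D2), $|X'_{i,j}(g_{t-1})| \le n^{\epsilon_3 j + o(1)}$, so $|\bigcup_j X''_{i,j}(g_{t-1})| = n^{O(\epsilon_3)}$, and the probability that a given such $G^{(t)}$ has all its not-yet-traversed edges land in $\Bite_{i+1}$ is at most $n^{(-\epsilon_1 - 2/5)j}$ per new edge (cf. the footnote defining $\Bite_{i+1}$). Multiplying the branching factor by the survival probability across one step of the path gives a contraction factor of the form $n^{O(\epsilon_3) - \epsilon_1 - 2/5} \cdot (\text{number of vertex choices})$; the vertex choices contribute at most $n$ per new $K_4^-$, and one checks that at least one new edge per step is genuinely new (so gets a factor $n^{-\epsilon_1 - 2/5}$), making the per-step factor at most $n^{3/5 + O(\epsilon_3) - \epsilon_1}$. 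This is a bad bound by itself, so the real work is in noticing — exactly as in the proofs of (C3)--(C5) and in Section~\ref{sec:6:2} — that after the first step most of the new edges are forced, i.e. for $j \le 2$ the relevant counts $|L_G|$ are only $n^{1/5 + O(\epsilon_3)}$ or $(\ln n)^{O(1)}$ by (C3). Summing the "type vector" decomposition (how many $G^{(t)}$ contribute $j$ new edges, as in the bad-sequence analysis of Section~\ref{sec:6:2}) shows each step after the first contracts by a factor $n^{-\Theta(\epsilon_1)}$, so the expected number of paths of length $\ell_0$ is at most $n^{O(1)} \cdot n^{-\Theta(\epsilon_1 \ell_0)}$, which is $\le n^{-2}$ once $\ell_0$ is a large enough constant depending on $\epsilon_1$. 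A union bound then gives $\prob(\neg \F_i \mid \C_i \wedge \D_i) \le n^{-2}$, and combining with $\prob(\neg(\C_i \wedge \D_i)) \le n^{-\omega(1)}$ gives $\prob(\F_i) \ge 1 - n^{-1}$ as claimed.

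The main obstacle I expect is making the contraction-per-step estimate clean: one has to be careful that in a path in $H_i$ consecutive copies $G^{(t)}$ may share more than just the gluing edge $g_t$ (they may share a vertex, or the "new" vertex of $G^{(t+1)}$ may coincide with an old one), and that an edge shared between two $G^{(t)}$'s still only needs to land in $\Bite_{i+1}$ once. The right bookkeeping is the $(a_1, \ldots, a_5)$-type decomposition already used in Section~\ref{sec:5} and the bad-sequence analysis of Section~\ref{sec:6:2}: classify each $G^{(t)}$ by the number $j$ of its edges that are newly in $\NotTrav_i$ relative to the union of $F := \{g_0\}$ and the earlier $G^{(k)}$'s, bound the number of extensions using (C3)--(C5) and (D2) exactly as there, and observe that the total number of $\Bite_{i+1}$-membership events is the total number of distinct $\NotTrav_i$-edges appearing, each contributing $n^{-\epsilon_1 - 2/5}$ (up to the $(1 - i n^{-\epsilon_1 - \epsilon_2})^{-1} = 1 + o(1)$ factor). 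With that accounting in place the geometric decay is immediate and the rest is a routine union bound over the $n^{O(\ell_0)}$ choices of the underlying vertex set and $(\ln n)^{O(\ell_0)}$ choices of the combinatorial type.
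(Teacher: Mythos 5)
Your overall strategy---bound the component sizes of the auxiliary graph by a first-moment count of ``long'' structures, classified by how many edges each new $K_4^-$ shares with the earlier ones---is close in spirit to the paper's proof, which counts what it calls bad $(f,m)$-clusters using exactly such a $(a_0,a_1,a_2,a_3,a_4)$-type decomposition. But there is a genuine gap in the conditioning step that does not repair.

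You condition on $\C_i \wedge \D_i$ and claim this event has probability $1 - n^{-\omega(1)}$. That is not supported. Lemma~\ref{lemma:sec:4:1} gives $\prob(\C_i) \ge 1 - n^{-\omega(1)}$ unconditionally, but Lemma~\ref{lemma:sec:4:2} only bounds $\prob(\D_i)$ \emph{conditionally} on $\M_i$ satisfying $\A_i \wedge \B_i$ (and on $\Trav_i$ being nice). The only unconditional bound on $\A_i \wedge \B_i$ is Lemma~\ref{lemma0}, which gives $\prob(\A_i \wedge \B_i \wedge \C_i \wedge \D_i) \ge 1 - i n^{-0.1} - n^{-\omega(1)}$. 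For $i$ near $I = \lfloor n^{\epsilon_1 + \epsilon_1^2} \rfloor$ this is only about $1 - n^{\epsilon_1 - 0.1}$, and since $\epsilon_1$ is small, $n^{\epsilon_1 - 0.1}$ is far larger than $n^{-1}$. So your conditioning argument yields at best $\prob(\F_i) \ge 1 - n^{-2} - O(n^{\epsilon_1 - 0.1})$, which is much weaker than the stated $1 - n^{-1}$. The paper avoids this entirely by giving an unconditional argument: it counts $(f,m)$-clusters built from $X_{0,5}(\cdot)$---the $i=0$ family, which depends only on $\binom{[n]}{2}$ and not on $\M_i$---and multiplies by the ``raw'' sampling probabilities $\prob(g \in \Trav_i \cup \Bite_{i+1}) \le 2n^{\epsilon_1^2 - 2/5}$ and $\prob(g \in \Bite_{i+1}) \le 2n^{-\epsilon_1 - 2/5}$, both of which follow directly from the independent-bite construction without invoking $\A_i$, $\B_i$, $\C_i$ or $\D_i$ at all. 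That unconditioned first-moment count is what makes the $1 - n^{-1}$ bound achievable for every $i < I$.

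A secondary issue: you reduce to the absence of long \emph{paths}, asserting that a connected component of size $k$ contains a path on $\Omega_{\epsilon_1}(k)$ distinct vertices. That is false in general (a star has no long paths); one would need a max-degree bound on the auxiliary graph (which again would come from $\D_i$, circular for the same reason) to extract a path of logarithmic depth. The paper sidesteps this by counting exploration-tree-like structures ($(f,m)$-clusters, which are BFS-type sequences of $K_4^-$'s) rather than paths, so a large component immediately witnesses a bad cluster.
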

\begin{proof}
Fix $0 \le i < I$.  An $(f,m)$-cluster is a sequence $(G_l)_{l=1}^m$ such that
for all $1 \le l \le m$ the following holds: $G_l \in X_{0,5}(g)$ for some $g
\in \{f\} \cup \bigcup_{k<l} G_k$, and $G_l$ shares at most four edges with
$\{f\} \cup \bigcup_{k<l} G_k$.
Say that an $(f,m)$-cluster $(G_l)_{l=1}^m$ is bad, if for all $1 \le l \le m$,
$G_l \subseteq \Trav_i \cup \Bite_{i+1}$ and $|G_l \cap \Bite_{i+1}| \ge 1$.
It should be clear that if there exists an integer $m$ such that for every $f
\in \NotTrav_i$ there is no bad $(f,m)$-cluster, then the largest connected
component in the graph the underlies the definition of the event $\F_i$ has
size at most $6m$.  Thus, by the union bound and by Markov's inequality, it
suffices to prove that for a fixed $f \in \NotTrav_i$ and for some $m =
O_{\epsilon_1}(1)$, the expected number of bad $(f,m)$-clusters is at most
$n^{-4}$.

Fix an edge $f \in \NotTrav_i$.  For an $(f,m)$-cluster $(G_l)_{l=1}^m$, we say
that $G_l$ is a $j$-type if $G_l$ shares exactly $j$ edges with $\{f\} \cup
\bigcup_{k<l} G_k$. We further say that $(G_l)_{l=1}^m$ is an $(a_0, a_1, a_2,
a_3, a_4)$-type if the number of graphs $G_l$ of $j$-type is $a_j$.
Fix a sufficiently large integer $m = O_{\epsilon_1}(1)$.  Fix a set of
integers $\{a_j : 0 \le j \le 4\}$, so that $\sum_{j=0}^4 a_j = m$. Let $a$ be
the vector $(a_0, a_1, a_2, a_3, a_4)$.  The number of $a$-type
$(f,m)$-clusters of length $m$ is trivially at most $O_{\epsilon_1}( n^{2a_0 +
a_1 + a_2})$.  Note that for any edge $g \in \binom{[n]}{2}$, we have $\prob(g
\in \Trav_i \cup \Bite_{i+1}) \le q_1 \deq 2 n^{\epsilon_1^2 - 2/5}$ and
$\prob(g \in \Bite_{i+1}) \le q_2 \deq 2 n^{-\epsilon_1 - 2/5}$.  Therefore,
the probability that an $a$-type $(f,m)$-cluster is bad is at most $(q_1^4
q_2)^{a_0} \cdot q_1^{4a_1 + 3a_2 + 2a_3 + a_4}$.  It follows that the expected
number of bad $a$-type $(f, m)$-clusters is at most $n^{-\Omega(\epsilon_1
m)}$, which is at most $n^{-5}$, as $m$ is sufficiently large.  Since there are
at most $O_{\epsilon_1}(1)$ choices for the vector $a$, a union bound argument
completes the proof.
\end{proof}

We turn to prove Theorem~\ref{thm:main2}.  We need to show that a.a.s., every
set $S \subseteq [n]$ of $s$ vertices of $\M_I$ spans a triangle. In other
words, letting $\exists S$ stand for ``there exists a set $S \subseteq [n]$ of
$s$ vertices,'' and letting $K_3 \nsubseteq \M_i \cap \binom{S}{2}$ stand for
``$\M_i \cap \binom{S}{2}$ is triangle-free,'' we need to show that
$\prob(\exists S : K_3 \nsubseteq \M_I \cap \binom{S}{2}) = o(1)$.
Say that the process behaves if for every $0 \le i < I$, $\A_i \wedge \B_i
\wedge \C_i \wedge \F_i$ holds and $\M_i$ has maximum degree at most $0.01s$. From Lemma~\ref{lemma0},
Lemma~\ref{lemma:sec:8:1} and a result of Bohman and Keevash~\cite[Theorem~1.6]{BKeevash} it follows that $\prob(\text{process behaves}) = 1 -
o(1)$. Therefore, 
\begin{eqnarray*}
\prob(\exists S: K_3 \nsubseteq \M_I \cap \tbinom{S}{2}) &\le& \prob(\exists S:
K_3 \nsubseteq \M_I \cap \tbinom{S}{2} \given \text{process behaves}) +
\prob(\neg (\text{process behaves})) \\
&=& \prob(\exists S: K_3 \nsubseteq \M_I \cap \tbinom{S}{2} \given
\text{process behaves}) + o(1).
\end{eqnarray*}
Thus, it remains to show that $\prob(\exists S : K_3 \nsubseteq \M_I \cap
\binom{S}{2} \given \text{process behaves}) = o(1)$, and so by the union bound,
it remains to fix a set $S \subseteq [n]$ of $s$ vertices, and show that
\begin{eqnarray*}
\prob(K_3 \nsubseteq \M_I \cap \tbinom{S}{2} \given \text{process behaves}) =
o(n^{-s}).
\end{eqnarray*}
For that, we use the following lemma, whose proof is given below.
\begin{lemma} \label{lemma:sec:8:2}
For $1 \le i < I$, 
\begin{eqnarray*}
\prob( K_3 \nsubseteq \M_{i+1} \cap \tbinom{S}{2} \given \text{process behaves}
\wedge K_3 \nsubseteq \M_i \cap \tbinom{S}{2}) \le \exp\Big(-
\Omega_{\epsilon_1} \Big( n^{-\epsilon_1 - 2/5} y_{i,1,s^3} \Big)\Big).
\end{eqnarray*}
\end{lemma}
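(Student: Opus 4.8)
The plan is to prove a single-step estimate and combine it with the telescoping that underlies the rest of Section~\ref{sec:8}. I would fix $\M_i,\BIGBite_{i+1},\BigBite_{i+1},\Bite_{i+1}$ consistently with $\A_i\wedge\B_i\wedge\C_i\wedge\F_i$, with $\M_i$ of maximum degree at most $0.01s$, and with $\M_i\cap\binom{S}{2}$ triangle-free; the only remaining randomness is then the birthtimes of the edges of $\Bite_{i+1}$. Over these birthtimes I would show $\prob(K_3\subseteq\M_{i+1}\cap\binom{S}{2})\ge 1-\exp(-\Omega_{\epsilon_1}(n^{-\epsilon_1-2/5}y_{i,1,s^3}))$. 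Passing from this to the stated conditional bound is routine: since $\M_i\subseteq\M_{i+1}$, the event ``$\M_{i+1}\cap\binom{S}{2}$ triangle-free'' is contained in ``$\M_i\cap\binom{S}{2}$ triangle-free'', so the step bounds telescope, and the part of ``the process behaves'' concerning steps $>i$ is absorbed using $\prob(\text{process behaves})=1-o(1)$.

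For the single step, I would use $\B_i$ to fix $S_i\subseteq S$ satisfying (B1)--(B3), choose a balanced three-partition of a subset of $S_i$ to get $(R,T)\in\Pairs(S_i)$ with $|T|=t=\Theta(s^3)$ (so $y_{i,1,t}=\Theta(y_{i,1,s^3})$), and note $\M_i\cap\binom{R}{2}$ is triangle-free. By (B3) with $j=1$, $|Y_{i,1}(T)|\ge y_{i,1,t}(1-100\Gamma_i)-|Z_i(R,T)|$, and the first point is to show $|Z_i(R,T)|=o(y_{i,1,t})$ using (C7) and the maximum-degree bound on $\M_i$ (each triangle in $Z_i(R,T)$ forces a $K_4^-$ of $\M_i$ sharing at least three vertices with the small set $R$, and there are few such). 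So there are $\Omega(y_{i,1,s^3})$ triangles $G\in Y_{i,1}(T)$, each with two edges in $\M_i$ and a third edge $g_G\in O_i\cap\binom{R}{2}$. The crucial further observation is that, again because $R$ is small, almost all of these have \emph{pairwise distinct} free edges: the number of $G\in Y_{i,1}(T)$ whose free edge is shared with a second such triangle is at most a constant times the number of $K_{2,2}$-type configurations inside $\binom{R}{2}$, which on a behaved process is $O(n^{4/5+o(1)})\ll y_{i,1,s^3}$; hence at least $\tfrac12 y_{i,1,t}$ of these triangles have distinct free edges.

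Next I would expose $\Bite_{i+1}$. Since each distinguished free edge lands in $\Bite_{i+1}$ independently with probability $\Theta(n^{-\epsilon_1-2/5})$, Chernoff's bound gives that, with probability $1-\exp(-\Omega(n^{-\epsilon_1-2/5}y_{i,1,s^3}))$, at least $\Omega(n^{-\epsilon_1-2/5}y_{i,1,s^3})$ of them lie in $\Bite_{i+1}$; let $\Triangles$ be the corresponding triangles. Then $\Triangles$ is exactly the sort of family considered in the observation preceding Lemma~\ref{lemma:sec:8:1}: each member has two $\M_i$-edges and one $\Bite_{i+1}$-edge addable to $\M_i$ belonging to a unique member, and $\F_i$ holds. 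Hence, letting $N=|\{G\in\Triangles:G\subseteq\M_{i+1}\}|$, we get $\expec(N)=\Omega_{\epsilon_1}(|\Triangles|)=\Omega_{\epsilon_1}(n^{-\epsilon_1-2/5}y_{i,1,s^3})$; moreover $N$ depends on the birthtimes of $\Bite_{i+1}$ with each single birthtime changing $N$ by $O_{\epsilon_1}(1)$, and only $O_{\epsilon_1}(|\Triangles|)$ of the birthtimes being relevant. McDiarmid's inequality then yields $\prob(N=0)\le\prob(N\le\expec(N)-\expec(N))\le\exp(-\Omega_{\epsilon_1}(n^{-\epsilon_1-2/5}y_{i,1,s^3}))$, and $N\ge1$ puts a triangle of $\binom{S}{2}$ into $\M_{i+1}$.

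The hardest parts, I expect, are the two counting claims in the second paragraph: that $|Z_i(R,T)|=o(y_{i,1,s^3})$ and that one retains $\Omega(y_{i,1,s^3})$---not merely $\Omega(y_{i,1,s^3}/n^{1/5})$---triangles with distinct free edges. Both rest on the fact that pair-degrees and $K_4^-$-counts localized to the $n^{3/5+o(1)}$-vertex set $R$ are far below their global sizes, which is precisely what (C7), the maximum-degree hypothesis, and the small two-edge-overlap counts supply; a loss of the $n^{1/5}$ factor here would give a per-step bound too weak for $\sum_{i}n^{-\epsilon_1-2/5}y_{i,1,s^3}=\omega(s\ln n)$, which is what the proof of Theorem~\ref{thm:main2} ultimately needs.
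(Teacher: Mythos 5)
Your outline follows the same overall strategy as the paper: pick a good $(R,T)\in\Pairs(S_i)$, show $|Y_{i,1}(T)|=\Omega(y_{i,1,s^3})$ by bounding $|Z_i(R,T)|$, thin to a subfamily with pairwise distinct free edges, apply Chernoff to the free edges landing in $\Bite_{i+1}$, and finish via the observation before Lemma~\ref{lemma:sec:8:1} and McDiarmid's inequality. That is exactly the decomposition behind Claim~\ref{claim:yi1}, the event $\E$, and the two bounds that conclude the paper's proof. But the step you flag as hardest is where you leave a genuine gap, and the way you get into position to apply it has an exposure-order inconsistency.

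The gap is in bounding $|Z_i(R,T)|$. You assert that (C7) together with the degree bound shows there are few edges $g\in\binom{R}{2}$ possessing a $G_0\in X_{i,0}(g)$ with at least three vertices in $R$. But (C7) only controls the four-vertex case and the three-vertex case where the outside vertex lies in $[n]\setminus(R\cup R_0)$; it says nothing about the exceptional set $R_0$ of up to $n^{0.99/5}$ vertices, and a priori the $G_0$'s whose fourth vertex lies in $R_0$ could dominate. The paper dodges this by \emph{choosing the tri-partition defining $T$} so that no vertex of $R_0$ has $\M_i$-neighbours in two distinct parts (possible because $|R_0|^2 n^{1/5+10\epsilon_3}=o(s)$ via (C3) and because $\M_i$ has maximum degree $\le 0.01s$), which makes the $R_0$-contribution to $Z_i(R,T)$ vanish outright. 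Without that choice, ``there are few such'' does not follow, and $|Z_i(R,T)|=O(n^{4.2/5})$ breaks. Relatedly, your ``$K_{2,2}$-count inside $\binom{R}{2}$ is $O(n^{4/5+o(1)})$'' is not a direct consequence of the behaved process: (C6) only gives this bound \emph{after deleting} up to $n^{3/5+10\epsilon_3}$ edges, and you must track the triangles lost to that deletion (they are $O(n^{3/5+10\epsilon_3}\cdot n^{1.1/5})$ via the degree bound), as the paper does. Finally, note the inconsistency between your first paragraph (which fixes $\BIGBite_{i+1},\BigBite_{i+1},\Bite_{i+1}$, leaving only birthtimes random) and your third (which re-exposes $\Bite_{i+1}$ and uses the unconditional per-edge probability $\Theta(n^{-\epsilon_1-2/5})$, valid only when $\BIGBite_{i+1}$ and $\BigBite_{i+1}$ are still random). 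The paper fixes only $\M_i$ — together with $\A_i\wedge\B_i\wedge\C_{i-1}$, the degree bound, and $K_3\nsubseteq\M_i\cap\binom{S}{2}$, deliberately using $\C_{i-1}$ rather than $\C_i$ so the conditioning does not touch $\BIGBite_{i+1}$ — and then splits via the event $\E$, bounding $\prob(\neg\E)$ by Chernoff and $\prob(K_3\nsubseteq\M_{i+1}\cap\binom{S}{2}\mid\E\wedge\cdots)$ by McDiarmid over the birthtimes alone.
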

Let $I_0 \deq \floor{ n^{\epsilon_1 + 0.5\epsilon_1^2} }$ and recall that $I =
\floor{ n^{\epsilon_1 + \epsilon_1^2} }$.  By Lemma~\ref{lemma:sec:8:2},
\begin{eqnarray*}
\prob(K_3 \nsubseteq \M_I \cap \tbinom{S}{2} \given \text{process behaves})
&\le& \prod_{1 \le i < I} \exp\Big( - \Omega_{\epsilon_1}\Big( n^{-\epsilon_1 -
2/5} y_{i,1,s^3} \Big) \Big) \\ 
&\le& \prod_{I_0 \le i < I} \exp\Big( - \Omega_{\epsilon_1}\Big( n^{-\epsilon_1
- 2/5} y_{i,1,s^3} \Big) \Big) \\
& = & \exp\Big( -\Omega_{\epsilon_1}(1) \cdot \sum_{I_0 \le i < I}
n^{-\epsilon_1 - 2/5} y_{i,1,s^3} \Big).
\end{eqnarray*}
Also, for every $I_0 \le i < I$, by the definition of $y_{i,1,s^3}$ and by
Lemma~\ref{fact:f1},
\begin{eqnarray*}
n^{-\epsilon_1 - 2/5} y_{i,1,s^3} 
\ge
s^3 n^{-\epsilon_1 -6/5} \Phi(in^{-\epsilon_1})^2 \phi(in^{-\epsilon_1})
= \Omega\bigg(  \frac{C^2 s}{i} \bigg).
\end{eqnarray*}
Therefore, 
\begin{eqnarray*}
\prob(K_3 \nsubseteq \M_I \cap \tbinom{S}{2} \given \text{process behaves})
&\le& \exp\Big( -\Omega_{\epsilon_1}(1) \cdot C^2 s \cdot \sum_{I_0 \le i < I}
\frac{1}{i} \Big) \\ 
&\le& \exp\Big( -\Omega_{\epsilon_1}(1) \cdot C^2 s \ln n \Big).
\end{eqnarray*}
Taking $C = C(\epsilon_1)$ sufficiently large, the last bound is at most
$o(n^{-s})$. This gives Theorem~\ref{thm:main2}.

\subsection{Proof of Lemma~\ref{lemma:sec:8:2}}
%
Fix $1 \le i < I$. We want to bound the probability that $\M_{i+1} \cap
\binom{S}{2}$ is triangle-free, conditioned on the event that the process
behaves and that $\M_i \cap \binom{S}{2}$ is triangle-free. For that purpose,
assume that we are given $\M_i$ so that 
$\A_i \wedge \B_i \wedge \C_{i-1}$ holds, $\M_i$ has maximum degree  at most
$0.01s$, and $\M_i \cap \binom{S}{2}$ is triangle-free.  Given that
assumption, we bound the probability that $\M_{i+1} \cap \binom{S}{2}$ is
triangle-free (where the probability is over the choice of $\BIGBite_{i+1},
\BigBite_{i+1}, \Bite_{i+1}$ and the choice of the birthtimes of the edges in
$\Bite_{i+1}$), conditioned on the event that the process behaves.

Let $S_i \subseteq S$ be the set that is guaranteed to exist by $\B_i$. 
\begin{claim} \label{claim:yi1}
There exists a pair $(R, T) \in \Pairs(S_i)$ such that
$|Y_{i,1}(T)| = \Omega( y_{i,1,s^3} )$.
\end{claim}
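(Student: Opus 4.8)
The plan is to deduce the claim directly from property~(B3) of $\B_i$, which we are given to hold. For every pair $(R,T)\in\Pairs(S_i)$ with $|T|=t$, property~(B3) asserts that $|Y_{i,1}(T)|\ge y_{i,1,t}(1-100\Gamma_i)-|Z_i(R,T)|$, the coefficient of $|Z_i(R,T)|$ being $0.5\cdot 1\cdot 2\cdot 1=1$ at $j=1$. Since $\Gamma_i\to 0$ by Lemma~\ref{fact:f1} and $y_{i,1,t}$ is linear in $t$, it therefore suffices to exhibit a single pair $(R,T)\in\Pairs(S_i)$ with $|T|=\Theta(s^3)$ and $|Z_i(R,T)|\le\tfrac12 y_{i,1,|T|}$: for that pair $|Y_{i,1}(T)|\ge\tfrac14 y_{i,1,|T|}(1-o(1))=\Omega(y_{i,1,s^3})$.

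To build $R$, I would start from the set $S_i\subseteq S$ provided by $\B_i$, which has size $s-o(s)$ by~(B1), and from the bound (supplied by~(C2), using $\M_i\subseteq\Trav_{i-1}\cup\BIGBite_i$) that $\M_i\cap\binom{S_i}{2}$ has at most $n^{4/5+10\epsilon_3}$ edges. Deleting from $S_i$ every vertex whose $\M_i$-degree inside $S_i$ exceeds $n^{1/5+11\epsilon_3}$ discards only $O(n^{3/5-\epsilon_3})=o(s)$ vertices; let $R$ be the result, so $|R|=s-o(s)$ while $\M_i\cap\binom{R}{2}$ has maximum degree at most $n^{1/5+11\epsilon_3}\le n^{1.1/5}$ (the latter is what permits applying~(C6)--(C8) with $R$). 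Fixing any balanced partition of $R$ into three parts of size $\Omega(s)$ and letting $T$ be the induced set of tripartite triangles of $\binom{R}{2}$, we get $(R,T)\in\Pairs(S_i)$ with $|T|=\Theta(|R|^3)=\Theta(s^3)$, and $|Z_i(R,T)|$ is at most the number $Z^\ast$ of triangles $\{w,g_1,g_2\}$ with $w,g_1,g_2\in R$, $wg_1,wg_2\in\M_i$, $g:=g_1g_2\in\NotTrav_i$, and some $G_0\in X_{i,0}(g)$ sharing at least three vertices with $R$.

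The structural observation I would exploit is that $\M_i\cap\binom{S}{2}$ is triangle-free, so for each $v\in R\subseteq S$ the set of $\M_i$-neighbours of $v$ lying in $S$ is independent in $\M_i$. Hence a graph $G_0\in X_{i,0}(g)$ with $g\in\binom{R}{2}$ --- five $\M_i$-edges completing $g$ to a $K_4$ on a vertex set $\{g_1,g_2,u_1,u_2\}$ --- cannot have all four vertices in $R$, for then the $\M_i$-edge $u_1u_2$ would lie among the $\M_i$-neighbours of $g_1$ inside $S$; so $G_0$ shares exactly three vertices with $R$, say $g_1,g_2,u_1\in R$ and $u_2\notin S$. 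Consequently the first assertion of~(C7) (with $\M=\M_i$ and $R$) is vacuous here, and its second assertion supplies a set $R_0\subseteq[n]\setminus R$ with $|R_0|\le n^{0.99/5}$ such that all but $O(n^{4.2/5})$ of the base edges $g$ under consideration admit a witness $G_0$ whose vertex $u_2$ lies in $R_0$. I would bound $Z^\ast$ by counting the five-vertex configurations $(w,g_1,g_2,u_1,u_2)$, split according to whether $u_2\in R_0$. For $u_2\notin R\cup R_0$: combine the $O(n^{4.2/5})$ bound on base edges with the codegree bound $|N_{\M_i}(g_1)\cap N_{\M_i}(g_2)\cap R|\le n^{1/5+11\epsilon_3}$ for the choice of $w$, separating base edges with a unique common $R$-neighbour (one triangle each) from those with at least two, whose pairs of common $R$-neighbours are $4$-cycles in $\M_i\cap\binom{R}{2}$ and hence controlled via~(C6). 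For $u_2\in R_0$: sum over the at most $n^{0.99/5}$ choices of $u_2$, and bound the configurations through each fixed $u_2$ using the three-vertex codegree bound $(\ln n)^{O(1)}$ of~(C3) applied to the triples $(g_1,g_2,u_1)$, for which $u_2$ is a common $\M_i$-neighbour, together with the two-vertex codegree bound for the remaining choices. The target of this bookkeeping is $Z^\ast=o\big(n(\ln n)^{3/5}\Phi(in^{-\epsilon_1})^2\phi(in^{-\epsilon_1})\big)$, which, by Lemma~\ref{fact:f1}, is $o(y_{i,1,|T|})$ and in particular at most $\tfrac12 y_{i,1,|T|}$.

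The step I expect to be the main obstacle is exactly this last estimate: one must coordinate the triangle-freeness of $\M_i\cap\binom{S}{2}$, the two- and three-vertex codegree bounds of~(C3), and the $4$-cycle deletion bound of~(C6) so that even the crude per-vertex estimates for the exceptional set $R_0$ --- whose vertices may have large $\M_i$-degree into $R$ --- sum to something safely below $y_{i,1,|T|}=\Theta\big(n(\ln n)^{3/5}\Phi(in^{-\epsilon_1})^2\phi(in^{-\epsilon_1})\big)$, which is only $n^{1-\Theta(\epsilon_1)}$; the two-vertex codegree bound alone is far too weak, and it is essential both that the ``all four vertices in $R$'' case of~(C7) is killed by triangle-freeness and that the third vertex of each blocking $K_4$ is pinned down, up to $(\ln n)^{O(1)}$ choices, by the three-vertex codegree bound.
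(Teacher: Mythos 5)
Your proposal mirrors the paper's proof in its opening moves: reduce via (B3) to bounding $|Z_i(R,T)|$, deduplicate to a count of base edges using (C6) and the $4$-cycle argument, and use (C7) to split the remaining blocking configurations according to where the vertex $u_2$ of $G_0$ outside $\{g_1,g_2,u_1\}$ lies. Your observation that triangle-freeness of $\M_i\cap\binom{S}{2}$ makes the ``all four vertices in $R$'' case vacuous is correct and marginally sharper than the paper's reliance on (C7) there. The difficulty, which you yourself flag, is the case $u_2\in R_0$, and here the argument has a genuine gap. You invoke the three-vertex codegree bound of (C3) ``applied to the triples $(g_1,g_2,u_1)$ for which $u_2$ is a common $\M_i$-neighbour,'' but that bound runs the wrong way: (C3) bounds the number of common neighbours of a \emph{fixed} triple by $(\ln n)^{O(1)}$; it does not bound the number of triples a fixed $u_2$ dominates. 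A vertex of $R_0$ may have $\M_i$-degree up to $0.01s=\Theta(n^{3/5}(\ln n)^{1/5})$ into $R$, and the available degree and two-vertex codegree bounds then yield, even optimistically, on the order of $s\cdot n^{2/5+O(\epsilon_3)}\cdot n^{1.1/5}$ configurations through one fixed $u_2$; summed over $|R_0|\le n^{0.99/5}$ vertices this already exceeds $n^{7/5}$, whereas $y_{i,1,s^3}$ is only $n^{1-\Theta(\epsilon_1)}$. Nothing in (C3), (C6), or triangle-freeness forbids a single $R_0$-vertex from sitting above $\Theta(s)$ vertices of $R$, so with an \emph{arbitrary} balanced tripartition the bookkeeping does not close.

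The paper sidesteps the counting entirely by choosing the pair $(R,T)$ adaptively. It sets $V\subseteq S_i$ to be the set of vertices with at most one $\M_i$-neighbour in $R_0$ (losing only $o(s)$ vertices, via the two-vertex codegree bound of (C3) applied to pairs inside $R_0$), and then, using the $\M_i$-degree bound $0.01s$, selects the tripartition of $V$ so that no vertex of $R_0$ has $\M_i$-neighbours in two \emph{different} parts. With that partition, no edge of a tripartite triangle in $T$ can have both endpoints $\M_i$-adjacent to a single $R_0$-vertex, so the $u_2\in R_0$ contribution is identically zero. That adaptive choice of partition---not a finer count---is the ingredient your proof is missing.
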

\begin{proof}
By~(B3) and Lemma~\ref{fact:f1}, for every pair $(R, T) \in \Pairs(S_i)$ with
$|T| = t$, we have $|Y_{i,1}(T)|  = \Omega( y_{i,1,t} ) - |Z_i(R, T)|$, and
furthermore, since $t = \Omega(s^3)$, we have $y_{i,1,t} =
\Omega(y_{i,1,s^3})$. Hence, for every pair $(R,T) \in \Pairs(S_i)$, 
\begin{eqnarray*}
|Y_{i,1}(T)| = \Omega( y_{i,1,s^3} ) - |Z_i(R, T)|.
\end{eqnarray*}
We show below that for some pair $(R, T) \in \Pairs(S_i)$, we have $|Z_i(R, T)|
= O(n^{4.2/5})$. This, together with Lemma~\ref{fact:f1}, will imply that
$|Z_i(R,T)| = o(y_{i,1,s^3})$ and so the claim will follow.

Consider an arbitrary pair $(R,T) \in \Pairs(S_i)$. Recall that $Z_i(R,T)$ is
the set of all triangles $G \in T$ such that $|\M_i \cap G| = 2$, $|\NotTrav_i
\cap G| = 1$, and letting $g$ denote the edge in $\NotTrav_i \cap G$, there
exists $G_0 \in X_{i,0}(g)$ such that $G_0$ shares at least three vertices with
$R$.
Since $\M_i \subseteq \Trav_{i-1} \cup \BIGBite_i$, since $\C_{i-1}$(C6) holds
and since $R \subseteq S$, there is a set $E_0$ of at most $n^{3.1/5}$ edges,
the removal of which from $\M_i$ leaves at most $n^{4.2/5}$ $4$-cycles in $\M_i
\cap \binom{R}{2}$.  In addition, since $\M_i \subseteq \Trav_i$, since~(B2)
holds and since $R \subseteq S_i$, every edge in $E_0$ belongs to at most
$2n^{1.1/5}$ triangles in $Z_i(R,T)$.
Hence, except for at most $4n^{4.2/5} + |E_0| \cdot 2n^{1.1/5} = O(n^{4.2/5})$
triangles, for every triangle $G \in Z_i(R,T)$, we have that the edge $g \in
\NotTrav_i \cap G$ belongs to no other triangle in $Z_i(R,T)$.
Therefore, up to an additive factor of $O(n^{4.2/5})$, the number of triangles
in $Z_i(R,T)$ is at most the number of edges $g \in \binom{R}{2}$ that belong
to some triangle in $T$, and for which there exists $G_0 \in X_{i,0}(g)$ such
that $G_0$ shares at least three vertices with $R$.

Since $\M_i \subseteq \Trav_{i-1} \cup \BIGBite_i$, since $\C_{i-1}$(C7)
holds and since $s - o(s) \le |S_i| \le |S| \le s$ by~(B1), assuming the 
maximum degree in $\M_i \cap \binom{S_i}{2}$ is at most $n^{1.1/5}$,
$S_i$ satisfies the following two properties. First, there are at most
$O(n^{4.2/5})$ edges $g \in \binom{S_i}{2}$ for which there exists a graph $G_0
\in X_{i,0}(g)$, which shares all four vertices with $S_i$.  Second, there is a
set $R_0 \subseteq [n] \setminus S_i$ of at most $n^{0.99/5}$ vertices (which
we fix for the rest of the proof), such that there are at most $O(n^{4.2/5})$
edges $g \in \binom{S_i}{2}$ for which there exists a graph $G_0 \in
X_{i,0}(g)$, which shares exactly three vertices with $S_i$ and one vertex with
$[n] \setminus (S_i \cup R_0)$.
Also note that since $\M_i \subseteq \Trav_i$ and since~(B2) holds, 
the maximum degree in $\M_i \cap \binom{S_i}{2}$ is at most $n^{1.1/5}$.
Hence, it remains to show that for some pair $(R,T) \in \Pairs(S_i)$, the
number of edges $g \in \binom{R}{2}$ that belong to some triangle in $T$, and
for which there exists $G_0 \in X_{i,0}(g)$ such that $G_0$ shares three
vertices with $R$ and one vertex with $R_0$ is at most $O(n^{4.2/5})$.

Let $V \subseteq S_i$ be the set of vertices $v \in S_i$ such that $v$ is
adjacent in $\M_i$ to at most one vertex in $R_0$. 
We have the following two observations. The first observation is that since
$\M_i \subseteq \Trav_{i-1} \cup \BIGBite_i$ and since $\C_{i-1}$(C3) holds,
$|S_i \setminus V| \le |R_0|^2 \cdot n^{1/5 + 10\epsilon_3} = o(s)$, and so
$|V| \ge s - o(s)$. 
The second observation is that by this lower bound on the size of $V$ and
since $\M_i$ has maximum degree at most $0.01s$,
there exists a partition of the vertices of $V$ to three sets of
vertices, each of size $\Omega(s)$, such that there is no vertex in $R_0$ that
is adjacent in $\M_i$ to two vertices in two different parts of the partition.
Fix such a partition and consider the pair $(V, T) \in \Pairs(S_i)$ that
corresponds to that partition.
Then there is no triangle in $T$ with an edge whose two vertices
are adjacent in $\M_i$ to a vertex in $R_0$.
Hence, the number of edges $g \in \binom{V}{2}$ that belong to some triangle in
$T$, and for which there exists $G_0 \in X_{i,0}(g)$ such that $G_0$ shares
three vertices with $V$ and one vertex with $R_0$ is $0$.  This completes the
proof.
\end{proof}

Fix for the rest of the proof a pair $(R,T) \in \Pairs(S_i)$, as guaranteed to
exist by the above claim, so that $Y_{i,1}(T) = \Omega(y_{i,1,s^3})$.
Let $\E$ be the event that there exists a set $\Triangles \subseteq
Y_{i,1}(T)$ with the following three properties: first, every triangle in
$\Triangles$ has two edges in $\M_i$ and one edge in $\Bite_{i+1}$ (which can
be added to $\M_i$ without creating a copy of $K_4$); second, for every
triangle $G \in \Triangles$, the edge in $G \cap \Bite_{i+1}$ belongs to
exactly one triangle in $\Triangles$; third, $|\Triangles| = \Omega(
n^{-\epsilon_1 - 2/5} y_{i,1,s^3} )$.  We have
\begin{eqnarray*}
\prob( K_3 \nsubseteq \M_{i+1} \cap \tbinom{S}{2} \given \text{process behaves}
\wedge K_3 \nsubseteq \M_i \cap \tbinom{S}{2}) &\le& \\ 
\prob( K_3 \nsubseteq \M_{i+1} \cap \tbinom{S}{2} \given \E \wedge
\text{process behaves} \wedge K_3 \nsubseteq \M_i \cap \tbinom{S}{2}) &+& \\
\prob(\neg \E \given \text{process behaves} \wedge K_3 \nsubseteq \M_i \cap
\tbinom{S}{2} ).
\end{eqnarray*}
We bound each of the two last terms by $\exp( - \Omega_{\epsilon_1}( n^{-
\epsilon_1 - 2/5} y_{i,1,s^3} ))$.  

To bound the first term,  it is enough to bound the probability of the event
$K_3 \nsubseteq \M_{i+1} \cap \binom{S}{2}$, under the assumption that we are
given $\M_i$ and $\Bite_{i+1}$, $\E$ holds, the process behaves, and $K_3
\nsubseteq \M_i \cap \binom{S}{2}$. (The probability here is over the choice of
the birthtimes of the edges in $\Bite_{i+1}$.)
Under this assumption, we can use the observation that was given at the
beginning of the section to claim the following.  First, the event that a
triangle in $\Triangles$ is contained in $\M_{i+1}$ depends only on the
birthtimes of $O_{\epsilon_1}(1)$ edges in $\Bite_{i+1}$.  Second, the expected
number of triangles in $\Triangles$ that are contained in $\M_{i+1}$ is
$\Omega_{\epsilon_1}(|\Triangles|)$, which is $\Omega_{\epsilon_1}( n^{-
\epsilon_1 - 2/5} y_{i,1,s^3})$.  Third, changing the birthtime of a single
edge in $\Bite_{i+1}$ can change the number of triangles in $\Triangles$ that
are contained in $\M_{i+1}$ by at most an additive factor of
$O_{\epsilon_1}(1)$.  Therefore, the probability of the event $K_3 \nsubseteq
\M_{i+1} \cap \binom{S}{2}$ is at most the probability that no triangle in
$\Triangles$ is in $\M_{i+1}$, which given the assumptions and the three claims
above, by McDiarmid's inequality, is at most
$\exp(-\Omega_{\epsilon_1}(n^{-\epsilon_1 - 2/5} y_{i,1,s^3}))$, as needed.

Next, we bound the second term. We claim that under the assumption that we are
given $\M_i$ so that the process behaves and $K_3 \nsubseteq \M_i \cap
\binom{S}{2}$ holds, $\neg \E$ occurs with probability at most $\exp ( -
\Omega(n^{-\epsilon_1 - 2/5} y_{i,1,s^3}) )$. (The probability here is over the
choice of $\BIGBite_{i+1}, \BigBite_{i+1}$ and $\Bite_{i+1}$.) 
To prove that claim, first recall that $|Y_{i,1}(T)| = \Omega(y_{i,1,s^3})$.
Next, using an argument similar to the one used in the proof of
Claim~\ref{claim:yi1}, one can find that there is a set $Y^*_{i,1}(T) \subseteq
Y_{i,1}(T)$ of size $\Omega(|Y_{i,1}(T)|)$, that is of size
$\Omega(y_{i,1,s^3})$, such that for every triangle $G \in Y^*_{i,1}(T)$, the
edge in $\NotTrav_i \cap G$ belongs to exactly one triangle in $Y^*_{i,1}(T)$.
In particular, every triangle $G \in Y^*_{i,1}(T)$ is uniquely determined by
the edge in $\NotTrav_i \cap G$.  Consider the set of $\Omega(y_{i,1,s^3})$
edges that determine the triangles in $Y^*_{i,1}(T)$, and note that by
Chernoff's bound, $\Omega(n^{-\epsilon_1 - 2/5} y_{i,1,s^3})$ edges of these
are in $\Bite_{i+1}$ with probability at least $1 - \exp(-\Omega_{\epsilon_1}(
n^{-\epsilon_1 - 2/5} y_{i,1,s^3}))$.  So with probability at least $1 -
\exp(-\Omega_{\epsilon_1}( n^{-\epsilon_1 - 2/5} y_{i,1,s^3}))$, there is a set
of $\Omega(n^{-\epsilon_1 - 2/5} y_{i,1,s^3})$ triangles in $Y^*_{i,1}(T)$,
each of which has two edges in $\M_i$ and one edge in $\Bite_{i+1}$. This
completes the proof.


\begin{bibdiv}
\begin{biblist}

\bib{MR1469821}{article}{
   author={Alon, Noga},
   author={Krivelevich, Michael},
   title={Constructive bounds for a Ramsey-type problem},
   journal={Graphs Combin.},
   volume={13},
   date={1997},
   number={3},
   pages={217--225},
   issn={0911-0119},
   review={\MR{1469821 (98h:05136)}},
}



\bib{MR2522430}{article}{
   author={Bohman, Tom},
   title={The triangle-free process},
   journal={Adv. Math.},
   volume={221},
   date={2009},
   number={5},
   pages={1653--1677},
   issn={0001-8708},
   review={\MR{2522430}},
   doi={10.1016/j.aim.2009.02.018},
}

\bib{BKeevash}{article}{
   author={Bohman, Tom},
   author={Keevash, Peter},
   title={The early evolution of the $H$-free process},
   date={2009},
   eprint={http://arxiv.org/abs/0908.0429},
}

\bib{MR1091586}{article}{
   author={Bollob{\'a}s, B.},
   author={Hind, H. R.},
   title={Graphs without large triangle free subgraphs},
   journal={Discrete Math.},
   volume={87},
   date={1991},
   number={2},
   pages={119--131},
   issn={0012-365X},
   review={\MR{1091586 (91m:05136)}},
   doi={10.1016/0012-365X(91)90042-Z},
}

\bib{MR1756287}{article}{
   author={Bollob{\'a}s, B{\'e}la},
   author={Riordan, Oliver},
   title={Constrained graph processes},
   journal={Electron. J. Combin.},
   volume={7},
   date={2000},
   pages={Research Paper 18, 20 pp. (electronic)},
   issn={1077-8926},
   review={\MR{1756287 (2001e:05115)}},
}

\bib{MR0144332}{article}{
   author={Erd{\H{o}}s, Paul},
   author={Gallai, Tibor},
   title={On the minimal number of vertices representing the edges of a
   graph. },
   language={English, with Russian summary},
   journal={Magyar Tud. Akad. Mat. Kutat\'o Int. K\"ozl.},
   volume={6},
   date={1961},
   pages={181--203},
   review={\MR{0144332 (26 \#1878)}},
}


\bib{MR0141612}{article}{
   author={Erd{\H{o}}s, P.},
   author={Rogers, C. A.},
   title={The construction of certain graphs},
   journal={Canad. J. Math.},
   volume={14},
   date={1962},
   pages={702--707},
   issn={0008-414X},
   review={\MR{0141612 (25 \#5010)}},
}



\bib{MR1099791}{article}{
   author={Erd{\H{o}}s, Paul},
   author={Tetali, Prasad},
   title={Representations of integers as the sum of $k$ terms},
   journal={Random Structures Algorithms},
   volume={1},
   date={1990},
   number={3},
   pages={245--261},
   issn={1042-9832},
   review={\MR{1099791 (92c:11012)}},
   doi={10.1002/rsa.3240010302},
}

\bib{MR1138428}{article}{
   author={Janson, Svante},
   title={Poisson approximation for large deviations},
   journal={Random Structures Algorithms},
   volume={1},
   date={1990},
   number={2},
   pages={221--229},
   issn={1042-9832},
   review={\MR{1138428 (93a:60041)}},
   doi={10.1002/rsa.3240010209},
}

\bib{MR2096818}{article}{
   author={Janson, Svante},
   author={Ruci{\'n}ski, Andrzej},
   title={The deletion method for upper tail estimates},
   journal={Combinatorica},
   volume={24},
   date={2004},
   number={4},
   pages={615--640},
   issn={0209-9683},
   review={\MR{2096818 (2005i:60019)}},
   doi={10.1007/s00493-004-0038-3},
}

\bib{MR1900611}{article}{
   author={Janson, Svante},
   author={Ruci{\'n}ski, Andrzej},
   title={The infamous upper tail},
   note={Probabilistic methods in combinatorial optimization},
   journal={Random Structures Algorithms},
   volume={20},
   date={2002},
   number={3},
   pages={317--342},
   issn={1042-9832},
   review={\MR{1900611 (2003c:60013)}},
   doi={10.1002/rsa.10031},
}

\bib{MR1782847}{book}{
   author={Janson, Svante},
   author={{\L}uczak, Tomasz},
   author={Rucinski, Andrzej},
   title={Random graphs},
   series={Wiley-Interscience Series in Discrete Mathematics and
   Optimization},
   publisher={Wiley-Interscience, New York},
   date={2000},
   pages={xii+333},
   isbn={0-471-17541-2},
   review={\MR{1782847 (2001k:05180)}},
}


\bib{MR1369060}{article}{
   author={Krivelevich, Michael},
   title={Bounding Ramsey numbers through large deviation inequalities},
   journal={Random Structures Algorithms},
   volume={7},
   date={1995},
   number={2},
   pages={145--155},
   issn={1042-9832},
   review={\MR{1369060 (96k:05139)}},
   doi={10.1002/rsa.3240070204},
}

\bib{MR1300971}{article}{
   author={Krivelevich, Michael},
   title={$K\sp s$-free graphs without large $K\sp r$-free subgraphs},
   journal={Combin. Probab. Comput.},
   volume={3},
   date={1994},
   number={3},
   pages={349--354},
   issn={0963-5483},
   review={\MR{1300971 (95j:05117)}},
}

\bib{MR1799803}{article}{
   author={Osthus, Deryk},
   author={Taraz, Anusch},
   title={Random maximal $H$-free graphs},
   journal={Random Structures Algorithms},
   volume={18},
   date={2001},
   number={1},
   pages={61--82},
   issn={1042-9832},
   review={\MR{1799803 (2002d:05111)}},
   doi={10.1002/1098-2418(200101)18:1<61::AID-RSA5>3.3.CO;2-K},
}

\bib{MR1036755}{article}{
   author={McDiarmid, Colin},
   title={On the method of bounded differences},
   conference={
      title={Surveys in combinatorics, 1989},
      address={Norwich},
      date={1989},
   },
   book={
      series={London Math. Soc. Lecture Note Ser.},
      volume={141},
      publisher={Cambridge Univ. Press},
      place={Cambridge},
   },
   date={1989},
   pages={148--188},
   review={\MR{1036755 (91e:05077)}},
}

\bib{MR1930121}{article}{
   author={Reed, Bruce},
   author={Sudakov, Benny},
   title={Asymptotically the list colouring constants are 1},
   journal={J. Combin. Theory Ser. B},
   volume={86},
   date={2002},
   number={1},
   pages={27--37},
   issn={0095-8956},
   review={\MR{1930121 (2003i:05054)}},
   doi={10.1006/jctb.2002.2110},
}

\bib{MR1262978}{article}{
   author={R{\"o}dl, Vojt{\v{e}}ch},
   author={Ruci{\'n}ski, Andrzej},
   title={Random graphs with monochromatic triangles in every edge coloring},
   journal={Random Structures Algorithms},
   volume={5},
   date={1994},
   number={2},
   pages={253--270},
   issn={1042-9832},
   review={\MR{1262978 (95a:05102)}},
   doi={10.1002/rsa.3240050202},
}


\bib{MR2127369}{article}{
   author={Sudakov, Benny},
   title={Large $K\sb r$-free subgraphs in $K\sb s$-free graphs and some
   other Ramsey-type problems},
   journal={Random Structures Algorithms},
   volume={26},
   date={2005},
   number={3},
   pages={253--265},
   issn={1042-9832},
   review={\MR{2127369 (2006d:05125)}},
   doi={10.1002/rsa.20035},
}

\bib{Spencer0a}{article}{
      author={Spencer, Joel~H.},
       title={Maximal triangle-free graphs and {R}amsey $r(3,t)$},
        date={1995},
        note={Unpublished manuscript},
}


\bib{Vu02}{article}{
   author={Vu, V. H.},
   title={Concentration of non-Lipschitz functions and applications},
   journal={Random Structures Algorithms},
   volume={20},
   date={2002},
   number={3},
   pages={262--316},
   issn={1042-9832},
   review={\MR{1900610 (2003c:60053)}},
}


\end{biblist}
\end{bibdiv}

\end{document}